\title{Dynamics of soliton-like solutions for slowly varying, generalized gKdV equations: refraction vs. reflection}
\author{Claudio Mu\~noz C.}
\address{Universit\'e de Versailles Saint-Quentin-en-Yvelines \\ LMV-UMR 8100, 45 av. des Etats-Unis, 78035 Versailles cedex, France}
\email{Claudio.Munoz@math.uvsq.fr}
\date{September, 2010}
\subjclass[2000]{Primary 35Q51, 35Q53; Secondary 37K10, 37K40}
\keywords{gKdV equations, integrability theory, soliton dynamics, slowly varying medium, reflection, refraction}
\thanks{This research was supported in part by a CONICYT-Chile and an \emph{Allocation de Recherche} grants}
\chardef\bslash=`\\ 
\newtheorem{thm}{Theorem}[section]
\newtheorem{lem}[thm]{Lemma}
\newtheorem{prop}[thm]{Proposition}
\theoremstyle{definition}
\newtheorem{defn}{Definition}[section]
\theoremstyle{remark}
\newtheorem{rem}{Remark}[section]
\newtheorem{Cl}{Claim}
\numberwithin{equation}{section}
\newcommand{\R}{\mathbb{R}}
\newcommand{\N}{\mathbb{N}}
\newcommand{\la}{\lambda}
\newcommand{\al}{\alpha}
\newcommand{\ga}{\gamma}
\newcommand{\supp}{\operatorname{supp}}
\def\bm{\left( \begin{array}{cc}}
\def\endm{\end{array}\right)}
 \providecommand{\abs}[1]{\lvert#1 \rvert}
 \providecommand{\norm}[1]{\lVert#1 \rVert}
\newcommand{\ve}{\varepsilon}
\newcommand{\be}{\begin{equation}}
\newcommand{\ee}{\end{equation}}
\newcommand{\ba}{\begin{equation*}}
\newcommand{\ea}{\begin{equation*}}
\newcommand{\bea}{\begin{eqnarray}}
\newcommand{\eea}{\end{eqnarray}}
\newcommand{\bee}{\begin{eqnarray*}}
\newcommand{\eee}{\end{eqnarray*}}
\newcommand{\ben}{\begin{enumerate}}
\newcommand{\een}{\end{enumerate}}
\newcommand{\nonu}{\nonumber}
\newcommand{\eval}[2][\right]{\relax
  \ifx#1\right\relax \left.\fi#2#1\rvert}
\let\abs=\envert
\let\norm=\enVert
\begin{document}
\begin{abstract}
In this work we continue our study of the description of the soliton-like solutions of the variable coefficients, subcritical gKdV equation   
$$
u_t + (u_{xx} -\la u  + a(\ve x) u^m )_x =0,\quad \hbox{ in } \quad \R_t\times \R_x,  \quad m=2,3 \hbox{ and } 4,
$$
with $0\leq \la<1$, $1<a(\cdot )<2$ a strictly increasing, positive and asymptotically flat potential, and $\ve$ small enough. In \cite{Mu2} we proved the existence (and uniqueness in most of the cases) of a \emph{pure} soliton-like solution $u(t)$ satisfying
$$
\lim_{t\to -\infty}\|u(t) - Q(\cdot -(1-\la)t) \|_{H^1(\R)} =0, \quad 0\leq \la<1,
$$
provided $\ve$ small enough. Here $R(t,x) := Q_c(x-(c-\la)t)$ is the standard $H^1$-soliton solution of $R_t + (R_{xx} -\la R + R^m)_x =0$. In addition, this solution is global in time and satisfies, for all $0<\la\leq\frac{5-m}{m+3}$,
\be\label{MU2}
 \sup_{t\gg \frac 1\ve }\|u(t) - 2^{-1/(m-1)}Q_{c_\infty}(\cdot -\rho(t)) \|_{H^1(\R)} \leq K\ve^{1/2},
\ee
for suitable scaling and translation parameters $c_\infty(\la)\geq 1$ and $\rho'(t) \sim (c_\infty -\la) $, and $K>0$. In the cubic case, $m=3$, this result also holds for $\la=0$.  

\medskip

\noindent
The purpose of this paper is the following. We give an \emph{almost complete} description of the remaining case $\frac{5-m}{m+3}<\la<1$. Surprisingly, there exists a fixed, positive number $\tilde \la \in (\frac{5-m}{m+3} ,1)$, independent of $\ve$, such that the following alternative holds:

\smallskip

\begin{enumerate}
\item \emph{Refraction}. For all $\frac{5-m}{m+3}<\la<\tilde \la$, the soliton solution behaves as in \cite{Mu2}, and satisfies (\ref{MU2}), but now $\la <c_\infty<1 $, and $\rho'(t) \sim c_\infty -\la >0$.
\item \emph{Reflection}. If $\tilde \la <\la<1$, then the soliton-like solution is reflected by the potential and it satisfies
$$
\sup_{t\gg \frac 1\ve }\|u(t) - Q_{c_\infty}(\cdot -\rho(t)) \|_{H^1(\R)} \leq K\ve^{1/2}.
$$
with $0<c_\infty <\la$, and $\rho'(t) \sim c_\infty-\la <0$. This last is a completely new type of soliton-like solution for gKdV equations, also present in the NLS case \cite{Mu1}.

\end{enumerate}

\medskip

Moreover, for any $0<\la<1$, with $\la\neq \la$, the solution is not pure as $t\to +\infty$, in the sense that
$$
\limsup_{t\to +\infty}\|u(t) - \kappa(\la)Q_{c_\infty}(\cdot -\rho(t)) \|_{H^1(\R)}>0,
$$
with $\kappa(\la) $ depending on $\la$.

\end{abstract}
\maketitle \markboth{Dynamics of soliton solutions for perturbed gKdV equations} {Claudio Mu\~noz}
\renewcommand{\sectionmark}[1]{}

\section{Introduction and Main Results}

\medskip

In this work we continue our study of the dynamics of a soliton for some generalized Korteweg-de Vries equations (gKdV), started in \cite{Mu2}. In that paper the objective was the study of the global behavior of a \emph{generalized soliton solution} for the following subcritical, variable coefficients gKdV equation:
\be\label{aKdV0}
u_t + (u_{xx} -\la u  + a(\ve x) u^m )_x =0,\quad \hbox{ in } \quad \R_t\times \R_x,  \quad m=2,3 \hbox{ and } 4.
\ee
Here $u=u(t,x)$ is a real-valued function, $\ve>0$ is a small number, $\la\geq 0$ a fixed parameter, and the \emph{potential} $a(\cdot )$ a smooth, positive function satisfying some specific properties, see (\ref{ahyp}) below. 
 
 \smallskip
 
The above equation represents in some sense a simplified model of \emph{long dispersive waves in a channel with variable depth}, which takes in account \emph{large} variations in the shape of the solitary wave. The primary physical model, and the dynamics of a generalized soliton-solution, was formally described by Karpman-Maslov, Kaup-Newell, Asano, and Ko-Kuehl \cite{KM1,KN1,Asano,KK}, with further results by Grimshaw \cite{Gr1}, and Lochak \cite{Lo}. From a mathematical point of view, an additional objective was the study of perturbations of integrable systems, in this case the KdV equation ($m=2$). See \cite{Mu2, New} and references therein for a detailed physical introduction to this model.

\smallskip

The main novelty in the works above cited was the discovery of a \emph{dispersive tail} behind the soliton, with small height but large width, as a consequence of the lack of conserved quantities such as mass or energy. However, no mathematically rigorous proof of this phenomenon was given. 

\smallskip

In addition, from the mathematical point of view, equation (\ref{aKdV0}) is a variable coefficients version of the gKdV equation
\be\label{gKdV}
u_t + (u_{xx}- \la u +u^m)_x =0, \quad \hbox{ in } \quad \R_t\times \R_x;\quad m\geq 2 \hbox{ integer}.
\ee

This last equation is important due to the existence of localized, exponentially decaying and smooth solutions called \emph{solitons}. Given real numbers $x_0$ (=the translation parameter), and $c>0$ (=the scaling), solitons are solutions of (\ref{gKdV}) of the form
\be\label{(3)}
u(t,x):= Q_c(x-x_0-(c-\la)t), \quad  \hbox{ with } \quad Q_c(s):=c^{\frac 1{m-1}} Q(c^{1/2} s),
\ee  
and where $Q$ is the unique --up to translations-- function satisfying the second order nonlinear ordinary differential equation
\be\label{soliton}
Q'' -Q + Q^m =0, \quad Q>0, \quad Q\in H^1(\R).
\ee
In this case, this solution belongs to the Schwartz class and it is explicitly given by the formula
$$
Q(x) = \Big[ \frac{m+1}{2\cosh^2(\frac {(m-1)}2 x)}\Big]^{\frac 1{m-1}}.
$$
In particular, if $c>\la$ the solution (\ref{(3)}) represents a \emph{solitary wave}\footnote{In this paper we will not make any distinction between soliton and solitary wave, unlike in the mathematical-physics literature.}, of scaling $c$ and \emph{velocity} $(c-\la)$, defined for all time moving to the right without \emph{any change} in shape, velocity, etc. In other words, a soliton represents a \emph{pure}, traveling wave solution with \emph{invariant profile}. In addition, this equation allows soliton solutions with negative velocities, moving to the left direction, provided $c<\la$. Finally, for the case $c=\la$, one has a stationary soliton solution, $Q_\la(x)$. These two last solutions do not exist in the standard model of gKdV (namely when $\la=0$.)  In this sense, the dynamics of (\ref{gKdV}) is richer than the usual inviscid gKdV equation.

\medskip

Coming back to (\ref{aKdV0}), the corresponding Cauchy problem has been considered in \cite{Mu2}; in particular, we showed global well-posedness for $H^1(\R)$ initial data, even in the absence of some standard conserved quantities. The proof of this result is an adaptation of the fundamental work of Kenig, Ponce and Vega \cite{KPV}, in addition to the introduction of some new monotone quantities. See Proposition \ref{Cauchy} below for more details.

\smallskip

One fundamental question related to (\ref{gKdV}) is how to {\bf generalize} a soliton-like solution to more complicated models. In \cite{BL}, the existence of soliton solutions for generalized KdV equations with suitable autonomous nonlinearities was established. However, very little is known in the case of an inhomogeneous nonlinearity, as in the case of (\ref{aKdV0}). In a general situation, no elliptic, time-independent ODE can be associated to the soliton solution, unlike the standard autonomous case studied in \cite{BL}. Other methods are needed.

\smallskip

Concerning some time dependent, generalized KdV and mKdV equations ($m=2$ and $m=3$), Dejak-Jonsson, and Dejak-Sigal \cite{SJ,DS} studied the dynamics of a soliton for not too large times, of $O(\ve^{-1})$. Recently, Holmer  \cite{H} has improved some of the Dejak-Sigal results in the KdV case, up to the Ehrenfest time $O(|\log \ve |\ve^{-1})$. In their model, the perturbation is of linear type, which do not allow large variations on the soliton shape, different to the scaling itself. 

\smallskip

Finally, in \cite{Mu2} we described the soliton dynamics for \emph{all time} in the case of the time independent, perturbed gKdV equation (\ref{aKdV0}). In order to state this last result, and our present main results, let us first describe the framework that we have considered for the potential $a(\cdot)$ in (\ref{aKdV0}). 

\bigskip

\noindent
{\bf Setting and hypotheses on $a(\cdot)$}. Concerning the function $a$ in (\ref{aKdV0}), we assume that $a\in C^3(\R)$ and there exist constants $K, \ga>0$ such that
\be\label{ahyp} 
\begin{cases}
1< a(r) < 2, \quad a'(r)>0,  \quad | a^{(k)}(r)| \leq K e^{-\ga|r|}, \quad \hbox{ for all } r\in \R, \\
0<a(r) -1 \leq  Ke^{\ga r}, \; \hbox{ for all } r\leq 0, \, \hbox{ and} \\
0<2-a(r)\leq K e^{-\ga r} \; \hbox{ for all } r\geq 0.
\end{cases}
\ee
In particular, $\lim_{r\to -\infty}a(r) = 1$ and $\lim_{r\to +\infty} a(r) = 2$. The choice (1 and 2) here do not imply a loss of generality, it just simplifies the computations. In addition, we assume the following hypothesis: there exists $K>0$ such that for $m=2,3$ and $4$,
\be\label{3d1d}
| (a^{1/m})^{(3)}(s) | \leq K (a^{1/m})'(s), \quad \hbox{ for all } \quad s\in \R.
\ee
This condition is generally satisfied, however $a'(\cdot)$ must not be a compactly supported function. In addition, note that (\ref{aKdV0}) formally behaves as a gKdV equation (\ref{gKdV}), with constant coefficients $1$ and $2$, as $x\to \pm \infty$.

\medskip

Let us remark some important facts about (\ref{aKdV0}) (see \cite{Mu2} for more details.) First, this equation is not  invariant anymore under scaling and spatial translations. Moreover, a nonzero solution of (\ref{aKdV0}) \emph{might lose or gain some mass}, depending on the sign of $u$, in the sense that, at least formally, the quantity
\be\label{Ma}
M[u](t):= \frac 12\int_\R u^2(t,x)\,dx    \qquad \hbox{ (= {\bf mass})}
\ee
satisfies the identity
\be\label{dMa}
 \partial_t M[u](t) = -\frac{\ve}{m+1} \int_\R a'(\ve x) u^{m+1}(t,x)dx.
\ee
On the other hand, the {\bf energy}
\be\label{Ea}
E_a [u](t) :=  \frac 12 \int_\R u_x^2(t,x)\,dx + \frac \la 2 \int_\R u^2(t,x)\, dx - \frac 1{m+1}\int_\R a(\ve x)  u^{m+1}(t,x)\,dx
\ee
remains formally constant for all time. Recall that these quantities are conserved for $H^1$-solutions of (\ref{gKdV}), inside the corresponding interval of existence. 

\medskip

In addition, there exists another conservation law, valid only for solutions with enough decay at infinity:
\be\label{L1}
 \int_\R u(t,x)dx = \hbox{constant}.
\ee

\smallskip

Now let us describe what we mean by a soliton-like solution of (\ref{aKdV0}). Indeed, in \cite{Mu2} we introduced the concept of \emph{pure generalized soliton-solution} for (\ref{aKdV0}), of size $c=1$ and velocity $1-\la>0$.

\medskip

\begin{defn}[Pure generalized soliton-solution for (\ref{aKdV0}), \cite{Mu2}]\label{PSS}~

Let $0\leq \la <1$ be a fixed number. We will say that (\ref{aKdV0}) admits a \emph{pure} generalized soliton-like solution (of scaling equals $1$ and initial velocity equals $1-\la>0$) if there exist a $C^1$ real valued function $\rho=\rho(t)$, defined for all large time, and a global in time $H^1(\R)$ solution $u(t)$ of (\ref{aKdV}) such that 
\bea
\label{menos}\lim_{t\to - \infty}\|u(t) -Q(\cdot -(1-\la)t)\|_{H^1(\R)} & =& 0,\\
\label{mas} \lim_{t\to +\infty} \big\|u(t) - 2^{-1/(m-1)}Q_{c_\infty} (\cdot - \rho(t)) \big\|_{H^1(\R)} & =& 0,
\eea
with $\lim_{t\to +\infty} \rho(t) =+\infty$, and where $c_\infty= c_\infty(\la)>0$ is the scaling \emph{suggested by the energy conservation law}.
\end{defn}

\begin{rem}
The above definition describes a soliton-like solution being \emph{completely pure} at both $t\to \pm \infty$. Note e.g. that the standard soliton $Q(x -(1-\la)t) $ is a pure soliton solution of (\ref{gKdV}), with invariant profile and no dispersive behavior.  The coefficient $2^{-1/(m-1)}$ in front the soliton solution in (\ref{mas}) comes from the fact that (\ref{aKdV0}) behaves like the standard gKdV equation
$$
u_t + (u_{xx} -\la u +2u^m)_x =0, 
$$
as $x\to +\infty$.

However, in this definition we do not consider a possible case of a \emph{reflected} soliton,
$$
\lim_{t\to + \infty}\|u(t) -Q_{c_\infty}(\cdot - \rho(t))\|_{H^1(\R)}  = 0, \quad \lim_{t\to + \infty}\rho(t)= -\infty.
$$
\end{rem}

\begin{rem}[On the scaling $c_\infty$]
Let us explain in more detail the main argument --based in the energy conservation law--, to determine the scaling $c_\infty(\la)$. Let $u(t)$ be a pure soliton solution, as in Definition \ref{PSS}. Then one has 
$$
E_a[u](-\infty)  = (\la -\la_0) M[Q],
$$
with $\la_0$ given by
\be\label{l0}
\la_0:= \frac{5-m}{m+3} \in(0,1),
\ee
(cf. Appendix \ref{IdQ} for the details.) On the other hand, one has
$$
E_a[u](+\infty) = \frac{c_\infty^{\frac 2{m-1} -\frac 12}(\la)}{2^{\frac 2{m-1}}}( \la -c_\infty(\la) \la_0 ) M[Q]; 
$$
for $c_\infty =c_\infty(\la)$. From the energy conservation law one obtains
$$
\frac{c_\infty^{\frac 2{m-1} -\frac 12}(\la)}{2^{\frac 2{m-1}}}( \la -c_\infty(\la) \la_0 ) =  \la -\la_0,
$$
that is,
\be\label{cinfty}
c_\infty^{\la_0}(c_\infty -\frac{\la}{\la_0} )^{1-\la_0} = 2^{\frac 4{m+3}}(1-\frac{\la}{\la_0} )^{1-\la_0}.
\ee
In \cite{Mu2} we proved the existence of a unique solution $c_\infty(\la)\geq 1$ of this last  algebraic equation, for all $0\leq \la \leq \la_0$ (see Lemma \ref{ODE} for more details.) Moreover, the application $\la \mapsto c_\infty(\la) $ is a smooth \emph{decreasing} map with $c_\infty(0)=2^{4/(m+3)}$ and $c_\infty(\la_0) =1$. However, note that (\ref{cinfty}) is valid only under the assumptions (\ref{menos})-(\ref{mas}). In particular, if there exists a reflected soliton, it should obey a different scaling law. 
\end{rem}

\begin{rem}[Balance of mass]
Note that a pure generalized soliton-like solution may loss almost one half of its mass during the interaction. Indeed, a simple computation shows that the mass at infinity is given by 
$$
M[u](-\infty) = M[Q], \quad M[u](+\infty) = 2^{-2/(m-1)}c_\infty^{ 2/(m-1) -1/2} M[Q].
$$
Since $c_\infty(\la)$ is a decreasing map in $\la$ (see preceding remark), one has e.g.
\bee
M[u](+\infty)|_{\la=0} & = & 2^{\frac 4{m+3} -\frac 2{m-1}} M[Q] , \\
 M[u](+\infty)|_{\la =\la_0}&  = &  2^{-\frac 2{m-1}} M[Q].
\eee
\end{rem}

\bigskip

\subsection*{Description of the dynamics} Let us be more precise. By assuming the validity of (\ref{ahyp}) and (\ref{3d1d}), we proved, among other things, the following result.

\begin{thm}[Dynamics of solitons for gKdV under slowly varying medium, see \cite{Mu2}]\label{MT}~

Suppose $m=2,3$ and $4$, and let $0\leq \la <1$ be a fixed number. Consider $\la_0$ as in (\ref{l0}), and $c_\infty(\la)$ satisfying (\ref{cinfty}).
There exists a small constant $\ve_0>0$ such that for all $0<\ve<\ve_0$ the following holds. 

\medskip

\begin{enumerate}

\item \emph{Existence of a soliton-like solution}. 

\noindent
There exists a solution $u\in C(\R, H^1(\R))$ of (\ref{aKdV0}), global in time, such that 
\be\label{Minfty}
\lim_{t\to -\infty} \|u(t) - Q(\cdot -(1-\la)t) \|_{H^1(\R)} =0, 
\ee
with conserved energy $E_a[u](t) = (\la-\la_0)M[Q].$ This solution is \emph{unique} in the following cases: $(i)$ $m=3$; and $(ii)$ $m=2,4$, provided $\la>0$.

\medskip

\item \emph{Interaction soliton-potential and refraction}.  

\noindent
Suppose now in addition that $0< \la\leq\la_0$ for the cases $m=2,4$, and $0\leq \la \leq \la_0$ if $m=3$.\footnote{This is the case of nonpositive energy.} There exist constants $K,T, c^+>0$ and a $C^1$-function $\rho(t)$, defined in $[T,+\infty)$, such that 
$$
w^+(t,\cdot) := u(t,\cdot) - 2^{-1/(m-1)} Q_{c^+} (\cdot -\rho(t)) 
$$   
satisfies
\begin{enumerate}
\item \emph{Stability and asymptotic stability}. For any $t\geq T$,
\be\label{MT2}
\|w^+(t)\|_{H^1(\R)} +|\rho'(t) -(c_\infty(\la) -\la)| + |c^+ -c_\infty(\la)| \leq K\ve^{1/2};
\ee
and for some fixed $0<\beta <\frac 12 (c_\infty-\la)$,\footnote{Recall that $c_\infty(\la)\geq 1$ for all $0\leq \la\leq \la_0$.} depending on $\ve$, 
\be\label{MT3}
\lim_{t\to +\infty } \|w^+(t)\|_{H^1(x>\beta t)} =0.
\ee
\item \emph{Bounds on the scaling parameter}. Define $\theta := \frac 1{m-1} -\frac 14>0.$ One has, for all $\la>0$,
\be\label{Pc2}
 \frac 1K \limsup_{t\to +\infty }\|w^+(t)\|_{H^1(\R)}^2 \leq  \big( \frac{c^+}{c_\infty} \big)^{2\theta} -1 \leq K \liminf_{t\to +\infty} \|w^+(t)\|_{H^1(\R)}^2.
\ee
\end{enumerate}
\end{enumerate}
\end{thm}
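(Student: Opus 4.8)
The plan is to follow the Martel--Merle--Tsai scheme for soliton dynamics, adapted to the $L^2$-subcritical, variable-coefficient setting. On any time interval where the solution stays $H^1$-close to the soliton manifold, I would introduce a modulated decomposition $u(t,x) = Q_{c(t)}(x-\rho(t)) + z(t,x)$, choosing $c(t)$ and $\rho(t)$ by the implicit function theorem so that $z(t)$ satisfies two orthogonality conditions killing the (generalized) kernel of the linearized operator, e.g. $\langle z, \partial_x Q_c\rangle = \langle z, \Lambda Q_c\rangle = 0$ with $\Lambda Q_c := \partial_c Q_c$. Differentiating these along the flow yields modulation equations $|\rho'(t) - (c(t)-\la)| + |c'(t)| \lesssim \|z(t)\|_{H^1}^2 + \ve\, e^{-\ga\ve|\rho(t)|}$, where the leading term in $c'(t)$ is proportional to $a'(\ve\rho(t))>0$; this is the point at which the sign of $a'$ in (\ref{ahyp}) forces the scaling to increase through the interaction.

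Since a soliton of (\ref{aKdV0}) is orbitally $H^1$-stable modulo symmetries, the only enemy is the loss of mass conservation. To control it I would bring in localized mass and energy functionals $\int u^2(t)\,\psi(x-\sigma t)$ with a smooth logistic weight $\psi$ and $0<\sigma<c-\la$ close to $c-\la$, and use the favorable sign of the Airy flux to get almost-monotonicity: the mass lying to the right of a point travelling slightly slower than the soliton is nonincreasing up to $O(\ve)$. This both prevents $\|z(t)\|_{H^1}$ from growing and confines the radiation to the left. Together with coercivity of $z\mapsto\langle \mathcal{L}_c z,z\rangle$ on the orthogonal complement and a bootstrap, this closes $\|z(t)\|_{H^1}\le K\ve^{1/2}$ on $[T,+\infty)$. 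Part (1) is obtained backward in time: take $u_n$ solving (\ref{aKdV0}) with $u_n(T_n)=Q(\cdot-(1-\la)T_n)$, $T_n\to-\infty$, run the same bootstrap to get $\|u_n(t)-Q(\cdot-(1-\la)t)\|_{H^1}\lesssim e^{-\ga\ve(1-\la)|t|}$ uniformly in $n$, and extract a weak-$H^1$ limit, which by the monotonicity estimates is a global solution satisfying (\ref{Minfty}); its conserved energy equals $(\la-\la_0)M[Q]$ by the computation in Appendix \ref{IdQ}. Uniqueness for $m=3$, or for $m=2,4$ with $\la>0$, follows from a second modulation applied to the difference of two such solutions together with strict coercivity of the linearized energy, which fails exactly in the excluded borderline cases.

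For part (2), under the nonpositive-energy assumption $\la\le\la_0$ the modulation equations and the monotonicity yield convergence $c(t)\to c^+$ with $z(t)$ of size $K\ve^{1/2}$ escaping to $\{x<\beta t\}$; identifying $c^+$ gives (\ref{MT2}). To identify it I would pass to the limit in the conserved energy $E_a[u]$, writing it as the energy of $2^{-1/(m-1)}Q_{c^+}$ plus the quadratic term in $w^+$ plus cubic remainders and using $a(\ve x)\to 2$ along the trajectory; this reproduces the algebraic relation (\ref{cinfty}), hence $c^+=c_\infty(\la)+O(\ve^{1/2})$ via Lemma \ref{ODE}. The asymptotic stability (\ref{MT3}) is a rigidity statement: a localized virial/monotonicity estimate on $\{x>\beta t\}$, with $\beta$ strictly between the tail speed and the soliton speed, combined with a Liouville-type argument in the spirit of Martel--Merle, shows no radiation survives to the right, so $\|w^+(t)\|_{H^1(x>\beta t)}\to 0$. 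Finally (\ref{Pc2}) follows by Taylor-expanding the conserved energy \emph{and} the (non-conserved) mass around $2^{-1/(m-1)}Q_{c^+}$ and combining the two identities with coercivity of the linearized energy, giving $\|w^+(t)\|_{H^1}^2\sim (c^+/c_\infty)^{2\theta}-1$ in the limit, with $\theta=\frac1{m-1}-\frac14$.

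The main obstacle is obtaining the uniform-in-time $O(\ve^{1/2})$ control in the absence of mass conservation: one has to quantify how much mass the dispersive tail carries off and feed it back consistently into the energy identity, and this is precisely where subcriticality, the sign of $a'$ in (\ref{ahyp}), and the monotonicity of $\la\mapsto c_\infty(\la)$ must be used together. The Liouville-type rigidity behind (\ref{MT3}) is the second delicate point, since it has to be reproved in a genuinely non-autonomous setting whose limiting object is a soliton of the constant-coefficient equation with coefficient $2$.
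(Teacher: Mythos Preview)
Your outline captures the right architecture---modulation, coercivity of the linearized operator, localized mass/energy monotonicity, compactness for existence---and your treatment of Part~(1) and of the post-interaction stability (Propositions~\ref{Tm1} and~\ref{Tp1}) is essentially what the paper does. The gap is in the interaction interval $[-T_\ve,\tilde T_\ve]$, with $T_\ve\sim\ve^{-1-1/100}$: you propose to modulate directly around $Q_{c(t)}(\cdot-\rho(t))$ and close $\|z\|_{H^1}\le K^*\ve^{1/2}$ by monotonicity and a Weinstein functional. This does not close. After absorbing $\ve f_1$ into the modulation equation for $c'$, the residual $S[R]$ still contains the localized source $\ve F_1^{\bf I}(t,y)\in\mathcal Y$ (Lemma~\ref{lem:SQ}), which is $O(\ve e^{-\ve\ga|\rho|})$ in $H^1$. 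In the energy estimate $\mathcal F'$ this contributes a term of order $\ve e^{-\ve\ga|\rho|}\|z\|_{H^1}$; integrating over the interaction and using $\int \ve e^{-\ve\ga|\rho(s)|}\,ds=O(1)$ yields a contribution $O(K^*\ve^{1/2})$ to $\mathcal F$, which is too large to beat $(K^*)^2\ve$ and breaks the bootstrap.

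The paper's resolution (from \cite{Mu2}, reproduced here in Proposition~\ref{prop:decomp} and Proposition~\ref{CV}) is to build an \emph{approximate solution} $\tilde u=R+w$ with $R=\tilde a(\ve\rho)^{-1}Q_c$ and a correction $w=\ve d(t)A_c(y)$, where $A_c$ solves the linear problem $(\mathcal L A_c)_y=F_1/d$. This is precisely the dispersive tail you flag as the ``main obstacle'': it is bounded but not localized, with $\|w\|_{H^1}=O(\ve^{1/2})$, and including it upgrades the residual to $\|\tilde S[\tilde u]\|_{H^1}\le K\ve^{3/2}e^{-\ve\ga|\rho|}$. Only then does the energy estimate (Lemma~\ref{Ka}) close at $\ve^{1/2}$, and even so a sharp virial estimate (Lemma~\ref{VL}) is needed to get $\int_{-T_\ve}^t|c_1'|\le KK^*\ve$. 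Two smaller but non-cosmetic differences: the paper modulates around $\tilde a^{-1}Q_c$, not $Q_c$, to account for the $O(1)$ amplitude change through the potential; and the orthogonality conditions are $\int zQ_c=\int z\,yQ_c=0$ rather than your $(Q_c',\Lambda Q_c)$ pair, which is what makes the solvability condition for $A_c$ exactly $\int F_1 Q_c=0$ and matches the coercivity variant in Lemma~\ref{surL}(\ref{6a})(b).
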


\medskip

The proof of this result, and in particular of (\ref{MT2}), requires the introduction of an approximate solution, up to first order in $\ve$. Roughly speaking, the solution $u(t)$ behaves like a well modulated soliton-solution, plus a small order term, namely
\be\label{tildeuu}
u(t,x) \sim \mu(t) Q_{c(t)} (x-\rho(t)) + \ve \nu(t) A_{c(t)} (x-\rho(t)),
\ee
where $c(t), \rho(t)$ are the scaling and position parameters, and $\mu(t), \nu(t)$ $A_c$ are  unknown functions, to be found.
In \cite{Mu2} we proved that this description is a good approximation of the dynamics, provided $(c,\rho)$ follow a well defined dynamical system, of the form (cf. Lemma \ref{ODE0} for more details):
\be\label{firstorder0}
\begin{cases}
c'(t) \sim \ve f_1(t), \quad c(-T_\ve) \sim 1,\\
\rho'(t) \sim c(t) -\la, \quad \rho(-T_\ve) \sim -(1-\la)T_\ve,
\end{cases}
\ee   
for a given function $f_1(t)>0$ and some well defined time $T_\ve \gg \frac 1\ve$ (see (\ref{Te}) for a precise definition.) Therefore, the infinite dimensional dynamics reduces to a simple finite dimensional problem, which describes the main properties of the soliton solution. Once this system is well understood, the main problem reduces to an advanced form of  stability argument, in the spirit of Weinstein, and Martel-Merle \cite{We,MMcol1}.

\medskip

\begin{rem}[On the order of the error term in (\ref{MT2})]~

\noindent
Note that $u(t)$ behaves like an \emph{almost} pure soliton solution, in the sense of Definition \ref{PSS}, up to an error of order $\ve^{1/2}$ in $H^1(\R)$. A first sight, the order of magnitude of this term may appear somehow strange. However, it can explained by the existence of a {\bf dispersive tail} behind the soliton, formally found by physicists in \cite{KN1}. This tail is  mathematically described by the function $A_c$ in (\ref{tildeuu}). Indeed, one can see (cf. Proposition \ref{prop:decomp}), that $A_c$ is an almost flat function, with support of size $O(\ve^{-1})$. From this fact, it is clear that 
$$
\| \ve A_c (\cdot - \rho(t)) \|_{H^1(\R)} \leq K\ve^{1/2}, \quad \| \ve A_c (\cdot - \rho(t))\|_{L^1(\R)} =O(1).
$$
Note that this bound holds even for the cubic case, $m=3$, which makes a big difference with the model studied in \cite{SJ}. In that paper the authors found an upper bound of order $\ve$. We believe that our upper bound is bigger due to the shape variation experienced by the soliton, which is not present in the theory developed by \cite{SJ}.  
\end{rem}

\begin{rem}
\emph{Stability} (\ref{MT2}) and \emph{asymptotic stability} (\ref{MT3}) of solitary waves for gKdV equations as stated in the above Theorem have been widely studied since the '80s. The main ideas of our proof are classical in the literature. For more details, see e.g. \cite{Benj,BSS,MMT,PW}.
\end{rem}

\medskip

In addition, by using a contradiction argument and the $L^1$-conservation law (\ref{L1}), it was proved that no soliton-like solution exist in this regime:

\medskip

\begin{thm}[Non-existence of pure soliton-like solution for \ref{aKdV0}, \cite{Mu2}]\label{MTcor}~

Under the context of Theorems \ref{MT}, suppose $m=2,3,4$ with $0<\la\leq \la_0$. There exists $\ve_0>0$ such that for all $0<\ve<\ve_0$, 
\be\label{MTcor1}
\limsup_{t\to +\infty} \|w^+(t)\|_{H^1(\R)} >0. 
\ee
\end{thm}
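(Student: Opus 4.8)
The plan is to argue by contradiction using the $L^1$--conservation law (\ref{L1}): if the solution $u(t)$ of Theorem \ref{MT} were \emph{pure} as $t\to+\infty$, then $\int_\R u$ would be forced to take two different values at $t=-\infty$ and $t=+\infty$, which is impossible. So assume (\ref{MTcor1}) fails, i.e. $\|w^+(t)\|_{H^1(\R)}\to 0$ as $t\to +\infty$. By the lower bound in (\ref{Pc2}) and $\theta>0$ this forces $c^+=c_\infty(\la)$, hence $u(t)\to 2^{-1/(m-1)}Q_{c_\infty(\la)}(\cdot-\rho(t))$ in $H^1(\R)$, with $\rho'(t)\to c_\infty(\la)-\la$: that is, $u(t)$ would be a pure generalized soliton solution in the sense of Definition \ref{PSS}.

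I would first check that the solution $u(t)$ constructed in \cite{Mu2} has enough spatial decay for (\ref{L1}) to be rigorous: integrability on the left of the soliton core, and exponential localization on the right, uniformly in $t$ (coming from the monotonicity formulas for the mass on the right of the soliton). Integrating (\ref{aKdV0}) in $x$ then gives $\partial_t\!\int_\R u(t,x)\,dx=0$; letting $t\to -\infty$, where $u(t)$ is asymptotic in $H^1(\R)$ to the Schwartz soliton $Q(\cdot-(1-\la)t)$ in a class of functions controlling the $L^1$--integral, yields
\be\label{I0a}
\int_\R u(t,x)\,dx \equiv m_Q:=\int_\R Q(x)\,dx>0,\qquad\text{for all }t.
\ee

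The heart of the matter is to evaluate $\int_\R u(t,x)\,dx$ as $t\to+\infty$ under the purity hypothesis and to show it equals $2^{-1/(m-1)}c_\infty(\la)^{\frac1{m-1}-\frac12}m_Q$ (using $\int_\R Q_c=c^{\frac1{m-1}-\frac12}m_Q$). The delicate point is that $H^1$--smallness of $w^+(t)$ does \emph{not} by itself control $\int_\R w^+(t)$: the dispersive shelf $\ve A_c(\cdot-\rho(t))$ in the ansatz (\ref{tildeuu}) is only of order $\ve^{1/2}$ in $H^1$, but of order $1$ in $L^1$. To get around this I would use the refined decomposition of \cite{Mu2}, writing $u(t)=\mu(t)Q_{c(t)}(\cdot-\rho(t))+\ve\nu(t)A_{c(t)}(\cdot-\rho(t))+\eta(t)$ with $\mu(t)\to 2^{-1/(m-1)}$, $c(t)\to c_\infty(\la)$ and $\eta(t)$ small in $H^1$ \emph{and} in $L^1$ (or in a weighted norm controlling $\int\eta$). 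Since the soliton corrections are small in $L^1$ and, for fixed small $\ve$, $\|A_{c}\|_{H^1(\R)}$ is a positive constant and $\int_\R A_{c}\neq 0$, the purity hypothesis forces $\ve|\nu(t)|\,\|A_{c(t)}\|_{H^1}\to0$, hence $\nu(t)\to0$; integrating the decomposition and using (\ref{I0a}) then leaves
\be\label{clash2}
m_Q = 2^{-1/(m-1)}\,c_\infty(\la)^{\frac{1}{m-1}-\frac12}\,m_Q,\qquad\text{i.e.}\qquad 2^{-1/(m-1)}\,c_\infty(\la)^{\frac{3-m}{2(m-1)}}=1 .
\ee

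Finally one checks that (\ref{clash2}) is impossible in the range $0<\la\le\la_0$, where $1\le c_\infty(\la)\le 2^{4/(m+3)}$ by the analysis of (\ref{cinfty}). For $m=3$, (\ref{clash2}) reads $2^{-1/2}=1$, absurd; for $m=2$ it forces $c_\infty(\la)=4>2^{4/5}$, and for $m=4$ it forces $c_\infty(\la)=\tfrac14<1$; in all cases this contradicts the bounds on $c_\infty(\la)$, proving (\ref{MTcor1}). (In fact the same computation, performed \emph{without} the purity hypothesis, shows that $\ve\nu(t)$ converges to the fixed nonzero number $\big(1-2^{-1/(m-1)}c_\infty(\la)^{\frac1{m-1}-\frac12}\big)m_Q/\!\int_\R A_{c_\infty(\la)}$, which, given the scalings $\ve\|A_c\|_{L^1}\asymp 1$, $\ve\|A_c\|_{H^1}\asymp\ve^{1/2}$ of the shelf, yields the quantitative strengthening $\liminf_{t\to+\infty}\|w^+(t)\|_{H^1(\R)}\gtrsim\ve^{1/2}$, consistent with (\ref{MT2}).) The main obstacle is, as indicated, the passage to the limit at $t\to+\infty$ inside the $L^1$--integral: one must isolate the contribution of the dispersive shelf, which requires the structural ansatz (\ref{tildeuu}) and the $L^1$--type bounds on $A_c$ and on the error term, and not merely the asymptotic stability (\ref{MT3}); a secondary, but necessary, technical point is verifying that (\ref{L1}) genuinely applies to the constructed solution.
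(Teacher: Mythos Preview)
Your overall strategy---contradiction via the $L^1$-conservation law (\ref{L1}), comparing $\int_\R u$ at $t=-\infty$ and $t=+\infty$---is exactly the one the paper (following \cite{Mu2}) uses, and your endgame arithmetic with $2^{-1/(m-1)}c_\infty^{\frac{1}{m-1}-\frac12}\neq 1$ is correct. The substantive difference, and the place where your proposal has a genuine gap, is in how you pass to the limit in $\int_\R u(t)$ as $t\to+\infty$.

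You propose to carry the refined ansatz $u=\mu Q_c+\ve\nu A_c+\eta$ all the way to $t\to+\infty$, with $\eta$ small in $L^1$, and then argue that purity forces $\nu\to 0$. But the approximate solution (\ref{tildeuu}) with the shelf $A_c$ is only constructed on the interaction window $[-T_\ve,\tilde T_\ve]$ (Propositions \ref{prop:decomp}--\ref{CV}); for $t\geq \tilde T_\ve$ the description of $u$ is the stability decomposition $u=2^{-1/(m-1)}Q_{c^+}(\cdot-\rho)+w^+$, with no $A_c$ component. There is no mechanism giving you a global decomposition of that form with $\eta$ small in $L^1$, and this $L^1$-smallness claim is precisely the hard point. (Relatedly, $A_c\notin H^1(\R)$---only the cutoff $\eta_\ve A_c$ is---so the argument ``$\ve|\nu(t)|\,\|A_{c(t)}\|_{H^1}\to 0$ implies $\nu(t)\to 0$'' is not well posed as written.)

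The paper circumvents this entirely: under the purity hypothesis $\|w^+(t)\|_{H^1}\to 0$, a monotonicity argument upgrades the convergence to \emph{exponential decay in time}, $\|w^+(t)\|_{H^1}\leq Ke^{-\ve\gamma t}$; a second monotonicity formula then converts this into \emph{spatial} decay sufficient to define $\int_\R w^+(t)$ and show it tends to $0$. With that in hand, $\int_\R u(t)\to 2^{-1/(m-1)}\int_\R Q_{c_\infty}$ directly, and the contradiction follows. No tracking of the shelf $A_c$ past the interaction region is needed. Note also that the paper explicitly remarks that this argument yields \emph{no} quantitative lower bound on $\|w^+(t)\|_{H^1}$; your claimed strengthening $\liminf_{t\to+\infty}\|w^+(t)\|_{H^1}\gtrsim\ve^{1/2}$ therefore cannot come from this method and would require a different argument (cf. (\ref{LBBB}) and the surrounding discussion).
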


\begin{rem}
Let us explain in some words the proof of this last theorem. The proof it is mainly based in an argument introduced in \cite{Me}, in a completely different context. We suppose that $\lim_{t\to +\infty} \|w^+(t)\|_{H^1(\R)} =0$. Using a monotonicity argument, one can show that, for any $\la>0$, the convergence is indeed exponentially in time:
$$
\sup_{t\gg \ve^{-1}}\|w^+(t)\|_{H^1(\R)} \leq K e^{-\ve \ga t},
$$  
up to a small modulation parameter in the space variable. This time decay can be traduced in space decay via a new monotonicity formula, which allows to define the integral of $w^+(t)$ as $t\to +\infty$, and proves that it is small. Using the $L^1$-conservation law, and comparing the result obtained at both $t\sim \pm \infty$, we obtain the desired contradiction. 

\medskip

However, this argument \emph{does not give} a quantitative lower bound on the size of the defect $w^+(t)$, as $t\to +\infty.$
\end{rem}

\begin{rem}\label{NE}
From the proof of this result in \cite{Mu2}, we emphasize that the same conclusion in Theorem \ref{MTcor} holds {\bf for any} $0<\la<1$  if we assume the validity of (\ref{MT2}) -(\ref{MT3}) for all $t$ large enough, after some minor modifications (cf. Section \ref{5}.)
\end{rem}

Summarizing, Theorems \ref{MT} and \ref{MTcor} can be represented in the following figure:

\medskip
 
\begin{pdfpic} 
\begin{pspicture}(0,-1)(16,12)
\psset{xunit=0.7cm,yunit=0.6cm}
\psplot[linecolor=black, linewidth=1.5pt]{0}{2}{1.357 4.48 x 1    sub 10 mul exp 1 4.48 x 1 sub 10 mul exp div add 0.6667 exp div 4.652 mul 0.0 add  }

\psplot[linecolor=black, linewidth=1.5pt]{6.3}{8.7}{1.357 4.48 x 7.5   sub 10  mul exp 1 4.48 x 7.5 sub  8 mul exp div add 0.6667 exp div 4.652 mul 0.1 add 3 add }

\psplot[linecolor=black, linewidth=1.5pt]{12.5}{14.5}{0.957 4.48 x 13.5    sub 8 mul exp 1 4.48 x 13.5 sub 10 mul exp div add 0.6667 exp div 4.652 mul 0.1 add 4.7 add  }

\psplot[linecolor=black, linewidth=1.5pt]{2.7}{6.8}{ x 1000 mul sin  20 div  5.0 add}

\psplot[linecolor=black, linewidth=1.5pt]{2}{6.2}{ x 1000 mul sin  20 div  3.1 add}

\uput[0](15,0.5){$\uparrow t$}
\uput[0](15.3,0.2){$\rightarrow x$}
\uput[0](0.6,-0.2){$Q$}
\uput[0](9.8,6.6){$ 2^{-1/(m-1)}Q_{c^+}$}
\uput[0](10.5,5.4){$c^+ > c_\infty$}
\uput[0](3.2,5.5){\small (Thm. 1.2)}
\uput[0](3,6.3){non-zero defect}
\uput[0](14,3){$a\equiv 2$}
\uput[0](0,5){$a\equiv 1$}
\uput[0](2.5,3.7){$O_{H^1}(\varepsilon^{1/2})$}
\uput[0](10,3){$t\sim 0$}
\uput[0](14.5,6.2){$t\rightarrow +\infty$}
\uput[30](13.5,3.8){\small [Thm. 1.1 (2)] }
\uput[30](-2.7,1){\small [Thm. 1.1 (1)]}
\uput[0](3,0){$t\rightarrow -\infty$}

\psline[linewidth=1pt,linecolor=black,linestyle=dotted](1,0.2)(6.5,2.8) 
\psline[linewidth=1pt,linecolor=black,linestyle=dotted](7.5,3.2)(13.5,4.70) 
\end{pspicture}
\end{pdfpic}

\medskip

A first important question left open in \cite{Mu2} was the behavior of the solution $u(t)$ from Definition \ref{PSS} in the case of {\bf positive energy}, namely $\la_0<\la<1$. The analysis in this case requires more attention due the fact that the scaling of the soliton solution \emph{decreases} as long as the interaction soliton-potential takes place. This behavior is in part a consequence of the competition between the strength of the potential and the initial kinetic energy. In this paper our first objective is to describe in detail that case. Indeed, in the next paragraphs we will state the following surprising result: given a fixed $\la$ close to $1$, for any small $\ve>0$ the soliton is {\bf reflected} by the potential $a(\ve \cdot)$.  This result is basically a consequence of the fact that, given $0<\la<1$ and $c>0$ fixed, with $c<\la$, the small soliton $Q_c(\cdot -(c-\la)t)$, solution of 
$$
u_t + (u_{xx} -\la u + u^m)_x =0, \quad \hbox{ in } \; \R_t \times \R_x, 
$$
moves towards the {\bf left}.

\subsection*{Main Results}
 
Let us recall the setting of our problem. Let $0<\la< 1$ be a fixed parameter, consider the equation
\be\label{aKdV}
\begin{cases}
u_t + (u_{xx} -\la u + a (\ve x) u^m)_x =0 \quad \hbox{ in \ } \R_t \times \R_x,   \\
m=2,3 \hbox{ and } 4;\quad  0< \ve\leq\ve_0;  \quad a(\ve \cdot) \hbox{ satisfying } (\ref{ahyp}) \hbox{-}(\ref{3d1d}). 
\end{cases}
\ee

\medskip 
Here $\ve_0>0$ is a small parameter. Under these hypotheses, our main results are as follows. First, we describe the dynamics of interaction soliton-potential. Let $\tilde \la  =\tilde \la(m)$ be the unique solution of the algebraic equation
\be\label{tlan}
\tilde \la (\frac {1-\la_0}{\tilde \la -\la_0})^{1-\la_0} =2^{\frac 4{m+3}}, \quad \la_0<\tilde \la<1, \quad \la_0 \hbox{ given by } (\ref{l0}). 
\ee
(See Lemma \ref{ODE} for more details.) We claim that this number represents a sort of \emph{equilibrium} between the energy of the solitary wave and the strength of the potential. Indeed, first we prove that the dynamics in the case $\la_0<\la<\tilde \la$ is similar to that of \cite{Mu2}.

\begin{thm}[Interaction soliton-potential and refraction, case $\la_0<\la<\tilde \la$]\label{MTL1}~

Suppose $\la_0< \la <\tilde \la$. There exists $\ve_0>0$ such that for all $0<\ve<\ve_0$ the following holds. There exist constants $K, \tilde T, c^+, c_\infty(\la)>0$, with $\la <c_\infty(\la) <1$; and a smooth function $ \rho(t) \in \R $ such that the function 
$$
w^+ := u(t) - 2^{-\frac 1{m-1}} Q_{c^+} (\cdot-  \rho(t))
$$ 
satisfies  for all $t\geq \tilde T$,
\be\label{St1l}
\| w^+(t) \|_{H^1(\R)} + | \rho'(t) - c_\infty(\la) +\la | +|c^+-c_\infty| \leq K\ve^{1/2},
\ee
and 
$$
\lim_{t\to + \infty} \|w^+(t)\|_{H^1(x>\beta t)} =0,
$$
for a fixed $0<\beta <\frac 12 (c_\infty(\la) -\la)$.  Moreover, for $\theta := \frac 1{m-1} -\frac 14$, 
\be\label{Pc2l}
 \frac 1K \limsup_{t\to +\infty }\|w^+(t)\|_{H^1(\R)}^2 \leq  \big( \frac{c^+}{c_\infty} \big)^{2\theta} -1 \leq K\ve.
\ee
\end{thm}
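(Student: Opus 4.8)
The plan is to adapt the modulation-plus-stability machinery from \cite{Mu2} to the regime $\la_0<\la<\tilde\la$, the only new ingredient being a careful analysis of the reduced dynamical system which now forces the scaling parameter $c(t)$ to \emph{decrease} from $1$ toward a limit $c_\infty(\la)$ that still satisfies $\la<c_\infty(\la)<1$. First I would recall from Theorem \ref{MT}(1) the existence (and, in the stated cases, uniqueness) of the global $H^1$ soliton-like solution $u(t)$ with $\|u(t)-Q(\cdot-(1-\la)t)\|_{H^1}\to 0$ as $t\to-\infty$ and conserved energy $E_a[u](t)=(\la-\la_0)M[Q]$; note that for $\la>\la_0$ this energy is now \emph{positive}. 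Then I would invoke the decomposition (\ref{tildeuu}): for $t$ in a suitable window, $u(t,x)\approx \mu(t)Q_{c(t)}(x-\rho(t))+\ve\nu(t)A_{c(t)}(x-\rho(t))$, with $(c,\rho)$ governed by a system of the form (\ref{firstorder0}), $c'(t)\sim\ve f_1(t)$, $\rho'(t)\sim c(t)-\la$. The new point is that, when $\la>\la_0$, the right-hand side $f_1$ changes sign relative to the subcritical case, so that $c(t)$ decreases; one must verify that it stays bounded below by $\la$ (so $\rho'>0$ and the soliton keeps moving right — this is \emph{refraction}, not reflection) and converges to the value $c_\infty(\la)$ predicted by the energy law (\ref{cinfty}). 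This is exactly where the hypothesis $\la<\tilde\la$ enters: by Lemma \ref{ODE} (invoked as stated), (\ref{cinfty}) has a solution $c_\infty(\la)$ with $\la<c_\infty(\la)<1$ precisely for $\la_0<\la<\tilde\la$, whereas at $\la=\tilde\la$ one would get $c_\infty=\la$ and beyond it the soliton is reflected.

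Next I would set up the bootstrap/stability argument in the region $t\geq\tilde T$ where the interaction with the potential is essentially over (i.e. $\rho(t)$ is far into the region $a\equiv 2$). Here $u(t)$ solves, up to exponentially small error, the constant-coefficient gKdV with nonlinearity $2u^m$, so after rescaling $u\mapsto 2^{1/(m-1)}u$ it solves the standard subcritical gKdV (\ref{gKdV}). The subcriticality condition $m\leq 4$ guarantees $Q_c$ is orbitally stable (Weinstein, Martel--Merle), so I would introduce modulation parameters $(c^+(t),\rho(t))$ via the usual orthogonality conditions on $w^+(t)=u(t)-2^{-1/(m-1)}Q_{c^+(t)}(\cdot-\rho(t))$, derive the modulation equations, and use the coercivity of the linearized energy together with a Virial-type monotonicity functional to close the estimate $\|w^+(t)\|_{H^1}\lesssim\ve^{1/2}$ and $|c^+(t)-c_\infty(\la)|\lesssim\ve^{1/2}$, $|\rho'(t)-(c_\infty(\la)-\la)|\lesssim\ve^{1/2}$ uniformly in $t\geq\tilde T$. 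The $\ve^{1/2}$ rather than $\ve$ is inherited from the dispersive-tail term $\ve A_c$ which, being almost flat with support of size $O(\ve^{-1})$, has $H^1$-norm of size $\ve^{1/2}$. The asymptotic-stability statement $\|w^+(t)\|_{H^1(x>\beta t)}\to 0$ follows from a monotonicity argument on the right half-line (mass almost does not enter a region moving with speed $\beta<\tfrac12(c_\infty-\la)$), combined with a Liouville-type rigidity theorem for gKdV à la Martel--Merle, applied to the weak limit of time-translates.

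Finally, for the quantitative bounds (\ref{Pc2l}): the upper bound $(c^+/c_\infty)^{2\theta}-1\leq K\ve$ should come from comparing the \emph{energy at $+\infty$} of $2^{-1/(m-1)}Q_{c^+}$ against $E_a[u]=(\la-\la_0)M[Q]$ — since energy is exactly conserved, the discrepancy between $c^+$ and the energy-predicted $c_\infty$ is controlled by the energy of the radiation $w^+$, which a further monotonicity/almost-conservation computation bounds by $O(\ve)$ rather than merely $O(\ve^{1/2})$ (the radiation carries little energy even though it carries $O(1)$ mass). The lower bound $\tfrac1K\limsup\|w^+(t)\|_{H^1}^2\leq (c^+/c_\infty)^{2\theta}-1$ is the reverse inequality: expanding $E_a[u]$ in terms of the soliton part plus $w^+$ and using the orthogonality conditions to kill the linear term, the quadratic term in $w^+$ is coercive and bounded above by the scaling defect. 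The main obstacle I anticipate is the control of the reduced ODE for $c(t)$ in the new sign regime: one must show that the decreasing scaling does not overshoot past $\la$ during the interaction (which would be reflection, contradicting the claimed refraction), and that the error terms in the modulation system remain integrable in $\ve$ despite the soliton genuinely changing shape; this requires a precise construction of the approximate solution and a bootstrap on $c(t)$ tied quantitatively to the gap $\tilde\la-\la$, degenerating as $\la\uparrow\tilde\la$.
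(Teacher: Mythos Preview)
Your proposal is correct and follows essentially the same route as the paper: the proof assembles Proposition~\ref{Tm1} (existence of the pure soliton at $-\infty$), the ODE analysis of Lemmas~\ref{ODE} and~\ref{ODE1} (showing $c(t)\searrow c_\infty(\la)>\la$ via the conserved quantity (\ref{boundC}), so no overshoot), Proposition~\ref{T0}/\ref{prop:I} (the interaction-region bootstrap yielding (\ref{INT41a})), and Proposition~\ref{Tp1} (post-interaction stability and asymptotic stability, valid since $c_\infty>\la$); the bound (\ref{Pc2l}) is then the energy-conservation argument you sketch, identical to the case $0<\la\le\la_0$ in \cite{Mu2}. Your identification of the sole new obstacle --- preventing $c(t)$ from crossing $\la$, which degenerates as $\la\uparrow\tilde\la$ --- is exactly the content of Lemma~\ref{ODE1} and the escape-time bound $\tilde T_\ve\le K(\la)T_\ve$ with $K(\la)\sim(c_\infty(\la)-\la)^{-1}$.
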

\smallskip
\noindent
Note that this generalized soliton solution behaves, as $t\to +\infty$, as a solitary wave with velocity $\sim c_\infty -\la >0$, but {\bf smaller} than the initial one ($=1-\la$). 

\medskip

Now we consider the case $\tilde \la <\la<1$. Here a completely new behavior is present. The soliton solution is, in this case, a reflected solitary wave.

\begin{thm}[Interaction soliton-potential and reflection, case $\tilde \la<\la<1$]\label{MTL2}~ 

Suppose now $\tilde \la<\la<1$, with $\ve>0$ small enough. Then there exist constants $K, \tilde T, c^+, c_\infty(\la)>0$, with $0 <c_\infty(\la) <\la$; and a smooth function $ \rho(t) \in \R $ such that 
$$
w^+ := u(t) -  Q_{c^+} (\cdot-  \rho(t))
$$
satisfies  for all $t\geq \tilde T$,
\be\label{St1m}
\| w^+(t) \|_{H^1(\R)} + |\rho'(t) - c_\infty(\la) +\la | +|c^+ -c_\infty| \leq K\ve^{1/2},
\ee
and 
$$
\lim_{t\to + \infty} \|w^+(t)\|_{H^1(x>\beta t)} =0,
$$
for a fixed $\beta \in ( -\la, c_\infty(\la)-\la )$. Finally,
\be\label{Pc2m}
 \frac 1K \limsup_{t\to +\infty }\|w^+ (t)\|_{H^1(\R)}^2 \leq  \big( \frac{c_\infty}{c^+} \big)^{2\theta} -1 \leq K\ve.
\ee

\end{thm}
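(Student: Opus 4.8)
The plan is to follow the strategy that proved Theorem \ref{MT} in \cite{Mu2}, adapting it to the regime of positive energy where the scaling parameter \emph{decreases} during the interaction. First I would solve the reduced dynamical system \eqref{firstorder0}: with $c'(t) \sim \ve f_1(t) $ and $\rho'(t)\sim c(t)-\la$, starting from $c(-T_\ve)\sim 1$, integrate across the transition region $\ve x\sim O(1)$. The sign structure of $f_1$ — which, as $x$ ranges over the bulk of the potential, is governed by $a'(\ve\cdot)>0$ together with the factor $(\la - c\la_0)$ coming from the virial-type identity behind \eqref{cinfty} — forces $c(t)$ to move monotonically toward the value $c_\infty(\la)$ determined by energy conservation \eqref{cinfty}. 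The key dichotomy is then purely algebraic: in the ODE analysis of Lemma \ref{ODE}, the solution $c_\infty(\la)$ of \eqref{cinfty} crosses the value $\la$ exactly when $\la=\tilde\la$, the root of \eqref{tlan}. For $\la_0<\la<\tilde\la$ one gets $\la<c_\infty(\la)<1$, so $\rho'\sim c_\infty-\la>0$ and the soliton keeps moving right (refraction); for $\tilde\la<\la<1$ one gets $0<c_\infty(\la)<\la$, so $\rho'\sim c_\infty-\la<0$ and the emergent soliton travels left (reflection). This is the step that produces the two separate theorems, and it is essentially a careful but routine study of the scalar equation \eqref{cinfty}, including checking that the soliton does not shrink to zero scaling before it escapes the support of $a'$.

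Next I would build the approximate solution \eqref{tildeuu}, $u(t,x)\sim \mu(t)Q_{c(t)}(x-\rho(t)) + \ve\nu(t)A_{c(t)}(x-\rho(t))$, exactly as in \cite{Mu2}, and set up the modulation: decompose $u(t) = \tilde u(t) + w(t)$ with orthogonality conditions fixing $c(t)$ and $\rho(t)$, so that $w$ is small and the modulation equations reproduce \eqref{firstorder0} with errors controlled by $\|w\|_{H^1}$ and powers of $\ve$. The stability estimate \eqref{St1l}–\eqref{St1m} then comes from the standard energy-virial (Weinstein / Martel--Merle, \cite{We,MMcol1}) mechanism: one uses the coercivity of the linearized operator around $Q_{c}$ on the orthogonal subspace, combined with a localized virial/monotonicity functional adapted to the \emph{sign of the velocity} $c(t)-\la$. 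This is the one genuinely new technical point for reflection: the monotonicity weight must be oriented along the actual trajectory $\rho(t)$, which now eventually decreases, so the cutoff in \eqref{MT3}-type estimates uses $\beta\in(-\la, c_\infty-\la)$ rather than $\beta>0$. The asymptotic stability statement $\|w^+(t)\|_{H^1(x>\beta t)}\to 0$ follows from this monotonicity exactly as before, since to the right of the soliton the medium is the constant-coefficient gKdV with nonlinearity $2u^m$ (hence the normalization $2^{-1/(m-1)}$ in the refraction case) for refraction, while for reflection the emergent soliton outruns nothing — it leaves behind the region $\ve x\sim O(1)$ moving left, so the reference profile is the unnormalized $Q_{c^+}$ and the relevant half-line is again to its right.

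For the quantitative scaling bounds \eqref{Pc2l} and \eqref{Pc2m} I would argue as in \eqref{Pc2}: energy conservation pins $E_a[u](+\infty) = E_a[u](-\infty) = (\la-\la_0)M[Q]$, while the asymptotic object $\kappa Q_{c^+}(\cdot-\rho(t))$ carries energy $\frac{(c^+)^{2\theta+\frac12}}{\kappa^{\,?}}(\la-c^+\la_0)M[Q]$ up to $\|w^+\|^2_{H^1}$, and comparing gives $(c^+/c_\infty)^{2\theta}-1$ (resp. $(c_\infty/c^+)^{2\theta}-1$ in the reflection case, where the monotone direction of $c(t)$ is reversed) sandwiched between multiples of $\limsup\|w^+\|^2$ and of $\ve$; the upper bound $K\ve$ on the right uses that the dispersive tail $\ve A_c$ contributes $O(\ve)$ to the mass defect and the modulation errors are $O(\ve)$. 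The main obstacle, as I see it, is not any single estimate but making the reduced-ODE analysis of \eqref{cinfty}–\eqref{tlan} airtight uniformly in $\ve$: one must show that the true scaling $c(t)$ tracks $c_\infty(\la)$ to within $O(\ve^{1/2})$ \emph{through the entire transition}, and in particular that when $\la$ is close to $\tilde\la$ (so $c_\infty$ is close to $\la$ and the velocity $c-\la$ nearly vanishes) the soliton still clears the support of $a'$ before the linearization-plus-monotonicity machinery degrades — i.e. that the borderline between refraction and reflection is genuinely governed by the sign of $c_\infty(\la)-\la$ and nothing subtler interferes near $\tilde\la$. Handling the almost-vanishing velocity near $\tilde\la$ (and the fact, flagged in the abstract, that the description is only \emph{almost} complete there) is where the argument is most delicate.
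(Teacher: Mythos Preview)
Your overall architecture matches the paper's proof: analyze the finite-dimensional system \eqref{c} via Lemma \ref{ODE}, construct the approximate solution and run modulation plus a Weinstein--Martel--Merle argument through the interaction (Proposition \ref{prop:I} and Proposition \ref{T0}), then invoke a stability/asymptotic stability result for large time, and finish the scaling bound by energy conservation. Two points, however, are not right as stated and would derail the proof if left as is.

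\medskip

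\textbf{The equation for $c_\infty$ in the reflection case is not \eqref{cinfty}.} You describe the dichotomy as ``the solution $c_\infty(\la)$ of \eqref{cinfty} crosses $\la$ at $\la=\tilde\la$, and for $\tilde\la<\la<1$ one gets $0<c_\infty(\la)<\la$.'' But \eqref{cinfty} encodes the energy balance for a soliton that \emph{exits on the right}, where $a\to 2$. In the reflection regime the ODE analysis (Lemma \ref{ODE}, item (2)(b)) shows that $P(t)\to -\infty$: the soliton turns around at a unique time $t_0$ where $C(t_0)=\la$ and exits on the \emph{left}, where $a\to 1$. The conserved quantity \eqref{boundC} then yields the different algebraic equation \eqref{cinf2},
\[
c_\infty^{\la_0}\Big(\frac{\la-\la_0 c_\infty}{\la-\la_0}\Big)^{1-\la_0}=1,\qquad 0<c_\infty<\la,
\]
not \eqref{cinfty}. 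This is also why the emergent profile is $Q_{c^+}$ with no factor $2^{-1/(m-1)}$. The dichotomy is therefore not ``which branch of \eqref{cinfty}'' but ``does $C(t)$ hit $\la$ before $P(t)$ escapes to $+\infty$''; the answer determines the exit side and hence which algebraic equation fixes $c_\infty$. Lemmas \ref{ODE1} and \ref{ODE2} make this precise.

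\medskip

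\textbf{The stability step for a left-moving soliton needs a new monotone quantity, not just a reoriented virial weight.} You say the one new technical point is that ``the monotonicity weight must be oriented along the actual trajectory $\rho(t)$.'' That is necessary for the localized virial and for asymptotic stability, but it is not sufficient for the Weinstein functional controlling stability. In the paper's proof of Proposition \ref{Tp1} (the refraction case) one uses the almost-decreasing modified mass $\hat M[u]=\frac12\int a^{1/m}u^2$, and the sign in the Weinstein combination $E_a + (c_\infty-\la)\hat M$ works because $c_\infty-\la>0$. When $c_\infty<\la$ that combination has the wrong sign, and the argument collapses. The paper's Proposition \ref{Tp1r} introduces a \emph{different} modified mass,
\[
\mathcal M[u](t)=\int_\R \frac{u^2(t,x)}{2a(\ve x)}\,dx,
\]
which is almost \emph{increasing} (Lemma \ref{MMr}); with this choice the Weinstein combination $E_a + (c_2(t_1)-\la)\mathcal M$ is again coercive (Lemma \ref{Coer3}) and the bootstrap closes. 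This is the genuine new ingredient for reflection, beyond flipping the orientation of the virial cutoff, and your proposal does not supply it.

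\medskip

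With these two corrections your outline coincides with the paper's proof; the energy argument you sketch for \eqref{Pc2m} is exactly what is done in Section \ref{5}, using positivity of $E_a[w^+]$ for $\la>0$ to extract the lower bound on $(c_\infty/c^+)^{2\theta}-1$.
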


\medskip

Some few remarks are in order.

\begin{rem}
Note that in (\ref{Pc2m}) the final scaling $c^+$ is smaller or equal than $c_\infty(\la)$.\footnote{In Theorem \ref{MTL3} we will prove that it is actually smaller.} This is a big surprise, present in the case of a reflected soliton. In particular, it differs from the results found in the recent literature (compare with the results found in \cite{MMcol1, MMcol2, MMfin, Mu1}.)
\end{rem}

\begin{rem}[More on the literature]
We believe that Theorem \ref{MTL2} is the first completely rigorous result showing the existence and global description of a reflected solitary wave under a slowly varying potential; in this case for gKdV equations. Preliminary, formal results in this direction can be found in \cite{WM1,WM2,WM3,WM4,WM5,DS,H}.
\end{rem}

\begin{rem}[Notation]
With a slight abuse of notation, we have denoted by $w^+(t)$, $\rho(t)$, $c^+$, etc. some different functions or parameters (cf. Theorems \ref{MT}, \ref{MTL1} and \ref{MTL2}.) However, since the range of validity of each definition depends on $\la$, and each region of validity in $\la$ is pairwise disjoint, we have chosen this method, in order to simplify the notation.
\end{rem}

\begin{rem}
Note that the coefficients in front of $Q_{c^+}$ in Theorems \ref{MTL1} and \ref{MTL2} are different since the potential $a(\cdot)$ behaves in a different way depending on $x\to \pm \infty$.
\end{rem}

\begin{rem}[Remaining mass in the case of a soliton reflected]
In the case $\tilde \la <\la<1$, and compared with the case $0<\la<\tilde \la$, the equation for the parameter $0<c_\infty(\la)<1$ is now given by (cf. (\ref{cinf2}))
$$
\la -\la_0 = c_\infty^{\frac 2{m-1}-\frac 12}(\la)(\la-\la_0 c_\infty(\la)),
$$
that is,
$$
 c_\infty^{\la_0} (\frac{\la}{\la_0} -c_\infty)^{1-\la_0} = (\frac{\la }{\la_0} -1)^{1-\la_0}
$$
(compare with (\ref{cinfty}), and see also Lemma \ref{ODE2} for more details.) In addition the final mass is given now by the quantity
$$
M[u](+\infty) =c_\infty^{\frac 2{m-1}-\frac 12}(\la)M[Q] < \la^{\frac 2{m-1}-\frac 12}M[Q],
$$
modulo an error of order at most $\ve$.
\end{rem}

\begin{rem}[Case $\la=\tilde \la$]
The behavior of the solution in the case $\la =\tilde \la $ remains an interesting open problem. It seems that in the case $\la=\tilde \la$ the solution $u(t)$ behaves asymptotically at infinity as an almost bounded state of the form $2^{-1/(m-1)}Q_{\tilde \la} (x-\rho(t))$, for some $\rho'(t)$ small and close to zero. See Lemma \ref{ODE1} and Remark \ref{tildelala} for more details.
\end{rem}

\noindent
{\bf Main ideas in the proof of Theorems \ref{MTL1} and \ref{MTL2}.} Similar to \cite{Mu2}, the proof of this result is based in a detailed description of the behavior of a finite dimensional \emph{dynamical system}, for the case $\la_0<\la<1$, which leads to the different behaviors above mentioned. Indeed, from \cite{Mu2}, one has that the \emph{scaling} $c(t)$ and the \emph{translation} $\rho(t)$ associated to the soliton solution, satisfy, at the first order in $\ve$, the dynamical system (\ref{firstorder0}), now for a given function $f_1(t)<0 $. Since $c'(t) <0$ for all $t\geq -T_\ve$, the scaling is a {\bf decreasing} quantity in time. The key point is then the following: a necessary condition to obtain a \emph{reflected} soliton is that $\rho'(t)<0$ after some point, in other words, $c(t)<\la$. Therefore we need to check the values of $\la$ for which the scaling $c(t)$ satisfies $ c(t)>\la$ for all $t\geq -T_\ve$, or $c(t_0) =\la$ for some $t_0> -T_\ve$. After some computations, it turns out that the sharp parameter deciding between these two regimes is given by $\tilde \la$ in (\ref{tlan}) (see Remark \ref{tildla} for more details.) In addition, we have to prove that $c(t)$ remains far from zero for all time, which is not direct since $c(t)$ is always a decreasing quantity.

\begin{rem}
From the above results we do not discard the existence of small solitary waves traveling \emph{to the left} (since a small soliton moves to the left), at least for the case $m=2$. In the cubic and quartic cases, we believe there are no such soliton solutions. 
\end{rem}

\medskip

Finally, we prove that there is no pure soliton solution at both sides of time.

\begin{thm}[Inelastic character of the interaction soliton-potential]\label{MTL3}~

Suppose $\la_0<\la<1$, with $\la\neq \tilde \la.$ Then one has
$$
\limsup_{t \to +\infty}\|w^+(t)\|_{H^1(\R)} >0,
$$
in particular $c^+>c_\infty(\la)$ for $0<\la<\tilde \la$, and $c^+<c_\infty(\la)$ in the case $\tilde \la<\la<1$.
\end{thm}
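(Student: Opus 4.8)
The plan is to argue by contradiction, adapting the strategy of Theorem \ref{MTcor} (see the explanatory remark after it, and Remark \ref{NE}). Assume that $\lim_{t\to+\infty}\|w^+(t)\|_{H^1(\R)}=0$, where $w^+$ is the defect defined in Theorem \ref{MTL1} (with coefficient $2^{-1/(m-1)}$) when $\la_0<\la<\tilde\la$, and in Theorem \ref{MTL2} (with coefficient $1$) when $\tilde\la<\la<1$. The first step is an \emph{exponential convergence} upgrade: using the monotonicity formulas already available in \cite{Mu2} for mass-type quantities localized to the right of the soliton, together with the decomposition $u(t)=\kappa(\la)Q_{c(t)}(\cdot-\rho(t))+\tilde w(t)$ and modulation equations for $(c(t),\rho(t))$, one shows that the assumed decay forces $\|w^+(t)\|_{H^1(\R)}\le Ke^{-\ve\ga t}$ for $t\gg\ve^{-1}$, after absorbing a small translation correction into $\rho(t)$. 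Here the fact that $\rho'(t)\to c_\infty-\la$ is \emph{bounded away from zero} (strictly positive in the refraction case, strictly negative in the reflection case — precisely because $\la\neq\tilde\la$) is what makes the localized monotonicity argument work; this is where the hypothesis $\la\neq\tilde\la$ enters, since at $\la=\tilde\la$ one expects $c_\infty=\tilde\la$ and $\rho'(t)\to 0$, killing the monotonicity.

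The second step converts time decay into \emph{spatial decay and a computation of} $\int_\R u(t,x)\,dx$. Via a new monotonicity formula (in the spirit of the one sketched after Theorem \ref{MTcor}) one shows that the exponential-in-time bound implies $w^+(t)$ has a well-defined, small limit for its integral, so that
$$
\int_\R u(t,x)\,dx \;\xrightarrow[t\to+\infty]{}\; \kappa(\la)\int_\R Q_{c^+}(x)\,dx + O(\ve^{1/2})
= \kappa(\la)\, (c^+)^{\frac{1}{m-1}-\frac12}\int_\R Q + O(\ve^{1/2}),
$$
while at $t\to-\infty$ the $L^1$ conservation law (\ref{L1}) gives $\int_\R u(t,x)\,dx = \int_\R Q(x)\,dx$. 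Matching the two, and using that $\kappa(\la)=2^{-1/(m-1)}$ in the refraction range and $\kappa(\la)=1$ in the reflection range, yields an \emph{explicit predicted value} of $c^+$. Comparing this value with $c_\infty(\la)$ — which is determined by the energy law (\ref{cinfty}) in the refraction case and by its reflected analogue $c_\infty^{\la_0}(\tfrac{\la}{\la_0}-c_\infty)^{1-\la_0}=(\tfrac{\la}{\la_0}-1)^{1-\la_0}$ in the reflection case — one checks these are \emph{incompatible} unless $\ve=0$: the energy-predicted $c_\infty$ and the $L^1$-predicted $c^+$ cannot coincide. This contradiction proves $\limsup_{t\to+\infty}\|w^+(t)\|_{H^1}>0$.

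Finally, for the \emph{sign} of the discrepancy ($c^+>c_\infty$ when $\la<\tilde\la$, $c^+<c_\infty$ when $\la>\tilde\la$), I would use (\ref{Pc2l})–(\ref{Pc2m}): since $\limsup\|w^+\|_{H^1}^2>0$, the lower bounds there give $(c^+/c_\infty)^{2\theta}-1>0$ in the refraction case and $(c_\infty/c^+)^{2\theta}-1>0$ in the reflection case, with $\theta=\frac1{m-1}-\frac14>0$, which is exactly the asserted strict ordering. The main obstacle I anticipate is the first step: establishing the exponential-in-time decay rigorously, since one must control the modulation parameters and the interaction with the (exponentially small at the relevant scale) potential $a(\ve\cdot)$, and must handle the reflection case where the soliton travels \emph{left}, so the localized monotonicity quantities and the choice of cutoff geometry differ in orientation from \cite{Mu2}; one has to verify that the monotone quantity is still sign-definite when $\rho'(t)<0$, exploiting that $a'>0$ and that $a$ is essentially constant ($\equiv 2$ or $\equiv 1$) in the region where the soliton ends up.
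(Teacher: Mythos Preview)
Your proposal is correct and follows essentially the same contradiction scheme as the paper: assume $w^+(t)\to 0$, upgrade to exponential-in-time decay via monotonicity, pass to spatial decay to make $\int_\R w^+(t)$ well-defined and small, and then contradict the $L^1$ conservation law (\ref{L1}) by comparing the limits at $t\to\pm\infty$; the sign conclusion on $c^+$ versus $c_\infty$ then follows from (\ref{Pc2l})--(\ref{Pc2m}) exactly as you say. The paper's proof is brief and simply records which objects from \cite{Mu2} must be modified in the reflection regime $\tilde\la<\la<1$: one uses the Weinstein functional $E_a[v]+(c_\infty(\la)-\la)\hat M[v]$ with $\hat M$ from (\ref{hM}), adjusts the admissible ranges of the speed parameter in the monotonicity lemmas to $-\la<\sigma<\tfrac{11}{10}(c_\infty-\la)$ and $\tilde\sigma>\tfrac{9}{10}(c_\infty-\la)$, and uses $c_\infty(\la)<\la<1$ at the final comparison step --- this is precisely the ``cutoff geometry reversal'' you anticipate in your last paragraph.
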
 

From Remark \ref{NE}, the proof of this result is a consequence of estimates (\ref{St1l})-(\ref{St1m}), and the same argument developed for the proof of Theorem \ref{MTcor} in \cite{Mu2}.

\medskip

The following figure illustrates the behavior stated in Theorems \ref{MTL1}, \ref{MTL2} and \ref{MTL3}.

\bigskip

\begin{pdfpic}
\begin{pspicture}(10cm,10cm)
        \rput(8,8){
                \psset{xunit=1cm,yunit=1.2cm}
          
               
                \parametricplot[plotpoints=200,linecolor=black, linestyle=dashed]{-3}{3}{0  2 t exp 1 2 t exp div   add   ln 1 add sub     t}
                \psline[linewidth=1pt,linecolor=black,linestyle=dashed](-3,-3)(3.5,2.8)

               \psplot[linecolor=black, linewidth=1.5pt]{-4.3}{-1.9}{1.357 4.48 x 3.1  add 5 mul exp 1 4.48 x 3.1 add 5 mul exp div add 0.6667 exp div 4.652 mul 0.1 add 4 div 3 sub}

                 \psplot[linecolor=black, linewidth=1.5pt]{-2.9}{-0.5}{
                1.357 4.48 x 1.7  add 5 mul exp 1 4.48 x 1.7 add 5 mul exp div add 0.6667 exp div 4.652 mul 0.1 add 4 div}  

                \psplot[linecolor=black, linewidth=1.5pt]{-4.3}{-1.9}{1.357 4.48 x 3.1  add 5 mul exp 1 4.48 x 3.1 add 5 mul exp div add 0.6667 exp div 4.652 mul 0.1 add 4 div 3 add}
               \psplot[linecolor=black, linewidth=1.5pt]{5.3}{2.9}{1.357 4.48 x 4.1  sub 5 mul exp 1 4.48 x 4.1 sub 5 mul exp div add 0.6667 exp div 4.652 mul 0.1 add 4 div 3 add}
               
               \psplot[linecolor=black, linewidth=1.5pt]{-7}{-5}{ x 1000 mul sin  30 div  3 add}
               \psplot[linecolor=black, linewidth=1.5pt]{-5.5}{-4}{ x 1000 mul sin  30 div }
                 \uput[0](-7.1,2.7){$H^1$-defect $>0$}
                 \uput[0](-7.1,3.3){\small(Thm. 1.5)}
                 \uput[0](-3.8,-1.7){$Q$}
                 \uput[0](0.8,4.5){$2^{-1/(m-1)}Q_{c^+}$,  $\lambda<c^+<1$}
                 
                 \uput[0](1.3,3.3){\small(Thm. 1.3)}
                 \uput[0](-4.3,4.5){$Q_{c^+}$,  $0<c^+ <\lambda$}
                 \uput[0](-2.5,3.3){\small(Thm. 1.4)}
                 \uput[0](-0.5,-2.9){$t\to -\infty$}
                 \uput[0](2.5,1.5){case $\lambda_0 <\lambda<\tilde \lambda$}
                 \uput[0](-5.5,1.5){case $\tilde \lambda<\lambda<1$}
                 \uput[0](1.5,0){$t\sim 0$}
                 \uput[0](-0.5,3.1){$t\to +\infty$}
                 \uput[0](-5.8,0.5){$O_{H^1}(\varepsilon^{1/2})$}
                 \uput[0](-6.8,-1){$a\equiv 1$}
                 \uput[0](3.8,-1){$a\equiv 2$}
                 \uput[0](-6.9,-2.6){$\uparrow t$}
                 \uput[0](-6.8,-2.8){$\rightarrow x$}
                }
        }
\end{pspicture}
\end{pdfpic}

\bigskip

A second important open question from \cite{Mu2} and this paper is to establish a lower bound 
on the defect $w^+(t)$ as the time goes to infinity, at least in the case $0<\la<1$, $\la\neq \tilde \la$ (the cases $\la=0$ and $\la=\tilde \la$ seem harder.) We expect to treat this problem in a forthcoming paper (see \cite{Mu4}.) For the moment, and based in some formal computations (cf. Proposition \ref{prop:decomp} and Remark \ref{4.4}) we claim that 
\be\label{LBBB}
\liminf_{t\to +\infty} \|w^+(t)\|_{H^1(\R)} \geq K\ve^{p_m}, \quad \hbox{ with } \; p_2=p_4 = 1, \; p_3 =2.
\ee

\medskip

\begin{rem}[The Schr\"odinger case]
The interaction soliton-potential has be also considered in the case of the nonlinear Schr\"odinger equation with a slowly varying potential, or a soliton-defect interaction. See e.g. Gustafson et al. \cite{GFJS, FGJS}, Gang and Sigal \cite{GS}, Gang and Weinstein \cite{GW}, and Holmer, Marzuola and Zworski \cite{HZ,HMZ0, HMZ}, for more details. See also our recent work \cite{Mu1} on soliton dynamics for a modified NLS equation. 
\end{rem}

\medskip

\noindent
{\bf Notation.} In this paper, both $K,\ga>0$ will denote fixed constants, independent of $\ve$, and possibly changing from one line to another. Let us define, for $m=2,3$ and $4$,
\be\label{Muu}
\mu =\mu(\la) := \frac{99}{100}(1-\frac{\la_0}{\la})^{\frac{1-\la_0}{\la_0}} .
\ee
Since $\la_0<\la<1$, this number is always a positive quantity, less than 1. In addition, let us define, for $\ve>0$ small,
\be\label{Te}
 T_\ve := \frac {\ve^{-1 -\frac 1{100}}}{1-\la}>0.
\ee
Third, we consider the unperturbed energy
\be\label{E0}
E_1[u](t) := \frac 12\int_\R u_x^2(t) +\frac \la 2 \int_\R u^2(t) -\frac 1{m+1} \int_\R u^{m+1}(t),
\ee
namely $E_1[u] = E_{a\equiv 1} [u]$. 

\medskip

Finally, we denote by $\mathcal{Y}$ the set of $C^\infty$ functions $f$ such that for all $j\in \N$ there exist $K_j,r_j>0$ such that for all $x\in \R$ we have
\be\label{Y}
|f^{(j)}(x)|\leq K_j (1+|x|)^{r_j} e^{- \frac 12\mu|x|}.
\ee

\medskip

\noindent
{\bf Plan of this work.} 

\noindent
Let us explain the organization of this paper. First, in Section \ref{2} we introduce some basic tools to study the interaction problem, and state several important asymptotic results. In Section \ref{2a} we study a finite-dimensional dynamical system which describes the dynamics in a approximative way. Next, in section \ref{3} we describe the interaction soliton-potential, based in the construction of an approximate solution (see Appendix \ref{A} for that computation). Finally in Section \ref{5} we prove the main results of this article.

\bigskip

\noindent
{\bf Acknowledgments}. I wish to thank Y. Martel and F. Merle for presenting me this problem and for  their continuous encouragement and support, during the elaboration of this work; and the DIM members at Universidad de Chile for their kind hospitality, and where part of this work was written. 

\bigskip

\section{Preliminaries}\label{2}

The purpose of this section is to recall some important properties needed through this paper. For more details or the proof of these results, see Section 2 and 3 in \cite{Mu2}.

\subsection{The Cauchy problem} 

First we recall the following local well-posedness result for the Cauchy problem associated to (\ref{aKdV}). 

Let $u_0\in H^s(\R)$, $s\geq1$, $ \la > 0$.  We consider the following initial value problem 
\be\label{Cp1}
\begin{cases}
u_t + (u_{xx} -\la u + a(\ve x) u^m)_x = 0 \quad \hbox{ in } \R_t \times \R_x \\
u(t=0) =  u_0,
\end{cases}
\ee
where $m=2,3$ or $4$. The equivalent problem for the generalized KdV equations (\ref{gKdV}) has been studied e.g.  in \cite{KPV}. We have the following result.

\begin{prop}[Local and global well-posedness, see \cite{KPV} and Proposition 2.1 in \cite{Mu2}]\label{Cauchy}~

\begin{enumerate}
\item \emph{Local well posedness in $H^s(\R)$}. 

\noindent
Suppose $u_0\in H^s(\R),$ $s\geq 1$. Then there exist a maximal interval of existence $I$ $($with $0\in I)$, and a unique (in a certain sense) solution $u\in C(I, H^s(\R))$ of (\ref{Cp1}). In addition, for any $t\in I$ the energy  $E_a[u](t)$ from (\ref{Ea}) remains constant, and the mass $M[u](t)$ defined in (\ref{Ma}) satisfies (\ref{dMa}).

\medskip

\item \emph{Global existence in $H^1(\R)$, $\la>0$}. 

\noindent
Suppose now $u_0\in H^1(\R)$, and $\la>0$. Then $I$ is of the  form $I=(\tilde t_0 , +\infty)$, for some $-\infty \leq \tilde t_0<0$; and there exists $\ve_0>0$ small such that 
$$
\sup_{t\geq 0} \|u(t)\|_{H^1(\R)}  \leq K.
$$
Finally, suppose $u_0\in L^1(\R)\cap H^1(\R)$. Then (\ref{L1}) is well defined and remains constant for all $t\in I$.

\end{enumerate}

\end{prop}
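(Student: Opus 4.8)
\textbf{Proof proposal for Proposition \ref{Cauchy}.}

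The plan is to adapt the classical argument of Kenig--Ponce--Vega \cite{KPV} for the constant coefficient gKdV equation to the variable coefficient setting, and then promote local to global existence using the conservation/monotonicity of energy together with the a priori control of the mass. First I would set up the linear estimates for the Airy group $e^{-t(\partial_x^3 - \la\partial_x)}$: the usual smoothing effect $\|\partial_x e^{-t(\partial_x^3-\la\partial_x)}u_0\|_{L^\infty_xL^2_t} \lesssim \|u_0\|_{L^2}$, the maximal function estimate $\|e^{-t(\partial_x^3-\la\partial_x)}u_0\|_{L^4_xL^\infty_t}\lesssim \|u_0\|_{\dot H^{1/4}}$ (and the $L^2_xL^\infty_t$ variant needed for $s\ge1$), and the inhomogeneous (Duhamel) versions. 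Since the transport term $-\la\partial_x$ only shifts the dispersion relation by a harmless lower-order phase, these estimates are the same as in \cite{KPV}, uniformly in $\la$ on bounded sets. The new nonlinearity $\partial_x(a(\ve x)u^m)$ is multiplied by the smooth bounded factor $a(\ve x)$ with $\|\partial_x^k(a(\ve\cdot))\|_{L^\infty}\le C_k\ve^k \le C_k$, so the Leibniz rule produces $a(\ve x)\partial_x(u^m)$ plus $\ve a'(\ve x)u^m$; both are handled by the same multilinear estimates as $\partial_x(u^m)$, the second term being strictly better (no derivative loss, and a gain of $\ve$). Running the contraction mapping principle in the function space $X_T$ built from these norms on a ball of radius $\sim\|u_0\|_{H^s}$ for $T=T(\|u_0\|_{H^s})>0$ small yields local existence and uniqueness in $C([-T,T],H^s)$, $s\ge1$; the constancy of $E_a[u]$ and the mass identity \eqref{dMa} follow by multiplying the equation by $u$ and by $u_{xx}-\la u + a(\ve x)u^m$ respectively and integrating by parts (first for smooth data, then by the usual density/persistence-of-regularity argument).

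For part (2), global existence in $H^1$ with $\la>0$, I would argue as follows. The mass identity \eqref{dMa} gives $\partial_t M[u](t) = -\frac{\ve}{m+1}\int a'(\ve x)u^{m+1}$; since $0<a'<\infty$ and, more importantly, the soliton-type data we care about are $H^1$, we cannot bound $M$ monotonically for arbitrary sign of $u$, but we only need an a priori bound, not monotonicity. The clean route is: from energy conservation,
\be\label{energyglobal}
\tfrac12\|u_x(t)\|_{L^2}^2 + \tfrac\la2\|u(t)\|_{L^2}^2 = E_a[u_0] + \tfrac1{m+1}\int_\R a(\ve x)u^{m+1}(t),
\ee
and since $1<a<2$, the last term is bounded by $\tfrac{2}{m+1}\|u(t)\|_{L^{m+1}}^{m+1}\le C\|u(t)\|_{L^2}^{(m+3)/2}\|u_x(t)\|_{L^2}^{(m-1)/2}$ by Gagliardo--Nirenberg, with $(m-1)/2<2$ in the subcritical range $m=2,3,4$. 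So if one controls $\|u(t)\|_{L^2}$ one controls $\|u_x(t)\|_{L^2}$ via Young's inequality, absorbing the $\|u_x\|^{(m-1)/2}$ term into the left side. To control $\|u(t)\|_{L^2}^2 = 2M[u](t)$, integrate \eqref{dMa}: $|M[u](t) - M[u_0]| \le \frac{\ve}{m+1}\int_0^{|t|}\int a'(\ve x)|u|^{m+1}\,dx\,ds \le C\ve\int_0^{|t|}\|u(s)\|_{L^2}^{(m+3)/2}\|u_x(s)\|_{L^2}^{(m-1)/2}\,ds$. Combined with \eqref{energyglobal} this gives a closed Gronwall-type inequality for $N(t):=\sup_{[0,t]}(\|u\|_{L^2}^2+\|u_x\|_{L^2}^2)$ showing $N(t)$ cannot blow up in finite forward time when $\ve$ is small enough (depending only on $\|u_0\|_{H^1}$); hence $I\supset[0,\infty)$, and for the soliton data at hand $\|u_0\|_{H^1}\le K$, giving $\sup_{t\ge0}\|u(t)\|_{H^1}\le K$. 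Finally, the $L^1$ conservation \eqref{L1}: if $u_0\in L^1\cap H^1$, integrate the equation in $x$ over $\R$; since $u_t = -\partial_x(u_{xx}-\la u+a(\ve x)u^m)$ is a perfect $x$-derivative and the bracket $\to0$ as $|x|\to\infty$ (a persistence-of-$L^1$-regularity statement proved exactly as in \cite{KPV} via the same linear estimates in $L^1$-adapted spaces), one gets $\frac{d}{dt}\int u(t,x)\,dx=0$.

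The main obstacle is the global theory in part (2): unlike the constant coefficient gKdV where mass is exactly conserved and energy + mass give the $H^1$ bound instantly, here the mass drifts, so one must run the coupled Gronwall argument above and pay attention that the smallness of $\ve$ needed depends only on $\|u_0\|_{H^1}$ (and on the subcritical exponents), not on $t$. A secondary technical point is the persistence of $L^1$ regularity needed to make \eqref{L1} rigorous; this requires redoing the fixed-point argument in a space that also tracks an $L^1_x$ (or weighted) norm, which is routine given \cite{KPV} but needs to be stated. Everything else --- the local theory, the conservation laws for smooth data, the density argument --- is a direct transcription of \cite{KPV} since $a(\ve\cdot)$ is a benign smooth bounded multiplier.
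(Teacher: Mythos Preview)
Your treatment of part (1) is fine and essentially what the paper intends: the local theory is a direct adaptation of \cite{KPV}, the potential $a(\ve\cdot)$ being a harmless bounded smooth multiplier, and the identities for $E_a$ and $M$ follow by the usual density argument.

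The gap is in part (2). Your coupled Gronwall argument does not close. After you use energy conservation and Gagliardo--Nirenberg to bound $\|u_x(t)\|_{L^2}^2$ by a power of $\|u(t)\|_{L^2}$, the mass drift \eqref{dMa} feeds back a \emph{superlinear} inequality for $N(t)=\|u(t)\|_{L^2}^2$, of the form
\[
N(t)\le N(0)+C\ve\int_0^t N(s)^{\alpha}\,ds,\qquad \alpha>1.
\]
The comparison ODE $N'=C\ve N^\alpha$ blows up at a finite time $t^*\sim N(0)^{1-\alpha}/\ve$; taking $\ve$ small only postpones this. So you obtain existence on intervals of length $O(\ve^{-1})$, not the uniform bound $\sup_{t\ge0}\|u(t)\|_{H^1}\le K$ claimed in the Proposition. (For $m=3$ you get lucky because $u^{m+1}\ge0$ makes $M$ itself monotone, but for $m=2,4$ the argument genuinely fails.)

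The paper's device, which you are missing, is to replace $M$ by the weighted mass
\[
\hat M[u](t)=\tfrac12\int_\R a^{1/m}(\ve x)\,u^2(t,x)\,dx.
\]
The exponent $1/m$ is exactly chosen so that, in the computation of $\partial_t\hat M$, the two contributions from the nonlinearity cancel identically (the term $\int a^{1/m}u_x\cdot a u^m$ against $\ve\int(a^{1/m})'u\cdot a u^m$). What remains is
\[
\partial_t\hat M[u](t)=-\tfrac{3\ve}{2}\int_\R (a^{1/m})'(\ve x)\,u_x^2-\tfrac{\ve}{2}\int_\R\big[\la(a^{1/m})'-\ve^2(a^{1/m})^{(3)}\big](\ve x)\,u^2,
\]
which is $\le0$ for all $t\ge0$ once $\ve$ is small, thanks to $\la>0$ and the structural hypothesis \eqref{3d1d}. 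Hence $\hat M[u](t)\le\hat M[u](0)$, which gives a \emph{uniform} $L^2$ bound (since $1<a^{1/m}<2^{1/m}$), and then your energy $+$ Gagliardo--Nirenberg step finishes the $H^1$ bound globally. This monotone-quantity trick is the missing idea; the rest of your outline (including the $L^1$ conservation) is fine.
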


\medskip

\begin{rem}
In order to prove item (2) in the above result, we introduced in \cite{Mu2} a modified mass, \emph{decreasing} in time. Indeed, consider for all $t\in I$, $m=2,3$ and $4$,
\be\label{hM}
\hat M[u](t) := \frac 12 \int_\R a^{1/m}(\ve x) u^2(t,x) dx.
\ee
Then for any $m=2,3$ and $4$, and for all $t\in I$ we have
\be\label{hM2}
\partial_t \hat M[u](t)  = -\frac 32 \ve \int_\R (a^{1/m})'(\ve x) u_x^2 - \frac \ve 2 \int_\R [ \la (a^{1/m})'  -\ve^2 (a^{1/m})^{(3)}] (\ve x) u^2 . 
\ee
In conclusion, from (\ref{3d1d}) there exists $\ve_0>0$ such that for all $0<\ve\leq \ve_0$ and for all $t\geq 0$, one has 
\be\label{hM3}
\hat M[u](t) \leq \hat M[u](0).
\ee
The global existence follows from the subcritical nature of the nonlinearity $(m<5)$.
\end{rem}

\medskip

We will also need some properties of the corresponding linearized operator of (\ref{aKdV}). All the results here presented are by now well-known, see for example \cite{MMcol1}.

\subsection{Spectral properties of the linear gKdV operator}

In this paragraph we consider some important properties concerning the linearized KdV operator associated to (\ref{aKdV}). Fix $c>0$,  $m=2,3$ or 4, and let
\be\label{defLy}
    \mathcal{L} w(y) := - w_{yy} + c w - m Q_c^{m-1}(y) w, \quad\hbox{ where }\quad  Q_c(y) := c^{\frac 1{m-1}} Q(\sqrt{c} y).
\ee
Here $w=w(y)$. We also denote $\mathcal L_0 := \mathcal L_{c=1}$. 

\medskip

\begin{lem}[Spectral properties of $\mathcal{L}$, see \cite{MMcol2}]\label{surL}~

The operator $\mathcal{L}$ defined (on $L^2(\R)$) by \eqref{defLy}  has domain $H^2(\R)$, it is self-adjoint and satisfies the following properties:
\begin{enumerate}
\item \emph{First eigenvalue}. There exist a unique $\lambda_m>0$ such that  $\mathcal{L}  Q_c^{\frac {m+1}2} =-\lambda_m Q_c^{\frac {m+1}2} $. 
\item The kernel of $\mathcal{L}$ is spawned by $Q'_c$. Moreover,
\be\label{LaQc}
\Lambda Q_c := \partial_{c'} {Q_{c'}}\big|_{ c'=c} = \frac 1c \Big[\frac 1{m-1} Q_c + \frac 12 xQ'_c \Big],
\ee
satisfies $\mathcal{L} (\Lambda Q_c)=- Q_c$. Finally, the continuous spectrum of $\mathcal L$ is given by $\sigma_{cont}(\mathcal L) =[c,+\infty)$.
\item \emph{Inverse}. For all   $h=h(x) $ \emph{ polynomially growing}  function such that $\int_\R h Q_c'=0$, there exists a unique  \emph{ polynomially growing}  function $\hat h $   such that $\int_\R \hat hQ'_c=0$ and $\mathcal{L} \hat h = h$. Moreover,  if $h$ is even (resp. odd), then $\hat h$ is even (resp. odd).

\item \emph{Regularity in the Schwartz space $\mathcal S$}. For $h\in H^2(\mathbb{R})$,  $\mathcal{L} h \in \mathcal{S}$ implies $h\in \mathcal{S}$.

\item\label{6a} \emph{Coercivity}. 

\begin{enumerate}
\item
There exists  $K,\sigma_c>0$ such that for all $w\in H^1(\R)$
$$
 \mathcal B[w,w]   :=   \int_\R (w_y^2+c w^2 - mQ_c^{m-1} w^2)  \geq  \sigma_c \int_\R w^2-K\abs{\int_\R  w Q_c}^2 - K\abs{\int_\R w Q_c'}^2.
$$
In particular, if $\displaystyle{\int_\R  w Q_c = \int_\R w Q_c'=0,}$ then the functional $\mathcal B[w,w]$ is positive definite in $H^1(\R)$. 
\item Now suppose that $\displaystyle{\int_\R  w Q_c = \int_\R w yQ_c=0}$. Then the same conclusion as above holds.
\end{enumerate}
\end{enumerate}
\end{lem}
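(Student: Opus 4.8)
The plan is first to reduce everything to the case $c=1$: the change of variables $w(y)=v(\sqrt{c}\,y)$ conjugates $\mathcal{L}$ to $c\,\mathcal{L}_0$ with $\mathcal{L}_0v=-v_{zz}+v-mQ^{m-1}v$, so it suffices to analyze $\mathcal{L}_0$ and then undo the scaling (this also turns $[1,\infty)$ into $[c,\infty)$ in (2)). For (1)--(2) I would start from the soliton ODE $Q''-Q+Q^m=0$ and its first integral $(Q')^2=Q^2-\tfrac{2}{m+1}Q^{m+1}$. Differentiating the ODE in $y$ gives $\mathcal{L}_0Q'=0$. Setting $f:=Q^{(m+1)/2}$ and computing $f''$ from the ODE and the first integral yields $f''=\tfrac{(m+1)^2}{4}f-mQ^{m-1}f$, hence $\mathcal{L}_0f=\bigl(1-\tfrac{(m+1)^2}{4}\bigr)f=-\lambda_m f$ with $\lambda_m:=\tfrac{(m+1)^2}{4}-1>0$ for $m=2,3,4$; since $f>0$, it is the ground state. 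Because $Q'<0$ on $(0,\infty)$ has a single zero, Sturm oscillation theory identifies $0$ as the second eigenvalue and $-\lambda_m$ as the unique, simple negative eigenvalue, so $\ker\mathcal{L}_0=\mathbb{R}Q'$; differentiating $Q_c''-cQ_c+Q_c^m=0$ in $c$ at $c=1$ gives $\mathcal{L}_0(\Lambda Q)=-Q$; and Weyl's essential-spectrum theorem applied to the exponentially decaying potential $mQ^{m-1}$ gives $\sigma_{ess}(\mathcal{L}_0)=\sigma_{ess}(-\partial_{yy}+1)=[1,\infty)$.

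For (3), the homogeneous equation $\mathcal{L}_0w=0$ has a fundamental system $\{Q',\psi\}$ with $Q'$ exponentially decaying and $\psi\sim e^{|y|}$ at $\pm\infty$, with constant nonzero Wronskian. Variation of parameters with an appropriate choice of base points produces a particular solution of $\mathcal{L}_0\hat h=h$ for polynomially growing $h$; the hypothesis $\int_\R hQ_c'=0$ is exactly what forces the coefficient of the growing mode $\psi$ to vanish at \emph{both} infinities, so $\hat h$ is polynomially growing; subtracting a multiple of $Q_c'$ normalizes $\int_\R\hat hQ_c'=0$, and uniqueness holds because the difference of two solutions is a polynomially growing element of $\ker\mathcal{L}_0=\mathbb{R}Q_c'$, hence zero after normalization. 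Parity is automatic since $\mathcal{L}_0$ commutes with $y\mapsto-y$ and $Q_c'$ is odd. For (4), if $\mathcal{L}_0h\in\mathcal{S}$ and $h\in H^2$, write $h''=h-mQ^{m-1}h-\mathcal{L}_0h$; the right-hand side lies in $H^2$ (product of a Schwartz function with an $H^2$ function, plus a Schwartz function), so $h\in H^4$, and iterating gives $h\in H^k$ for all $k$, i.e. $h\in C^\infty$ with all derivatives vanishing at infinity; exponential decay of $h$ and its derivatives then follows since the ODE has asymptotic form $-h''+h=O(e^{-\delta|y|})$, whence $h\in\mathcal{S}$.

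For (5) I would use the spectral decomposition from (1)--(2): $\mathcal{B}[w,w]=\langle\mathcal{L}w,w\rangle$ has exactly one negative direction, spanned by $Q_c^{(m+1)/2}$, a zero direction $Q_c'$, and a positive part bounded below. The algebraic input is $\langle\mathcal{L}^{-1}Q_c,Q_c\rangle=-\langle\Lambda Q_c,Q_c\rangle=-\tfrac12\tfrac{d}{dc}\|Q_c\|_{L^2}^2<0$, since $\|Q_c\|_{L^2}^2=c^{\frac{2}{m-1}-\frac12}\|Q\|_{L^2}^2$ with positive exponent for $m=2,3,4$; this is the classical Weinstein / Martel--Merle subcritical criterion, which guarantees that the single orthogonality $\int_\R wQ_c=0$ already controls the negative direction, while $\int_\R wQ_c'=0$ removes the kernel. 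The quantitative bound with the two quadratic error terms then follows by the standard compactness-contradiction argument. For (b) one argues the same way with $Q_c'$ replaced by $yQ_c$, checking that $yQ_c$ is not orthogonal to the bad direction remaining after $\int_\R wQ_c=0$ is imposed (equivalently, one reduces (b) to (a) by a finite-rank perturbation). I expect the coercivity estimates, and in particular the non-degeneracy check underlying part (b), to be the most delicate point; however all of these statements are by now classical (see \cite{We,MMcol1,MMcol2}), so no genuinely new idea is needed and the argument can largely be cited.
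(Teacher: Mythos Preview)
Your outline is correct and thorough. Note, however, that the paper does not actually prove this lemma: it is stated with a reference to \cite{MMcol2} and used as a black box throughout. So there is no ``paper's own proof'' to compare against beyond the citation itself. Your sketch faithfully reproduces the classical arguments behind the cited result --- the scaling reduction to $c=1$, the explicit ground-state computation $\mathcal{L}_0 Q^{(m+1)/2}=-\big(\tfrac{(m+1)^2}{4}-1\big)Q^{(m+1)/2}$, Sturm oscillation for the kernel, Weyl for the essential spectrum, variation of parameters for (3), elliptic bootstrap for (4), and the Weinstein/Vakhitov--Kolokolov criterion $\langle \Lambda Q_c,Q_c\rangle>0$ for (5) --- and you yourself acknowledge at the end that these are standard and can be cited. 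In effect your proposal and the paper agree: this is known material from \cite{We,MMcol1,MMcol2}, and the paper chooses simply to invoke it rather than reprove it.
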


\bigskip

\subsection{Construction of a soliton-like solution}

Let us recall the following result of existence and uniqueness of a \emph{pure} soliton-like solution for (\ref{aKdV}) for $t\to -\infty$, valid {\bf for any fixed} $0\leq \la<1$.

\begin{prop}[Existence and uniqueness of a pure soliton-like solution, \cite{Mu2}]\label{Tm1}~

Suppose $0\leq  \la <1$ fixed. There exists $\ve_0>0$ small enough such that the following holds for any $0<\ve < \ve_0$. 
There is a solution $u \in C(\R, H^1(\R))$ of (\ref{aKdV}) such that 
\be\label{lim0}
\lim_{t\to -\infty} \|v (t) - Q(\cdot -(1-\la)t) \|_{H^1(\R)} =0,
\ee
and energy $E_a[u](t) = (\la -\la_0)M[Q] .$
Moreover, there exist constants $K,\ga>0$ such that  for all time $t\leq -\frac 1{2}T_\ve$, \footnote{with $T_\ve$ defined in (\ref{Te})},
\be\label{minusTe}
\|u(t) - Q(\cdot -(1-\la)t) \|_{H^1(\R)} \leq  K\ve^{-1} e^{\ve \ga t}.
\ee
In particular, 
\be\label{mTep}
\|u(-T_\ve) - Q(\cdot + (1-\la)T_\ve) \|_{H^1(\R)} \leq K\ve^{-1} e^{- \ga \ve^{-\frac 1{100}}} \leq K \ve^{10},
\ee
provided $0<\ve<\ve_0$ small enough.

Finally, this solution is unique for all $\la>0$, and in the case $\la=0$, $m=3.$
\end{prop}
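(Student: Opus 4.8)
The plan is to construct $u(t)$ as the limit of a sequence of backward-in-time solutions, following the compactness method of Martel--Merle (and as used in \cite{Mu2}). First I would fix an increasing sequence of times $T_n \to +\infty$ and let $u_n(t)$ be the solution of (\ref{aKdV}) with ``final'' data prescribed at $t = -T_n$ equal to the exact soliton profile $Q(\cdot + (1-\la)T_n)$, i.e. $u_n(-T_n,\cdot) = Q(\cdot + (1-\la)T_n)$. At time $-T_n$ the soliton sits very far out in the region $x\to -\infty$ where $a(\ve x)\approx 1$, so the nonlinearity is essentially the unperturbed $u^m$; one expects $u_n$ to stay close to a modulated soliton for $t\in[-T_n,-\tfrac12 T_\ve]$. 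The core of the argument is a \emph{uniform backward estimate}: there exist $K,\ga>0$, independent of $n$, such that
\be\label{uniform-n}
\|u_n(t) - Q_{c_n(t)}(\cdot - \rho_n(t))\|_{H^1(\R)} \leq K\ve^{-1} e^{\ve\ga t}, \qquad t\in[-T_n, -\tfrac12 T_\ve],
\ee
for suitable modulation parameters $(c_n(t),\rho_n(t))$ with $c_n(t)$ close to $1$ and $\rho_n'(t)\sim c_n-\la$. This is obtained exactly as the stability/monotonicity estimates that underlie Theorem \ref{MT}: decompose $u_n$ via modulation theory (using the coercivity of Lemma \ref{surL}(\ref{6a}) and the implicit function theorem to fix orthogonality conditions), control the scaling by the modulation equations (the ODE system analogous to (\ref{firstorder0})), and run a monotonicity functional in the direction of the soliton's motion combined with energy/mass almost-conservation to close a Gronwall-type inequality producing the exponentially small-in-$\ve t$ bound. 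The factor $\ve^{-1}$ absorbs the width of the region over which mass can leak.

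Next I would extract the soliton-like solution. The bound (\ref{uniform-n}) at $t=-\tfrac12 T_\ve$ gives $\|u_n(-\tfrac12 T_\ve)\|_{H^1}$ uniformly bounded, and local well-posedness (Proposition \ref{Cauchy}) extends $u_n$ forward; moreover the sequence $\{u_n(-\tfrac12 T_\ve)\}$ is, by the same monotonicity argument, uniformly tight in $H^1$ (no mass escapes to spatial infinity along the sequence). Hence, up to a subsequence, $u_n(-\tfrac12 T_\ve) \to u_0$ strongly in $H^1(\R)$. Define $u(t)$ to be the solution of (\ref{aKdV}) with $u(-\tfrac12 T_\ve) = u_0$; by continuous dependence it is the local-uniform-in-time limit of the $u_n$, so passing to the limit in (\ref{uniform-n}) yields
\be\label{final-bound}
\|u(t) - Q_{c(t)}(\cdot - \rho(t))\|_{H^1(\R)} \leq K\ve^{-1} e^{\ve\ga t}, \qquad t\leq -\tfrac12 T_\ve,
\ee
with limiting parameters $c(t)\to 1$ and $\rho(t)-(1-\la)t \to 0$ as $t\to-\infty$, which upgrades to (\ref{lim0}) after absorbing the modulation into the error. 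The energy identity $E_a[u](t) = (\la-\la_0)M[Q]$ follows by conservation of $E_a$ (Proposition \ref{Cauchy}(1)) together with evaluating $E_a[u](t)$ as $t\to-\infty$ on the soliton $Q(\cdot -(1-\la)t)$ in the region $a\equiv 1$, using the algebraic identities of Appendix \ref{IdQ}; and (\ref{mTep}) is just (\ref{minusTe}) evaluated at $t=-T_\ve$ with the definition (\ref{Te}) of $T_\ve$.

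For the \emph{uniqueness} statement (for all $\la>0$, and for $\la=0$ when $m=3$): suppose $u^{(1)},u^{(2)}$ are two such solutions and set $z = u^{(1)}-u^{(2)}$. From (\ref{minusTe}) applied to each, $\|z(t)\|_{H^1}\leq K\ve^{-1}e^{\ve\ga t}\to 0$ as $t\to-\infty$. One runs a linearized stability estimate for $z$ around the common modulated soliton: the quadratic form controlling $z$ is coercive modulo the (two or three) symmetry directions, which are handled by modulation, and a monotonicity functional gives a differential inequality of the form $\tfrac{d}{dt}\mathcal N(t) \geq -\ve\ga\,\mathcal N(t) - (\text{small})$, where $\mathcal N(t)\sim \|z(t)\|_{H^1}^2$; integrating backward from $t$ down to $-\infty$ and using $z(-\infty)=0$ forces $\mathcal N\equiv 0$, hence $z\equiv 0$. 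The restriction $\la>0$ (or $\la=0,m=3$) enters precisely here: it is the regime where the linearized operator and the monotonicity weights are compatible — for $\la=0$ with $m=2,4$ the soliton can have zero or ``wrong-sign'' effective mass and the backward uniqueness argument degenerates, which is why uniqueness is not claimed there.

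The main obstacle is establishing the uniform-in-$n$ backward estimate (\ref{uniform-n}): this requires setting up modulation theory together with a monotonicity formula that is robust uniformly as the initial time $-T_n$ recedes to $-\infty$, and carefully tracking the $\ve$-dependence so that the leaking mass contributes only the $\ve^{-1}e^{\ve\ga t}$ error and the scaling $c_n(t)$ stays trapped near $1$ on the whole interval. Everything else — the compactness extraction, the energy computation, and the backward-uniqueness argument — is a relatively standard consequence once (\ref{uniform-n}) is in hand.
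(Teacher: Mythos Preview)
Your proposal is correct and follows essentially the same approach as the paper: the paper does not give a detailed argument but simply states that the proof is standard and follows Martel's compactness construction of $N$-soliton solutions, which is exactly the backward-sequence-plus-uniform-estimate-plus-compactness scheme you outline. The energy identity and the derivation of (\ref{mTep}) from (\ref{minusTe}) are handled just as you describe.
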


\begin{rem}
Note that the energy identity above follows directly from (\ref{lim0}), Appendix \ref{IdQ} and the energy conservation law from Proposition \ref{Cauchy}.
\end{rem}

\medskip

The proof of this Proposition is standard and follows the work of Martel \cite{Martel}, where the existence of a unique $N$-soliton solution for gKdV equations was established. Although there exist possible different proofs of this result, the method employed in \cite{Martel} has the advantage of giving an explicit uniform bound in time (cf. (\ref{minusTe})). This bound is indeed consequence of some compactness properties. 

\bigskip

\subsection{Stability and asymptotic stability results for large time} 

In order to prove the stability properties contained in Theorems \ref{MTL1} and \ref{MTL2}, we recall the following result, proved in \cite{Mu2} for the case $0<\la\leq\la_0$, but still valid for any fixed $0<\la<1$ and $c_\infty>0$, satisfying $\la<c_\infty$. 

\medskip

\begin{prop}[Stability and asymptotic stability in $H^1$, see \cite{Mu2}]\label{Tp1}~

Let $m=2,3$ and $4$, and let $0< \la<1$, $c_\infty > \la$. Let $0<\beta< \frac 12(c_\infty -\la)$ be a fixed number. There exists $\ve_0>0$ (depending on $\beta$) such that if $0<\ve <\ve_0$ the following hold. 

\medskip

Suppose that for some time $t_1\geq \frac 12 T_\ve$ and $t_1 \leq X_0 \leq 2t_1$, one has
\be\label{18}
\big\| u(t_1) - Q_{c_\infty} (x - X_0) \big\|_{H^1(\R)} \leq  \ve^{1/2},
\ee
where $u(t)$ is a $H^1$-solution of (\ref{aKdV}). Then $u(t)$ is defined for every $t\geq t_1$ and there exists $K, c^+>0$ and a $C^1$-function $ \rho(t)$ defined in $[t_1,+\infty)$ such that 
\begin{enumerate}
\item \emph{Stability}.
\be\label{S}
 \sup_{t\geq t_1} \big\| u(t) -  2^{-1/(m-1)} Q_{c_\infty} (\cdot - \rho(t)) \big\|_{H^1(\R)} \leq K \ve^{1/2},
\ee
where 
$$
|\rho(t_1)+ X_0 | \leq  K\ve^{1/2},  \quad \hbox{ and for all } t\geq t_1,\quad  |\rho'(t)-c_\infty+\la | \leq K\ve^{1/2}.
$$
\item \emph{Asymptotic stability}. One has
\be\label{AS}
\lim_{t\to +\infty} \big\| u(t) - Q_{c^+} (\cdot - \rho(t)) \big\|_{H^1(x> \beta t)} =0.
\ee
In addition,
\be\label{liminfinity}
\lim_{t\to +\infty} \rho'(t) = c^+-\la , \qquad |c^+ - c_\infty | \leq K\ve^{1/2}. 
\ee
\end{enumerate}
\end{prop}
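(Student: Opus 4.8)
The plan is to follow the by-now classical Martel--Merle scheme of \emph{modulation plus monotonicity}, adapted to the slowly varying medium of \eqref{aKdV}. First I would set up the modulation. Since $\| u(t_1) - Q_{c_\infty}(\cdot - X_0)\|_{H^1} \leq \ve^{1/2}$ is small, the standard implicit function theorem argument (using the non-degeneracy of the soliton and Lemma \ref{surL}) produces, on a maximal interval $[t_1, t^*)$ on which $u(t)$ stays $H^1$-close to the soliton manifold, $C^1$ modulation parameters $c(t)>0$ and $\rho(t)\in\R$ such that, writing
\[
 z(t,y) := u(t, y+\rho(t)) - Q_{c(t)}(y),
\]
one has the orthogonality conditions $\int_\R z(t) Q_{c(t)} = \int_\R z(t) Q_{c(t)}' = 0$ (or the second-moment variant in Lemma \ref{surL}(5)(b), depending on which coercivity one wants), together with $|c(t_1)-c_\infty| + |\rho(t_1)+X_0| \lesssim \ve^{1/2}$. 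Plugging this decomposition into \eqref{aKdV} and projecting onto $Q_{c(t)}$, $Q_{c(t)}'$ and their complement gives the modulation equations: $|c'(t)| + |\rho'(t) - c(t) + \la| \lesssim \ve \|a'\|_{L^\infty} + \|z(t)\|_{H^1}^2 + \ve\|z(t)\|_{H^1}$, so that as long as $\|z\|_{H^1}$ stays $O(\ve^{1/2})$ the scaling is essentially frozen and $\rho'(t) \approx c_\infty - \la$.

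The heart of the matter is the bootstrap: I want to show $\|z(t)\|_{H^1} \leq K\ve^{1/2}$ propagates on $[t_1,\infty)$, which in particular forces $t^* = +\infty$. Here the two conserved/almost-conserved quantities enter. The energy $E_a[u]$ is exactly conserved (Proposition \ref{Cauchy}), and the modified mass $\hat M[u]$ from \eqref{hM} is monotone decreasing via \eqref{hM2}; moreover, because $\rho(t)$ is roughly transported to the right at speed $c_\infty-\la>0$ and $a'$ is exponentially localized near the origin (hypothesis \eqref{ahyp}), the soliton sits in the region $x \gtrsim \beta t$ where $a(\ve x)$ is essentially constant equal to $2$ for $t$ large. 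One then forms the Lyapunov functional $\mathcal{E}(t) := E_a[u](t) + c(t)\hat M[u](t)$ (with the constant $2$ absorbed appropriately), expands it around $Q_{c(t)}$, and uses the coercivity of the bilinear form $\mathcal B$ (Lemma \ref{surL}(5)) under the orthogonality conditions to get $\|z(t)\|_{H^1}^2 \lesssim \mathcal{E}(t) - \mathcal{E}(\text{soliton}) + (\text{modulation error})$. The right-hand side is controlled at $t_1$ by the hypothesis \eqref{18}, and its growth in time is governed by $\ve \int a'(\ve x) u^{m+1}$, which is integrable in time because $a'$ is exponentially small along the soliton trajectory --- this is exactly the mechanism producing the $O(\ve^{1/2})$, not $O(\ve)$, loss. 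Combining, one closes the bootstrap and obtains \eqref{S}; the bound on $\rho'$ follows from the modulation equations.

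For the asymptotic stability statement \eqref{AS}--\eqref{liminfinity} I would localize everything to the right half-line $\{x > \beta t\}$. Introduce a monotonicity functional $I_{x_0}(t) = \int_\R u^2(t,x)\,\psi(x - x_0 - \tfrac12(c_\infty-\la)(t - t_1))\,dx$ for a suitable smooth cutoff $\psi$ increasing from $0$ to $1$; the dispersive nature of gKdV (terms of the wrong sign are handled by the travelling weight, and the potential contributes only an exponentially small error because $a'$ is localized) gives that $I_{x_0}$ is almost-monotone, which confines the $H^1$ mass of $z(t)$ ahead of the soliton and shows that any weak limit of the translated solution $u(t, \cdot + \rho(t))$ along a time sequence is, on the right, an exact soliton of the \emph{constant-coefficient} gKdV with coefficient $2$ --- hence a rescaled $Q$ with some scaling $c^+$. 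A Liouville-type rigidity argument for gKdV (Martel--Merle) identifies this limit and upgrades the weak convergence to the strong local convergence \eqref{AS}; passing to the limit in the modulation equation yields $\rho'(t) \to c^+ - \la$ and $|c^+ - c_\infty| \lesssim \ve^{1/2}$.

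The step I expect to be the main obstacle is the monotonicity/localization argument near the soliton trajectory, specifically controlling the contribution of the variable coefficient $a(\ve x)$ to the virial-type identities on the moving half-line: one must check that even though $a$ is \emph{not} constant, its derivative $a'$ is exponentially small precisely where the soliton lives for large $t$ (because $\rho(t) \gg \ve^{-1}$), so that all error terms involving $a'$ are time-integrable and do not spoil either the energy--mass Lyapunov estimate or the monotonicity of $I_{x_0}$. The uniformity of constants in $\ve$ and the interplay between the $\ve$-scale of the potential and the $O(1)$ speed of the soliton is the delicate bookkeeping here; everything else is a routine, if lengthy, adaptation of \cite{MMcol1,MMcol2} and the corresponding arguments in \cite{Mu2}.
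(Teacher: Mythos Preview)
Your proposal is correct and follows essentially the same Martel--Merle modulation-plus-monotonicity scheme that the paper invokes; indeed the paper does not reprove Proposition~\ref{Tp1} here but cites \cite{Mu2}, and the analogous argument carried out in Appendix~\ref{Stab} for the reflection case (Proposition~\ref{Tp1r}) matches your outline step by step: modulated decomposition with orthogonality to $Q_c$ and $yQ_c$ (Lemma~\ref{3Dr}), a Weinstein-type functional built from $E_a$ and a modified mass, coercivity from Lemma~\ref{surL}, and monotonicity functionals $I_{x_0,t_0}$, $J_{x_0,t_0}$ for the asymptotic stability. The only minor discrepancy is the coefficient in your Lyapunov functional: the paper (and \cite{Mu2}) uses $E_a[u]+(c_2(t_1)-\la)\hat M[u]$ with the \emph{frozen} initial scaling $c_2(t_1)$ rather than the running $c(t)$, which is what makes the quadratic expansion in Lemma~\ref{C2}--\ref{fin} close cleanly; otherwise your identification of the main obstacle --- controlling the $a'(\ve x)$ contributions via the fact that the soliton lives where $a'$ is exponentially small --- is exactly right.
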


\begin{rem}
In other words, the above result states that once the soliton has crossed the interaction region, it behaves like a \emph{standard} soliton of a gKdV equation, and stability and asymptotic stability hold. Let us recall that, from \cite{Mu2}, this result is valid for any $m=2,3$ or $4$, and any $0<\la<1$, provided $c_\infty>\la$. In addition, it is still valid for $m=3$ and $\la=0$.
\end{rem}

\begin{rem}
Let us recall that the hypothesis $c_\infty>\la$ is essential; otherwise the soliton should have negative velocity $(=c_\infty -\la)$ and it would return to the interaction region. Indeed, the original proof in \cite{Mu2} falls to be correct since the quantity $(c_\infty(\la)-\la) \hat M[u](t)$ in the Weinstein functional is no longer decreasing. Later, in Lemma \ref{ODE}, we will see that  $c_\infty(\la) $, as introduced in Theorem \ref{MTL1}, satisfies $c_\infty(\la)>\la$ for any $0\leq \la <\tilde \la$ (cf. (\ref{tlan}).)
\end{rem}

\medskip

In order to prove the global stability result of Theorem \ref{MTL2}, we will need a version of the above Proposition for the case of a \emph{reflected soliton}, namely when $\tilde\la <\la<1$. Let us recall that, given $0<\la<1$ and $c>0$ fixed, with $c<\la$, the \emph{small} soliton $Q_c(\cdot -(c-\la)t)$, solution of 
$$
u_t + (u_{xx} -\la u + u^m)_x =0, \quad \hbox{ in } \; \R_t \times \R_x, 
$$
moves towards the left. 

\medskip

\begin{prop}[Stability and asymptotic stability in $H^1(\R)$, reflection case]\label{Tp1r}~

Suppose $m=2,3$ or $4$. Let $0< \la<1$ and $c_\infty>0$ be such that $c_\infty <\la$. Let $-\la<\beta < c_\infty -\la $. There exists $\ve_0>0$ such that if $0<\ve <\ve_0$ the following hold.

\smallskip

\noindent
Suppose that for some time $t_1\geq K T_\ve$ and $t_1 \leq X_0 \leq 2t_1$
\be\label{18b}
\big\| u(t_1) - Q_{c_\infty} (x + X_0) \big\|_{H^1(\R)} \leq  \ve^{1/2}.
\ee
where $u(t)$ is a $H^1$-solution of (\ref{aKdV}). 
Then $u(t)$ is defined for every $t\geq t_1$, and there exists $K>0$ and a $C^1$-function $ \rho(t)$, defined in $[t_1,+\infty)$, such that for all $t\geq t_1$,
\begin{enumerate}
\item \emph{Stability}.
\be\label{Sb}
\| u(t) - Q_{c_\infty} (\cdot - \rho(t)) \|_{H^1(\R)} + |\rho(t_1)+ X_0 | +  | \rho'(t)- c_\infty +\la | \leq K\ve^{1/2}.
\ee
\item \emph{Asymptotic stability}. There exists $c^+>0$ such that
\be\label{ASb}
\lim_{t\to +\infty} \big\| u(t) - Q_{c^+} (\cdot - \rho(t)) \big\|_{H^1(x> \beta t)} =0.
\ee
In addition,
\be\label{liminfinityb}
\lim_{t\to +\infty} \rho'(t) = c^+ - \la , \qquad |c^+ - c_\infty | \leq K\ve^{1/2}. 
\ee
\end{enumerate}
\end{prop}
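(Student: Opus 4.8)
The plan is to mirror the proof of Proposition \ref{Tp1} given in \cite{Mu2}, tracking carefully the one point where the sign of $c_\infty-\la$ enters, and showing that the reflection hypothesis $-\la<\beta<c_\infty-\la<0$ is exactly what is needed to still close the argument. First I would set up the standard modulation: since \eqref{18b} places $u(t_1)$ close to a soliton centered far to the left (at $-X_0$ with $X_0\sim t_1\gg T_\ve$), and since by hypothesis the potential $a(\ve\cdot)$ is essentially constant equal to $1$ for $x\ll -\ve^{-1}$, the solution has already left the interaction region and is evolving (to first approximation) according to the constant-coefficient equation $u_t+(u_{xx}-\la u+u^m)_x=0$. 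By the implicit function theorem one obtains $C^1$ parameters $c(t)>0$ and $\rho(t)$ and a decomposition $u(t,x)=Q_{c(t)}(x-\rho(t))+z(t,x)$ on a maximal interval, with the orthogonality conditions $\int z(t)Q_{c(t)}(\cdot-\rho(t))=\int z(t)(y\,Q_{c(t)})(\cdot-\rho(t))=0$ chosen so that Lemma \ref{surL}(5)(b) gives coercivity of $\mathcal B$; the initial bound $\|z(t_1)\|_{H^1}+|c(t_1)-c_\infty|+|\rho(t_1)+X_0|\lesssim\ve^{1/2}$ follows from \eqref{18b}.

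The heart of the argument is a bootstrap on $[t_1,t^*)$ assuming $\|z(t)\|_{H^1}\le K_0\ve^{1/2}$ and $|c(t)-c_\infty|\le K_0\ve^{1/2}$, which in particular forces $c(t)<\la$ (since $c_\infty<\la$ and $\ve$ is small) and hence $\rho'(t)\sim c(t)-\la<0$, so the soliton keeps moving to the left. I would then propagate control by combining (i) the modulation equations, which give $|c'(t)|+|\rho'(t)-c(t)+\la|\lesssim \|z(t)\|_{H^1}^2+\ve e^{-\ga|\rho(t)|}$ — the exponential term being small because $\rho(t)\le\rho(t_1)-\delta(t-t_1)$ stays in the region $a\equiv1$ (here $a\equiv1$ at $-\infty$ means no extra mass is injected, unlike at $+\infty$, which is why the reflected soliton carries the coefficient $1$ rather than $2^{-1/(m-1)}$); and (ii) a Lyapunov/Weinstein functional $\mathcal F(t):=E_1[u](t)+ (c(t)-\la)\hat M[u](t)+\text{(lower order)}$, or rather a localized version $\mathcal F_{loc}$ built with a cutoff $\psi(x-\beta t)$ traveling at speed $\beta$. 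This is the point that fails in \cite{Mu2} when $c_\infty>\la$ is dropped: one needs the weight in front of $\hat M$ to be such that $\mathcal F_{loc}$ is almost monotone. With $\beta\in(-\la,c_\infty-\la)$ chosen between the soliton speed $c_\infty-\la$ and $-\la$, the cutoff moves slower (more to the left) than the soliton's left tail but the localized mass/energy flux still has the right sign because $\beta+\la>0$; this yields $\mathcal F_{loc}(t)\le \mathcal F_{loc}(t_1)+K\ve$, and coercivity of $\mathcal B$ (Lemma \ref{surL}(5)(b)) converts this into $\|z(t)\|_{H^1}^2\lesssim\ve$, closing the bootstrap with a better constant and giving \eqref{Sb}.

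For the asymptotic stability statement \eqref{ASb}–\eqref{liminfinityb} I would argue exactly as in the Martel–Merle theory \cite{MMcol1,MMcol2}: once \eqref{Sb} holds for all $t\ge t_1$, the parameters converge, $c(t)\to c^+$ with $|c^+-c_\infty|\lesssim\ve^{1/2}$ and $\rho'(t)\to c^+-\la$, and weak $H^1$-convergence of translates of $z(t)$ to a solution of the linearized equation with suitable decay — which must vanish by the Liouville-type rigidity theorem for gKdV, valid in the subcritical range $m=2,3,4$ — gives $\|z(t)\|_{H^1(x>\beta t)}\to0$. The only modification is that the region $\{x>\beta t\}$ is now a left-moving (for $\beta<0$) half-line; one checks that the monotonicity formula controlling the mass to the right of $\beta t$ is still available precisely because $\beta>-\la$ (so the linear group's dispersion plus the drift $-\la\partial_x$ push radiation in the correct direction relative to the cutoff). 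The main obstacle, and the place requiring genuine care rather than transcription, is verifying the almost-monotonicity of the localized functional $\mathcal F_{loc}$ with the new weight $(c(t)-\la)<0$ and the new cutoff speed $\beta\in(-\la,c_\infty-\la)$: one must check that every flux term generated by $\partial_t\mathcal F_{loc}$ has a favorable sign or is $O(\ve^2 + \text{exponentially small})$, which is the explicit computation underlying this proposition and the analogue of the step that breaks down in Proposition \ref{Tp1} when $c_\infty\le\la$.
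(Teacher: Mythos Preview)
There is a genuine gap in your proposal, and it is precisely at the point you yourself flag as ``the main obstacle''.  You build the Lyapunov functional with the same modified mass $\hat M[u](t)=\frac12\int a^{1/m}(\ve x)u^2$ used in \cite{Mu2}.  That mass is almost \emph{decreasing} in time (cf.\ (\ref{hM2})--(\ref{hM3})); hence with the coefficient $c_\infty-\la<0$ the combination $(c_\infty-\la)\hat M[u](t)$ is almost \emph{increasing}, and the Weinstein functional $E_a[u]+(c_\infty-\la)\hat M[u]$ is no longer controlled from above by its initial value.  No choice of cutoff speed $\beta$ rescues this: the bad sign comes from the global flux identity for $\hat M$, not from a localization artifact.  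Your final paragraph concedes that ``one must check that every flux term \ldots\ has a favorable sign'', but with $\hat M$ that check fails, which is exactly why the paper singles out this proposition as requiring ``several new ideas''.

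The paper's fix is to introduce a \emph{different} modified mass,
\[
\mathcal M[u](t):=\frac12\int_\R \frac{u^2(t,x)}{a(\ve x)}\,dx,
\]
which is almost \emph{increasing} along the flow (Lemma \ref{MMr}); then $(c_2(t_1)-\la)\mathcal M[u]$ is almost decreasing and the Weinstein functional $E_a[u]+(c_2(t_1)-\la)\mathcal M[u]$ has the correct monotonicity.  The associated quadratic form is
\[
\mathcal F_2(t)=\frac12\int\Big(z_x^2+\big[\la+(c_2(t_1)-\la)\,a(\ve x)^{-1}\big]z^2\Big)-\cdots,
\]
and its coercivity (Lemma \ref{Coer3}) uses the pointwise inequality $\la+(c_2(t_1)-\la)/a(\ve x)\ge c_2(t_1)$, which holds precisely because $c_2(t_1)-\la<0$ and $a\ge 1$.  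To bound the variation of the scaling parameter (Lemma \ref{fin}), the paper combines \emph{both} masses --- $\hat M$ (decreasing) for one inequality and $\mathcal M$ (increasing) for the reverse --- to get two-sided control of $c_2^{2\theta}(t)-c_2^{2\theta}(t_1)$.  The asymptotic stability part then proceeds as you outline, but the monotonicity quantities $I_{x_0,t_0}$, $J_{x_0,t_0}$ in (\ref{I0}) again carry the weight $a^{1/m}(\ve x)$ in the integrand; the range $\sigma\in(-\la,\tfrac{11}{10}(c_\infty-\la))$ is what makes the flux computation (\ref{dT}) close.  In short, the architecture you sketch is right, but the specific mass you invoke is the wrong one, and identifying the correct replacement $\mathcal M$ is the substantive content of the proof.
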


The proof of this statement requires several new ideas, in particular, the introduction of a modified mass, {\bf almost increasing} in time. It turns out that these requirements are satisfied e.g. by the quantity 
\be\label{Mback}
\mathcal M[u](t) :=\int_\R \frac {u^2(t,x)}{2 a(\ve x)} dx.
\ee
Summarizing, the stability theory requires the introduction of two different, almost monotone masses, depending on $c_\infty$. Indeed, if
$$
\begin{cases}
c_\infty >\la \implies \hbox{ we use } \hat M[u](t), \; \hbox{ (cf. (\ref{hM}))}, \\
c_\infty < \la \implies \hbox{ we use } \mathcal M[u](t).
\end{cases}
$$
\begin{proof}[Proof of Proposition \ref{Tp1r}]
See Appendix \ref{Stab}. 
\end{proof}

\bigskip

Let us finish this section with a result concerning the \emph{dynamical system} associated to the parameters of the soliton solution.

\subsection{Existence of approximate dynamical parameters}

In this paragraph we recall the existence of a unique solution to the dynamical system found in \cite{Mu2}, and involving the evolution of the first order \emph{scaling} and \emph{translation} parameters of the soliton solution, $(C(t), P(t))$, in the interaction region. The behavior of this solution is essential to understand the actual dynamics of the soliton solution inside this region.

Let  us fix some notation. Denote, for $C>0$ and $P\in \R$ given,
\be\label{f1}
f_1(C,P) := p \ C(C-\frac \la{\la_0} ) \frac{a'(\ve P)}{a(\ve P)},  \quad p:=\frac4{m+3},
\ee
with $\la_0$ as in (\ref{l0}). 

\begin{lem}[Existence and basic properties of dynamical parameters, see \cite{Mu2}]\label{ODE0}~

Suppose $m=2,3$ or $4$, $\la_0, a(\cdot), T_\ve$ be as in Theorem \ref{MT}, (\ref{ahyp}) and (\ref{Te}). Let $0\leq \la\leq \la_0$. There exists a unique solution $(C(t), U(t))$, with $C$ bounded positive, monotone increasing, defined for all $t\geq -T_\ve$, of the following system  
\be\label{c00}
\begin{cases}
C'(t)  = \ve f_1(C(t),P(t)), \qquad C(- T_\ve) = 1, \\
P'(t) = C(t) -\la, \qquad P(-T_\ve) =-(1-\la)T_\ve.
\end{cases}
\ee
In addition, one has $1\leq C(t)< C(+\infty) = c_\infty(1+O(\ve^{10}))$, and $(1-\la) t \leq P(t) \leq \frac {101}{100}(c_\infty -\la) t $, with $c_\infty=c_\infty(\la)$ being the unique positive solution of 
\be\label{cinf}
c_\infty^{\la_0}(c_\infty-\frac{\la}{\la_0})^{1-\la_0} = 2^p(1-\frac{\la}{\la_0})^{1-\la_0}, \quad c_\infty(\la)\geq 1.
\ee
In particular, one has $c_\infty(\la =0) =2^{p}>1$ and $c_\infty(\la=\la_0)=1$.
\end{lem}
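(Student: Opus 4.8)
The plan is to treat \eqref{c00} as a planar autonomous-in-disguise system and solve it essentially by quadrature, exploiting the structure $P'(t) = C(t) - \la > 0$, which lets us use $P$ as the independent variable. First I would note that on any interval where $C(t) > \la$ (which holds at $t = -T_\ve$ since $C(-T_\ve) = 1 > \la$ in the range $0 \le \la \le \la_0 < 1$) the map $t \mapsto P(t)$ is strictly increasing, hence invertible, and writing $C$ as a function of $P$ we get the scalar ODE
\be\label{scalarode}
\frac{dC}{dP} = \ve\, \frac{f_1(C,P)}{C - \la} = \ve\, p\, \frac{C(C - \frac{\la}{\la_0})}{C - \la}\, \frac{a'(\ve P)}{a(\ve P)}.
\ee
Local existence and uniqueness for \eqref{c00} is immediate from the Cauchy--Lipschitz theorem since $f_1$ is $C^1$ in $(C,P)$ (using $a \in C^3$, $a > 1 > 0$). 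The content of the lemma is the \emph{global} existence for $t \ge -T_\ve$, the monotonicity and boundedness of $C$, and the identification of the limit $c_\infty$.

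Next I would establish monotonicity and the a priori bounds. Since $a' > 0$, $a > 0$, and $C(-T_\ve) = 1 > \frac{\la}{\la_0}$ when $\la < \la_0$ (the boundary cases $\la = 0$ and $\la = \la_0$ handled separately, the latter giving $C \equiv 1$ as the constant solution of the separated equation with $c_\infty = 1$), the right-hand side $f_1(C,P)$ is strictly positive as long as $C > \frac{\la}{\la_0}$; hence $C$ is increasing, stays above $1$, and in particular stays above $\la$, so the change of variables is legitimate for all times under consideration and $P(t) \ge (1-\la)t$ follows from $P' = C - \la \ge 1 - \la$ together with $P(-T_\ve) = -(1-\la)T_\ve$. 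For the upper bound on $C$, I would separate variables in \eqref{scalarode}: the left side integrates to an explicit antiderivative $G(C)$ of $\frac{C - \la}{C(C - \la/\la_0)}$ (a rational function, integrated by partial fractions, producing the combination $C^{\la_0}(C - \la/\la_0)^{1-\la_0}$ up to the logarithm/power bookkeeping that matches \eqref{cinf}), while the right side integrates to $\ve p \int_{-T_\ve}^{P(t)} \frac{a'(\ve P)}{a(\ve P)}\,dP = p\,[\log a(\ve P(t)) - \log a(-\ve T_\ve)]$. Because $a$ is bounded between $1$ and $2$ and, by \eqref{ahyp}, $a(-\ve T_\ve) = 1 + O(e^{-\ga \ve^{-1/100}}) = 1 + O(\ve^{10})$ while $a(\ve P(t)) \uparrow 2$, the total variation of the right-hand side over all of $[-T_\ve, +\infty)$ is $p \log 2 + O(\ve^{10})$. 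This bounds $G(C(t))$ and, since $G$ is strictly increasing in $C$ on $(\la/\la_0, \infty)$ and proper, bounds $C(t)$ itself, giving global existence (no blow-up), the monotone limit $C(+\infty) =: c_\infty$, and the algebraic relation \eqref{cinf} once we evaluate $G(c_\infty) - G(1) = p\log 2 + O(\ve^{10})$ — the exact form \eqref{cinf} being precisely the exponentiated version of this identity (and the $O(\ve^{10})$ is absorbed into the claimed $c_\infty(1 + O(\ve^{10}))$). Finally, the upper bound $P(t) \le \frac{101}{100}(c_\infty - \la)t$ for $t$ large follows from $P'(t) = C(t) - \la \le c_\infty - \la$ plus the fact that $C(t) \to c_\infty$, so that for $\ve$ small the average velocity on $[-T_\ve, t]$ is squeezed below $\frac{101}{100}(c_\infty-\la)$ for $t$ past the interaction region; the explicit evaluations $c_\infty(0) = 2^p$ and $c_\infty(\la_0) = 1$ are read off directly from \eqref{cinf}.

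The main obstacle is making the separation-of-variables argument completely rigorous near the degenerate cases and near the singularity $C = \la/\la_0$ of the integrand: one must verify that $C(t)$ stays uniformly bounded \emph{away} from $\la/\la_0$ (which it does, starting at $1 > \la/\la_0$ and increasing), so that $G$ is smooth along the trajectory, and one must check that $G$ is genuinely proper (tends to $+\infty$ as $C \to \infty$) with the right leading behavior so that the finite total variation on the right forces $C$ to converge rather than merely stay bounded — this is where the exponents $\la_0$ and $1 - \la_0$ must be tracked carefully to land exactly on \eqref{cinf}. The case $\la = \la_0$, where $C \equiv 1$ is forced and $c_\infty = 1$, and the case $\la = 0$, where $\la/\la_0 = 0$ and the integrand simplifies, should be dispatched as easy special instances of the same computation. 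Everything else — the $O(\ve^{10})$ error from $a(-\ve T_\ve) - 1$, the $\frac{101}{100}$ slack in the bound on $P$ — is routine once $\ve_0$ is chosen small enough.
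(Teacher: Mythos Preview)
Your approach is correct and essentially the same as the paper's. The paper does not prove Lemma~\ref{ODE0} directly (it is cited from \cite{Mu2}), but its proof of the companion Lemma~\ref{ODE} (the case $\la_0<\la<1$) uses exactly your separation-of-variables idea: from $C' = \ve f_1(C,P)$ and $P' = C-\la$ one writes
\[
\frac{(C-\la)}{C(C-\la/\la_0)}\,C' \;=\; \ve p\, P'\,\frac{a'}{a}(\ve P),
\]
does the same partial-fraction integration $\la_0\log C + (1-\la_0)\log|C-\la/\la_0| = p\log a(\ve P) + \mathrm{const}$, and reads off the conserved quantity \eqref{boundC}, which is the exact analogue of your $G(C)$ and yields \eqref{cinf} after exponentiation. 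The only cosmetic difference is that the paper keeps $t$ as the independent variable rather than switching to $P$; and for the a priori trapping of $C$ it invokes the constant solutions $C\equiv 0$ and $C\equiv \la/\la_0$ together with ODE uniqueness, which is a slightly slicker packaging of your observation that $C$ starts at $1>\la/\la_0$, increases, and hence stays away from the singularity.

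One small slip to fix: the lower limit of your $P$-integral should be $P(-T_\ve)=-(1-\la)T_\ve$, giving $a(\ve P(-T_\ve))=a(-\ve^{-1/100})$ rather than $a(-\ve T_\ve)$; since both are $1+O(\ve^{10})$ by \eqref{ahyp} this does not affect the argument.
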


\begin{rem}
Let us explain the importance of this result. The above lemma \emph{formally} describes the dynamics of the soliton solution by means of some approximate, finite dimensional system of the variables $C(t)$ and $P(t)$. In other words, the dynamics in the case $\ve\sim 0$ can be seen as the projection into a approximate two dimensional manifold represented by $(C(t), P(t))$.
\end{rem}

In the next section, our objective is to extend this result to the full range $\la_0< \la<1.$ In this case, from (\ref{f1}), (\ref{c00}), and the initial condition $C(-T_\ve) =1$, the scaling $C(t)$ is a {\bf decreasing function in time}. In this direction, a first key property to prove is that $C(t)$ remains far from zero independently of $\ve$. Moreover, a new behavior is possible if there exists some time $t_0$ such that $C(t_0)=\la$. In that case, the soliton should be formally reflected by the potential.

\bigskip

\section{Study of a dynamical system revisited}\label{2a}

\medskip

This section is devoted to the study of the approximate dynamical system describing the evolution of the first order \emph{scaling} and \emph{translation} parameters $(C(t), P(t))$, inside the interaction region, in the case $\la_0<\la<1$. This system shares many properties with the nonlinear system considered in \cite{Mu2} for $0\leq \la \leq \la_0$, that is Lemma \ref{ODE0}; however, the large time behavior in the case $\la_0<\la< 1$ may be completely different. In what follows, we prove, among other things, the existence and uniqueness of a suitable solution, and reflection for large enough $\la$. 

\smallskip

\begin{lem}[Existence of dynamical parameters, case $\la_0<\la<1$]\label{ODE}~

Suppose $m=2,3$ or $4$. Let $\la_0, a(\cdot)$, $p$ and $f_1$ be as in (\ref{l0}), (\ref{ahyp}) and (\ref{f1}). Then there exists $\ve_0>0$ small such that, for all $0<\ve<\ve_0$, the following holds. 

\smallskip

\begin{enumerate}
\item \emph{Existence}. 

\noindent
There exists a unique solution $(C(t),P(t))$, with $C(t)$ bounded, positive and monotone decreasing, defined for all $t\geq -T_\ve$, of the following nonlinear system  
\be\label{c}
\begin{cases}
C'(t) = \ve f_1(C(t), P(t)), \qquad C(- T_\ve) = 1, \\
P'(t) = C(t) -\la, \qquad P(-T_\ve) =-(1-\la)T_\ve.
\end{cases}
\ee
In addition for all $t\geq -T_\ve$ one has $ 0<C(t)\leq 1$ and
\be\label{boundC}
C^{\la_0 }(t)  ( \frac \la{\la_0 }- C(t) )^{1-\la_0 }   = ( \frac \la{\la_0 } -1)^{1-\la_0 } \frac{a^p(\ve P(t))}{a^p(-\ve^{-1/100})}.
\ee
Moreover, $\lim_{t\to +\infty} C(t) $ exists and satisfies $\lim_{t\to +\infty} C(t)  > \mu(\la)>0$, for all $\la_0<\la<1$ (cf. (\ref{Muu}).)

\medskip
\item \emph{Asymptotic behavior}. 

\noindent
Let $\la_0<\tilde \la<1$ be the unique number satisfying
\be\label{tl0}
\tilde \la (\frac{1-\la_0}{\tilde\la-\la_0})^{1-\la_0} =2^p.
\ee
Then,
\begin{enumerate}
\item For all $\la_0<\la  \leq  \tilde \la$, one has $\lim_{t\to +\infty} C(t) >\la$ and $\lim_{t\to +\infty} P(t) =+\infty$.

\smallskip

\item For all $\tilde \la<\la<1$, there exists a unique $ t_0\in ( -T_\ve, +\infty)$ such that $C(t_0)=\la$, with  $\lim_{t\to +\infty} C(t) < \la $. Moreover, $\lim_{t\to +\infty} P(t) =-\infty.$ Finally, one has the bound  $-T_\ve <t_0 \leq K(\la) T_\ve ,$ for a positive constant $K(\la)$, independent of $\ve$.
\end{enumerate}
\end{enumerate}
\end{lem}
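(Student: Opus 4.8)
The plan is to treat the nonlinear system (\ref{c}) as an ODE for $C$ with $P$ slaved to it through $P' = C - \la$, and to extract a first integral. First I would derive the conserved-type relation (\ref{boundC}). Using the chain rule and $f_1(C,P) = p\, C(C - \tfrac{\la}{\la_0})\tfrac{a'(\ve P)}{a(\ve P)}$, one has along a trajectory
\be
\frac{dC}{dt}\cdot\frac{1}{C(C-\tfrac{\la}{\la_0})} = \ve p\,\frac{a'(\ve P)}{a(\ve P)} = \ve p\,(\ln a(\ve P))',
\ee
so that, using a partial fraction decomposition $\frac{1}{C(C-\la/\la_0)} = \frac{\la_0}{\la}\big(\frac{1}{C-\la/\la_0} - \frac1C\big)$, integrating from $-T_\ve$ to $t$ gives an identity of the form $\ln\big(C^{-\la_0}(\la/\la_0 - C)^{\la_0}\big)\big|_{-T_\ve}^{t} = p\,\ln a(\ve P)\big|_{-T_\ve}^t$ up to constants, which rearranges into (\ref{boundC}) once one recalls $C(-T_\ve) = 1$ and $P(-T_\ve) = -(1-\la)T_\ve$ so $\ve P(-T_\ve) = -(1-\la)\ve T_\ve = -(1-\la)\ve^{-1/100}/(1-\la)\cdot\ve^{\text{something}}$; one should be careful here and simply write $a(\ve P(-T_\ve)) = a(-\ve^{-1/100})$ using (\ref{Te}). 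This requires that $C(t) < \la/\la_0$ along the flow so the logarithm is defined: since $C$ is decreasing from $1 < \la/\la_0$ (because $\la > \la_0$), this holds automatically on the maximal interval, and it shows $C'(t) = \ve f_1 < 0$ (the sign of $C - \la/\la_0$ is negative), confirming monotone decrease.

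Next I would establish local existence and then globality. Local existence and uniqueness is standard (the right-hand side is $C^1$ in $(C,P)$ wherever $C > 0$); the issue is whether the solution exists for all $t \geq -T_\ve$, which by the usual continuation criterion reduces to showing $C$ stays in a compact subset of $(0,\infty)$. The upper bound $C \leq 1$ is immediate from monotone decrease and $C(-T_\ve) = 1$. For the lower bound $\lim C(t) > \mu(\la) > 0$ I would use (\ref{boundC}): since $a$ is increasing with $1 < a < 2$, the ratio $a^p(\ve P)/a^p(-\ve^{-1/100})$ is bounded above by $2^p/a^p(-\ve^{-1/100})$, and since $a(-\ve^{-1/100}) = 1 + O(e^{-\ga\ve^{-1/100}})$ is essentially $1$, this ratio is at most $2^p(1 + O(\ve^{10}))$. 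Thus the function $g(C) := C^{\la_0}(\la/\la_0 - C)^{1-\la_0}$ satisfies $g(C(t)) \leq (\la/\la_0 - 1)^{1-\la_0}\cdot 2^p(1 + O(\ve^{10}))$. As $C \downarrow 0$, $g(C) \to 0$, so $C(t)$ cannot approach $0$; quantitatively, for $C$ small, $g(C) \approx C^{\la_0}(\la/\la_0)^{1-\la_0}$, and comparing with the bound forces $C^{\la_0} \gtrsim$ a fixed constant. Plugging in and being slightly generous with constants produces the bound $\lim C(t) > \mu(\la) = \tfrac{99}{100}(1 - \la_0/\la)^{(1-\la_0)/\la_0}$; the limit exists because $C$ is monotone and bounded below. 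This also gives globality.

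For part (2), the asymptotic behavior, I would analyze the limit value $c_\infty := \lim_{t\to\infty} C(t)$ using (\ref{boundC}) once more. Either $P(t) \to +\infty$, in which case $a(\ve P(t)) \to 2$ and (\ref{boundC}) forces $c_\infty$ to solve $C^{\la_0}(\la/\la_0 - C)^{1-\la_0} = (\la/\la_0 - 1)^{1-\la_0}2^p$ (the limiting equation, using $a(-\ve^{-1/100}) \to 1$), or $P(t) \to$ some finite limit or $-\infty$. If $c_\infty > \la$ then $P'(t) = C(t) - \la \geq c_\infty - \la > 0$ eventually, so $P(t) \to +\infty$, consistent; if $c_\infty < \la$ then $P'(t) \to c_\infty - \la < 0$ so $P(t) \to -\infty$, and then $a(\ve P) \to 1$ and (\ref{boundC}) forces $c_\infty$ to solve the reflection equation $C^{\la_0}(\la/\la_0 - C)^{1-\la_0} = (\la/\la_0 - 1)^{1-\la_0}$, i.e. $C = 1$, contradicting $C$ strictly decreasing below $1$ — wait, this needs care, so actually in the reflection case $C$ must pass through $\la$ at a finite time $t_0$ and the relevant normalization of (\ref{boundC}) must be re-anchored at $t_0$. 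The cleanest route: define $h(C) := C^{\la_0}(\la/\la_0 - C)^{1-\la_0}$, which is strictly increasing on $(0, \la_0)$... no — one checks $h$ increases then decreases with maximum at $C = \la_0\cdot(\la/\la_0)/(\,?\,)$; on the relevant range $C \in (\mu(\la), 1]$ one must verify monotonicity. Then whether $c_\infty \gtrless \la$ is decided by comparing the value $h(\la)$ against the right side evaluated when $P$ would be such that $C = \la$; pushing through the algebra, the threshold is exactly $\tilde\la$ defined by (\ref{tl0}), because at $\la = \tilde\la$ the equation $h(\la) = (\la/\la_0 - 1)^{1-\la_0}2^p$ holds, which rearranges (dividing by $\la_0^{-1}$ powers appropriately) to $\tilde\la(\tfrac{1-\la_0}{\tilde\la-\la_0})^{1-\la_0} = 2^p$. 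For $\la \leq \tilde\la$, $c_\infty \geq \la$ and $P \to +\infty$; for $\la > \tilde\la$, $C$ decreases past $\la$ at a unique finite $t_0$ (unique by strict monotonicity of $C$), after which $P' < 0$ and $P \to -\infty$ with $c_\infty < \la$. The bound $-T_\ve < t_0 \leq K(\la)T_\ve$ comes from integrating $C' = \ve f_1$: since $f_1$ is bounded away from zero (in absolute value) on $C \in [\la, 1]$, wherever $a'(\ve P)$ is non-negligible, and $P$ travels at speed $\sim c_\infty - \la$ of order $1$, the descent from $C = 1$ to $C = \la$ takes time $O(\ve^{-1})$, hence comparable to $T_\ve$ up to the $\ve^{1/100}$ factor.

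The main obstacle I anticipate is the sharp threshold analysis in part (2): proving that $\tilde\la$ defined algebraically by (\ref{tl0}) is \emph{exactly} the dividing line between $c_\infty > \la$ and $c_\infty < \la$, rather than just an approximate one, and handling the degenerate case $\la = \tilde\la$ (where $c_\infty = \la$ and $P'$ vanishes in the limit, making the $P \to +\infty$ claim delicate). This hinges on a careful monotonicity study of the function $h(C) = C^{\la_0}(\la/\la_0 - C)^{1-\la_0}$ on the range swept by $C(t)$, together with controlling the $O(\ve^{10})$ errors coming from $a(-\ve^{-1/100}) \neq 1$ uniformly enough that they don't blur the threshold — which works precisely because those errors are exponentially small in $\ve^{-1/100}$, far smaller than any algebraic gap near $\tilde\la$ for $\la$ bounded away from $\tilde\la$; the borderline $\la = \tilde\la$ is then genuinely excluded from the clean statement.
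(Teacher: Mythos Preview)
Your overall strategy matches the paper's, but the derivation of the first integral (\ref{boundC}) contains a genuine error that, left uncorrected, prevents the rest of the argument from going through. You write
\[
\frac{C'}{C(C-\la/\la_0)} \;=\; \ve p\,\frac{a'(\ve P)}{a(\ve P)} \;=\; \ve p\,(\ln a(\ve P))'.
\]
The last equality is false: the chain rule gives $(\ln a(\ve P))' = \ve\,P'(t)\,\tfrac{a'(\ve P)}{a(\ve P)}$, and $P'(t)=C(t)-\la$ is not $1$. Consequently your partial-fraction decomposition, with coefficients $\la_0/\la$, does not integrate to (\ref{boundC}); in particular you will not recover the exponents $\la_0$ and $1-\la_0$ on the left-hand side. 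The paper's remedy is precisely to multiply the ODE identity by $P'=C-\la$ before separating, obtaining
\[
\frac{(C-\la)}{C(\la/\la_0 - C)}\,C' \;=\; -\ve p\,P'\,\frac{a'(\ve P)}{a(\ve P)} \;=\; -p\,\partial_t\ln a(\ve P),
\]
and then the partial fractions of $\tfrac{C-\la}{C(\la/\la_0-C)}$ give the correct coefficients $-\la_0/C + (1-\la_0)/(\la/\la_0-C)$, which integrate to $\partial_t\ln\big[C^{\la_0}(\la/\la_0-C)^{1-\la_0}\big]$. Without this step you cannot obtain (\ref{boundC}), and everything downstream (the lower bound $C>\mu(\la)$, the identification of $\tilde\la$ as the exact threshold, the bound on $t_0$) rests on that identity.

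Two smaller points once (\ref{boundC}) is fixed. For the lower bound on $C$, you argue via an \emph{upper} bound on the ratio $a^p(\ve P)/a^p(-\ve^{-1/100})$, but what is needed is a \emph{lower} bound (close to $1$, since $a>1$) together with the crude estimate $(\la/\la_0-C)^{1-\la_0}\le(\la/\la_0)^{1-\la_0}$; this yields $C^{\la_0}\ge\tfrac{99}{100}(1-\la_0/\la)^{1-\la_0}$ directly. For the bound $t_0\le K(\la)T_\ve$, your sketch (``$f_1$ bounded away from zero, $P$ travels at speed of order $1$'') breaks down because near $t_0$ one has $P'\approx 0$ and, a priori, $P(t_0)$ could sit where $a'$ is exponentially small. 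The paper instead uses (\ref{boundC}) at $t=t_0$ to pin down $|\ve P(t_0)|\le K(\la)$, which forces $a'(\ve P(t_0))$ to be of order one and hence $C'(t_0)\le -\kappa(\la)\ve$; a Taylor expansion around $t_0$ then gives the estimate.
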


\medskip

\noindent
Before the proofs, some remark are in order.

\smallskip

\begin{rem}[On the meaning of the parameter $\tilde \la$]\label{tildla}
Let us say some words about where the parameter $\tilde\la$ comes from. Indeed, since this parameter decides whether the soliton is reflected or not, a formal necessary condition is then the existence of $t_0\geq -T_\ve$ such that $C(t_0)=\la$, for $\la>\tilde \la$. Let us suppose this property. Replacing in (\ref{boundC}), we get
$$
\la (\frac{1-\la_0}{\la-\la_0})^{1-\la_0} = \frac{a^p(\ve P(t_0))}{a^p(-\ve^{-1/100})};
$$
(recall that $\la_0<\la<1$.) This is an implicit equation for $P(t_0)$. Since $1< a(\cdot)<2$, we have that if
$$
\la (\frac{1-\la_0}{\la-\la_0})^{1-\la_0} >\frac{2^p}{a^p(-\ve^{-1/100})}
$$
then there is no solution for the above equation. So, since the left hand side above does not depend on $\ve$, in order to \emph{ensure} the existence of a point $t_0$, a necessary condition is that
$$
\la (\frac{1-\la_0}{\la-\la_0})^{1-\la_0} \leq 2^p.
$$
Finally, we define $\tilde \la $ to be the \emph{worst} possible case, such that the equality is reached in the above inequality.
\end{rem}

\medskip

\begin{proof}[\bf Proof of Lemma \ref{ODE}]~

\smallskip

\noindent
{\bf 1.} The local existence of a solution $(C, P)$ of (\ref{c}) is a direct consequence of the Cauchy-Lipschitz-Picard theorem. In addition, $C\equiv 0, \frac \la{\la_0}$ are constant solutions. Since $C(-T_\ve)=1$ and $\la>\la_0$, we have $C$ globally defined, strictly decreasing and satisfying $0<C(t)<\frac \la{\la_0}$ for all $t\geq -T_\ve$.  

\medskip

\noindent
{\bf 2.} Now we use (\ref{c})-(\ref{f1}) to obtain some a priori estimates on the solution $C$. Note that 
$$
\frac{(C(t)-\la)}{C(t)( \frac \la{\la_0 } - C(t))}C'(t) = - \ve p(C(t)-\la) \frac{a'}{a}(\ve P(t)) = - \ve p P'(t) \frac{a'}{a}(\ve P(t)).
$$
In particular,
$$
(1-\la_0 )\partial_t \log ( \frac \la{\la_0 } -C(t) )  + \la_0  \partial_t \log C(t)  = p \partial_t  \log a(\ve P(t)).
$$
By integration on $[-T_\ve, t]$, and by using $C(-T_\ve) =1$, we obtain (\ref{boundC}).

Since $1\leq a \leq 2$ and $C$ is bounded we have $P$ bounded on compact sets and consequently we obtain global existence. Using $C> 0$ and (\ref{boundC}) one proves for $\ve$ small
\be\label{boundCp}
C^{\la_0}( t) \geq \frac{99}{100}(1-\frac{\la_0}{\la})^{1-\la_0} \implies C(t)  \geq \mu(\la).  \qquad \hbox{ (cf. (\ref{Muu})).}
\ee
Moreover, $\lim_{t\to +\infty } C(t)$ {\bf exists and it is always far from zero}, independent of $\ve$, as long as $\la_0<\la<1$. This proves the first part of the Lemma.

\medskip

\noindent
{\bf 3.} Now, given $\la_0<\la<1$, we study the existence of a point $t_0> -T_\ve $ such that $C(t_0)=\la$.  A priori, replacing this condition in (\ref{boundC}), we have
\be\label{cla}
\la  ( \frac 1{\la_0 }-1 )^{1-\la_0 }   = ( \frac \la{\la_0 } -1)^{1-\la_0 } \frac{a^p(\ve P(t_0))}{a^p(-\ve^{-1/100} ) }.
\ee
By choosing $\la := \la_0(1 + \delta)$, with $\delta>0$ a small number, we obtain a contradiction with the above identity. In conclusion, such a $t_0$ does not exist if $\la = \la_0(1+  \delta)$, with $\delta>0$ small. 
Moreover, let $\tilde \la \in (\la_0, 1)$ be the unique solution of (\ref{tl0}). Since the function 
$$
\la \in (\la_0,1) \mapsto f(\la) := \la(\frac{1-\la_0}{\la-\la_0})^{1-\la_0}  \in (0,+\infty)
$$ 
is strictly decreasing\footnote{More precisely, one has $$f'(\la) = -\frac{(1-\la)(1-\la_0)^{1-\la_0}}{(\la-\la_0)^{2-\la_0}}.$$}, we have $f(\la) \geq 2^p $ provided $\la_0<\la \leq \tilde \la$. Therefore, from (\ref{cla}) we have
$$
2^p  \leq f(\la)  =\frac{a^p(\ve P(t_0))}{a^p(-\ve^{-1/100})} < 2^p.
$$
In conclusion, since $f(\la) $ is independent of $\ve$, there is no $t_0 \in \R$ such that $C(t_0)=\la.$ Thus, by continuity we have $C(t) >\la $ for all $t\geq -T_\ve$ and $\lim_{+\infty} C(\cdot) \geq \la$. Moreover, if $\lim_{+\infty}C(\cdot) = \la$, we have from (\ref{boundC}), after passing to the limit,
$$
f(\la) \leq \limsup_{t\to +\infty} \frac{a^p(\ve P(t))}{a^p(-\ve^{-1/100})} <  2^p, \quad \la \leq \tilde \la, 
$$
a contradiction. Therefore, $\lim_{+\infty}C(\cdot) > \la$. Moreover, from the equation for $P$ in (\ref{c}) one has, for all $t\geq 0$,
$$
P(t) = P(-T_\ve) + \int_{-T_\ve}^{0} (C(s)-\la) ds + \int_{0}^{t} (C(s)-\la) ds  \geq  P(-T_\ve) + (C(0) -\la) t;
$$
and thus $\lim_{t\to +\infty} P(t) = +\infty$.

\medskip

\noindent
{\bf 4.} Now, let us prove that for all $\la\in (\tilde\la ,1)$ there exists $t_0\in\R$ such that $C(t_0)=\la$ (and therefore $\lim_{+\infty} C(\cdot) <\la$.) By contradiction, let us suppose $C(t)> \la$ for all $t\geq -T_\ve,$ with  $\tilde c_\infty :=\lim_{+\infty} C(\cdot) \geq \la.$

First, let us suppose $\tilde c_\infty>\la $. Thus $\lim_{+\infty}P(\cdot ) =+\infty $ and from (\ref{boundC}) we have
\be\label{KK}
\tilde c_\infty^{\la_0} ( \frac \la{\la_0 }- \tilde c_\infty )^{1-\la_0 }   = ( \frac \la{\la_0 } -1)^{1-\la_0 } \frac{2^p}{a^p(-\ve^{-1/100})}.
\ee
Since $\tilde c_\infty >\la$ one has
$$
\tilde c_\infty^{\la_0} ( \frac {\la -  \la_0 \tilde c_\infty}{\la -\la_0  })^{1-\la_0 } \leq   \max_{r \in (0,1)} r^{\la_0} ( \frac {\la -  \la_0 r}{\la -\la_0  })^{1-\la_0 }  = f(\la)<2^p,  
$$
a contradiction with (\ref{KK}) for small $\ve$.

Now we suppose $\tilde c_\infty =\la$. Here we have two possibilities: either $P_\infty :=\lim_{t\to+\infty} P(t) =+\infty$, or $P_\infty <+\infty$. For the first case, by following the preceding analysis, we have 
$$
\tilde c_\infty^{\la_0} ( \frac {\la -  \la_0 \tilde c_\infty}{\la -\la_0  })^{1-\la_0 }= f(\la)<2^p,  
$$
a contradiction with (\ref{KK}), for small $\ve$. Otherwise, from the equation of $C'(t)$ in (\ref{c}), one has
$$
\lim_{t\to+\infty} C'(t) = \lim_{t\to+\infty} \ve f_1(C(t),P(t)) =  p\ve \la^2 (1-\frac 1{\la_0}) \frac{a'(\ve P_\infty)}{a(\ve P_\infty)} \neq 0;
$$
for all $m=2,3$ and $4$. This last result contradicts the fact that $\lim_{t\to+\infty} C'(t) =\lim_{+\infty} \frac{C(t)}{t} =0$.

In conclusion, we have that there exists at least one $t_0> -T_\ve$ such that $C(t_0) = \la$. From $C'<0$ we have that such a $t_0$ is unique. 

\medskip

\noindent
{\bf 5.} We finally prove some properties of $P(t)$ in the case $\tilde \la<\la<1$. From (\ref{boundC}), one has
$$
f(\la) = \frac{a^p(\ve P(t_0))}{a^p(-\ve^{-1/100})}.
$$
Since $f(\la)\in (1,2^p)$ for fixed $\la\in(\tilde \la, 1)$, and it is independent of $\ve$, one has, for small $\ve$,
\be\label{Kala}
\abs{\ve P(t_0)} \leq K(\la);
\ee
(the constant $K$ becomes singular as $\la$ approaches $\tilde \la$ or $1$.) Therefore, from (\ref{c}) one has
$$
C'(t_0) =  -\ve p \la^2  (\frac 1{\la_0} -1) \frac{a'(\ve P(t_0))}{a(\ve P(t_0))}  \leq -\kappa(\la) \ve,  \qquad \kappa(\la) >0;
$$
and thus, for $\al>0$ small enough (but independent of $\ve$), since $C''(t) =O_{L^\infty}(\ve^2)$,
\be\label{Cal}
C(t_0 -\frac \al \ve) \geq \la  +  \kappa(\la) \al + O(\al^2) \geq \la + \frac 9{10} \kappa(\la) \al.
\ee
We use this identity to obtain
\bee
P(t_0) & = & -(1-\la) T_\ve + \int_{-T_\ve}^{t_0-\frac \al\ve} (C(s) -\la) ds +   \int^{t_0}_{t_0-\frac \al\ve} (C(s) -\la) ds \\
& \geq &-(1-\la)T_\ve + \frac 9{10} \kappa(\la) \al (t_0 -\frac \al\ve + T_\ve)   - \frac{K\al}\ve,
\eee
and therefore $t_0 \leq K(\la) T_\ve$. 

Finally, note that $P(t)$ is strictly decreasing for all $t > t_0$.  Therefore,  for all $ t\geq t_0+1$ one has  $ C(t_0+1) <\la$ and
$$
P(t) = P(t_0) + \int_{t_0}^{t_0+1} (C(s) -\la) ds  + \int_{t_0+1}^{t} (C(s) -\la) ds \leq P(t_0) +  (C(t_0+1) -\la)(t-t_0-1);
$$
thus $\lim_{t\to +\infty} P(t) = -\infty$.  The proof is complete.

\end{proof}

Some of the properties found in the above Lemma allow to introduce the following definition.

\medskip

\begin{defn}[Escape time]\label{ET}~

Suppose $\la_0 <\la \leq \tilde \la$.  Let us define the {\bf escape time} $\tilde T_\ve > -T_\ve$ such that $P(\tilde T_\ve) := -P(-T_\ve) = (1-\la)T_\ve$. 
Otherwise, if $\tilde \la <\la <1$, let us consider $\tilde T_\ve > t_0$ such that $P(\tilde T_\ve) := P(-T_\ve)= -(1-\la)T_\ve$.
\end{defn}

\medskip

The next result states that in the interval $\la_0<\la<\tilde \la$ the soliton leaves the interaction zone by the right hand side, with a well determined scaling $c_\infty(\la)\in (\la,1)$.  Moreover, the escape time is bounded by $K(\la) T_\ve$, with $K$ becoming unbounded as $\la$ approaches $\tilde \la$. 

\medskip

\begin{lem}[Asymptotic behavior, case $\la_0<\la <\tilde\la$]\label{ODE1}~

Suppose $\la_0<\la < \tilde \la $, $m=2,3$ or $4$. 

\begin{enumerate}
\item There exists  a unique solution $c_\infty =c_\infty(\la)$ of the following algebraic equation
\be\label{cinf1}
c_\infty^{\la_0 } (\frac{\la - \la_0 c_\infty}{\la-\la_0})^{1-\la_0 }  = 2^p, \quad \la < c_\infty<1.
\ee
In addition, $\la\mapsto c_\infty(\la)$ is a strictly decreasing map with  $c_\infty(\la_0) =1$ and $c_\infty(\la) > c_\infty(\tilde \la)=\tilde \la$.

\item  Let $(C(t), P(t))$ be the solution of (\ref{c}). Then $C( \tilde T_\ve) = c_\infty(\la) $, and $\tilde T_\ve \leq K(\la) T_\ve$, with $K(\la) \sim (c_\infty(\la)-\la)^{-1}$.
\end{enumerate}
\end{lem}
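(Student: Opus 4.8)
The plan is to read (\ref{cinf1}) as $g(c_\infty)=2^p$, where on the interval $c\in(\la,1)$ I set
\[
g(c):= c^{\la_0}\Big(\frac{\la-\la_0 c}{\la-\la_0}\Big)^{1-\la_0}.
\]
First I would compute the logarithmic derivative $g'(c)/g(c)=\la_0\,(\la-c)\big/\big(c(\la-\la_0 c)\big)$ and observe that on $(\la,1)$ one has $\la-c<0$ while $\la-\la_0 c\ge\la-\la_0>0$ (using $\la_0<\la$ and $c<1$), so $g$ is strictly decreasing there; this gives uniqueness once existence is known. For existence I would evaluate the endpoints: a one–line simplification gives $g(\la)=\la\big(\tfrac{1-\la_0}{\la-\la_0}\big)^{1-\la_0}=f(\la)$, with $f$ the function introduced in Step 4 of the proof of Lemma \ref{ODE}, and $g(1)=1$. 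Since $f$ is strictly decreasing with $f(\tilde\la)=2^p$ by (\ref{tl0}), for $\la<\tilde\la$ we get $g(\la)=f(\la)>2^p>1=g(1)$, and the intermediate value theorem produces $c_\infty\in(\la,1)$. For the $\la$–dependence I would differentiate $\log g(c;\la)=\la_0\log c+(1-\la_0)[\log(\la-\la_0 c)-\log(\la-\la_0)]$ in $\la$: since $c<1$ forces $\la-\la_0 c>\la-\la_0$, this shows $\partial_\la\log g<0$, and with $\partial_c\log g<0$ the implicit function theorem yields $c_\infty'(\la)<0$. Finally, plugging $c=\tilde\la$ into (\ref{cinf1}) at $\la=\tilde\la$ reproduces exactly $f(\tilde\la)=2^p$, so $c_\infty(\tilde\la)=\tilde\la$ by uniqueness, hence $c_\infty(\la)>\tilde\la$ for $\la<\tilde\la$; and letting $\la\downarrow\la_0$, $c_\infty$ must tend to $1$ (otherwise the factor $\big(\tfrac{\la-\la_0 c_\infty}{\la-\la_0}\big)^{1-\la_0}$ blows up against the fixed value $2^p$), consistently with the normalization $c_\infty(\la_0)=1$ of Lemma \ref{ODE0}.

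\textbf{Strategy for the scaling identity in Part (2).} By Lemma \ref{ODE}(2)(a), for $\la_0<\la\le\tilde\la$ one has $C(t)>\la$ for all $t\ge-T_\ve$, so $P'=C-\la>0$ and $P$ increases from $-(1-\la)T_\ve$ to $+\infty$; hence the escape time $\tilde T_\ve$ of Definition \ref{ET} is well defined and unique, and by construction $\ve P(\tilde T_\ve)=\ve(1-\la)T_\ve=\ve^{-1/100}$ in view of (\ref{Te}). I would then evaluate the invariant (\ref{boundC}) at $t=\tilde T_\ve$ and insert the bounds on $a$ at the two large arguments $\pm\ve^{-1/100}$ coming from (\ref{ahyp}), namely $a(\ve^{-1/100})=2+O(\ve^{10})$ and $a(-\ve^{-1/100})=1+O(\ve^{10})$. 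After clearing the common powers of $\la_0$ this turns into $g(C(\tilde T_\ve))=2^p(1+O(\ve^{10}))$, that is, equation (\ref{cinf1}) for $C(\tilde T_\ve)$ up to a relative error $O(\ve^{10})$; by the strict monotonicity of $g$ (a quantitative inversion) one gets $C(\tilde T_\ve)=c_\infty(\la)$ modulo $O(\ve^{10})$, which is the sense intended in the statement, exactly as for $C(+\infty)$ in Lemma \ref{ODE0}.

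\textbf{Strategy for the time bound in Part (2).} Integrating $P'=C-\la$ over $[-T_\ve,\tilde T_\ve]$ and using the prescribed values $P(-T_\ve)=-(1-\la)T_\ve$ and $P(\tilde T_\ve)=(1-\la)T_\ve$ gives $2(1-\la)T_\ve=\int_{-T_\ve}^{\tilde T_\ve}(C(s)-\la)\,ds$. Since $C$ is decreasing (Lemma \ref{ODE}) and $C(\tilde T_\ve)=c_\infty(\la)+O(\ve^{10})$, for $\ve$ small I can bound $C(s)-\la\ge\tfrac12(c_\infty(\la)-\la)>0$ on the whole interval, which yields $\tilde T_\ve+T_\ve\le 4(1-\la)\,(c_\infty(\la)-\la)^{-1}T_\ve$, i.e. $\tilde T_\ve\le K(\la)T_\ve$; and since $c_\infty(\la)\to\tilde\la$ while $1-\la\to 1-\tilde\la>0$ as $\la\uparrow\tilde\la$, the constant $K(\la)$ is of order $(c_\infty(\la)-\la)^{-1}$, as claimed.

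\textbf{Expected main obstacle.} I do not expect a genuine difficulty: the lemma is elementary calculus on top of Lemma \ref{ODE}. The only point requiring care is the bookkeeping of the exponentially small corrections $a(\pm\ve^{-1/100})-a(\pm\infty)=O(\ve^{10})$ from (\ref{ahyp}), which must be checked to be harmless both for identifying $C(\tilde T_\ve)$ with $c_\infty(\la)$ and for the uniform lower bound on $C(s)-\la$ used in the time estimate; and, secondarily, making sure that the degeneracy of $K(\la)$ as $\la\uparrow\tilde\la$ is carried entirely by the factor $(c_\infty(\la)-\la)^{-1}$ and not by some hidden extra $\la$–dependence.
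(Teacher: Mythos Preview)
Your proposal is correct and follows essentially the same route as the paper. For Part~(1) you supply the explicit calculus argument (monotonicity of $g$, endpoint values, implicit differentiation in $\la$) that the paper merely cites from \cite{Mu2}; for Part~(2) the paper argues identically via the invariant (\ref{boundC}), with the only cosmetic difference that it uses $\tilde c_\infty:=\lim_{+\infty}C$ rather than $C(\tilde T_\ve)$ as the uniform lower bound for $C(s)-\la$ in the time estimate, which is equivalent since $C$ is decreasing.
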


\begin{rem}
Note that the condition $c_\infty >\la$ is essential, because there exists another minimal branch of solutions $c_\infty^*(\la)<\la$ increasing in $\la$ with $c_\infty^*(\la_0) =0$ and $c_\infty^*(\tilde \la) =\tilde \la$. 
\end{rem}

\begin{proof}
The proof of existence and uniqueness of a solution $c_\infty(\la)$ of (\ref{cinf1}) is similar to Lemma 4.4 in \cite{Mu2}. We skip the details. 

Let $\tilde c_\infty (\la,\ve) := \lim_{+\infty} C$. From (\ref{boundC}) and $\lim_{+\infty} P = + \infty$ one has 
\be\label{limite}
\tilde c_\infty^{\la_0 } (\frac{\la - \la_0 \tilde c_\infty}{\la-\la_0})^{1-\la_0 }  = \frac{2^p}{a^p(-\ve^{-1/100})}, \quad \la < \tilde c_\infty<1.
\ee
Now let us define for $r\in (0, 1)$
$$
g(r) := r^{\la_0 } (\frac{\la - \la_0 r}{\la-\la_0})^{1-\la_0 }.
$$
Note that $g(r)$ is strictly decreasing in the interval $(\la, 1)$. In addition, from (\ref{cinf1}) and (\ref{limite}) we have
$c_\infty <\tilde c_\infty$. Moreover, from the behavior of $a$ in (\ref{ahyp}) we have $\tilde c_\infty = c_\infty + O(\ve^{10})$, for all $\ve$ small. This implies that 
\be\label{jajaja}
\tilde c_\infty(\la, \ve)-\la > c_\infty(\la)-\la >0,
\ee
uniformly for all $\ve$ small enough. On the other hand, at time $t=\tilde T_\ve$ one has
$$
C(\tilde T_\ve)^{\la_0} (\frac{\la - \la_0 C(\tilde T_\ve)}{\la-\la_0})^{1-\la_0 } = \frac{a^p(\ve^{-1/100})}{a^p(-\ve^{-1/100})}, \qquad 0<C(\tilde T_\ve) <\la,
$$
therefore $C(\tilde T_\ve) = c_\infty(\la) + O(\ve^{10})$. Moreover, 
\bee
(1-\la) T_\ve & = & P(\tilde T_\ve)  = P(-T_\ve) + \int_{-T_\ve}^{\tilde T_\ve} (C(s)-\la)ds  \\
& \geq&  -(1-\la)T_\ve + (\tilde c_\infty(\la,\ve)-\la )(\tilde T_\ve + T_\ve).
\eee
From this inequality and (\ref{jajaja}) we obtain, for all $\la_0<\la <\tilde \la$, the upper bound $\tilde T_\ve \leq K(\la)T_\ve$, with $K(\la) \sim (c_\infty(\la) -\la)^{-1}$. Note that $K(\la)$ becomes singular as $\la \uparrow \tilde \la$.
\end{proof}

\medskip

\begin{rem}\label{tildelala}
Note that $c_\infty(\tilde \la) =\tilde \la$ and therefore in the last inequality above one has, for $\la =\tilde \la$,
$$
(1-\tilde \la) T_\ve \geq -(1-\tilde\la)T_\ve + (\tilde c_\infty(\tilde \la,\ve) - c_\infty(\tilde \la) )(\tilde T_\ve + T_\ve).
$$
Since $\tilde c_\infty(\tilde \la,\ve) - c_\infty(\tilde \la) =O(\ve^{10})$ for $\ve$ small, we cannot obtain any reasonable upper bound of the time $\tilde T_\ve$ in this case. Further developments are probably necessary.
\end{rem}

\medskip

Now we consider the case $\tilde \la <\la<1$. Here we obtain the following striking result: the soliton is finally reflected by the potential. The final scaling is given by a modified parameter $0<c_\infty(\la)<1$, away from zero provided $\la\in (\tilde \la, 1)$.

\medskip

\begin{lem}[Asymptotic behavior, case $\tilde \la<\la<1$]\label{ODE2}~

Suppose $\tilde \la <\la <1$. There exists  a unique solution $c_\infty(\la) $ of the following algebraic equation
\be\label{cinf2}
c_\infty^{\la_0 } (\frac{\la - \la_0 c_\infty}{\la-\la_0})^{1-\la_0 } = 1, \quad 0< c_\infty<\la.
\ee
In addition, the map $\la \mapsto c_\infty(\la) $ is strictly increasing with $ c_\infty(\la) \geq c_\infty(\tilde \la) > \mu(\tilde \la )$, and $\lim_{\la\uparrow 1}c_\infty(\la) =1$. Finally, one has $C(\tilde T_\ve) = c_\infty(\la)$,  and $\tilde T_\ve \leq  K(\la) T_\ve$.
 \end{lem}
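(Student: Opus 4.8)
The plan is to transcribe the proof of Lemma~\ref{ODE1} to the reflected regime, where the soliton leaves the interaction zone to the \emph{left}, so that $\lim_{t\to+\infty}P(t)=-\infty$ and $a(\ve P(t))\to 1$; the governing algebraic relation therefore becomes (\ref{cinf2}) instead of (\ref{cinf1}). For the existence and uniqueness of $c_\infty(\la)$ I would reuse the function $g(r):=r^{\la_0}\big(\frac{\la-\la_0 r}{\la-\la_0}\big)^{1-\la_0}$ from the proof of Lemma~\ref{ODE1}, now on $(0,\la)$. A direct computation gives $g'(r)/g(r)=\la_0(\la-r)/[r(\la-\la_0 r)]>0$ on $(0,\la)$, so $g$ is a strictly increasing bijection from $(0,\la)$ onto $(0,g(\la^-))$, with $g(\la^-)=\la\big(\frac{1-\la_0}{\la-\la_0}\big)^{1-\la_0}=f(\la)$ and $f$ the strictly decreasing map from the proof of Lemma~\ref{ODE}, which satisfies $f(1)=1$. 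Hence $f(\la)>1$ for every $\la_0<\la<1$, so $1$ belongs to the range of $g$ and there is a unique $c_\infty(\la)\in(0,\la)$ with $g(c_\infty(\la))=1$, i.e. (\ref{cinf2}).

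Next, for the monotonicity in $\la$, I would regard $g=g(r;\la)$ and note that $\partial_r g>0$ while, differentiating $\log g$ in $\la$, $\partial_\la g=(1-\la_0)\,g\,\big[(\la-\la_0 r)^{-1}-(\la-\la_0)^{-1}\big]$, which is strictly negative at $r=c_\infty(\la)$ because $c_\infty(\la)<\la<1$ forces $\la-\la_0 c_\infty(\la)>\la-\la_0$; the implicit function theorem then yields $c_\infty'(\la)>0$. Since $c_\infty(\la)<\la<1$ and $c_\infty$ is increasing, $L:=\lim_{\la\uparrow1}c_\infty(\la)$ exists in $(0,1]$, and passing to the limit in (\ref{cinf2}) gives $L^{\la_0}\big(\frac{1-\la_0 L}{1-\la_0}\big)^{1-\la_0}=1$; as this left side is strictly increasing in $L$ on $(0,1)$ and equals $1$ at $L=1$, we get $L=1$. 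For the lower bound I would simply evaluate $g$ at $r=\mu(\la)$ (cf. (\ref{Muu})): since $\mu(\la)<1$, one has $\la-\la_0<\la-\la_0\mu(\la)<\la$, hence $g(\mu(\la))<\big(\frac{99}{100}\big)^{\la_0}(1-\frac{\la_0}\la)^{1-\la_0}(1-\frac{\la_0}\la)^{-(1-\la_0)}=\big(\frac{99}{100}\big)^{\la_0}<1=g(c_\infty(\la))$, so $c_\infty(\la)>\mu(\la)$ for all $\la_0<\la<1$; in particular $c_\infty(\tilde\la)>\mu(\tilde\la)$, and $c_\infty(\la)\ge c_\infty(\tilde\la)$ follows from the monotonicity just established.

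It remains to identify $C(\tilde T_\ve)$ and estimate $\tilde T_\ve$. By part~4 of the proof of Lemma~\ref{ODE}, for $\tilde\la<\la<1$ the scaling $C$ is strictly decreasing, there is a unique $t_0>-T_\ve$ with $C(t_0)=\la$, and $P$ increases on $(-T_\ve,t_0)$ and strictly decreases on $(t_0,\infty)$; hence the escape time $\tilde T_\ve>t_0$ of Definition~\ref{ET}, characterized by $P(\tilde T_\ve)=-(1-\la)T_\ve$, is well defined and $C(\tilde T_\ve)<\la$. Since $-(1-\la)T_\ve=-\ve^{-1-1/100}$, we have $\ve P(\tilde T_\ve)=-\ve^{-1/100}$, so $a^p(\ve P(\tilde T_\ve))/a^p(-\ve^{-1/100})=1$ and (\ref{boundC}) at $t=\tilde T_\ve$ reduces to $g(C(\tilde T_\ve))=1$; the uniqueness above then gives $C(\tilde T_\ve)=c_\infty(\la)$, exactly. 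For $\tilde T_\ve\le K(\la)T_\ve$, part~5 of the proof of Lemma~\ref{ODE} provides $t_0\le K(\la)T_\ve$ and $C'(t_0)\le-\kappa(\la)\ve$; combining this with $C''=O_{L^\infty}(\ve^2)$ exactly as in the derivation of (\ref{Cal}), for a fixed small $\al>0$ one gets $C(t_0+\al\ve^{-1})\le\la-\frac{9}{10}\kappa(\la)\al$, so $C(t)-\la\le-\delta_0(\la)<0$ for all $t\ge t_1:=t_0+\al\ve^{-1}$, with $\delta_0(\la):=\frac{9}{10}\kappa(\la)\al$. Then $P(t)\le P(t_1)-\delta_0(\la)(t-t_1)$ for $t\ge t_1$, and since $P(t_1)\le P(t_0)\le K(\la)T_\ve$ (from $C\le1$ and $t_0+T_\ve\le K(\la)T_\ve$), the value $-(1-\la)T_\ve$ is attained at some $\tilde T_\ve\le t_1+\delta_0(\la)^{-1}\big(K(\la)T_\ve+\ve^{-1-1/100}\big)\le K(\la)T_\ve$, using $\al\ve^{-1}\ll T_\ve$.

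The main obstacle is precisely this last bound $\tilde T_\ve\le K(\la)T_\ve$: immediately after $t_0$ the scaling $C$ is only $O(\ve)$ below $\la$, so $P$ decreases slowly and a naive estimate would only give $\tilde T_\ve=O(\ve^{-1}T_\ve)$; one genuinely needs the curvature bound $C''=O(\ve^2)$ to propagate $|C'(t_0)|\gtrsim\ve$ over a window of length $\sim\ve^{-1}$ and thereby obtain an $O(1)$ drop of $C$ below $\la$, in the spirit of the time estimates in parts~4--5 of the proof of Lemma~\ref{ODE}. Everything else is a routine transcription of the proof of Lemma~\ref{ODE1} and of the argument leading to (\ref{boundCp}).
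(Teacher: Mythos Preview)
Your proof is correct and follows essentially the same route as the paper. The paper defers the existence, uniqueness, monotonicity and limit properties of $c_\infty(\la)$ to an external reference, whereas you work them out explicitly via the monotonicity of $g$ on $(0,\la)$ and the implicit function theorem; your identification $C(\tilde T_\ve)=c_\infty(\la)$ is identical to the paper's, and your estimate $\tilde T_\ve\le K(\la)T_\ve$ uses exactly the same key ingredient (the curvature bound $C''=O(\ve^2)$ to propagate $|C'(t_0)|\gtrsim\ve$ over a window of length $\sim\ve^{-1}$, as in (\ref{Cal})/(\ref{Cal1})), only with slightly different bookkeeping---the paper splits the identity $\int_{-T_\ve}^{\tilde T_\ve}(C-\la)=0$ around $t_0\pm\beta\ve^{-1}$, while you bound $P(t)$ directly for $t\ge t_0+\alpha\ve^{-1}$.
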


\begin{proof}
The proof of existence and uniqueness of a solution $c_\infty(\la)$ of (\ref{cinf2}) is similar to Lemma 4.4 in \cite{Mu2}. We skip the details. 

Let $\tilde c_\infty (\la,\ve) := \lim_{+\infty} C$. From (\ref{boundC}) and $\lim_{+\infty} P = -\infty$ one has 
$$
\tilde c_\infty^{\la_0 } (\frac{\la - \la_0 \tilde c_\infty}{\la-\la_0})^{1-\la_0 } = \frac{1}{a^p(-\ve^{-1/100})}.
$$
with $0< \tilde c_\infty<\la$. From the behavior of $a$ in (\ref{ahyp}) we have $\tilde c_\infty = c_\infty(\la) + O(\ve^{10})$, for all $\ve$ small. This implies that 
$$
\la - \tilde c_\infty(\la, \ve) \geq \frac{99}{100} (\la-c_\infty(\la) )>0,
$$
uniformly for all $\ve$ small enough. On the other hand, at time $t=\tilde T_\ve$ one has
$$
C(\tilde T_\ve)^{\la_0} (\frac{\la - \la_0 C(\tilde T_\ve)}{\la-\la_0})^{1-\la_0 } = \frac{a^p(-\ve(1-\la) T_\ve)}{a^p(-\ve^{-1/100})} =1, \qquad 0<C(\tilde T_\ve) <\la,
$$
therefore by uniqueness $C(\tilde T_\ve) = c_\infty(\la)$. 

Finally, we prove the upper bound on $\tilde T_\ve$. We have
$$
P(-T_\ve) = -(1-\la)T_\ve =  -(1-\la)T_\ve +  \int_{-T_\ve}^{\tilde T_\ve} (C(s) -\la) ds. 
$$
From here we have for $\beta>0$
\bee
0 & = & \int_{ -T_\ve}^{t_0 -\frac \beta\ve } (C(s) -\la) ds +  \int_{ t_0 - \frac \beta\ve}^{t_0 + \frac \beta\ve} (C(s) -\la) ds - \int_{ t_0 + \frac \beta\ve}^{\tilde T_\ve} (\la -C(s)) ds \\
& \leq & (1-\la)( t_0 + \frac \beta\ve + T_\ve) +  \frac{K\beta}{\ve}  - \int_{ t_0 + \frac \beta\ve}^{\tilde T_\ve} (\la -C(s)) ds.
\eee
Similarly to estimate (\ref{Cal}), one has for $\beta>0$ small, but independent of $\ve$,
\be\label{Cal1}
C(t_0 + \frac \beta\ve )  \leq \la - \nu(\la) \beta + O(\beta^2), \quad \nu(\la) >0.
\ee
Inserting this estimate above, and using the estimate on $t_0$, one has
$$
\tilde T_\ve \leq K(\la) T_\ve,
$$
as desired.
\end{proof}

\begin{rem}
In \cite{Mu2}, from a simple study of the dynamical system in the case $0\leq \la\leq \la_0$, we found that the soliton leaves the interaction region at time $t=T_\ve$. However, since the dynamics is \emph{repulsive} in the case $\la_0<\la<1$, the soliton takes more time to exit this region, either by the left hand side or the right one. Fortunately, in the case of an asymptotically flat potential, the \emph{escape time} is of the same order as $T_\ve$. Therefore, in what follows, $\tilde T_\ve$ will denote the corresponding escape time, {\bf for all} $0\leq \la <1$, $\la\neq \tilde \la$. Moreover, we know that $\tilde T_\ve \sim T_\ve$.    
\end{rem}

\bigskip

\section{Description of the interaction soliton-potential}\label{3}

\medskip

This is the main section of this paper. Here we will describe in detail (see also \cite{Mu2} for the case $0\leq \la\leq \la_0$), the dynamics of the soliton-like solution, inside the interaction region, for times $t\in [-T_\ve, \tilde T_\ve]$, and $\la_0<\la<1$, still avoiding the more difficult case $\la=\tilde \la$. In order to obtain this result, we need to construct some modulation parameters $(c(t),\rho(t))$ satisfying, up to order $\ve^{1/2}$, the dynamical system given in Lemma \ref{ODE}. Since we understand the formal behavior of the nonlinear problem for $(C(t), P(t))$, the rigorous description is reduced to the use of an advanced form of  Weinstein functional, as in \cite{MMcol1,Mu1,Mu2, MMfin} (compare with Theorem 4.1 in \cite{Mu2}.)

\medskip

Let us recall that, from (\ref{mTep}), and for all $\ve$ small enough, we have
\be\label{hypINT}
\| u(-T_\ve) - Q(\cdot + (1-\la)T_\ve) \|_{H^1(\R)}\leq K \ve^{10},
\ee
with $u(t)$ the solution constructed in Proposition \ref{Tm1}.

\medskip

\medskip

\begin{prop}[Dynamics in the interaction region, case $0\leq \la<1$, $\la\neq \tilde \la$]\label{T0}~

Suppose $0 \leq \la <1$, with $\la\neq \tilde \la$, cf. (\ref{tl0}). There exists a constant $\ve_0>0$ such that the following holds for any $0<\ve <\ve_0$.

Let $u=u(t)$ be a globally defined $H^1$ solution of (\ref{aKdV}) satisfying (\ref{hypINT}). Then one has

\medskip

\begin{enumerate}
\item \emph{Case} $0\leq \la \leq \la_0.$ \emph{(cf. \cite{Mu2})}

\noindent
There exist a number $K_0 >0 $, a final scaling $c_\infty(\la)\geq 1$ and $\rho_\ve \in \R$ such that
\be\label{INT410}
\|u( T_\ve ) - 2^{-1/(m-1)}Q_{c_\infty}( \cdot - \rho_\ve ) \|_{H^1(\R)} \leq K_0 \ve^{1/2}.
\ee
In addition, $\lim_{\la \uparrow \la_0} c_\infty(\la)=1$. Moreover, we have the bounds
\be\label{INT420}
(1-\la)T_\ve \leq \rho_\ve \leq (c_\infty(\la) -\la) T_\ve,   
\ee
valid for $\ve_0$ sufficiently small.

\smallskip

\item  \emph{Case} $\la_0<\la<\tilde \la$.  

\noindent
There exists $K_0>0 $, a final scaling $\la<c_\infty(\la)<1$ and $\rho_\ve \in \R$ such that
\be\label{INT41a}
\|u( \tilde T_\ve ) - 2^{-1/(m-1)}Q_{c_\infty}( \cdot - \rho_\ve ) \|_{H^1(\R)} \leq K_0 \ve^{1/2}.
\ee
In addition, $\lim_{\la \downarrow \la_0} c_\infty(\la)=1$, $\lim_{\la \uparrow \tilde \la} c_\infty(\la)=\tilde \la$. Moreover, we have the bounds
\be\label{INT42a}
(c_\infty(\la) -\la) T_\ve \leq \rho_\ve \leq (1-\la) T_\ve. 
\ee

\smallskip

\item \emph{Case }$\tilde \la <\la <1$.  

\noindent
Now there exists a constant $K_0>0$, a final scaling $\mu(\la) <c_\infty(\la)<\la$ and $\hat \rho_\ve \in \R$ such that
\be\label{INT41b}
\|u( \tilde T_\ve ) - Q_{c_\infty}( \cdot - \hat \rho_\ve ) \|_{H^1(\R)} \leq K_0 \ve^{1/2}.
\ee
In addition, $\lim_{\la \uparrow 1} c_\infty(\la)=1$. Finally, we have the bounds
\be\label{INT42b}
-K_1(\la) T_\ve \leq \hat \rho_\ve \leq - K_2(\la) T_\ve,   
\ee
valid for $\ve_0$ sufficiently small and some $K_1,K_2>0$.

\end{enumerate}
\end{prop}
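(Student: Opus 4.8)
The plan is to adapt the scheme of Theorem~4.1 in \cite{Mu2} (proved there for $0\leq\la\leq\la_0$) to the regime $\la_0<\la<1$, $\la\ne\tilde\la$; item (1) is literally that theorem, so I concentrate on items (2) and (3). The first step is to build, following Appendix \ref{A}, an approximate solution $\tilde u_{c,\rho}(t)$ of the form (\ref{tildeuu}): a well-modulated soliton $Q_{c(t)}(\cdot-\rho(t))$ dressed with a slowly varying \emph{dispersive tail} $\ve A_{c(t)}(\cdot-\rho(t))$, where $\|A_c\|_{L^1(\R)}=O(\ve^{-1})$ so that $\|\ve A_c\|_{H^1(\R)}=O(\ve^{1/2})$ while $\|\ve A_c\|_{L^1(\R)}=O(1)$. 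Inserting $\tilde u_{c,\rho}$ into (\ref{aKdV}) leaves a residual of size $O(\ve^2)$ in suitable weighted Sobolev norms (of $\mathcal{Y}$ type), \emph{provided} the modulation parameters $(c,\rho)$ obey a system agreeing with (\ref{c})--(\ref{f1}) up to $O(\ve^{10})$; moreover, by (\ref{mTep})--(\ref{hypINT}) one can arrange $\|\tilde u_{c,\rho}(-T_\ve)-u(-T_\ve)\|_{H^1(\R)}=O(\ve^{10})$. The prefactor of the soliton in (\ref{tildeuu}) equals $2^{-1/(m-1)}$ when the soliton eventually escapes to the right (into the region $\{a\equiv 2\}$) and equals $1$ when it is reflected to the left (into $\{a\equiv 1\}$); this accounts for the different coefficients in (\ref{INT41a}) and (\ref{INT41b}).

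The second step is a modulation and bootstrap argument. On the maximal subinterval $I\subset[-T_\ve,\tilde T_\ve]$ on which $\|u(t)-\tilde u(t)\|_{H^1(\R)}\leq\ve^{1/3}$, the implicit function theorem provides $C^1$ parameters $(c(t),\rho(t))$ such that $w(t):=u(t)-\tilde u_{c(t),\rho(t)}(t)$ is orthogonal to the two directions entering the coercivity statement of Lemma \ref{surL} (in the reflection case the second condition is adapted to the weighted geometry used below). Differentiating the orthogonality relations yields the modulation bounds $|c'(t)-\ve f_1(c(t),\rho(t))|+|\rho'(t)-(c(t)-\la)|\leq K(\ve\|w(t)\|_{H^1(\R)}+\ve^2)$. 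To close the bootstrap I use a Weinstein-type functional built, as in \cite{Mu2}, from the conserved energy $E_a[u]$, an almost-monotone mass, and a correction depending on $c(t)$ that absorbs the slow drift of these quantities along the modulated soliton. The choice of mass is dictated by the sign of $c-\la$: one takes $\hat M$ from (\ref{hM}) when $c_\infty(\la)>\la$ and $\mathcal M$ from (\ref{Mback}) when $c_\infty(\la)<\la$, that is, in each case the mass whose monotonicity (\ref{hM2})--(\ref{hM3}) (respectively its $\mathcal M$-analogue, Appendix \ref{Stab}) has the favourable sign throughout $[-T_\ve,\tilde T_\ve]$. Expanding this functional around $\tilde u$, the linear terms in $w$ are eliminated by the orthogonality conditions up to the $O(\ve^2)$ residual, the quadratic form is coercive by Lemma \ref{surL}, and a Gronwall-type argument using $c'=O(\ve)$, $\int\ve|f_1|\,dt=O(1)$ along the trajectory, and the residual bounds yields $\|w(t)\|_{H^1(\R)}\leq K\ve^{1/2}$ on $I$; this closes the bootstrap, so $I=[-T_\ve,\tilde T_\ve]$.

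The third step transfers the asymptotics of the dynamical system. Combining the modulation ODE with a Gronwall estimate --- using that $\ve\int a'(\ve\rho)/a(\ve\rho)\,dt=O(1)$ along the trajectory (this stays bounded even through the turning point in the reflection case) and that $\tilde T_\ve\leq K(\la)T_\ve$ by Lemmas \ref{ODE1}--\ref{ODE2} --- forces $(c(t),\rho(t))$ to remain within $O(\ve^{1/2})$ of the solution $(C,P)$ of (\ref{c}) on $[-T_\ve,\tilde T_\ve]$. By Lemmas \ref{ODE1} and \ref{ODE2}, $C(\tilde T_\ve)=c_\infty(\la)$ up to $O(\ve^{10})$, where $c_\infty(\la)$ is the unique root of (\ref{cinf1}) (respectively (\ref{cinf2})), with $\la<c_\infty(\la)<1$ for $\la_0<\la<\tilde\la$ and $\mu(\la)<c_\infty(\la)<\la$ for $\tilde\la<\la<1$ (the lower bound $\mu(\la)$ being exactly Lemma \ref{ODE}(1)); and $P(\tilde T_\ve)=(1-\la)T_\ve$, respectively $-(1-\la)T_\ve$, by Definition \ref{ET}. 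Writing $u(\tilde T_\ve)$ as the right-hand side of (\ref{tildeuu}) at $t=\tilde T_\ve$ plus $w(\tilde T_\ve)$, and combining $\|Q_{c(\tilde T_\ve)}-Q_{c_\infty}\|_{H^1(\R)}=O(\ve^{1/2})$ with $\|\ve A_{c(\tilde T_\ve)}\|_{H^1(\R)}=O(\ve^{1/2})$ and $\|w(\tilde T_\ve)\|_{H^1(\R)}=O(\ve^{1/2})$, one obtains (\ref{INT41a}) and (\ref{INT41b}) with $\rho_\ve:=\rho(\tilde T_\ve)$ (resp. $\hat\rho_\ve$). The bounds (\ref{INT42a}) and (\ref{INT42b}) follow by integrating $\rho'(t)=c(t)-\la+O(\ve^{1/2})$ and using the monotonicity of $c(t)$ and the position of the turning point $t_0$ from Lemma \ref{ODE}: in the refraction case $c(t)-\la$ decreases from $\sim 1-\la$ down to $\sim c_\infty(\la)-\la>0$, which gives (\ref{INT42a}); in the reflection case $\rho_\ve\approx P(\tilde T_\ve)=-(1-\la)T_\ve$, which lies between $-K_1(\la)T_\ve$ and $-K_2(\la)T_\ve$ for suitable positive constants, giving (\ref{INT42b}). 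Item (1) needs no new argument, being Theorem~4.1 of \cite{Mu2}.

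The main obstacle is the reflection regime $\tilde\la<\la<1$. There the soliton passes through zero velocity at the time $t_0$ where $c(t_0)=\la$ and afterwards propagates backwards, so the monotone mass $\hat M$ --- which was paired with $(c-\la)$ when $c_\infty>\la$ --- now pairs with $(c-\la)$ with the \emph{wrong} sign near $t_0$, and the whole estimate must instead be run with $\mathcal M[u]=\int_\R u^2(t,x)/(2a(\ve x))\,dx$, whose time derivative has a favourable sign regardless of the direction of propagation precisely because $a'>0$; re-proving the coercivity and the energy--mass estimate in this weighted setting (Proposition \ref{Tp1r}, Appendix \ref{Stab}) is the technical heart of the argument. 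Two further points must be controlled: the scaling must be prevented from collapsing, which is why the lower bound $c(t)>\mu(\la)>0$ of Lemma \ref{ODE}(1) is essential (recall that $c(t)$ is monotone decreasing); and, since the dynamics is repulsive here, the soliton spends a long time $\sim T_\ve$ in the interaction zone, so the error must be propagated over the full interval $[-T_\ve,\tilde T_\ve]$ --- which is feasible only thanks to the bound $\tilde T_\ve\leq K(\la)T_\ve$. All three of these mechanisms degenerate as $\la\to\tilde\la$ (in particular the escape time ceases to be controlled, cf. Remark \ref{tildelala}), which is exactly why the value $\la=\tilde\la$ is excluded.
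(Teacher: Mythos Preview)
Your overall architecture (approximate solution, modulation, bootstrap, transfer from the ODE system) is right, and your remarks about $c(t)>\mu(\la)$, about $\tilde T_\ve\le K(\la)T_\ve$, and about the degeneracy at $\la=\tilde\la$ are all to the point. But there are two genuine problems with the way you propose to close the bootstrap on $[-T_\ve,\tilde T_\ve]$.

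First, you have conflated the interaction-region estimate with the post-interaction stability. The dichotomy $\hat M$ versus $\mathcal M$ (and the associated discussion of ``wrong sign near $t_0$'') belongs to Propositions~\ref{Tp1} and~\ref{Tp1r}, which are applied only \emph{after} $t=\tilde T_\ve$, when the soliton sits in a region where $a$ is essentially constant. Inside the interaction region the paper uses a single functional $\mathcal F(t)=\tfrac12\int(z_x^2+c(t)z^2)-\tfrac1{m+1}\int a(\ve x)[(\tilde u+z)^{m+1}-\tilde u^{m+1}-(m+1)\tilde u^m z]$, the same for all $\la\neq\tilde\la$; there is no switch of masses, and the sign of $c(t)-\la$ plays no role in its coercivity. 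The only place where the reflection case differs in the proof of Proposition~\ref{prop:I} is in checking $\int_{-T_\ve}^t\ve e^{-\ve\ga|\rho(s)|}\,ds\le K$, which requires a short three-piece argument around the turning time $t_0$.

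Second, and more seriously, your modulation bound $|c'-\ve f_1|\le K(\ve\|w\|_{H^1}+\ve^2)$ is not what one actually obtains: differentiating the orthogonality $\int zQ_c=0$ leaves a leading term of order $\int e^{-\ga|y|}z^2$, i.e.\ $O(\|z\|^2)$, not $O(\ve\|z\|)$ (see (\ref{c1})). With a bound $|c_1'|\lesssim\|z\|^2$ one gets $\int_{-T_\ve}^{\tilde T_\ve}|c_1'|\lesssim (K^*)^2\ve\cdot T_\ve\sim(K^*)^2$, and the term $\tfrac12 c_1'\int z^2$ in $\mathcal F'$ then destroys the bootstrap. The paper's remedy is a localized virial estimate (Lemma~\ref{VL}): for a suitable weight $\psi_{A_0}(y)$ one has $\partial_t\int z^2\psi_{A_0}\le -\delta_0\int(z_x^2+z^2)e^{-|y|/A_0}+O(\ve^{5/2}e^{-\ve\ga|\rho|})$, which upon integration in time yields the crucial improvement $\int_{-T_\ve}^t|c_1'|\le KK^*\ve$ (equation~(\ref{intc1})). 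This step is absent from your sketch, and without it the Gronwall argument you describe does not close.
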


\medskip

\begin{rem}
The first part of the above Proposition (namely, the case $0\leq \la\leq \la_0$), was proven in \cite{Mu2}. Now we give a different proof which allows us  to find, at least formally, a lower bound on the defect of the soliton-like solution. 
The proof of the two cases involved in the region $\la_0<\la<1$ is new, and requires the results of Section \ref{2a}, in particular Lemmas \ref{ODE1}  and \ref{ODE2}.  Following \cite{Mu2}, we construct an approximate solution up to certain order of accuracy, given by the power of the nonlinearity involved. This is the objective of the next subsection.
\end{rem}

\begin{rem}
Estimate (\ref{INT42b}) on $\hat \rho_\ve$ shows that the soliton solution is, at time $\tilde T_\ve (\sim T_\ve)$, outside the interaction region; moreover, it is on the left hand side. In other words, this estimate proves that the soliton is {\bf reflected} by the potential.
\end{rem}

\bigskip

\subsection{Construction of an approximate solution describing the interaction}\label{sec:2}

We look for $\tilde u(t,x)$, an approximate solution of (\ref{aKdV}), carrying out a specific structure. In particular, we construct $\tilde u$ as a suitable modulation of the soliton $Q(x-(1-\la)t)$, solution of the following gKdV equation: 
\be\label{orig}
u_t +(u_{xx} -\la u + u^m)_x =0.
\ee 
Let $t\in [-T_\ve, \tilde T_\ve]$, $c=c(t)>0$ and $\rho(t)\in \R$ be bounded functions to be chosen later, and
\be\label{defALPHA}
    y:=x-\rho(t) \quad \hbox{and} \quad     R(t,x): =\frac {Q_{c(t)}(y)}{\tilde a(\ve \rho(t))},
\ee
where $\tilde a (s) := a^{\frac 1{m-1}}(s).$ The parameter $\tilde a$ describes the shape variation of the soliton through the interaction. Concerning the parameters $c(t)$ and $\rho(t)$, we will assume that for all $t\in [-T_\ve, \tilde T_\ve]$,
\be\label{r1}
|c(t)-C(t)| +  |\rho'(t) -P'(t) |\leq \ve^{1/100}.
\ee
with $(C(t),P(t))$ given from Lemmas \ref{ODE0} and \ref{ODE}. Later we will improve these constraints by constructing parameters $(c(t), \rho(t))$ with better estimates. 

\medskip

As in \cite{Mu2}, the form of $\tilde u(t,x)$ will be the sum of the soliton plus a correction term:
\be\label{defv} 
\tilde u(t,x) :=R(t,x)+w(t,x),
\ee
where $w$ is given by
\be\label{defW}
    w(t,x):= \begin{cases} \ve d(t)A_{c} (y) , \quad \hbox{ if $m=2,4$}, \\  \ve d(t)A_{c} (y)  + \ve^2B_c(t,y), \quad \hbox{ if $m=3$}, \end{cases}
\ee
and 
\be\label{dd}
d(t) := \frac{a'}{\tilde a^m}(\ve \rho(t)).
\ee
Here $A_{c}(y)$ and   $B_c(t,y)$ are unknown functions. 

\medskip

\begin{rem}
In \cite{Mu2} we looked for an approximate solution of the form $w(t) = \ve d(t) A_c(y)$, for all $m=2,3$ and $4$. In this opportunity,  we require the inclusion of a second order term $\ve^2 B_c$ in the cubic case, in order to improve the quality of our approximation. In the other cases, namely $m=2$ and $4$, we just need to consider a unique, special choice of $A_c$ to obtain a difference with the dynamics of our solution from a hypothetical, completely pure soliton solution.  
\end{rem}

\medskip

We want to measure the size of the error produced by inserting $\tilde u$ as defined in (\ref{defW}) in the equation (\ref{aKdV}). For this, let 
\be\label{2.2bis}
S[\tilde u](t,x) := \tilde u_t + (\tilde u_{xx} -\la \tilde u +a(\ve x) \tilde u^{m})_x.
\ee
Our first result is the following

\begin{prop}[Improved decomposition of $S{[}\tilde u{]}$, see also \cite{Mu2}]\label{prop:decomp}~

Suppose $(c(t), \rho(t))$ satisfying (\ref{r1}). There exists $\ga>0$ independent of $\ve$ small, and an approximate solution $\tilde u$ of the form (\ref{defv})-(\ref{defW})-(\ref{dd}), such that for all $t\in [-T_\ve, \tilde T_\ve]$,

\medskip

\begin{enumerate}
\item The error term (\ref{2.2bis}) is given by
\bea\label{Decomp}
S[\tilde u](t,x) & = &   (c'(t) - \ve f_1(t) -\ve^2 \delta_{m,3} f_3(t))\partial_c\tilde u  \nonu\\
& & +\  (\rho'(t) -c(t)+ \la -  \ve f_2(t) -\ve^2 \delta_{m,3}f_4(t)) \partial_\rho \tilde u + \tilde S[\tilde u](t,x), 
\eea
with $\partial_\rho \tilde u:= \partial_\rho R -w_y + O(\ve^2 e^{-\ve\ga|\rho(t)|} A_c)$, and $\delta_{m,3}$ being the Kronecker's symbol $(\delta_{3,3} =1, \delta_{2,3} = \delta_{4,3} =0)$. 

\medskip

\item $A_c, B_c$ satisfy 
\be\label{Ac}
\begin{cases}
A_c, \partial_c A_c \in L^\infty(\R), \quad A_c'\in \mathcal Y,  \\
|A_c(y) | \leq K e^{-\ga y} \; \hbox{ as } y\to +\infty, \quad \lim_{-\infty} A_c  \neq 0,\\
\displaystyle{\int_\R Q_c(y) A_c(y) =\int_\R yQ_c(y) A_c(y) =0;}
\end{cases}
\ee
and for $m=3$,
\be\label{Bc}
\begin{cases}
B_c'(t,\cdot) \in L^\infty(\R),\quad  |B_c(t, y) | \leq K e^{-\ga y}e^{-\ve \ga|\rho(t)|} \; \hbox{ as } y\to +\infty, \\
|B_c(t,y)| +|\partial_c B_c(t,y)| \leq K|y| e^{-\ve \ga|\rho(t)|}, \; \hbox{ as } y\to -\infty, \\
\displaystyle{\int_\R Q_c(y) B_c(y) =\int_\R yQ_c(y) B_c(y) =0.}
\end{cases}
\ee

\medskip

\item In addition, $f_1 (t)=f_1(c(t),\rho(t))$ is given by (\ref{f1}),
\be\label{f2}
f_2(t) = f_2(c(t), \rho(t)) := - \frac{\xi_m}{\sqrt{c(t)}} (\la - 3 \la_0c(t)) \frac{a'}{a}(\ve \rho(t)), \quad \xi_m := \frac{(3-m)}{(5-m)^2} \frac{(\int_\R Q)^2}{\int_\R Q^2};
\ee
\be\label{f3}
f_3(t) = f_3(c(t), \rho(t)) :=  \frac{\tilde\xi_3}{\sqrt{c(t)}} (c(t)-\la) \frac{a'^2}{a^2}(\ve \rho(t)), \qquad  \tilde \xi_3 := \frac \la2 \frac{(\int_\R Q)^2}{\int_\R Q^2},
\ee
and $f_4(t)$ satisfies the decomposition
\be\label{f4}
f_4(t) := f_4^1(t)\frac{a'^2}{a^2}(\ve \rho(t)) + f_4^2(t) \frac{a''}{a}(\ve \rho(t)), \quad  |f_4^i (t)| \leq K.
\ee
\item Finally, $\tilde S[\tilde u] (t,\cdot )$ is a polynomially growing function as $y\to -\infty$, and exponentially decaying as $y\to +\infty$. It satisfies\footnote{See Step 7 in Appendix \ref{A} for a precise description.}
\be\label{Stilde}
\| \tilde S[\tilde u] (t,\cdot )\|_{L^2(y\geq -\frac 3\ve)}  + \| \partial_x \tilde S[\tilde u] (t,\cdot )\|_{L^2(y\geq -\frac 3\ve)}  \leq K \ve^{3/2} e^{-\ve\ga|\rho(t)|} + K\ve^3.
\ee
Moreover, one has the improved estimates
\be\label{SIn}
\abs{\int_\R Q_c \tilde S[\tilde u]} +\abs{\int_\R yQ_c \tilde S[\tilde u]} \leq K \ve^2 e^{-\ve\ga|\rho(t)|} + K\ve^3,
\ee
for the quadratic and quartic cases, and 
\be\label{SIn3}
\abs{\int_\R Q_c \tilde S[\tilde u]} +\abs{\int_\R yQ_c \tilde S[\tilde u]} \leq K \ve^3 e^{-\ve\ga|\rho(t)|} + K\ve^4,
\ee
in the cubic case.
\end{enumerate}
\end{prop}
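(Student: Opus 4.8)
The plan is to construct $\tilde u$ by a systematic expansion of $S[\tilde u]$ in powers of $\ve$, in the spirit of the analogous construction in \cite{Mu2}; I sketch the scheme and where each object is pinned down. First I would insert $\tilde u=R+w$, with $R$ and $w$ as in (\ref{defALPHA})--(\ref{dd}), into (\ref{2.2bis}), Taylor-expand $a(\ve x)=a(\ve\rho)+\ve y\,a'(\ve\rho)+\tfrac12\ve^2y^2a''(\ve\rho)+\cdots$ around $x=\rho(t)$ (here $y=x-\rho(t)$), and use the rescaled soliton equation $Q_c''-cQ_c+Q_c^m=0$ together with the algebraic identity $a(\ve\rho)\,\tilde a^{-m}(\ve\rho)=\tilde a^{-1}(\ve\rho)$ --- which is exactly what singles out $\tilde a=a^{1/(m-1)}$ as the right shape factor. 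At order $\ve^0$ the bulk then collapses to $\tilde a^{-1}(\ve\rho)\bigl[(c-\la-\rho')Q_c'+c'\,\Lambda Q_c\bigr]$, which, since $\partial_cR=\tilde a^{-1}\Lambda Q_c$ and $\partial_\rho R=-\tilde a^{-1}Q_c'+O(\ve)$, is precisely the transport part $c'\partial_c\tilde u+(\rho'-c+\la)\partial_\rho\tilde u$ of (\ref{Decomp}) --- the $-w_y$ and $O(\ve^2e^{-\ve\ga|\rho|}A_c)$ corrections to $\partial_\rho\tilde u$ absorbing $\partial_tw$ and the $\ve\rho$-dependence of $\tilde a$ and $d$. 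The leftover terms I would sort by powers of $\ve$.

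The heart of the argument is the order-$\ve$ identity (and, for $m=3$, the order-$\ve^2$ one). Collecting the $O(\ve)$ terms that are not of transport form produces a source $G_1=G_1(c,\rho;y)$ built out of $Q_c$, $Q_c^m$, $yQ_c^m$ and their derivatives, exponentially decaying as $y\to+\infty$, exponentially decaying as $y\to-\infty$ but with $\int_\R G_1\neq0$. Matching forces $w=\ve dA_c$ to solve, at this order, $d\,\partial_y(\mathcal L_cA_c)=-G_1+f_1\,\partial_cR+f_2\,\partial_\rho R$, i.e. $\mathcal L_cA_c$ must be a primitive of the right-hand side. Since $\partial_y\mathcal L_c$ has generalized kernel spanned by $Q_c'$ and $\Lambda Q_c$ and $\mathcal L_c(\Lambda Q_c)=-Q_c$ (Lemma \ref{surL}), its obstructions are carried by the two modulation laws: $f_1$ (the $\partial_c$-correction, absorbing the $Q_c$-content via $\Lambda Q_c$) and $f_2$ (the $\partial_\rho$-correction, absorbing the $Q_c'$-content). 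Evaluating the resulting scalar products with the explicit identities for $\int Q^2$, $\int(Q')^2$, $\int Q^{m+1}$ of Appendix \ref{IdQ}, and with $\la_0=\tfrac{5-m}{m+3}$, $p=\tfrac4{m+3}$, is what yields precisely (\ref{f1}) and (\ref{f2}); no other choice makes the $A_c$-equation solvable in the class of polynomially growing functions, by Lemma \ref{surL}(3). With $f_1,f_2$ fixed, that lemma then produces a bounded $A_c$; the remaining additive freedom (an integration constant and a multiple $\mu(t)Q_c'$) I would spend to enforce the exponential decay $|A_c(y)|\le Ke^{-\ga y}$ as $y\to+\infty$ and the normalizations $\int Q_cA_c=\int yQ_cA_c=0$ of (\ref{Ac}) --- the latter being exactly the orthogonality conditions under which the coercivity Lemma \ref{surL}(5)(b) will later apply to $w$. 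The nonzero limit $\lim_{-\infty}A_c$ is then forced: requiring $\mathcal L_cA_c\to0$ at $+\infty$ pins $\mathcal L_cA_c$ to a nonzero limit (minus the net flux $\int_\R G_1$) at $-\infty$, so $A_c$ tends to a nonzero shelf value --- the rigorous incarnation of the dispersive tail predicted in \cite{KN1}; $A_c'\in\mathcal Y$ and $\partial_cA_c\in L^\infty$ then follow by differentiating the defining equation.

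For $m=3$ I would repeat this one order lower. The terms surviving at order $\ve^2$ --- the Taylor remainder of $a(\ve x)$, the back-reaction of $\ve dA_c$ through the cubic nonlinearity, and $\partial_t(\ve dA_c)$ --- form a source carrying the factor $e^{-\ve\ga|\rho|}$ near the soliton plus genuinely $O(\ve^2)$ far pieces; adjoining the correction $\ve^2B_c$, I would read off $f_3$ (the $\partial_c$-coefficient, of the form (\ref{f3})) and $f_4$ (the $\partial_\rho$-coefficient, with the splitting (\ref{f4})) from the two solvability conditions, and solve $\mathcal L_cB_c=(\text{primitive})$ with the normalizations in (\ref{Bc}); the $|y|e^{-\ve\ga|\rho|}$ growth of $B_c$ as $y\to-\infty$ is inherited from the $y$-linear growth of its source there, and the global prefactor $e^{-\ve\ga|\rho|}$ from the fact that this second-order source switches on only once $a'\neq0$ has shifted the soliton.

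Finally I would set $\tilde S[\tilde u]$ equal to everything left over. By construction it consists of order-$\ve^2$ terms for $m=2,4$ (order $\ve^3$ for $m=3$) localized near the soliton and carrying $e^{-\ve\ga|\rho|}$, together with far-field Taylor remainders of size $O(\ve^3)$ (resp. $O(\ve^4)$), which gives (\ref{Stilde}) on $\{y\ge-3/\ve\}$ --- the cutoff being unavoidable because $w$ does not decay as $y\to-\infty$. The sharpened bounds (\ref{SIn}), (\ref{SIn3}) should come almost for free: the very solvability conditions that fixed $f_1,\dots,f_4$ make the leading part of $\tilde S$ orthogonal to $Q_c$ and to $yQ_c$ up to one (resp. two) additional powers of $\ve$, so pairing against $Q_c$ or $yQ_c$ annihilates the dominant contribution. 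The hard part will be the middle step: verifying by explicit $Q$-integrals that the source --- \emph{after} the single integration dictated by the KdV structure $\partial_y\mathcal L_c$ rather than $\mathcal L_c$ alone --- lies in the range of $\mathcal L_c$ on polynomially growing functions for the precise laws (\ref{f1})--(\ref{f4}), while simultaneously controlling the non-decaying tail of $A_c$ and the linear growth of $B_c$ as $y\to-\infty$ sharply enough to close both the $L^2(y\ge-3/\ve)$ bound (\ref{Stilde}) and the orthogonality estimates (\ref{SIn})--(\ref{SIn3}); the rest is careful bookkeeping of Taylor remainders.
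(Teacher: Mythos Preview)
Your outline matches the paper's proof (Appendix \ref{A}) closely: the same splitting $S[\tilde u]=\mathbf{I}+\mathbf{II}+\mathbf{III}$, Taylor expansion of $a(\ve x)$ about $\rho(t)$, the same hierarchy of linear problems $(\mathcal L A_c)_y=F_1$ and (for $m=3$) $(\mathcal L B_c)_y=\tilde F_2$, and the same ansatz $A_c=\beta_c(\varphi_c-\sqrt c)+\hat A_c+\mu_cQ_c'+\delta_c\Lambda Q_c$.

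There is, however, one point where your bookkeeping is off and would not close if followed literally. You describe $f_1$ and $f_2$ as the two ``obstructions'' for the $A_c$-equation, and then propose to spend the remaining additive freedom (an integration constant and a multiple of $Q_c'$) on \emph{three} conditions: decay at $+\infty$ and the two orthogonalities in (\ref{Ac}). That count is one short. In fact $\partial_y\mathcal L$ has a \emph{single} solvability constraint on this class of data, namely $\int_\R Q_c F_1=0$, and this alone fixes $f_1$ (cf.\ (\ref{Or})). The extra term $-f_2 Q_c'/\tilde a$ you insert into the source is odd, hence automatically orthogonal to $Q_c$, so it does not affect solvability at all; its role --- realized in the paper through the summand $\delta_c\Lambda Q_c$ in the ansatz, since $(\mathcal L\Lambda Q_c)'=-Q_c'$ --- is to supply one more degree of freedom in the \emph{solution}, which is then spent on $\int_\R Q_cA_c=0$. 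Accordingly, the paper computes $f_2$ not by pairing the source against a cokernel element, but by pairing the equation against $\int_{-\infty}^y\Lambda Q_c$ and imposing $\int Q_cA_c=0$ a posteriori (see (\ref{Beg}) and the computation following it in Lemma \ref{lem:omega}). The decay at $+\infty$ is not a separate condition but is built into the ansatz via $\varphi_c-\sqrt c$. The same remark applies verbatim at second order: $f_3$ is the genuine solvability constraint $\int Q_c\tilde F_2=0$ (see (\ref{OR})), while $f_4$ is fixed afterwards by $\int Q_cB_c=0$. Once you reassign $f_2$ and $f_4$ to this role, your scheme is the paper's.
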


\medskip

\begin{rem}\label{4.4}
A formal \emph{lower bound} in the defect of the soliton solution can be seen as a consequence of the fact that $f_2(t)\neq  0$, and $f_3(t)\neq 0$ for $m=3$. These perturbations of the dynamical system (\ref{ODE}) imply the lower bounds suggested in (\ref{LBBB}). That is the reason because we perform a second order improvement of the solution in the cubic case.
\end{rem}

\begin{proof}
A similar proof is contained in \cite{Mu2}. Now we improve our result by adding the terms $f_2(t), f_3(t)$ and $f_4(t)$ above, which will be of great importance \emph{to quantify} the lower bound on the defect. For the sake of clarity we include the proof in Appendix \ref{A}. 
\end{proof}

Since $\tilde u \not\in L^2(\R)$, we need to perform a correction in our approximate solution, in order to obtain a valid $L^2$ solution.

\subsection{Correction to the solution $\tilde u$}

The next results are contained in \cite{Mu2}. However, we need some new estimates. Consider a cutoff function $\eta \in C^\infty (\R)$ satisfying the following properties:
\be\label{eta}
\begin{cases}
0\leq \eta (s) \leq 1, \quad 0\leq  \eta' (s) \leq 1, \; \hbox{ for any } s\in \R;\\
\eta(s)\equiv 0 \; \hbox{ for } s\leq -1, \quad  \eta(s)\equiv 1 \; \hbox{ for } s\geq 1.
\end{cases}
\ee 
Define 
\be\label{etac}
\eta_\ve (y) := \eta( \ve y +  2 ),
\ee
and for $w=w( t, y)$ the first order correction constructed in Lemma \ref{lem:omega}, {\bf redefine}
\be\label{hatu}
\tilde u(t,x) :=  \eta_\ve (y) \tilde u(t,x) =\eta_\ve (y) (R(t,x) + w(t,x)),
\ee
and similarly for $R(t)$ and $w(t)$. Note that, by definition, 
\be\label{newb}
\tilde u(t, x) = 0 \quad \hbox{ for all } y\leq -\frac 3\ve.
\ee

The following Proposition deals with the error associated to this cut-off function, and the new approximate solution $\tilde u$.

\medskip

\begin{prop}[Final approximate solution for (\ref{aKdV}), \cite{Mu2}]\label{CV}~

There exist constants $\ve_0,\ga, K>0$ such that for all $0<\ve <\ve_0$ the following holds. 

\medskip

\begin{enumerate}

\item Consider the localized function $\tilde u(t) = R(t) +w(t)$ defined in (\ref{etac})-(\ref{hatu}). Then we have

\begin{enumerate}
\item \emph{$L^2$-solution}. For all $t\in [-T_\ve, \tilde T_\ve]$, $w(t, \cdot ) \in H^1(\R)$, with
\be\label{H1}
\|w(t, \cdot ) \|_{H^1(\R)} \leq K \ve^{1/2}e^{-\ga \ve |\rho(t)|}.
\ee
\item \emph{Almost orthogonality}. For all $t\in [-T_\ve, \tilde T_\ve]$ one has
\be\label{AO}
\abs{\int_\R w(t,x)Q_c(y)dx} + \abs{\int_\R yw(t,x)Q_c(y)dx} \leq K \ve^{10}.
\ee
\end{enumerate}

\item \emph{Almost solution}. The error associated to the new function $\tilde u(t)$ satisfies
\bee
S[\tilde u] & = & (c'(t) - \ve f_1(t) -\ve^2 \delta_{m,3} f_3(t))\partial_c\tilde u   \\
& & \qquad + (\rho'(t) -c(t)+ \la -  \ve f_2(t) -\ve^2 \delta_{m,3}f_4(t)) (\partial_\rho  \tilde u +  \ve  \eta_\ve ' \tilde u)+ \tilde S[\tilde u](t),
\eee
with $\|\ve \eta_\ve ' \tilde u \|_{H^1(\R)} \leq K\ve^{3/2} e^{-\ve\ga|\rho(t)|} $, and
\be\label{SH2}
\| \tilde S[\tilde u](t) \|_{H^1(\R)} \leq K \ve^{3/2}e^{-\ga \ve |\rho(t)|}.
\ee
Finally, estimates (\ref{SIn})-(\ref{SIn3}) remains unchanged.
\end{enumerate}

\end{prop}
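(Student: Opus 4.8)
Write $\check u=\check R+\check w$ for the \emph{untruncated} approximate solution produced by Proposition \ref{prop:decomp} (so $\check w$ is the function appearing in (\ref{defW})--(\ref{dd})), so that the object redefined in (\ref{hatu}) is $\tilde u=\eta_\ve\check u$, and likewise $R=\eta_\ve\check R$, $w=\eta_\ve\check w$. The whole argument rests on three elementary facts. First, by (\ref{eta})--(\ref{etac}), $\eta_\ve$ is supported in $\{y\geq-3\ve^{-1}\}$ and equals $1$ on $\{y>-\ve^{-1}\}$, and $\abs{\eta_\ve^{(j)}(y)}\leq K_j\ve^{j}\,\mathbf 1_{[-3\ve^{-1},-\ve^{-1}]}(y)$, so each derivative of $\eta_\ve$ gains a power of $\ve$ while living on a window of measure $2\ve^{-1}$; in particular $\norm{\eta_\ve^{(j)}}_{L^2}\leq K_j\ve^{\,j-1/2}$. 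Second, by (\ref{ahyp}) the amplitude $d(t)=(a'/\tilde a^{m})(\ve\rho(t))$ satisfies $\abs{d(t)}+\abs{d'(t)}\leq Ke^{-\ga\ve\abs{\rho(t)}}$. Third, from (\ref{r1}) together with Lemma \ref{ODE} (resp. Lemma \ref{ODE0} when $0\leq\la\leq\la_0$) the scaling is bounded below, $c(t)\geq c_\ast>0$ uniformly in $t\in[-T_\ve,\tilde T_\ve]$ and in $\ve$, hence $\check R=Q_{c}(y)/\tilde a(\ve\rho)$ obeys $\abs{\partial_x^{j}\partial_c^{k}\check R(t,y)}\leq Ke^{-\ga/\ve}$ on $\{y\leq-\ve^{-1}\}$, i.e. the soliton profile is already exponentially negligible exactly where the cut-off acts; moreover $A_c,\partial_cA_c\in L^\infty$, $A_c'\in\mathcal Y$, $\int_\R A_cQ_c=\int_\R yA_cQ_c=0$, with the analogous statements for $B_c$ from (\ref{Bc}).

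\smallskip
\noindent For part $(1)$ I estimate (\ref{H1}) term by term: $\norm{w}_{L^2}^2\leq2\ve^{2}\abs{d}^{2}\norm{\eta_\ve A_c}_{L^2}^{2}+2\ve^{4}\delta_{m,3}\norm{\eta_\ve B_c}_{L^2}^{2}$, and since $A_c$ is a \emph{bounded flat tail} on the window $[-3\ve^{-1},0]$ of length $O(\ve^{-1})$ and decays exponentially for $y>0$, one gets $\norm{\eta_\ve A_c}_{L^2}^{2}\leq K\ve^{-1}$, while the linear-growth bound in (\ref{Bc}) gives $\norm{\eta_\ve B_c}_{L^2}^{2}\leq K\ve^{-3}e^{-2\ga\ve\abs{\rho}}$; hence $\norm{w}_{L^2}\leq K\ve^{1/2}e^{-\ga\ve\abs{\rho}}$. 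For $w_x=\eta_\ve'\check w+\eta_\ve\check w_x$ the first piece costs $\norm{\eta_\ve'}_{L^2}\ve\abs{d}\norm{A_c}_{L^\infty}\leq K\ve^{3/2}e^{-\ga\ve\abs{\rho}}$ and the second $\ve\abs{d}\norm{A_c'}_{L^2}\leq K\ve\, e^{-\ga\ve\abs{\rho}}$ (plus harmless contributions from $B_c$), which proves (\ref{H1}). For the almost-orthogonality (\ref{AO}): by (\ref{Ac})--(\ref{Bc}) the untruncated $\check w$ is \emph{exactly} orthogonal to $Q_c$ and $yQ_c$, so $\int_\R wQ_c\,dx=-\int_\R(1-\eta_\ve)\check wQ_c\,dx$, and on $\supp(1-\eta_\ve)\subset\{y\leq-\ve^{-1}\}$ the factor $\abs{Q_c(y)}\leq Ke^{-\ga/\ve}$ defeats the at most polynomial growth of $\abs{\check w}$; integrating, $\abs{\int_\R wQ_c\,dx}+\abs{\int_\R ywQ_c\,dx}\leq Ke^{-\ga/(2\ve)}\leq K\ve^{10}$, which is (\ref{AO}).

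\smallskip
\noindent For part $(2)$ I split $S[\tilde u]$ over $\{y>-\ve^{-1}\}$ and $\{y\leq-\ve^{-1}\}$. On the first (open) set $\eta_\ve\equiv1$, hence $\tilde u=\check u$ and, $S$ being a local differential operator, $S[\tilde u]=S[\check u]$ there; Proposition \ref{prop:decomp} then gives exactly the asserted decomposition, with $\partial_c\tilde u=\partial_c\check u$, $\partial_\rho\tilde u=\partial_\rho\check u$, $\eta_\ve'\equiv0$, and $\tilde S[\check u]$ obeying (\ref{Stilde}). On the second set I expand $S[\eta_\ve\check u]$ by the Leibniz rule, using $\check u=\check w+O(e^{-\ga/\ve})$ there: every resulting term either carries at least one derivative of $\eta_\ve$ --- hence a factor $\ve$ times the window of measure $2\ve^{-1}$, multiplied by something of size $\leq K\ve e^{-\ga\ve\abs{\rho}}$ on that window --- or is one of the genuinely interior pieces $\eta_\ve\partial_t\check w$, $\eta_\ve\partial_x^{3}\check w$, $\partial_x(a\,\eta_\ve^{m}\check w^{m})$, whose $y$-derivatives involve $A_c',A_c'',\dots\in\mathcal Y$ (and, for $m=3$, the explicit $B_c$ of Appendix \ref{A}) and are therefore $\leq K\ve^{2}e^{-\ga\ve\abs{\rho}}$ pointwise; summing the $L^2(\{y\leq-\ve^{-1}\})$-norms and using $\ve\cdot\ve^{-1/2}=\ve^{1/2}$ bounds this contribution by $K\ve^{3/2}e^{-\ga\ve\abs{\rho}}$ in $H^1$. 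The one term not absorbed into the error is $-\eta_\ve'\check u$, coming from $\partial_\rho(\eta_\ve\check u)=\eta_\ve\,\partial_\rho\check u-\eta_\ve'\check u$: it is collected into the coefficient of $\rho'-c+\la-\ve f_2-\ve^{2}\delta_{m,3}f_4$, producing the extra summand $\ve\eta_\ve'\tilde u$, for which $\norm{\ve\eta_\ve'\tilde u}_{H^1}\leq K\ve^{5/2}e^{-\ga\ve\abs{\rho}}\leq K\ve^{3/2}e^{-\ga\ve\abs{\rho}}$. Setting $\tilde S[\tilde u]:=\mathbf 1_{\{y>-\ve^{-1}\}}\tilde S[\check u]+\mathbf 1_{\{y\leq-\ve^{-1}\}}\big(S[\eta_\ve\check u]\text{ with the two modulation terms removed}\big)$ then gives (\ref{SH2}); and since $\tilde S[\tilde u]$ differs from $\tilde S[\check u]$ only on $\{y\leq-\ve^{-1}\}$, where $\abs{Q_c}+\abs{yQ_c}\leq Ke^{-\ga/\ve}$, the scalar products $\int_\R Q_c\tilde S[\tilde u]$ and $\int_\R yQ_c\tilde S[\tilde u]$ change by at most $Ke^{-\ga/(2\ve)}$, so (\ref{SIn})--(\ref{SIn3}) persist unchanged.

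\smallskip
\noindent The genuine difficulty is the $\ve$-power bookkeeping in the tail window $[-3\ve^{-1},-\ve^{-1}]$: each cut-off derivative gains a factor $\ve$, but the flat tail $\lim_{-\infty}A_c\neq0$ provides no spatial decay, and one must check that the combined $H^1$ loss lands exactly at order $\ve^{3/2}e^{-\ga\ve\abs{\rho}}$ and the loss on the two orthogonality conditions only at order $\ve^{10}$, not larger; in the cubic case this additionally requires the explicit form of $B_c$ from Appendix \ref{A} in order to control $\partial_x^{2}B_c$, which (\ref{Bc}) bounds only indirectly. Everything else is routine once the three facts recorded in the first paragraph are in hand.
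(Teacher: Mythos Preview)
Your proof is correct and follows essentially the same approach as the paper. The three key ingredients you isolate --- (i) $\eta_\ve^{(j)}$ gains $\ve^j$ on a window of measure $2\ve^{-1}$, (ii) the amplitude $d(t)$ carries $e^{-\ga\ve|\rho(t)|}$, (iii) $Q_c$ is exponentially negligible on $\{y\leq-\ve^{-1}\}$ while $\check w$ is only $O(\ve)$ there --- are exactly what the paper uses, and your treatment of (\ref{H1}) and (\ref{AO}) matches the paper's almost line by line.

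For part~(2) the paper's organisation is slightly cleaner than your regional split: instead of working on $\{y>-\ve^{-1}\}$ and $\{y\leq-\ve^{-1}\}$ separately, the paper writes the \emph{global} commutator identity
\[
S[\eta_\ve\check u]=\eta_\ve S[\check u]+(\eta_\ve)_t\check u+3\ve\eta_\ve'\check u_{xx}+3\ve^2\eta_\ve''\check u_x+\ve^3\eta_\ve'''\check u-\la\ve\eta_\ve'\check u+\ve\eta_\ve' a(\ve x)\check u^m,
\]
plugs in Proposition~\ref{prop:decomp} for $S[\check u]$, and then rewrites $\eta_\ve\partial_c\check u=\partial_c(\eta_\ve\check u)$ and $\eta_\ve\partial_\rho\check u=\partial_\rho(\eta_\ve\check u)+\ve\eta_\ve'\check u$. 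This yields directly $\tilde S[\tilde u]=\eta_\ve\tilde S[\check u]+(\text{commutator terms})$, so the modulation coefficients $(c'-\ve f_1-\dots)$ and $(\rho'-c+\la-\dots)$ never contaminate $\tilde S[\tilde u]$; in your split you must check separately that the modulation directions $\partial_c\tilde u,\partial_\rho\tilde u$ are themselves small on the tail window (they are, being $O(\ve e^{-\ga\ve|\rho|})$ there), which you leave implicit. Two minor imprecisions: your list of ``interior pieces'' omits $-\la\eta_\ve\check w_x$, and the justification ``whose $y$-derivatives involve $A_c',A_c'',\dots\in\mathcal Y$'' does not cover $\eta_\ve\partial_t\check w$, whose dominant part $\ve d'(t)A_c$ has the flat tail of $A_c$; the correct reason that term is $O(\ve^2e^{-\ga\ve|\rho|})$ is $|d'(t)|\leq K\ve e^{-\ga\ve|\rho|}$, as you recorded earlier. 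None of this affects the validity of the argument.
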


\medskip

\begin{proof}
The proof of (\ref{H1}) follows from a direct computation. Indeed,
$$
\|w(t) \eta_\ve\|_{H^1(\R)} \leq K \|w(t)\|_{H^1(y\geq -\frac 3\ve)} ,
$$
but from (\ref{Ac})-(\ref{Bc}),
$$
\| \ve d(t) A_c(y) + \ve^2 B_c(t,y) \|_{H^1(y\geq -\frac 3\ve)}  \leq K\ve^{1/2}e^{-\ve\ga|\rho(t)|}.
$$

\medskip

Let us consider now (\ref{AO}). Here we have, using (\ref{Ac}), 
$$
\int_\R w(t,x) \eta_\ve (y) Q_c(y) = \int_\R w(t,x) (\eta (\ve y +2)-1) Q_c(y). 
$$
Note that $\eta (\ve y +2)-1 \equiv 0$ for $y\geq -\frac 1\ve$. Using the exponential decay of $Q_c(y)$, we have
\bee
\abs{\int_\R w(t,x) \eta_\ve (y) Q_c(y)}  & \leq & K \int_{y\leq -\frac 2\ve}  \ve |y| e^{\sqrt{c} y} + K \int_{y\in (-\frac 2\ve, -\frac 1\ve)}  \ve |y| e^{- \frac 12 (\ve y+2)} e^{\sqrt{c} y} \\
& \leq & K e^{-\ga/\ve} \leq K\ve^{10}. 
\eee
The proofs for $yA_c$, $B_c$ and $yB_c$ are similar. We skip the details.

\medskip

For the proof of (\ref{SH2}), we proceed as follows. First of all, a simple computation shows that
$$
S[\eta_\ve \tilde u]  =   \eta_\ve S[\tilde u] + (\eta_\ve)_t \tilde u + 3\ve \eta_\ve' \tilde u_{xx} + 3\ve^2 \eta_\ve^{(2)} \tilde u_{x} + \ve^{3} \eta_\ve^{(3)} \tilde u -\la \ve \eta_\ve' \tilde u + \ve \eta_\ve' a(\ve x) \tilde u^m .
$$
Since $\supp \eta_\ve^{(k)} \subseteq [-\frac 3\ve, -\frac 1\ve]$ for $k=1,2$ and $3$, we have
\bee
& & 3\ve \eta_\ve' \tilde u_{xx} + 3\ve^2 \eta_\ve^{(2)} \tilde u_{x} + \ve^{3} \eta_\ve^{(3)} \tilde u -\la \ve \eta_\ve' \tilde u + \ve \eta_\ve' a(\ve x) \tilde u^m = \\
& & \qquad =  O_{H^1(\R)} (\ve^{3/2} e^{-\ve\ga|\rho(t)|}) + O_{H^1(\R)}(\ve^{10}).
\eee
Similarly, from the definition of $\rho'(t)$ and (\ref{r1})
\bee
(\eta_\ve)_t \tilde u & = & -\rho'(t)\ve \eta_\ve' \tilde u \\
&  = &  O_{H^1(\R)}(\ve^{3/2} e^{-\ve\ga|\rho(t)|}) + O_{H^1(\R)}(\ve^{10}).
\eee
Collecting the above terms, we have
$$
S[\eta_\ve \tilde u]  =   \eta_\ve S[\tilde u]+ O_{H^1(\R)}(\ve^{3/2} e^{-\ve\ga|\rho(t)|}) + O_{H^1(\R)}(\ve^{10}).
$$
Finally, from the decomposition (\ref{Decomp}), one has $S[\tilde u] = \hbox{dynamical system} + \tilde S[\tilde u]$, with
$$
\eta_\ve \tilde S[\tilde u] =  O_{H^1(\R)}(\ve^{3/2} e^{-\ve\ga|\rho(t)|} + \ve^3).
$$
Indeed, we have, from (\ref{Stilde}), (\ref{dd}) and (\ref{Ac}),
$$
\| \eta_\ve \tilde S[\tilde u] \|_{H^1(\R)} \leq K\ve^{3/2} e^{-\ve\ga|\rho(t)|} + K \ve^3.
$$
Finally, one has
\bee
& & \eta_\ve \Big[  (c'(t) - \ve f_1(t) -\ve^2 \delta_{m,3} f_3(t))\partial_c\tilde u    +\ (\rho'(t) -c(t)+ \la -  \ve f_2(t) -\ve^2 \delta_{m,3}f_4(t)) \partial_\rho \tilde u \Big]   \\
& &\qquad  = (c'(t) - \ve f_1(t) -\ve^2 \delta_{m,3} f_3(t))\partial_c(\eta_\ve\tilde u) +  (\rho'(t) -c(t)+ \la -  \ve f_2(t) -\ve^2 \delta_{m,3}f_4(t)) \partial_\rho (\eta_\ve \tilde u) \\
& & \qquad \quad  + \ \ve (\rho'(t) -c(t)+ \la -  \ve f_2(t) -\ve^2 \delta_{m,3}f_4(t)) \eta_\ve ' \tilde u.
\eee
Since $\ve \eta_\ve ' \tilde u =O_{H^1(\R)}(\ve^{3/2} e^{-\ve\ga|\rho(t)|}) $, from this last estimate, we get the final conclusion. 
\end{proof}

\medskip

\begin{rem}
Note that, even under a second order term $(=\ve^2 B_c)$ in our approximate solution $\tilde u$, \emph{we have no chance} of improving the associated error, and we obtain the same result as in \cite{Mu2}, namely $O(\ve^{1/2})$. We believe that this phenomenon is a consequence of the fact that, since $A_c$ is not localized, we have lost most of the accuracy of $\tilde u$, in the standard energy space. Further improvements should consider e.g. a new, more accurate description of the correction term $w(t)$ in $H^1(\R)$.  
\end{rem}

\medskip

\subsection{$H^1$-estimates}

In this subsection we recall some estimates concerning our approximate solution.

\medskip

\begin{lem}[First estimates on $\tilde u$,\cite{Mu2}]~

\begin{enumerate}
\item \emph{Decay away from zero}. Suppose $f=f(y)\in \mathcal Y$, with $y=x-\rho(t)$. Then there exist $K,\ga>0$ constants such that for all $t\in [-T_\ve,  \tilde T_\ve]$
\be\label{Est1}
\norm{a'(\ve x) f(y)}_{H^1(\R)} \leq K e^{-\ga\ve|\rho(t)|}.
\ee
\item \emph{Almost soliton solution}. The following estimates hold for all $t\in [-T_\ve,\tilde  T_\ve]$:
\be\label{Est2}
\norm{\tilde u_t + \rho' \tilde u_x - c'\partial_c \tilde u}_{H^1(\R)}  \leq K \ve e^{-\ga\ve |\rho(t)|},
\ee
\be\label{Est2a}
\tilde u_{xx} -\la \tilde u + a(\ve x) \tilde u^m = \frac 1{\tilde a} (c-\la) Q_c + O_{L^2(\R)}(\ve e^{-\ga\ve |\rho(t)|}),
\ee
and
\be\label{Est20}
\| (\tilde u_{xx} -c\tilde u +  a(\ve x) \tilde u^m)_x \|_{H^1(\R)} \leq K \ve e^{-\ga\ve |\rho(t)|} +K\ve^2.
\ee
\end{enumerate}
\end{lem}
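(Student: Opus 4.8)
The plan is to treat all four bounds as bookkeeping consequences of: the exponential localization of $a',a'',\dots$ from \eqref{ahyp} (so that $a'(\ve\,\cdot)$ is concentrated near the origin with width $O(\ve^{-1})$); the exponential decay of $Q_c$, $Q_c'$ and of every element of $\mathcal Y$; the explicit structure $\tilde u=\eta_\ve(y)\bigl(R(t,x)+w(t,x)\bigr)$, with $R=Q_{c(t)}(y)/\tilde a(\ve\rho(t))$ and $w$ as in \eqref{defW}; and the soliton identity $Q_c''=cQ_c-Q_c^m$ together with the structural bounds \eqref{Ac}--\eqref{Bc} and the smallness \eqref{H1}. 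Most of this already appears in \cite{Mu2}; I only indicate the computations, the new cubic correction $\ve^2 B_c$ entering at an order already permitted by the statements. For \eqref{Est1} I would use a ``non-overlapping supports'' splitting: on $\{|x|\le\tfrac12|\rho(t)|\}$ one has $|x-\rho(t)|\ge\tfrac12|\rho(t)|$, so by \eqref{Y} $|f(x-\rho(t))|\le K(1+|x-\rho(t)|)^{r}e^{-\frac{\mu}{4}|\rho(t)|}$, which dominates $e^{-\ga\ve|\rho(t)|}$ for $\ve$ small; on $\{|x|\ge\tfrac12|\rho(t)|\}$, \eqref{ahyp} gives $|a'(\ve x)|\le Ke^{-\frac{\ga}{2}\ve|\rho(t)|}$; adding the two $L^2$-contributions gives the $L^2$-bound, and since $\partial_x\bigl(a'(\ve x)f(y)\bigr)=\ve a''(\ve x)f(y)+a'(\ve x)f'(y)$ is of exactly the same type (by \eqref{ahyp} and \eqref{Y}), the $H^1$-bound follows verbatim.

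For \eqref{Est2}: since $\partial_c\eta_\ve=0$, $(\eta_\ve)_t=-\ve\rho'\eta_\ve'$ and $(\eta_\ve)_x=\ve\eta_\ve'$, the cutoff commutes through, so $\tilde u_t+\rho'\tilde u_x-c'\partial_c\tilde u=\eta_\ve\bigl[(R+w)_t+\rho'(R+w)_x-c'\partial_c(R+w)\bigr]$. Differentiating $R$ and using $\tilde a'=\tfrac{a'}{(m-1)a}\tilde a$, the $R$-part collapses to $-\tfrac{\ve\rho'(t)\,a'(\ve\rho(t))}{(m-1)a(\ve\rho(t))\tilde a(\ve\rho(t))}\,Q_{c(t)}(y)$, which is $O_{H^1}(\ve e^{-\ga\ve|\rho(t)|})$ since $|\rho'|\le K$, $|a'(\ve\rho)|\le Ke^{-\ga\ve|\rho(t)|}$ and $\|Q_c\|_{H^1}\le K$; the $w$-part collapses (the two $\partial_cA_c$ terms cancel) to $\ve d'(t)A_c(y)$, and since $d'(t)=\ve\rho'(t)(a'/\tilde a^m)'(\ve\rho(t))$ only involves $a''$ and $(a')^2$, one has $|d'(t)|\le K\ve e^{-\ga\ve|\rho(t)|}$, which after the cutoff (using $\|\eta_\ve A_c\|_{L^2}\lesssim\ve^{-1/2}$ and $A_c'\in\mathcal Y$ for the derivative) gives $O_{H^1}(\ve^{3/2}e^{-\ga\ve|\rho(t)|})$; the $\ve^2 B_c$ term contributes at the same or lower order by \eqref{Bc}. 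Summing gives \eqref{Est2}.

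For \eqref{Est2a}--\eqref{Est20} I would substitute $Q_c''=cQ_c-Q_c^m$. Up to terms supported in $\{-3/\ve\le y\le-1/\ve\}$ — where $Q_c=O(e^{-\ga/\ve})$ and $A_c'$ is exponentially small, hence $O(\ve^{10})$ — one has $\tilde u_{xx}-\la\tilde u+a(\ve x)\tilde u^m=\eta_\ve\bigl[(R+w)_{xx}-\la(R+w)+a(\ve x)(R+w)^m\bigr]$, and since $\tilde a^m(\ve\rho)=a(\ve\rho)\tilde a(\ve\rho)$,
\[
R_{xx}-\la R+a(\ve x)R^m=\frac{c-\la}{\tilde a(\ve\rho)}Q_c(y)+\frac{1}{\tilde a(\ve\rho)}\Bigl(\frac{a(\ve x)}{a(\ve\rho)}-1\Bigr)Q_c^m(y).
\]
By the mean value theorem $|a(\ve x)-a(\ve\rho)|\le\ve|y|\sup|a'|$, and on the (effectively bounded) support of $Q_c^m(y)$ that supremum is $\lesssim e^{-\ga\ve|\rho(t)|}$, so the second term is $O_{L^2}(\ve e^{-\ga\ve|\rho(t)|})$; the $w$-contributions $w_{xx}-\la w+ma(\ve x)R^{m-1}w+O(w^2)$ are controlled using \eqref{H1}, $d(t)=O(e^{-\ga\ve|\rho(t)|})$, \eqref{Ac}--\eqref{Bc} and the defining ODE of $A_c$ from Appendix~\ref{A}. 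Then \eqref{Est20} follows from $\tilde u_{xx}-c\tilde u+a\tilde u^m=(\tilde u_{xx}-\la\tilde u+a\tilde u^m)-(c-\la)\tilde u$: the $\tfrac{c-\la}{\tilde a}Q_c$ and $-(c-\la)\eta_\ve R$ pieces combine to $\tfrac{c-\la}{\tilde a}(1-\eta_\ve)Q_c=O_{H^1}(\ve^{10})$, leaving $-(c-\la)\eta_\ve w$ plus the error of \eqref{Est2a}; applying $\partial_x$, the decisive gain is that $w_x=\ve d(t)A_c'(y)+\cdots$ with $A_c'\in\mathcal Y$ is \emph{localized}, so this piece is $O_{H^1}(\ve e^{-\ga\ve|\rho(t)|})$, while the $K\ve^2$ in the bound absorbs the quadratic-in-$w$ errors and the $\ve^2 B_c$ term in the cubic case.

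I expect the main obstacle to be the $w$-part of \eqref{Est2a}: one needs the exact equation solved by $A_c$ (constructed in Appendix~\ref{A}) to check that $w_{xx}-\la w+ma(\ve x)R^{m-1}w$ reduces to a localized $O(\ve e^{-\ga\ve|\rho(t)|})$ remainder plus an almost-flat piece of width $O(\ve^{-1})$, and one must keep track of the non-localized tail of $A_c$ (which tends to a nonzero constant as $y\to-\infty$); that flat piece is harmless precisely because the quantity actually used later, \eqref{Est20}, carries an extra $\partial_x$, and flat pieces have exponentially small $x$-derivative.
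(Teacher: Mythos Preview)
The paper does not prove this lemma here --- it is recalled verbatim from \cite{Mu2} --- so there is no proof to compare against. Your argument is the standard one and is correct for \eqref{Est1}, \eqref{Est2} and \eqref{Est20}: the splitting $\{|x|\le\tfrac12|\rho|\}\cup\{|x|\ge\tfrac12|\rho|\}$ for \eqref{Est1}, the collapse $\tilde u_t+\rho'\tilde u_x-c'\partial_c\tilde u=\eta_\ve\bigl(-\ve\rho'\tfrac{\tilde a'}{\tilde a^2}Q_c+\ve d'(t)A_c+\cdots\bigr)$ for \eqref{Est2}, and for \eqref{Est20} the observation that after one $x$-derivative the only surviving non-exponentially-small piece is $-\ve d(t)(\mathcal L A_c)_y$, which is localized since $(\mathcal L A_c)_y=\tilde F_1+\la\hat F_1\in\mathcal Y$.

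Your closing caveat about \eqref{Est2a} is exactly right and worth stating as a genuine subtlety rather than an afterthought. The $w$-contribution to $\tilde u_{xx}-\la\tilde u+a(\ve x)\tilde u^m$ contains the term $-\la\,\eta_\ve w=-\la\ve d(t)\,\eta_\ve A_c+\cdots$, and since $A_c$ tends to a nonzero constant as $y\to-\infty$, this piece is only $O_{L^2}(\ve^{1/2}e^{-\ga\ve|\rho(t)|})$ after the cutoff, not $O_{L^2}(\ve e^{-\ga\ve|\rho(t)|})$. So \eqref{Est2a} as literally written is slightly too strong by a factor $\ve^{1/2}$. This does not damage the paper: \eqref{Est2a} is invoked only once, in the energy expansion at the end of Section~\ref{sec:3}, where it is paired with $\int_\R zQ_c=0$ and the resulting term is already dominated by the $O((K^*)^2\ve)$ budget; and, as you say, in \eqref{Est20} the extra $\partial_x$ kills the flat piece and the stated rate is correct.
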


In addition, we have the following result.

\begin{Cl}[Behavior at $t = - T_\ve$, \cite{Mu2}]\label{atpmT}~

Let $(C,P)$ be the \emph{unique} solution of the dynamical systems (\ref{c00}) and (\ref{c}), for any $0\leq \la <1$, $\la \neq \tilde \la$. There exist constants $K,\ve_0>0$ such that for every $0<\ve <\ve_0$ the approximate solution $\tilde u$ constructed in Proposition \ref{CV} satisfies
\be\label{mTe}
\| \tilde u (-T_\ve, C(-T_\ve), P(-T_\ve)) - Q(\cdot + (1-\la)T_\ve ) \|_{H^1(\R)} \leq K \ve^{10}.
\ee
\end{Cl}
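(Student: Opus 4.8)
The plan is to unwind the definition of $\tilde u$ at the specific time $t=-T_\ve$, where by construction the modulation parameters take the explicit values $c(-T_\ve)=C(-T_\ve)=1$ and $\rho(-T_\ve)=P(-T_\ve)=-(1-\la)T_\ve$ (this holds whether $\la$ lies in the regime of \eqref{c00} or of \eqref{c}), and to bound separately the three deviations of $\tilde u(-T_\ve)$ from the bare soliton $Q(\cdot+(1-\la)T_\ve)$: the shape factor $\tilde a(\ve\rho(-T_\ve))^{-1}$, the cutoff $\eta_\ve$, and the correction term $w(-T_\ve)$. The numerical fact driving everything is that, from \eqref{Te}, $\ve|\rho(-T_\ve)| = \ve(1-\la)T_\ve = \ve^{-1/100}$, so any weight of the form $e^{-\ga\ve|\rho(-T_\ve)|}=e^{-\ga\ve^{-1/100}}$, and any $e^{-1/\ve}$, is smaller than every fixed power of $\ve$, hence $\le K\ve^{10}$ for $\ve$ small. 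Concretely I would write, with $y=x+(1-\la)T_\ve$, the decomposition $\tilde u(-T_\ve)-Q(\cdot+(1-\la)T_\ve) = \tilde a(\ve\rho(-T_\ve))^{-1}(\eta_\ve(y)-1)Q(y) + (\tilde a(\ve\rho(-T_\ve))^{-1}-1)Q(y) + \eta_\ve(y)\,w(-T_\ve,x)$, and bound the three pieces.

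First I would treat the shape factor $\tilde a(\ve\rho(-T_\ve)) = a^{1/(m-1)}(-\ve^{-1/100})$. Since $-\ve^{-1/100}\to-\infty$, hypothesis \eqref{ahyp} (namely $0<a(r)-1\le Ke^{\ga r}$ for $r\le 0$) gives $|a(-\ve^{-1/100})-1|\le Ke^{-\ga\ve^{-1/100}}\le K\ve^{10}$, hence $|\tilde a(\ve\rho(-T_\ve))^{-1}-1|\le K\ve^{10}$; as $\|Q\|_{H^1(\R)}$ is a fixed finite constant this yields $\|(\tilde a(\ve\rho(-T_\ve))^{-1}-1)Q(\cdot+(1-\la)T_\ve)\|_{H^1(\R)}\le K\ve^{10}$. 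Next the cutoff: at $t=-T_\ve$ one has $\eta_\ve(y)=\eta(\ve y+2)$, which equals $1$ whenever $y\ge -1/\ve$, so $\eta_\ve-1$ is supported in $\{y\le -1/\ve\}$, a region where the explicit formula for $Q$ gives $|Q(y)|+|Q'(y)|\le Ke^{-\gamma|y|}\le Ke^{-\gamma/\ve}$; combining this with $0\le\eta_\ve\le 1$ and $\|\eta_\ve'\|_{L^\infty}\le\ve$ produces $\|(\eta_\ve-1)Q(\cdot+(1-\la)T_\ve)\|_{H^1(\R)}\le K\ve^{10}$ (the $L^2$-mass of $Q,Q'$ over the half-line $y\le -1/\ve$ is already $O(e^{-\gamma/\ve})$). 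Finally, for the correction $w(-T_\ve)$ — which in the cubic case already includes the second-order term $\ve^2 B_c$ — estimate \eqref{H1} of Proposition \ref{CV} applies at the endpoint $t=-T_\ve$ and gives directly $\|\eta_\ve\,w(-T_\ve)\|_{H^1(\R)}\le K\|w(-T_\ve)\|_{H^1(\R)}\le K\ve^{1/2}e^{-\ga\ve|\rho(-T_\ve)|} = K\ve^{1/2}e^{-\ga\ve^{-1/100}}\le K\ve^{10}$. Adding the three estimates through the triangle inequality completes the proof.

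I do not expect a genuine obstacle: the statement is a quantitative consistency check that the approximate solution matches the prescribed left-hand asymptotic profile $Q(\cdot+(1-\la)T_\ve)$ at the entrance of the interaction window. The only point requiring care is the exponent bookkeeping in $T_\ve$ — one must use $T_\ve=\ve^{-1-1/100}/(1-\la)$ precisely so that $\ve|\rho(-T_\ve)|=\ve^{-1/100}$ diverges (rather than staying bounded or vanishing), which is exactly what turns the exponentially small weights $e^{-\ga\ve|\rho|}$ from \eqref{H1} and the flatness estimate for $a$ near $-\infty$ from \eqref{ahyp} into the claimed $O(\ve^{10})$ bound. Everything else is routine, and the argument is uniform in $\la$ over $0\le\la<1$, $\la\neq\tilde\la$, since at $t=-T_\ve$ the two dynamical systems share the same initial data.
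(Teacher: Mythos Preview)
Your argument is correct. The paper itself does not prove this claim; it simply states it and cites \cite{Mu2}, so there is no in-paper proof to compare against. Your decomposition into the three pieces (shape factor, cutoff, correction term) and the use of $\ve|\rho(-T_\ve)|=\ve^{-1/100}$ to convert the exponential weights from \eqref{ahyp} and \eqref{H1} into $O(\ve^{10})$ is exactly the natural route, and each step is sound.

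One minor notational point worth tidying: after \eqref{hatu} the paper has already absorbed $\eta_\ve$ into the redefinitions of $R$ and $w$, so estimate \eqref{H1} applies directly to the (new) $w(-T_\ve)$ without a further factor of $\eta_\ve$; your bound $\|\eta_\ve w(-T_\ve)\|_{H^1}\le K\ve^{1/2}e^{-\ga\ve^{-1/100}}$ is correct in substance but the ``$\eta_\ve$'' on the left is redundant given the paper's conventions.
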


In concluding this section, we have constructed and approximate solution $\tilde u(t)$ describing, at least formally, the interaction soliton-potential. In the next section we will show that the \emph{solution} $u$ constructed in Theorem \ref{Tm1} actually behaves like $\tilde u$ inside the interaction region $[-T_\ve, \tilde T_\ve]$.


\bigskip

\subsection{Stability}\label{sec:3}

In this section our objective is to prove that the approximate solution $\tilde u(t)$ describes the dynamics of interaction of the solution $u(t)$, inside the interval $[-T_\ve, \tilde T_\ve]$. Recall that from (\ref{hypINT}) and (\ref{mTe}), one has 
\be\label{hypINTa}
\| u(-T_\ve) - \tilde u(-T_\ve,  C(-T_\ve), P(-T_\ve))) \|_{H^1(\R)}\leq K \ve^{10}.
\ee
In addition, from (\ref{SH2}) one has 
\be\label{INTkl}
\| \tilde S[\tilde u](t)\|_{H^1(\R)} \leq K\ve^{3/2}e^{-\ga\ve|\rho(t)|},    
\ee
for some $K,\ga>0$, and $\la\neq \tilde \la $. 

\medskip

\begin{prop}[Exact solution close to the approximate solution $\tilde u$]\label{prop:I}~

Suppose $\la\in (0,1)$, $\la\neq \tilde \la$. There exists $\ve_0>0$ such that the following holds for any $0<\ve <\ve_0$. There exist $K_0>0$ independent of $\ve$ and unique $C^1$ functions $c, \rho : [-T_\ve, \tilde T_\ve] \to \R$ such that, for all $t\in [-T_\ve, \tilde T_\ve]$,
\be\label{INT41}
\|u(t)-\tilde u(t,c(t), \rho(t)) \|_{H^1(\R)} \leq K_0 \ve^{1/2},
\ee
and
\bea\label{INT42}
& & | \rho'(t) -c(t) +\la -\ve f_2(t) -\ve^2 \delta_{m,3}f_3(t)|  \nonumber \\
& & \qquad +\ \ve^{-1/2} |c'(t) - \ve f_1(t) -\ve^2 \delta_{m,3}f_4(t)| + |c(t) -C(t)|  \leq K_0 \ve^{1/2}.
\eea
Finally, one has
\be\label{crm}
|c(-T_\ve) -1 | + |\rho(-T_\ve) + (1-\la)T_\ve| \leq K\ve^{10},
\ee
with $K>0$ independent of $K_0$.
\end{prop}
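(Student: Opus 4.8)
The plan is to run a modulation argument coupled with a Weinstein--Martel--Merle type Lyapunov functional, closed by a bootstrap on $[-T_\ve,\tilde T_\ve]$; this is the analogue, for $\la_0<\la<1$, of Theorem~4.1 in \cite{Mu2}. \emph{Modulation.} While $\|u(t)-\tilde u(t,c,\rho)\|_{H^1}$ stays small I would write $u(t)=\tilde u(t,c(t),\rho(t))+z(t)$ and fix $(c(t),\rho(t))$, near $(C(t),P(t))$ from Lemma~\ref{ODE}, by the two orthogonality conditions $\int_\R z(t,x)\,Q_{c(t)}(x-\rho(t))\,dx=\int_\R (x-\rho(t))\,Q_{c(t)}(x-\rho(t))\,z(t,x)\,dx=0$. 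Existence and uniqueness of such $C^1$ parameters follows from the implicit function theorem, since the Jacobian in $(c,\rho)$ is, by Lemma~\ref{surL}, a small perturbation of a nonsingular matrix; this is precisely the choice of orthogonality for which Lemma~\ref{surL}(5)(b) applies. At $t=-T_\ve$, Claim~\ref{atpmT} together with (\ref{hypINT})--(\ref{hypINTa}) gives $\|z(-T_\ve)\|_{H^1}\le K\ve^{10}$ and $|c(-T_\ve)-1|+|\rho(-T_\ve)+(1-\la)T_\ve|\le K\ve^{10}$, which is (\ref{crm}) and leaves ample room for the bootstrap.

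\emph{Modulation equations.} Differentiating the orthogonality conditions in $t$, substituting $u_t=-(u_{xx}-\la u+a(\ve x)u^m)_x$ and the identity for $S[\tilde u]$ from Proposition~\ref{CV}, and inverting the resulting almost-diagonal linear system, I would obtain bounds of the form $|\rho'(t)-c(t)+\la-\ve f_2(t)-\ve^2\delta_{m,3}f_3(t)|\le K(\ve\|z(t)\|_{H^1}+\|z(t)\|_{H^1}^2+\ve^{3/2}e^{-\ve\ga|\rho(t)|}+\ve^3)$ and the same bound for $|c'(t)-\ve f_1(t)-\ve^2\delta_{m,3}f_4(t)|$ but with an extra factor $\ve^{1/2}$ on the right-hand side, the gain coming from the refined estimates (\ref{SIn})--(\ref{SIn3}) on $\int Q_c\tilde S[\tilde u]$ and $\int yQ_c\tilde S[\tilde u]$, which are one power of $\ve$ better than (\ref{Stilde}); this is exactly the asymmetric left-hand side of (\ref{INT42}). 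The bound $|c(t)-C(t)|\le K_0\ve^{1/2}$ then follows by a Gronwall comparison with the system (\ref{c}): the coupling coefficients $\ve\partial_cf_1$, $\ve\partial_\rho f_1$ are $O(\ve e^{-\ve\ga|\rho|})$, hence integrable in $t$ over $[-T_\ve,\tilde T_\ve]$ with total mass $O(1)$ (recall $|\rho'|\sim 1$ away from a reflection point), so only the effective interaction window $|\rho|=O(\ve^{-1})$ contributes, with constants independent of $K_0$.

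\emph{Lyapunov functional and bootstrap.} Let $T^\ast\in(-T_\ve,\tilde T_\ve]$ be maximal with $\|z(t)\|_{H^1}\le K_0\ve^{1/2}$ and $|c(t)-C(t)|\le K_0\ve^{1/2}$ on $[-T_\ve,T^\ast]$, and suppose for contradiction $T^\ast<\tilde T_\ve$. Following \cite{Mu2,MMcol1,Mu1,MMfin} I would use $\mathcal F(t):=E_a[u](t)+(c(t)-\la)\,\mathfrak M(t)$, with $\mathfrak M=\hat M[u]$ (cf. (\ref{hM})) on the portion where $c(t)\ge\la$ and $\mathfrak M=\mathcal M[u]$ (cf. (\ref{Mback})) where $c(t)\le\la$; for $\la_0<\la\le\tilde\la$ the first choice serves throughout, while for $\tilde\la<\la<1$ the interval is split at the unique $t_0$ with $C(t_0)=\la$ from Lemma~\ref{ODE}, using (\ref{INT41}) at $t_0$ as the initial bound for the second piece. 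Expanding $\mathcal F$ around $\tilde u$: the zeroth- and first-order terms in $z$ are controlled using that $\tilde u$ is an approximate solution (Propositions~\ref{prop:decomp}--\ref{CV}), the orthogonality conditions, and the near-monotonicity along the reference solution (a consequence of the energy-balance computation behind $c_\infty$); the quadratic term is a form of type $\mathcal B[z,z]$ with an effective positive spectral parameter (this is why the $(c-\la)\mathfrak M$ correction is inserted: it shifts the parameter from $\la$ to essentially $c(t)$), hence coercive modulo $(\int zQ_c,\int zyQ_c)$ by Lemma~\ref{surL}(5)(b), so $\mathcal F(t)\gtrsim\|z(t)\|_{H^1}^2-(\text{errors})$. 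For the time derivative $\tfrac{d}{dt}E_a[u]=0$, while $c'(t)\mathfrak M(t)=O(\ve)\cdot O(1)$ and $(c(t)-\la)\mathfrak M'(t)$ have, by (\ref{hM2}) (respectively the analogous identity for $\mathcal M$ read off from (\ref{Mback})) together with the sign of $c(t)-\la$, the \emph{good} sign --- the crux, since the $O(\ve)\|z\|_{H^1}^2$ contribution that a naive Gronwall could not absorb (as $\ve\,\tilde T_\ve\to\infty$) is killed by genuine monotonicity. The remaining terms in $\tfrac{d}{dt}\mathcal F$ are $\lesssim(\ve^{3/2}e^{-\ve\ga|\rho(t)|}+\ve^3)\|z(t)\|_{H^1}+\ve^3$, from $\tilde S[\tilde u]$, from $\ve\eta_\ve'\tilde u$, and from the modulation defects times $\|z\|$; integrating over $[-T_\ve,t]$ and using that $e^{-\ve\ga|\rho|}$ is integrable in $t$ with mass $O(\ve^{-1})$ and $\ve^3\tilde T_\ve=\ve^{2-1/100}$, these contribute $\lesssim K_0\ve$. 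Hence $\mathcal F(t)\le\mathcal F(-T_\ve)+CK_0\ve\le CK_0\ve$, and coercivity yields $\|z(t)\|_{H^1}^2\le C'K_0\ve$ on $[-T_\ve,T^\ast]$; choosing $K_0$ large enough that $C'K_0\le\tfrac14K_0^2$, and then $\ve_0$ small, contradicts maximality and simultaneously closes $|c(t)-C(t)|\le\tfrac12K_0\ve^{1/2}$. This gives (\ref{INT41})--(\ref{INT42}) on all of $[-T_\ve,\tilde T_\ve]$; uniqueness of $(c,\rho)$ is the uniqueness already built into the modulation.

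\emph{Main obstacle.} The hard part will be exactly this balance: the interaction interval has length $\tilde T_\ve\gg\ve^{-1}$ while the best control on $\tilde S[\tilde u]$ is only $O(\ve^{3/2})$ in $H^1$, so the energy method must be arranged so that every growth term of order $\ve$ per unit time is absorbed by an almost-monotone quantity rather than by exponentiating $\int\ve\,dt$. This forces the use of two different almost-monotone weighted masses --- $\hat M$ when $c(t)>\la$, $\mathcal M$ when $c(t)<\la$ --- and a careful treatment of the transition $c(t)=\la$ in the reflection regime $\tilde\la<\la<1$, where the two functionals must be matched at $t_0$ and the effective spectral parameter is smallest. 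A secondary point is bookkeeping: the constants in the modulation estimates must stay independent of $K_0$, so that $K_0$ can be fixed (large, for coercivity and to beat the $O(1)$ constants) before $\ve_0$ is chosen small.
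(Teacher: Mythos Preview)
Your overall architecture (bootstrap, modulation with the orthogonality $\int zQ_c=\int z\,yQ_c=0$, Lyapunov control) matches the paper's, and your treatment of the initial step and of~(\ref{crm}) is correct. But the heart of your argument --- the choice of Lyapunov functional and the mechanism for absorbing the dangerous growth term --- differs from the paper and, as written, has a gap.

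\textbf{What the paper actually does.} The functional is \emph{not} the full Weinstein quantity $E_a[u]+(c(t)-\la)\mathfrak M[u]$; it is the purely quadratic-in-$z$ functional
\[
\mathcal F(t)=\tfrac12\int_\R\big(z_x^2+c(t)z^2\big)-\tfrac1{m+1}\int_\R a(\ve x)\big[(\tilde u+z)^{m+1}-\tilde u^{m+1}-(m+1)\tilde u^m z\big],
\]
cf.~(\ref{F}). The dangerous contribution in $\mathcal F'$ is $\tfrac12 c'(t)\int z^2=\tfrac12 c_1'\int z^2+\tfrac12\ve f_1\int z^2$; the second piece is $O(\ve e^{-\ve\ga|\rho|}\|z\|^2)$ and is handled by Gronwall, but the first requires a \emph{virial / local-smoothing estimate} (Lemma~\ref{VL}) to obtain
\[
\int_{-T_\ve}^{t}|c_1'(s)|\,ds\le KK^\ast\ve,
\]
cf.~(\ref{intc1}). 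This is the key technical ingredient and it is absent from your proposal. Note also that your claim that the pointwise bound on $|c_1'|$ already carries an extra $\ve^{1/2}$ from (\ref{SIn})--(\ref{SIn3}) is not correct: the paper's estimate (\ref{c1}) still contains $\int e^{-\ga\sqrt c|y|}z^2$, and the gain for $c_1'$ relative to $\rho_1'$ appears only \emph{after integration in time}, precisely via the virial.

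\textbf{Why your Weinstein-functional route does not close here.} The monotone-mass argument you invoke is exactly what the paper uses for the \emph{stability} result (Proposition~\ref{Tp1r}, Appendix~\ref{Stab}), where the scaling $c_2(t)$ is essentially constant ($|c_2-c_\infty|\le K\ve^{1/2}$). In the interaction region, by contrast, $c(t)$ varies by $O(1)$ (from $1$ to $c_\infty(\la)$). Whether you take the coefficient in front of $\mathfrak M$ to be $c(t)-\la$ or freeze it at $c(-T_\ve)-\la$, the reference value $E_a[\tilde u]+(\hbox{coeff})\,\mathfrak M[\tilde u]$ changes by $O(1)$ over $[-T_\ve,\tilde T_\ve]$ (a direct computation with $E_a[R]$ and $\hat M[R]$), so after subtracting the zeroth order you are comparing an $O(\ve)$ quantity to an $O(1)$ drift, and the ``good sign'' of $(c-\la)\mathfrak M'$ does not help: it is the term $c'\,\mathfrak M$ (or equivalently the $O(1)$ variation of the reference) that is the obstruction, and you have not explained how to cancel it.

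\textbf{Closing $|c(t)-C(t)|$.} The paper does \emph{not} close this by Gronwall comparison with the ODE~(\ref{c}); it first establishes $\|z(t)\|_{H^1}\le\tfrac12K^\ast\ve^{1/2}$ via $\mathcal F$~+~virial, and then expands the \emph{exact} conservation law $E_a[u](t)=E_a[u](-T_\ve)$ around $\tilde u$ to read off directly $|c(T^\ast)-C(T^\ast)|\le K\ve^{1/2}+K K^\ast\ve$, cf.\ the end of the proof of Proposition~\ref{prop:I}. Your Gronwall route for $|c-C|$ again needs $\int|c_1'|\le K\ve$, i.e.\ the virial, to succeed.

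In short: keep your modulation and bootstrap framework, replace the Weinstein functional by the $z$-functional~(\ref{F}), insert the virial Lemma~\ref{VL} to get (\ref{intc1}), and close $|c-C|$ via energy conservation rather than ODE comparison.
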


\medskip

Before the proof of this result, let us finish the proof of Proposition \ref{T0}.

\medskip

\subsection{Proof of Proposition \ref{T0}}

We are now in position to give a direct proof of Proposition \ref{T0}. Indeed, since (\ref{hypINTa}) is satisfied, we have (\ref{INT41}) for all time $t\in [-T_\ve, \tilde T_\ve]$; in particular, at $t=\tilde T_\ve$ one has
$$
\|u(\tilde T_\ve)  -\tilde u(\tilde T_\ve, c(\tilde T_\ve), \rho(\tilde T_\ve)) \|_{H^1(\R)} \leq K_0\ve^{1/2},
$$
with 
$$
|c(\tilde T_\ve) -C(\tilde T_\ve)| \leq K_0 \ve^{1/2}.
$$
Furthermore, after integration in time of (\ref{INT42})
\be\label{bonda}
| \rho(\tilde T_\ve) -P(\tilde T_\ve)| \leq K_0 \ve^{-1/2 -1/100}.
\ee
Finally, from (\ref{defv}), (\ref{defW}), (\ref{dd}) and Proposition \ref{prop:decomp}, one has
$$
\| \tilde u(\tilde T_\ve, c(\tilde T_\ve), \rho(\tilde T_\ve))  -  2^{-1/(m-1)}Q_{c_\infty(\la)} (\cdot - \rho(\tilde T_\ve))\|_{H^1(\R)} \leq K_0\ve^{1/2}, \quad 0\leq \la<\tilde \la,
$$ 
and
$$
\| \tilde u(\tilde T_\ve, c(\tilde T_\ve), \rho(\tilde T_\ve))  - Q_{c_\infty(\la)} (\cdot - \rho(\tilde T_\ve)) \|_{H^1(\R)} \leq K_0\ve^{1/2}, \quad 0\leq \tilde \la <\la<1.
$$ 
By defining $\rho_\ve := \rho(\tilde T_\ve)$, using (\ref{bonda}) and using the triangle inequality, the conclusion follows, provided Proposition \ref{prop:I} holds.

\bigskip

\begin{rem}
For the sake of clarity in the forthcoming computations, let us denote 
$$
c_1' := c'-\ve f_1-\ve^2 \delta_{m,3}f_3, \quad \hbox{ and } \quad \rho_1' := \rho' -c+\la- \ve f_2-\ve^2 \delta_{m,3} f_4.
$$
\end{rem}

\medskip

\begin{proof}[\bf Proof of Proposition \ref{prop:I}]~

Let $K^*>1$ be a constant to be fixed later. From (\ref{hypINTa}), by continuity in $H^1(\R)$ of the flow, there exists $ -T_\ve<T^*\leq  \tilde T_\ve$ with
\bea
    T^*& := & \sup\big\{T\in [-T_\ve, \tilde T_\ve], \hbox{ such that for all }  t\in [-T_\ve,T],  \hbox{ there exists a smooth}\nonu  \\
    & & \qquad  \hbox{ $r(t)\in \R$, such that } \|u(t) -  \tilde u( \cdot \ ; C(t), r(t))\|_{H^1(\R)}\leq K^* \ve^{1/2 } \big\}. \label{Tstar}
\eea
The objective is to prove that $T^*=  T_\ve$ for $K^*$ large enough and $\al>0$ small. To achieve this, we argue by contradiction, assuming that $T^*< T_\ve $ and reaching a contradiction with the definition of $T^*$ by proving some independent estimates for  $\|u(t)- \tilde u(\cdot \ ; C(t),r(t) )\|_{H^1(\R)}$. 

\medskip

\begin{lem}[Modulation]\label{DEFZ}~

Assume $0<\ve<\ve_0(K^*)$ small enough. There exist $K>0$ and unique $C^1$ functions $c(t), \rho(t)$ such that, for all $t\in [-T_\ve,T^*]$,
\be\label{defz}
z(t)=u(t)-\tilde u(t,c(t),\rho(t)) \quad \hbox{satisfies}\quad
 \int_\R z(t,x) y Q_c(y) dx  = \int_\R z(t,x) Q_c(y)dx=0.
\ee
Moreover, we have,  for all $t\in [-T_\ve,T^*]$,
\bea\label{TRANS3}
& & \|z(-T_\ve)\|_{H^1(\R)} + | c(-T_\ve) -C(-T_\ve) | \leq K \ve^{1/2 }, \nonu \\ 
& &  \|z(t)\|_{H^1(\R)} + | c(t) -C(t) | \leq  K K^* \ve^{1/2 }.
\eea
In addition, $z(t)$ satisfies the following equation
\be\label{Eqz1}
 z_t +  \big\{ z_{xx}  -\la z +  a(\ve x) [ (\tilde u +z)^m - \tilde u^m ] \big\}_x  +  \tilde S[\tilde u]     + c_1'(t) \partial_c \tilde u  + \rho_1'(t) \partial_\rho \tilde u =0.
\ee
Finally, there exists $\ga>0$ independent of $K^*$ such that for every $t\in [-T_\ve, T^*]$,
\be\label{rho1}
 |\rho_1'(t) |  \leq   K\Big[  (m-3 +\ve e^{-\ga\ve|\rho(t)|} ) \Big[\int_\R z^2 e^{- \ga\sqrt{c}|y|} \Big]^{1/2}  +  \int_\R e^{- \ga\sqrt{c}|y|}z^2(t) + \abs{\int_\R yQ_c \tilde S[\tilde u]} \Big],
\ee
and
\be\label{c1}
|c_1'(t) |\leq K \Big[ \int_\R e^{-\ga\sqrt{c} |y|} z^2(t)  +  \ve e^{-\ga\ve|\rho(t)| } \Big[ \int_\R e^{-\ga\sqrt{c} |y|} z^2(t)\Big]^{1/2} + \abs{\int_\R Q_c \tilde S[\tilde u]} \Big].
\ee
\end{lem}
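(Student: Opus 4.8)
\textbf{Plan of proof for Lemma \ref{DEFZ}.} The argument follows the standard modulation-plus-energy scheme of Martel--Merle and its adaptation in \cite{Mu2}; I describe each piece in order. First, the existence of the $C^1$ parameters $c(t),\rho(t)$ realizing the two orthogonality conditions in (\ref{defz}) is obtained by an implicit function theorem argument. One considers the map
$$
(c,\rho,t)\longmapsto \Big( \int_\R (u(t)-\tilde u(t,c,\rho))\,Q_c(y)\,dx,\ \int_\R (u(t)-\tilde u(t,c,\rho))\,yQ_c(y)\,dx\Big),
$$
and checks that its Jacobian with respect to $(c,\rho)$ at the point $(C(t),r(t))$ is, to leading order, the diagonal matrix with entries $-\int Q_c\Lambda Q_c$ and $-\int y^2 (Q_c')\cdots$ --- i.e.\ a nondegenerate matrix up to $O(\ve^{1/2})+O(K^*\ve^{1/2})$ perturbations coming from $\|u-\tilde u\|_{H^1}\le K^*\ve^{1/2}$ and from the $w$-part of $\tilde u$ (which is $O(\ve^{1/2})$ in $H^1$ by (\ref{H1}) and almost orthogonal to $Q_c,yQ_c$ by (\ref{AO})). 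Shrinking $\ve_0$ depending on $K^*$ makes the Jacobian invertible, giving unique $C^1$ solutions on $[-T_\ve,T^*]$. The bound at $t=-T_\ve$ in (\ref{TRANS3}) follows from (\ref{hypINTa}) together with Claim \ref{atpmT} (which controls $\tilde u(-T_\ve)-Q(\cdot+(1-\la)T_\ve)$) and the nondegeneracy just established; the bound for general $t$ is immediate from the definition (\ref{Tstar}) of $T^*$ and the triangle inequality, absorbing the $|c(t)-C(t)|$ term into $KK^*\ve^{1/2}$.

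Next, the equation (\ref{Eqz1}) for $z=u-\tilde u$ is a direct computation: $u$ solves (\ref{aKdV}), so $0 = S[u] = S[\tilde u] + (z_t + \{z_{xx}-\la z + a(\ve x)[(\tilde u+z)^m-\tilde u^m]\}_x)$, and then one substitutes the decomposition of $S[\tilde u]$ from part (2) of Proposition \ref{CV}, writing $S[\tilde u] = c_1'(t)\partial_c\tilde u + \rho_1'(t)(\partial_\rho\tilde u + \ve\eta_\ve'\tilde u) + \tilde S[\tilde u]$; the $\ve\eta_\ve'\tilde u$ term is $O_{H^1}(\ve^{3/2}e^{-\ve\ga|\rho|})$ and can be folded into $\tilde S[\tilde u]$ without changing the stated estimates, giving exactly (\ref{Eqz1}) (with this harmless redefinition of $\tilde S$, or keeping it explicit --- either way the subsequent estimates are unaffected).

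The modulation estimates (\ref{rho1})--(\ref{c1}) are obtained by differentiating the two orthogonality conditions $\int z Q_c = \int z yQ_c = 0$ in time and using (\ref{Eqz1}). Differentiating $\int z(t,x)Q_{c(t)}(y)\,dx = 0$ produces, schematically, a linear system
$$
\begin{pmatrix} a_{11} & a_{12} \\ a_{21} & a_{22}\end{pmatrix}\begin{pmatrix} c_1' \\ \rho_1'\end{pmatrix} = \begin{pmatrix} b_1 \\ b_2\end{pmatrix},
$$
where the matrix is, to leading order, $\mathrm{diag}(-\int Q_c\Lambda Q_c + O(K^*\ve^{1/2}),\ \cdot)$ --- again nondegenerate for $\ve$ small --- and the right-hand side $b_i$ collects: (i) the pairing of the nonlinear remainder $\{a(\ve x)[(\tilde u+z)^m-\tilde u^m]\}_x$ against $Q_c'$ or $(yQ_c)'$, which after integration by parts and using $m\ge 2$ is controlled by $\int e^{-\ga\sqrt c|y|}z^2$ plus, when $m\ge 3$, a linear-in-$z$ term $(\int z^2 e^{-\ga\sqrt c|y|})^{1/2}$ weighted by the nonexact coefficient (this is the origin of the factor $(m-3+\ve e^{-\ga\ve|\rho|})$ in (\ref{rho1}): for $m=2$ the quadratic cross term $\tilde u z$ has a piece not orthogonal to the generalized kernel, but it carries a factor $\ve e^{-\ve\ga|\rho|}$ from $R = Q_c/\tilde a(\ve\rho)$ and $d(t)$, while for $m=3$ the coefficient survives without that smallness); (ii) the pairings $\int Q_c\tilde S[\tilde u]$, $\int yQ_c\tilde S[\tilde u]$, which are exactly the improved-orthogonality quantities estimated in (\ref{SIn})--(\ref{SIn3}); (iii) pairings involving $\partial_c\tilde u$, $\partial_\rho\tilde u$ against $\partial_c Q_c$-type functions, which are $O(1)$ and get inverted by the matrix, plus genuinely small terms from the $w$-part of $\tilde u$ using (\ref{Ac})--(\ref{Bc}) and (\ref{Est1}). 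Solving the $2\times 2$ system and bounding each contribution gives (\ref{rho1}) and (\ref{c1}).

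\textbf{Main obstacle.} The delicate point is the quadratic case $m=2$: there the leading nonlinear interaction $a(\ve x)\,2\tilde u\, z$ paired against the generalized kernel does \emph{not} vanish by the orthogonality of $z$ to $Q_c,yQ_c$ alone, since $\tilde u$ differs from the exact soliton $Q_c$ by the non-localized correction $\ve d(t)A_c$ and by the factor $\tilde a(\ve\rho)^{-1}$. One must check carefully that every such "bad" term in fact carries either the extra power of $\ve$, or the factor $e^{-\ve\ga|\rho(t)|}$, or a quadratic-in-$z$ gain --- this is precisely what is encoded in the structural factor $(m-3+\ve e^{-\ga\ve|\rho(t)|})$ multiplying the linear-in-$z$ piece of (\ref{rho1}), and where the precise form of $A_c$, $d(t)$, and the choice $R=Q_c/\tilde a(\ve\rho)$ built into Proposition \ref{prop:decomp} must be used. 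The analogous but easier point for $c_1'$ explains the absence of a linear-in-$z$ term of size $O(1)$ in (\ref{c1}): there the scaling direction is protected by the extra $\ve$-smallness in all cases.
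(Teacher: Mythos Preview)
Your overall architecture is correct and matches the paper: implicit function theorem for the modulation parameters, direct substitution for (\ref{Eqz1}), and testing (\ref{Eqz1}) against $Q_c$ and $yQ_c$ (equivalently, differentiating the orthogonality conditions) to extract (\ref{rho1})--(\ref{c1}).

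However, your explanation of the factor $(m-3+\ve e^{-\ga\ve|\rho(t)|})$ in (\ref{rho1}) is wrong, and your ``Main obstacle'' paragraph is built on this misunderstanding. You claim that for $m=2$ the dangerous linear-in-$z$ term is small while for $m=3$ it ``survives without that smallness''; this is exactly backwards. The source of the $(m-3)$ coefficient is not the nonlinear remainder at all but the algebraic identity
\[
\mathcal L\big((yQ_c)_y\big) \;=\; -(m-3)\,Q_c^m \;-\; 2c\,Q_c,
\]
which one checks by writing $(yQ_c)_y=Q_c+yQ_c'$ and using $\mathcal L Q_c=(1-m)Q_c^m$, $\mathcal L(yQ_c')=-2Q_c''=-2cQ_c+2Q_c^m$. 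When you integrate (\ref{Eqz1}) against $yQ_c$ and move the linear part onto the test function, the contribution is
\[
\int_\R (yQ_c)_y\,\mathcal L z \;=\; \int_\R z\,\mathcal L(yQ_c)_y \;=\; -(m-3)\int_\R z\,Q_c^m \;-\; 2c\int_\R z\,Q_c,
\]
and the second term vanishes by the orthogonality $\int zQ_c=0$. What remains is a linear-in-$z$ term with coefficient exactly $(m-3)$: it is $O(1)$ for $m=2,4$ and \emph{vanishes} for $m=3$. The $\ve e^{-\ga\ve|\rho|}$ contribution to the bracket comes separately, from the correction terms $a(\ve x)\tilde u^{m-1}-Q_c^{m-1}$ and the $w$-part of $\tilde u$.

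So there is no special difficulty at $m=2$; the structural point is rather that $m=3$ is the \emph{favorable} case in which the leading linear-in-$z$ term in (\ref{rho1}) disappears. Once you replace your heuristic by the identity above, the rest of your computation goes through as you describe.
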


\begin{proof}
The proof of (\ref{defz})-(\ref{TRANS3}) is a standard consequence of the Implicit function theorem, applied for each time $t\in [-T_\ve, T^*]$. Similarly, (\ref{Eqz1}) is a direct computation.

\smallskip

Let us prove (\ref{rho1}) and (\ref{c1}). Let us recall that  $f_2 \equiv 0$ in the cubic case. 
We integrate (\ref{Eqz1}) against $yQ_c$ to obtain
\bee
& &  \partial_t \int_\R yQ_c z  - \int_\R (yQ_c)_t z -  \int_\R (yQ_c)_x \big\{ z_{xx}  -\la z +  a(\ve x) [ (\tilde u +z)^m - \tilde u^m ] \big\}  \nonumber \\
 & & \qquad \qquad  + \int_\R yQ_c \tilde S[\tilde u]   + c_1' \int_\R yQ_c \partial_c \tilde u  + \rho_1'\int_\R yQ_c \partial_\rho \tilde u =0.
\eee
Therefore,
\bee
 \rho_1' \int_\R yQ_c \partial_\rho \tilde u & = & -   \int_\R yQ_c \tilde S[\tilde u] - c_1' \int_\R y Q_c \partial_c \tilde u  + \int_\R (yQ_c)_y \mathcal L z  -\rho_1' \int_\R  (yQ_c)_y z \nonu\\
 & & + c_1' \int_\R y \Lambda Q_c z  -\ve (f_2 + \ve \delta_{m,3} f_4) \int_\R  (yQ_c)_y z + \ve( f_1 +\ve \delta_{m,3} f_3)\int_\R  y\Lambda Q_c z  \nonu \\
 & &  + \int_\R (yQ_c)_y  a(\ve x) [ (\tilde u +z)^m - \tilde u^m -m\tilde u^{m-1}z ]  \nonu \\
 & &  + m \int_\R (yQ_c)_y  [a(\ve x)\tilde u^{m-1} - Q_c^{m-1}]z .
\eee
Note that $\mathcal L \{ (yQ_c)_y\} =-(m-3)Q_c^m -2cQ_c .$ From here and (\ref{defz}) one has
\bee
|\rho_1'| & \leq &   \abs{\int_\R yQ_c \tilde S[\tilde u]} + K(m-3 +  \ve e^{-\ve\ga |\rho(t)|} ) \big[\int_\R e^{-\ga\sqrt{c} |y|} z^2 \big]^{1/2} \\
& & \qquad +  K |c_1'| ( \ve e^{Ð\ve\ga|\rho(t)|}+\|z(t)\|_{L^2(\R)} ) + K\int_\R e^{-\ga\sqrt{c} |y|} z^2.
\eee

We consider now (\ref{c1}). We integrate (\ref{Eqz1}) against $Q_c$ to obtain
\bee
& & \partial_t \int_\R  Q_c z  -  \int_\R  (Q_c)_t z +  \int_\R Q_c \big\{ z_{xx}  -\la z +  a(\ve x) [ (\tilde u +z)^m - \tilde u^m ] \big\}_x   \nonumber \\
 & & \qquad \qquad  +  \int_\R Q_c \tilde S[\tilde u]  + c_1' \int_\R Q_c \partial_c \tilde u  + \rho_1'\int_\R Q_c \partial_\rho \tilde u =0.
\eee
So we have
\bee
 c_1' \int_\R Q_c \partial_c \tilde u & = & -   \int_\R Q_c \tilde S[\tilde u] - \rho_1' \int_\R Q_c \partial_\rho \tilde u  + \int_\R Q_c' \mathcal L z  -\rho_1' \int_\R  Q_c' z + c_1' \int_\R  \Lambda Q_c z \nonu\\
 & &  -\ve (f_2 + \ve f_4) \int_\R  Q_c' z + \ve( f_1 +\ve f_3)\int_\R  \Lambda Q_c z   \nonu\\
 & &  + \int_\R Q_c'  a(\ve x) [ (\tilde u +z)^m - \tilde u^m -m\tilde u^{m-1}z ]   + m \int_\R Q_c'  [a(\ve x)\tilde u^{m-1} - Q_c^{m-1}]z .
\eee
After a similar computation to the recently performed, one gets
\bee
|c_1'| & \leq &  K |\rho_1'| \Big[\int_\R e^{-\ga\sqrt{c} |y|} z^2(t) \Big]^{1/2} +K\ve  |\rho_1'| e^{-\ve\ga|\rho(t)|}  + K \ve e^{-\ve\ga |\rho(t)|} \Big[ \int_\R e^{-\ga\sqrt{c} |y|} z^2(t)\Big]^{1/2} \\
& &\qquad +  \abs{\int_\R Q_c \tilde S[\tilde u]} + K\int_\R e^{-\ga\sqrt{c} |y|} z^2 \\
& \leq &   K \ve e^{-\ve\ga |\rho(t)|}\Big[ \int_\R e^{-\ga\sqrt{c} |y|} z^2(t)\Big]^{1/2}+ K K^* \ve^{1/2}\abs{\int_\R yQ_c \tilde S[\tilde u]}  \\
& &\qquad + \abs{\int_\R Q_c \tilde S[\tilde u]} + K\int_\R e^{-\ga\sqrt{c} |y|} z^2.
\eee
Using (\ref{SIn})-(\ref{SIn3}), we obtain the final result. The proof  is complete.
\end{proof}

\medskip

\noindent
{\bf Virial estimate.}
A better understanding of the estimate on the scaling parameter (\ref{c1}) needs the introduction of a viriel estimate, in the spirit of \cite{Mu2} (see Lemma 6.4). See also \cite{H} for a similar result, in the context of a different gKdV equation.

First of all, we define some auxiliary functions. Let $\phi \in C^\infty(\R)$ be an \emph{even} function satisfying the following properties
\be\label{psip}
\begin{cases}
\phi' \leq 0 \; \hbox{ on } [0, +\infty); \quad  \phi (x) =1 \; \hbox{ on } [0,1], \\
\phi (x) = e^{-x}  \; \hbox{ on } [2, +\infty) \quad\hbox{and}\quad  e^{-x} \leq \phi (x) \leq 3e^{-x}  \; \hbox{ on } [0,+\infty).
\end{cases}
\ee
Now, set $\psi(x) := \int_0^x \phi $. It is clear that $\psi$ an odd function. Moreover, for $|x|\geq 2$,
\be\label{asy}
\psi(+\infty) -\psi (|x|) = e^{-|x|}.
\ee
Finally, for $A>0$, denote 
\be\label{psiA}
\psi_A(x) := A(\psi(+\infty) + \psi(\frac xA))>0; \quad e^{-|x|/A} \leq \psi_A'(x)   \leq 3e^{-|x|/A}. 
\ee
Note that $\lim_{x\to -\infty} \psi(x) =0$. We claim the following

\begin{lem}[Sharp Virial-type estimate]\label{VL}~

There exist $K, A_0, \delta_0>0$  such that for all $t\in [-T_\ve, T^*]$ and for some $\ga =\ga(A_0)>0$,
\be\label{dereta}
 \partial_t \int_\R  z^2(t,x) \psi_{A_0}(y)   \leq   -\delta_0  \int_\R ( z_x^2 + z^2 )(t,x) e^{-\frac 1{A_0} |y|}  + KA_0 K^* \ve^{5/2}e^{-\ve\ga|\rho(t)|}.
\ee
\end{lem}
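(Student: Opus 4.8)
The plan is to differentiate the weighted quantity $I(t) := \int_\R z^2(t,x)\psi_{A_0}(y)\,dx$ in time and bound each resulting term, using the equation \eqref{Eqz1} for $z$, the coercivity from Lemma \ref{surL}, and the estimates on $\rho_1'$, $c_1'$ and $\tilde S[\tilde u]$ from Lemma \ref{DEFZ} and Proposition \ref{prop:decomp}. First I would compute, writing $y=x-\rho(t)$ so that $\partial_t[\psi_{A_0}(y)] = -\rho'(t)\psi_{A_0}'(y)$,
\[
\partial_t I(t) = 2\int_\R z z_t \psi_{A_0}(y) - \rho'(t)\int_\R z^2 \psi_{A_0}'(y).
\]
Substituting $z_t = -\{z_{xx}-\la z + a(\ve x)[(\tilde u+z)^m-\tilde u^m]\}_x - \tilde S[\tilde u] - c_1'\partial_c\tilde u - \rho_1'\partial_\rho\tilde u$ and integrating by parts, the leading contribution is the quadratic form
\[
-3\int_\R z_x^2\,\psi_{A_0}'(y) - \int_\R z^2\big(\rho'(t)\psi_{A_0}' - \psi_{A_0}'''\big)(y) + m\int_\R \big(a(\ve x)\tilde u^{m-1}\big)_x z^2 \psi_{A_0}(y) + \cdots,
\]
plus genuinely cubic terms in $z$, terms involving $\tilde S[\tilde u]$, and terms proportional to $c_1'$ and $\rho_1'$ (which are themselves small by Lemma \ref{DEFZ}).

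The core of the argument is to show that the quadratic part is coercive, i.e. bounded above by $-\delta_0\int_\R(z_x^2+z^2)e^{-|y|/A_0}$. Here I would exploit $\psi_{A_0}'(y) \sim e^{-|y|/A_0}$ together with $|\psi_{A_0}'''| \le A_0^{-2}\psi_{A_0}'$, and the fact that $\rho'(t) = c(t)-\la + O(\ve^{1/100})$ has a fixed sign on the relevant time interval (positive in the refraction regime, negative in the reflection regime, but in either case $|\rho'(t)|$ stays bounded away from the degenerate value). The potentially dangerous term $m\int (a\tilde u^{m-1})_x z^2 \psi_{A_0}$ is localized near $y=0$ on the soliton core: using the two orthogonality conditions $\int z Q_c = \int z y Q_c = 0$ from \eqref{defz}, I would invoke the coercivity statement Lemma \ref{surL}(5)(b) applied to the bilinear form on the region $\{|y|\lesssim A_0\}$ (after splitting $z$ via a cutoff and controlling the cross terms), which dominates this term provided $A_0$ is chosen large enough. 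This is essentially the localized virial identity of Martel--Merle type, and it is where the evenness of $\phi$ and the precise shape \eqref{psip}--\eqref{psiA} of $\psi_{A_0}$ are used: $\psi_{A_0}'$ being an approximate exponential with slowly varying logarithmic derivative is exactly what makes the weighted quadratic form controllable.

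The remaining terms are error terms. The cubic terms $\int a(\ve x)[(\tilde u+z)^m - \tilde u^m - m\tilde u^{m-1}z - \binom{m}{2}\tilde u^{m-2}z^2](\psi_{A_0})_x$ and analogous pieces are bounded by $K\|z\|_{H^1}\int z^2 e^{-|y|/A_0} \le KK^*\ve^{1/2}\int z^2 e^{-|y|/A_0}$ using \eqref{TRANS3}, hence absorbed into $\delta_0\int z^2 e^{-|y|/A_0}$ for $\ve$ small. The $\tilde S[\tilde u]$ term is handled by Cauchy--Schwarz and \eqref{SH2}: $|\int \tilde S[\tilde u] (z\psi_{A_0})_x| \le K\|\tilde S[\tilde u]\|_{H^1}\|z\|_{H^1} \le KK^*\ve^{3/2}e^{-\ve\ga|\rho(t)|}\cdot\ve^{1/2} = KK^*\ve^2 e^{-\ve\ga|\rho|}$ — actually I expect one gains an extra $A_0\ve^{1/2}$ from the weight and the local $L^2$ norm of $z$, giving the stated $KA_0K^*\ve^{5/2}e^{-\ve\ga|\rho(t)|}$. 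Finally the $c_1'$ and $\rho_1'$ contributions: using \eqref{rho1}--\eqref{c1}, both are $O(\int z^2 e^{-\ga\sqrt c|y|}) + O(\ve e^{-\ve\ga|\rho|}(\int z^2 e^{-\ga\sqrt c|y|})^{1/2}) + O(\ve^2 e^{-\ve\ga|\rho|})$, and when multiplied against the bounded quantities $\int \partial_c\tilde u\,(z\psi_{A_0})$, etc., they are again absorbed (the quadratic-in-$z$ part) or give admissible $\ve^{5/2}$-size errors (after a Young's inequality splitting $\ve\cdot(\int z^2 e^{-\ga|y|})^{1/2}$).

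The main obstacle I anticipate is making the localized coercivity rigorous: the weight $\psi_{A_0}$ is supported on all of $\R$, not compactly, so Lemma \ref{surL}(5)(b) does not apply directly. One must either decompose $z = z\chi + z(1-\chi)$ with a cutoff $\chi$ supported on $\{|y|\le 2A_0\}$ and show the cross terms and the far-field piece (where $m Q_c^{m-1}$ is exponentially small, so the form is manifestly positive) are controlled, or — following Lemma 6.4 of \cite{Mu2} — use a monotonicity/weighted-coercivity argument tailored to $\psi_{A_0}$. Tracking the dependence on $A_0$ carefully (the coercivity constant $\sigma_c$ degrades, the third-derivative term improves) to land on a clean $\delta_0>0$ independent of the final $A_0$, while keeping the error constant linear in $A_0$, is the delicate bookkeeping that makes this a \emph{sharp} virial estimate rather than a crude one.
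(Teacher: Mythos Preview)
Your overall strategy---differentiate, substitute \eqref{Eqz1}, isolate the weighted quadratic form, and control the remainder via \eqref{rho1}--\eqref{c1} and \eqref{SH2}---is the same as the paper's, and your identification of the localized Martel--Merle virial coercivity as the heart of the matter is exactly right (the paper defers this to Appendix~B of \cite{MMnon}).

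There is, however, one concrete error in your reasoning that you should correct. You write that $\rho'(t)=c(t)-\la+O(\ve^{1/100})$ ``has a fixed sign on the relevant time interval\ldots and in either case $|\rho'(t)|$ stays bounded away from the degenerate value.'' In the reflection regime $\tilde\la<\la<1$ this is false: by Lemma~\ref{ODE}, $C(t)$ decreases from $1$ through $\la$ at time $t_0$ down to $c_\infty(\la)<\la$, so $\rho'(t)\sim c(t)-\la$ changes sign near $t_0$ and is arbitrarily small there. The coercivity does \emph{not} come from the sign of $\rho'$. What actually happens is that the contribution $2\int(z\psi_{A_0})_x(-\la z)=-\la\int z^2\psi_{A_0}'$ from the dispersive term combines with $-(c-\la+\ve f_2+\ldots)\int z^2\psi_{A_0}'$ from the transport to give a net coefficient $-c(t)\int z^2\psi_{A_0}'$. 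Since $c(t)\ge\mu(\la)>0$ uniformly on $[-T_\ve,\tilde T_\ve]$ (Lemma~\ref{ODE}, estimate~\eqref{boundCp}), the principal weighted form reads
\[
-\int_\R \psi_{A_0}'\big(3z_x^2+c(t)z^2-mQ_c^{m-1}z^2\big)-m\int_\R(Q_c^{m-1})'z^2\psi_{A_0}+\int_\R z^2\psi_{A_0}^{(3)}+\ldots,
\]
and it is the uniform positivity of $c(t)$, not of $c(t)-\la$, that makes the Martel--Merle localized coercivity applicable across both regimes. Once you make this correction the rest of your plan (cutoff decomposition for Lemma~\ref{surL}(5)(b), absorption of cubic terms, Young splitting on the $c_1',\rho_1'$ contributions) goes through as you outline.
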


\begin{proof} Let $t\in [-T_\ve, T^*]$. Replacing the value of $ z_t$ given by (\ref{Eqz1}),  we have
\bea
 \partial_t \int_\R  z^2 \psi_{A_0}(y)  & = &   2\int_\R  z  z_t  \psi_{A_0}(y) -\rho'(t) \int_\R  z^2 \psi_{A_0}'(y) \nonumber \\
&  = &  2\int_\R ( z   \psi_{A_0}(y) )_x (  z_{xx} -\la z+  m a(\ve x) \tilde R^{m-1}  z ) \label{e0} \\
& &    -(c-\la +\ve f_2 +\ve^2 \delta_{m,3}f_4)(t)\int_\R  z^2 \psi_{A_0}' - 2\rho'_1(t)\int_\R  z \partial_\rho \tilde u \psi_{A_0}\label{e1} \\
& &   + 2\int_\R ( z   \psi_{A_0}(y) )_x a(\ve x)  [(\tilde u +  z)^m -\tilde u^m -m\tilde u^{m-1} z ] \label{e2}\\
& &  - 2c_1'(t)\int_\R  z \partial_c\tilde u \psi_{A_0}    - \rho_1'(t) \int_\R  z^2 \psi_{A_0}' \label{e3}\\
& &   + 2m\int_\R z( z   \psi_{A_0}(y) )_x a(\ve x) (\tilde u^{m-1} -\tilde R^{m-1}) -2\int_\R z \psi_{A_0}\tilde S[\tilde u] \label{e4}.
\eea
First of all, note that
\bee
|(\ref{e2})| & \leq & K\abs{ \int_\R  z_x   \psi_{A_0}(y) a(\ve x)  [(\tilde u +  z)^m -\tilde u^m -m\tilde u^{m-1} z ]} \\
& & \qquad +  K\abs{\int_\R     \psi_{A_0}'(y) a(\ve x) z [(\tilde u +  z)^m -\tilde u^m -m\tilde u^{m-1} z ] } \\
& \leq &   KA_0 K^*\ve^{1/2}\int_\R   z^2 (t)e^{-\ga\sqrt{c}|y|} + KK^*\ve^{1/2} \int_\R   z^2(t)e^{-\frac 1{A_0}|y|} \\
& & \qquad +K \abs{\int_\R z^{m+1} ( \psi_{A_0}(y) a(\ve x))_x}\\
& \leq & KK^*A_0 \ve^{1/2}\int_\R   z^2(t)e^{-\frac 1{A_0}|y|} + K A_0 \ve \|z(t)\|_{H^1(\R)}^{m+1} \\
& \leq & KK^*A_0 \ve^{1/2}\int_\R   z^2(t)e^{-\frac 1{A_0}|y|} + K (K^*)^{m+1}A_0 \ve^{(m+3)/2}.
\eee
for $A_0$ large, but independent of $\ve$.
Now, by using (\ref{rho1}) and (\ref{c1})  it is easy to check that for $A_0$ large enough, and some constants $\delta_0, \ve_0$ small, one has
\bee
 |(\ref{e3})|  &  \leq &   |c_1'(t)| \abs{\int_\R  z \partial_c\tilde u \psi_{A_0} }+  K K^* \ve^{1/2} \int_\R  z^2(t) e^{-\frac 1{A_0}|y|} \\
 & \leq &  \frac{\delta_0}{100}\int_\R z^2(t)e^{-\frac 1{A_0}|y|} + KK^* A_0\ve^{5/2} e^{-\ve\ga|\rho(t)|}.
\eee
On the other hand, the terms (\ref{e0}) and (\ref{e1}) goes similarly to the terms $B_1$ and $B_2$ in Appendix B of \cite{MMnon}. Indeed, we have
\bee
(\ref{e0}) + (\ref{e1}) & =& -\int_\R \psi_{A_0}' (  3z_x^2  + c z^2 -  mQ_c^{m-1}  z^2 ) - m\int_\R (Q_c^{m-1})' z^2 \psi_{A_0}  \\
& &   + \int_\R  z^2 \psi_{A_0}^{(3)} - 2\rho_1'(t) \int_\R  z \partial_\rho \tilde u\psi_{A_0} \\
& & +2m\int_\R (z\psi_{A_0})_x z (a\tilde R^{m-1} -Q_c^{m-1}) -\ve(f_2 +\ve\delta_{m,3} f_4)\int_\R z^2 \psi_{A_0}' .
\eee
We finally get, taking $\ve$ small, depending on $A_0$,
$$
(\ref{e0}) + (\ref{e1})\leq  -\frac{\delta_0}{10}\int_\R ( z_x^2 +  z^2)(t)e^{-\frac 1{A_0}|y|}.
$$
Finally, the term (\ref{e4}) can be estimated as follows
\bee
|(\ref{e4})| & \leq &  K \abs{\int_\R z( z   \psi_{A_0}(y) )_x a(\ve x) (\tilde u^{m-1} -\tilde R^{m-1}) } +K\abs{\int_\R z \psi_{A_0}\tilde S[\tilde u]}\\
& \leq &K \abs{\int_\R z^2   \psi'_{A_0}(y) a(\ve x) (\tilde u^{m-1} -\tilde R^{m-1}) }  \\
& & \qquad + K \abs{\int_\R z z_x   \psi_{A_0}(y)  a(\ve x) (\tilde u^{m-1} -\tilde R^{m-1}) } +  K A_0 K^* \ve^{2} e^{-\ve \ga|\rho(t)|} \\
& \leq & K A_0 \ve \int_\R (z^2(t) + z_x^2(t)) e^{-\frac 1{A_0} |y|}  +  K A_0 K^* \ve^{5/2} e^{-\ve \ga|\rho(t)|} + K A_0 K^* \ve^{7/2}.
\eee
Collecting these estimates, we finally get (\ref{dereta}).
\end{proof}

\medskip

A simple but very important conclusion of the last estimate, is the following. One has, from (\ref{c1}) and (\ref{dereta}),
\be\label{intc1}
\int_{-T_\ve}^{t} |c'_1(s)| ds  \leq  KK^* \ve,
\ee
for all $t\in [-T_\ve, T^*]$, by taking $A_0$ large enough, independent of $\ve$ and $K^*$. In other words, we improve the estimate on the integral of $|c_1'(t)|$ (a crude integration of (\ref{c1}) gives $\int_{-T_\ve}^t |c'_1(s) ds|  \leq K\ve^{-\frac 1{100}}$.)

\medskip

\subsection{Energy functional for $z$}\label{EFz}

Consider the functional $\mathcal F$ defined as follows

\be\label{F}
\mathcal F(t) := \frac 12 \int_\R (z_x^2 +c(t) z^2) - \frac{1}{m+1}\int_\R a(\ve x) [(\tilde u+ z)^{m+1} -\tilde u^{m+1} - (m+1)\tilde u^{m}z]. 
\ee

Similary to \cite{Mu2}, and thanks to Lemma \ref{surL}, we have the following coercivity property: there exist $K,\nu_0>0$, independent of $K^*$ and $\ve$ such that for every $t\in [-T_\ve, T^*]$
\be\label{Coer2}
\mathcal F(t) \geq \nu_0 \|z(t)\|_{H^1(\R)}^2  - K (\ve e^{-\ga\ve |\rho(t)|} + \ve^2)\|z(t)\|_{L^2(\R)}^2 - K  \|z(t)\|_{L^2(\R)}^3. 
\ee

The next step is to obtain independent estimates on $\tilde F(T^*)$.  We follow \cite{Mu2}, but now estimate \ref{intc1} is the key element to close the argument.

\begin{lem}[Estimates on $\mathcal F(t)$]\label{Ka}~

The following properties hold for any $t\in [-T_\ve, T^*]$.
\begin{enumerate}
\item First time derivative. 
\bea
\mathcal F'(t) &  = &   -\int_\R z_t \big\{ z_{xx} -c z + a(\ve x) [ (\tilde u+z)^m -\tilde u^m ] \big\} + \frac 12 c'\int_\R z^2 \nonumber \\
& &  -\int_\R a(\ve x) \tilde u_t[ (\tilde u+z)^m -\tilde u^m -m\tilde u^{m-1}z]. \label{Fp}
\eea
\item Integration in time. There exist constants $K,\ga>0$ such that
\bea\label{IntF}
\mathcal F(t) -\mathcal F(-T_\ve)&  \leq &  K(K^*)^4 \ve^{2-\frac 1{100}}  + K(K^*)^3 \ve^{\frac 32-\frac 1{100}} + KK^* \ve \nonumber  \\
 & &  + K\int_{-T_\ve}^t \ve e^{-\ve\ga |\rho(s)|} \|z(s)\|_{H^1(\R)}^2 ds . 
\eea
\end{enumerate}
\end{lem}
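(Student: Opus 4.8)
Part (1) is a direct differentiation, and I would do it first. Differentiating $\mathcal F(t)$ term by term — recalling that $z$, $\tilde u$ and $c$ depend on $t$ but $a(\ve\cdot)$ does not — the quadratic part gives $\int_\R z_x z_{xt}+\tfrac12 c'\int_\R z^2+c\int_\R zz_t$, and an integration by parts (no boundary terms, since $z(t)\in H^1(\R)$) turns $\int_\R z_x z_{xt}$ into $-\int_\R z_{xx}z_t$. For the nonlinear part I apply the chain rule to $(\tilde u+z)^{m+1}$, $\tilde u^{m+1}$ and $(m+1)\tilde u^m z$; after dividing by $m+1$, the $z_t$-contributions collect to $-\int_\R a(\ve x)z_t[(\tilde u+z)^m-\tilde u^m]$ and the $\tilde u_t$-contributions to $-\int_\R a(\ve x)\tilde u_t[(\tilde u+z)^m-\tilde u^m-m\tilde u^{m-1}z]$. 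Adding these to the quadratic part gives exactly \eqref{Fp}. (For $H^1$ solutions the whole computation is justified by the usual density/regularization argument.)

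For part (2) I would integrate \eqref{Fp} on $[-T_\ve,t]$ and estimate the contributions one by one. Setting $G:=z_{xx}-cz+a(\ve x)[(\tilde u+z)^m-\tilde u^m]$ and $H:=z_{xx}-\la z+a(\ve x)[(\tilde u+z)^m-\tilde u^m]$, so that $G=H-(c-\la)z$, I substitute $z_t=-H_x-\tilde S[\tilde u]-c_1'\partial_c\tilde u-\rho_1'\partial_\rho\tilde u$ from \eqref{Eqz1} into the first term of \eqref{Fp}, obtaining $-\int z_t G=\int H_x G+\int\tilde S[\tilde u]\,G+c_1'\int(\partial_c\tilde u)G+\rho_1'\int(\partial_\rho\tilde u)G$. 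In the first of these, $\int H_x H=\tfrac12\int\partial_x(H^2)=0$, so $\int H_x G=-(c-\la)\int H_x z=(c-\la)\int Hz_x$; using $\int z_{xx}z_x=\int zz_x=0$ this reduces to $(c-\la)\int a(\ve x)[(\tilde u+z)^m-\tilde u^m]z_x$, and one further integration by parts expresses it as a space-localized quadratic form $\int e^{-\ga\sqrt c|y|}z^2$ (the weight coming from $\partial_x(\tilde u^{m-1})\sim Q_c'$) plus terms cubic and higher in $z$ and an $\ve e^{-\ve\ga|\rho|}$-weighted piece from $a'(\ve x)$. The term $\int\tilde S[\tilde u]\,G$ I handle by moving one derivative onto $\tilde S[\tilde u]$ and invoking $\|\partial_x\tilde S[\tilde u]\|_{L^2}+\|\tilde S[\tilde u]\|_{H^1}\le K\ve^{3/2}e^{-\ve\ga|\rho(t)|}$ from \eqref{SH2}, so it is $\le K\ve^{3/2}e^{-\ve\ga|\rho(t)|}\|z(t)\|_{H^1}$. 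For the last two terms I use that $\partial_c\tilde u$, $-\partial_\rho\tilde u$ and $-G$ equal $\Lambda Q_c$, $Q_c'$ and $\mathcal L z$ respectively up to lower-order corrections, together with $\mathcal L\Lambda Q_c=-Q_c$, $\mathcal L Q_c'=0$ and the orthogonality \eqref{defz}: the leading parts vanish, leaving a remainder bounded by $(|c_1'(t)|+|\rho_1'(t)|)$ times weighted $L^2$-norms of $z$.

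Then I would collect and integrate in time. For the $\tfrac12 c'\int z^2$ term of \eqref{Fp}: since $c'=\ve f_1+\ve^2\delta_{m,3}f_3+c_1'$ and $|f_1|+|f_3|\le Ke^{-\ve\ga|\rho(t)|}$ (because $a'(\ve\cdot)$ decays exponentially), it is $\le K\ve e^{-\ve\ga|\rho(t)|}\|z(t)\|_{H^1}^2+|c_1'(t)|\,\|z(t)\|_{H^1}^2$. The $\tilde u_t$-term of \eqref{Fp} I treat with \eqref{Est2}, writing $\tilde u_t=-\rho'\tilde u_x+c'\partial_c\tilde u+O_{H^1}(\ve e^{-\ve\ga|\rho|})$; since $\tilde u_x,\partial_c\tilde u$ decay like $e^{-\ga\sqrt c|y|}$ and the bracket is $O(z^2)$, this again yields a space-localized quadratic form plus higher-order and $\ve e^{-\ve\ga|\rho|}$-weighted pieces. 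Finally: every space-localized term $\int e^{-\ga\sqrt c|y|}(z_x^2+z^2)$, integrated over $[-T_\ve,t]$, is $\le KK^*\ve$ by the virial estimate \eqref{dereta} (pick $A_0$ with $A_0^{-1}<\ga\sqrt c$, note $\int_{-T_\ve}^t e^{-\ve\ga|\rho(s)|}ds\le K\ve^{-1}$ since $\rho'\sim c-\la$, and use $\|z(-T_\ve)\|_{H^1}\le K\ve^{1/2}$ from \eqref{TRANS3} with $\psi_{A_0}\ge0$); each $(|c_1'|+|\rho_1'|)$-weighted term, integrated in time, is $\le KK^*\ve$ via $\int_{-T_\ve}^t(|c_1'|+|\rho_1'|)\,ds\le KK^*\ve$, which is \eqref{intc1}; the $\tilde S[\tilde u]$-contributions give $\le K\int_{-T_\ve}^t\ve^{3/2}e^{-\ve\ga|\rho|}\|z\|_{H^1}\le KK^*\ve$; the cubic-and-higher-in-$z$ terms, bounded crudely by powers of $\|z\|_{H^1}\le KK^*\ve^{1/2}$ over a time interval of length $\sim T_\ve=\ve^{-1-1/100}/(1-\la)$ (refined where an $a'$- or $Q_c'$-weight is present), produce the $K(K^*)^3\ve^{3/2-1/100}+K(K^*)^4\ve^{2-1/100}$ terms; and the $\ve e^{-\ve\ga|\rho|}\|z\|_{H^1}^2$ pieces are kept as they are, forming the last term of \eqref{IntF}. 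Collecting everything yields \eqref{IntF}.

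\textbf{Main obstacle.} The delicate point is the coupling, not any single estimate: \eqref{IntF} can only be closed with the \emph{linear-in-$K^*$} bound $\int_{-T_\ve}^t|c_1'|\le KK^*\ve$, i.e. \eqref{intc1}, which itself relies on the virial Lemma \ref{VL} to convert the time-decay of $\int z^2 e^{-|y|/A_0}$ into an integrable-in-time bound, while \eqref{dereta} in turn feeds on the modulation estimates \eqref{rho1} and \eqref{c1}; so Lemmas \ref{DEFZ}, \ref{VL} and \ref{Ka} have to be run in tandem. Concretely, the care needed is to check that after substituting \eqref{Eqz1} every resulting term genuinely falls into one of the four harmless classes — space-localized quadratic in $z$, $(|c_1'|+|\rho_1'|)$-weighted, $\tilde S[\tilde u]$-driven, or cubic-and-higher / $e^{-\ve\ga|\rho|}$-weighted — with no residual $O(\|z\|_{H^1}^2)$ term carrying neither smallness nor weight, since such a term would destroy the Gronwall step that closes Proposition \ref{prop:I}.
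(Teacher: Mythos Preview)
Your plan is essentially the paper's: substitute \eqref{Eqz1} into \eqref{Fp}, break the result into the pieces that become \eqref{Fp1}--\eqref{Fp4} and \eqref{Fp3b}, and integrate using \eqref{intc1}. One slip: \eqref{intc1} controls only $\int_{-T_\ve}^t|c_1'|$, not $\int_{-T_\ve}^t|\rho_1'|$; for $m\neq 3$ the bound \eqref{rho1} carries a term $K\big[\int e^{-\ga\sqrt c|y|}z^2\big]^{1/2}$ whose time integral is not $O(\ve)$. This does not damage your argument, since every $\rho_1'$-weighted piece you encounter comes multiplied by a localized quadratic in $z$, and you can use the pointwise bound $|\rho_1'|\le KK^*\ve^{1/2}$ together with the virial estimate $\int_{-T_\ve}^t\int e^{-\ga|y|}z^2\,ds\le KK^*\ve$ instead.

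The paper organizes the quadratic terms slightly differently: rather than bounding each space-localized $\int e^{-\ga|y|}z^2$ via the virial, it notes that the pieces $-(c-\la)\int a\tilde u_x z^2$ from \eqref{Fp1}, $-\rho_1'\int a\tilde u_x z^2$ from \eqref{Fp2}, and $+\rho'\int a\tilde u_x z^2$ from \eqref{Fp4} combine to $(\ve f_2+\ve^2\delta_{m,3}f_4)\int a\tilde u_x z^2=O(\ve e^{-\ve\ga|\rho|}\|z\|^2)$, and likewise the $c_1'$ and $c'$ pieces from \eqref{Fp2b} and \eqref{Fp4} cancel to $O(\ve e^{-\ve\ga|\rho|}\|z\|^2)$. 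This yields the clean pointwise bound $\mathcal F'(t)\le\frac12 c_1'\|z\|_{L^2}^2+K\ve e^{-\ga\ve|\rho|}\|z\|_{H^1}^2+K\ve\|z\|_{H^1}^{m+1}$ before integrating, so the virial is used only through \eqref{intc1}. Your direct-virial route and the paper's cancellation route both lead to \eqref{IntF}.
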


\begin{proof}
First of all, (\ref{Fp}) is a simple computation. Let us  consider (\ref{IntF}). Replacing (\ref{Eqz1}) in (\ref{Fp}) we get
\bea
\mathcal F'(t) & = & (c(t)-\la)  \int_\R 
 a(\ve x) [ (\tilde u+z)^m -\tilde u^m ]  z_x \label{Fp1} \\
& & + \rho_1'(t)  \int_\R \partial_\rho \tilde u \big\{ z_{xx} -cz + a(\ve x) [ (\tilde u+z)^m -\tilde u^m ] \big\} \label{Fp2} \\
& & + c_1'(t) \int_\R \partial_c \tilde u \big\{ z_{xx} -cz + a(\ve x) [ (\tilde u+z)^m -\tilde u^m ] \big\} \label{Fp2b} \\
& & +  \int_\R \tilde S[\tilde u] \big\{ z_{xx} -cz + a(\ve x) [ (\tilde u+z)^m -\tilde u^m ] \big\} \label{Fp3} \\
& & + \frac 12 c_1'(t) \int_\R z^2+\frac 12 \ve (f_1 + \ve \delta_{m,3}f_3)(t) \int_\R z^2 \label{Fp3b} \\
& &-\int_\R a(\ve x) \tilde u_t [ (\tilde u+z)^m -\tilde u^m -m\tilde u^{m-1}z]. \label{Fp4}
\eea 
Now we consider the case $m=2$, the other cases being similar (see \cite{Mu2} for more details.) First of all, note that 
$$
\frac 12 \ve f_1(t) \int_\R z^2 \leq K\ve e^{-\ve\ga|\rho(t)|} \|z(t)\|_{L^2(\R)}^2.
$$
Next, after some simplifications, we get
\bee
(\ref{Fp1})&  =&  (c-\la)  \int_\R a(\ve x) [2\tilde u z + z^2 ] z_x \\
& = & -(c-\la) \int_\R [ a(\ve x) \tilde u_x z^2  + \ve a'(\ve x) \tilde u z^2 + \frac 13 \ve a'(\ve x) z^3 ] .
\eee
From this, using (\ref{Est1}),
\be\label{a00}
(\ref{Fp1}) + (c-\la) \int_\R a(\ve x) \tilde u_x z^2 \leq K \ve e^{-\ga\ve |\rho(t)|} \|z(t)\|_{L^2(\R)}^2 + K\ve \|z(t)\|^3_{H^1(\R)}. 
\ee
Now we estimate (\ref{Fp2}). Since $\partial_\rho \tilde u =\partial_\rho R + O(\eta_\ve w_y) + O_{H^1(\R)}(\ve^{3/2} e^{-\ve \ga |\rho(t)|})$ (cf. Proposition \ref{CV}), one has 
\bea\label{a01}
(\ref{Fp2})  & = &  \rho'_1 \int_\R \partial_\rho \tilde u  \big\{ z_{xx} -cz + a(\ve x) [ 2\tilde u z + z^2  ] \big\}  \nonumber  \\
& = & - \rho'_1 \int_\R a(\ve x) \tilde u_x   z^2 + O(\ve e^{-\ve\ga |\rho(t)|} \|z(t)\|^2_{L^2(\R)}).
\eea
Similarly, we have from (\ref{defz})
\bea\label{a02}
(\ref{Fp2b})  & = &  c_1'  \int_\R \partial_c \tilde u  \big\{ z_{xx} -cz + a(\ve x) [ 2\tilde u z + z^2  ] \big\}   \nonu \\
& = &  c_1'  \int_\R a(\ve x) \partial_c \tilde u   z^2 + O(\ve e^{-\ve\ga |t|} \|z(t)\|^2_{L^2(\R)}).
\eea
On the one hand, we have
\bea\label{a03}
& & \abs{  \int_\R \tilde S[\tilde u] \big\{ z_{xx} -cz + a(\ve x) [ 2\tilde u z + z^2 ] \big\}  }  \leq \nonu\\ 
& & \qquad \leq \abs{  \int_\R\partial_x \tilde S[\tilde u] z_{x} } + K (1+K^* \ve^{1/2})\abs{\int_\R \tilde S[\tilde u] z}  + K \abs{\int_\R \tilde S[\tilde u] \tilde u z } \nonu \\
& & \qquad \leq K \ve^{2}(e^{-\ve\ga|\rho(t)|} +\ve) \|z(t)\|_{H^1(\R)} +  K (1+K^* \ve^{1/2})\abs{\int_\R \tilde S[\tilde u] z} \nonu\\
& & \qquad \leq KK^*\ve^{2}e^{-\ve\ga|\rho(t)|}.
\eea
Finally, 
\bea\label{a04}
(\ref{Fp4}) &  = &     - \int_\R a(\ve x) (\tilde u_t + \rho' \tilde u_x  - c' \partial_c \tilde u) z^2 +  \rho'\int_\R a(\ve x) \tilde u_x z^2  -c'\int_\R a(\ve x) \partial_c \tilde u z^2 \nonumber \\ 
& & \qquad +O(\ve e^{-\ve\ga|\rho(t)|} \|z(t)\|_{L^2(\R)}^2). 
\eea
We get then from (\ref{Est2}) and (\ref{a00})-(\ref{a04})
$$
\mathcal F'(t) \leq  \frac 12  c_1' \|z(t)\|_{L^2(\R)}^2 + K \ve e^{-\ga\ve |\rho(t)|} \|z(t)\|^2_{L^2(\R)} + K\ve\|z(t)\|_{H^1(\R)}^3.
$$
Collecting the above estimates and (\ref{rho1}), and using (\ref{intc1}), after an integration, we finally get
$$
\mathcal F(t) -\mathcal F(-T_\ve) \leq  K(K^*)^3 \ve^{\frac 32-\frac 1{100}}  + KK^* \ve + K \int_{-T_\ve}^t  \ve e^{-\ga\ve |\rho(s)|} \|z(s)\|_{H^1(\R)}^2 ds,
$$ 
as desired. The cases $m=3$ and $4$ are similar.
\end{proof}

We are finally in position to show that $T^*<T_\ve$ leads to a contradiction. 

\medskip

\noindent
{\bf End of proof of Proposition \ref{prop:I}} Since from Lemma \ref{DEFZ}, $\mathcal F(-T_\ve) \leq K \ve,$ using (\ref{Coer2}) and Lemma (\ref{IntF}) we get
\bee
\|z(t)\|_{L^2(\R)}^2   \leq   K\Big[ \ve +  (K^*)^4 \ve^{2-\frac 1{100}} + (K^*)^3 \ve^{\frac 32-\frac 1{100}}  + K^* \ve  +  \int_{-T_\ve}^t  \ve e^{-\ga\ve |\rho(s)|} \|z(s)\|_{H^1(\R)}^2 ds \Big].
\eee
Now, by Gronwall's inequality (see e.g. \cite{Mu2} for a detailed proof), there exists a large constant $K>0$, but independent of $K^*$ and $\ve$, such that
\be\label{RMCF}
\|z(t)\|_{H^1(\R)}^2 \leq  K\ve  + K(K^*)^3 \ve^{\frac 32-\frac 1{100}}.
\ee
Indeed, we just need to justify that $\abs{\int_{-T_\ve}^{t} \ve e^{-\ga\ve |\rho(s)|} ds} \leq K$, independent of $\ve$ and $K^*$. It is clear that this estimate holds in the case $0\leq \la<\tilde \la$, since $\rho'(s) \geq  \frac{9}{10}(c(s) -\la )\geq \frac{8}{10}(c_\infty(\la) -\la)>0 $.  The case $\tilde \la<\la<1$ requires more care, since $P'(t_0)=0.$ To overcome this difficulty, we split the proof into three parts, arguing similarly to the proof of Lemma \ref{ODE2}. First, we suppose $t\leq t_0 -\frac \al\ve$, for $\al>0$ small, but independent of $\ve$. It is clear that 
$$
\abs{\int_{-T_\ve}^{t} \ve e^{-\ga\ve |\rho(s)|} ds} \leq \frac K\al,
$$
since $\rho'(t) \sim c(t)-\la \sim \al$ (see (\ref{Cal}).) Let us suppose $t_0 -\frac \al\ve \leq t\leq t_0 +\frac \al\ve$. In this case one has
$$
\abs{\int_{-T_\ve}^{t} \ve e^{-\ga\ve |\rho(s)|} ds} \leq \frac K\al + K\al.
$$
Finally, the remaining case $t\geq t_0 +\frac \al\ve$ is similar to the first case. Since each estimate is independent of $K^*$ and $\ve$, provided $\ve$ small, we get the final conclusion.

\medskip

Let us come back to the main proof. From estimate (\ref{RMCF}), and taking $\ve$ small, and $K^*$ large enough, we obtain that for all $t\in [-T_\ve, T^*]$,
\be\label{KKa}
\|z(t)\|_{H^1(\R)}^2 \leq  \frac 14 (K^*)^2 \ve.
\ee
Therefore, we improve the estimate on $z(t)$ stated in (\ref{TRANS3}). 

\medskip

Next, we prove that 
\be\label{lastest}
\|u(T^*)- \tilde u (\cdot \ ; C(T^*), \rho(T^*)) \|_{H^1(\R)} \leq \frac 12 K^* \ve^{1/2}.
\ee
Indeed, expanding the definition of the energy (\ref{Ea}),
\bee
E_a[\tilde u(\cdot, c,\rho)  + z](t) & = & E_a[\tilde u](t)  -\int_\R z(\tilde u_{xx} -\la \tilde u +a(\ve x) \tilde u^m)\\
& & \qquad  -\frac 1{m+1} \int_\R a(\ve x) [(\tilde u +z)^{m+1} -\tilde u^{m+1} -(m+1) \tilde u^m z].
\eee
Now we use (\ref{Est2a}), the definition of $\tilde u$ given in (\ref{defv}) and the orthogonality condition (\ref{defz}); we get
\bee
E_a[\tilde u(\cdot, c,\rho)  + z](t)  & =  & E_a[R + w](t)+ O(\ve e^{-\ve \ga |\rho(t)|}\|z(t)\|_{H^1(\R)})+ O(\|z(t)\|_{H^1(\R)}^2)\\
& =&  E_a[R + w](t) + O(K^*\ve^{3/2} e^{-\ve \ga |\rho(t)|}) +O((K^*)^2 \ve).
\eee
On the other hand, a simple computation shows that
\bee
 E_a[R+w](t) &  =&    E_a[R](t) -\int_\R w(R_{xx} -\la R +a(\ve \rho) R^m)  -\int_\R w (a(\ve x) -a(\ve \rho)) R^m \\
& & \quad  -\frac 1{m+1} \int_\R a(\ve x) [(R + w)^{m+1} -R^{m+1} -(m+1) R^m w] \\
& = &  E_a[R](t) -\frac 1{\tilde a}(c-\la)\int_\R w Q_c + O(\ve \|w(t)\|_{H^1(\R)}) + O(\|w(t)\|_{H^1(\R)}^2)\\
& = &  E_a[R](t) + O(\ve^2 e^{-\ve\ga |\rho(t)|} ) + O(\ve^{10}).
\eee
Note that in the last line we have used (\ref{H1}) and (\ref{AO}). Finally,
\bee
 E_a[R](t) & = & \frac{1}{\tilde a^2(\ve \rho)} \Big[ \frac 12 \int_\R Q_c'^2 +\frac \la 2 \int_\R Q_c^2  -\frac 1{m+1} \int_\R Q_c^{m+1} \Big] \\
 & & \qquad    -\frac 1{(m+1)\tilde a^2(\ve \rho)} \int_\R \Big[ \frac{a(\ve x)}{a(\ve \rho)}  - 1\Big] Q_c^{m+1}\\
 & =&  \frac{1}{\tilde a^2(\ve \rho)} E_1[Q_c] + O(\ve^2) = \frac{c^{2\theta}}{\tilde a^2(\ve \rho)} (\la-\la_0c)M[Q] + O(\ve^2),
\eee
(for the last identity, see Lemma \ref{IdQ}.)

\medskip

Now we invoke the energy conservation law. We have, for all $t\in [-T_\ve, T^*]$,
$$
E_a[\tilde u(\cdot, c,\rho)  + z](t) = E_a[\tilde u(\cdot, c,\rho)  + z](-T_\ve).
$$
Therefore,
$$
\frac{c^{2\theta}(t)}{\tilde a^2(\ve \rho(t))} (\la-\la_0c(t))M[Q] \Big|_{-T_\ve}^t = O((K^*)^2\ve) + O(K^*\ve^{3/2}) + O(\ve^2).  
$$
We finally get
$$
|c(T^*) - C(T^*) | \leq K \ve^{1/2} + K K^* \ve + K(K^*)^2\ve.
$$
Using this estimate, (\ref{KKa}), and the triangle inequality, we get finally (\ref{lastest}), provided $K^*$ is large enough. This estimate contradicts the definition of $T^*$ given in (\ref{Tstar}), and concludes the proof of Proposition \ref{prop:I}.
\end{proof}

\bigskip

\section{Proof of the Main Theorems}\label{5}

\medskip

In this small section we prove the main results, namely Theorems \ref{MTL1}, \ref{MTL2}, and \ref{MTL3}. It turns out that Theorems \ref{MTL1} and \ref{MTL2} are of similar structure. 
 
\medskip 
 
 \noindent
{\bf Proof of Theorems \ref{MTL1} and \ref{MTL2}.} Let us consider $u(t)$ be the solution of (\ref{aKdV}) satisfying (\ref{Minfty}). Then, from  Proposition \ref{Tm1}, one has (\ref{mTep}). Therefore, Proposition \ref{T0} implies that $u(t)$ satisfies either (\ref{INT41a}) , or (\ref{INT41b}), depending on $\la \in (\la_0,\tilde\la)$ or $\la\in(\tilde\la,1)$, respectively. Finally, invoking Propositions \ref{Tp1} or \ref{Tp1r} respectively, we obtain the final conclusions, namely the asymptotic behavior included in Theorems \ref{MTL1} and \ref{MTL2}. 

Finally, let us prove (\ref{Pc2l}) and (\ref{Pc2m}). It is clear that the proof of (\ref{Pc2l}) is the same as in \cite{Mu2}, since $c_\infty(\la)>\la$. For the proof of (\ref{Pc2m}), we need to be careful. Indeed, from the energy conservation law, one has, for all $t\geq t_1$,
$$
E_a[u](-\infty) = E_a [Q_{c^+}(\cdot -\rho_2(t)) + w^+(t)]
$$
In particular, from the property of asymptotic stability, and Appendix \ref{IdQ} we have as $t\to +\infty$
\be\label{Eplus}
(\la-\la_0)M[Q]    =  (c^+)^{2\theta} (\la -\la_0 c^+ )M[Q] + E^+.
\ee
From this identity $E^+ := \lim_{t\to +\infty} E_a[w^+](t)$ is well defined.  Next, note that from the stability result (\ref{S}) and the Morrey embedding we have that, for any $\la>0$,
\bee
E[w^+](t) & = & \frac 12\int_\R (w^+_x)^2(t) +\frac \la 2 \int_\R (w^+)^2(t) -\frac 1{m+1} \int_\R a(\ve x) (w^+)^{m+1}(t)\\
& \geq &  \frac 12\int_\R (w^+_x)^2(t) +\frac \la 2 \int_\R (w^+)^2(t) -K\ve^{(m-1)/2} \int_\R a(\ve x) (w^+)^{2}(t)\\
& \geq & \nu \|w^+(t)\|_{H^1(\R)}^2
\eee
for some $\nu=\nu(\la)>0$. Passing to the limit, we obtain $\limsup_{t\to +\infty} E[w^+](t) \leq E^+$.

\medskip

On the one hand, note that after an algebraic manipulation the equation for $c_\infty$ in (\ref{cinf2}) can be written in the following form:
$$
c_\infty^{2\theta} (\la_0 c_\infty -\la)M[Q] =(\la_0-\la)M[Q].
$$
On the other hand, note that from (\ref{Eplus}) and the preceding inequality, we have
$$
\nu \limsup_{t\to +\infty }\|w^+(t)\|_{H^1(\R)}^2  \leq  (c^+)^{2\theta} (\la_0 c^+ -\la)M[Q] -  (\la_0-\la)M[Q].
$$ 
Putting together both estimates, we get
$$
 \tilde \nu \limsup_{t\to +\infty }\|w^+(t)\|_{H^1(\R)}^2  \leq   (c^+)^{2\theta+1}-c_\infty^{2\theta+1}   -\frac \la{\la_0} ( (c^+)^{2\theta} -c_\infty^{2\theta} ), 
$$
for some $\tilde \nu>0$. Using a similar argument as in Lemma \ref{C2} we have
$$
 \tilde \nu \limsup_{t\to +\infty }\|w^+(t)\|_{H^1(\R)}^2  \leq \frac 1{\la_0}(\la-c_\infty )(c_\infty^{2\theta}-(c^+)^{2\theta} )   + O(\abs{(c^+)^{2\theta}-c_\infty^{2\theta} }^2).
$$
From this inequality and the bound $\abs{c^+-c_\infty}\leq K\ve $ we get
$$
 \big( \frac{c_\infty}{c^+} \big)^{2\theta} -1 \geq \tilde \nu \limsup_{t\to +\infty }\|w^+(t)\|_{H^1(\R)}^2,
$$
as desired.

\medskip

\noindent
{\bf Proof of Theorem \ref{MTL3}.} Since we have the validity of the stability and asymptotic stability properties, from Remark \ref{NE} we can apply almost the same proof as in \cite{Mu2} to conclude Theorem \ref{MTL3}. Indeed, let us follow the proof of Theorem 1.3 in \cite{Mu2}. It is clear that the proof adapts without modifications in the case $\la_0<\la<\tilde \la$, which is the case where $c_\infty(\la)>\la$. The case $\tilde \la<\la<1$ requires some modifications. First of all, in Proposition 7.2 we use the following Weinstein's functional
$$
E_a[v](t) + (c_\infty(\la)-\la)\hat M[v](t),
$$
with $\hat M[v](t)$ defined in (\ref{hM}). Lemma 7.3 holds with the assumption $-\la<\sigma < \frac{11}{10}(c_\infty(\la)-\la)$. On the other hand, Lemma 7.4 is valid with the assumption $\tilde \sigma >\frac{9}{10}(c_\infty(\la)-\la)$. Finally, in the conclusion of the proof we use that $c_\infty(\la)<\la<1$ to obtain the desired contradiction. The rest of the proof is the same.

\bigskip

\appendix

\section{Proof of Proposition \ref{Tp1r}}\label{Stab}

In this section we sketch the proof of the stability and asymptotic stability result in the case of a reflected soliton. Note that in this case we have $c_\infty(\la) <\la$. For a detailed proof concerning the case $c_\infty(\la)>\la$, see e.g. Theorem 6.1 in \cite{Mu2}.

\medskip

\noindent
{\bf Proof of the Stability result.} Let us recall that the main difference between Propositions \ref{Tp1r} and Theorem 6.1 in \cite{Mu2} is in the modified mass introduced to construct a Weinstein functional. In the former, we have worked with $\hat M[u](t)$ (cf. (\ref{hM})), and now we will use $\mathcal M[u](t)$, defined in (\ref{Mback}).

\medskip

Let us assume that for some $K>0$ fixed, $t_1\geq  \tilde T_\ve,$
\be\label{48bon1}
\|u(t_1)- Q_{c_\infty} (\cdot  - X_0)\|_{H^1(\R)}\leq K \ve^{1/2}.
\ee
From the local and global Cauchy theory exposed in Proposition \ref{Cauchy}, we know that  the solution $u$ is well defined for all $t\geq t_1$.

Let $D_0>2K$ be a large number to be chosen later,  and set 
\bea\label{Tprime}
  T^* & : = &  \sup\Big\{t\geq t_1 \ | \ \forall \ t'\in [t_1, t), \ \exists \ \tilde \rho_2(t')\in\R  \hbox{ smooth, such that }  
  | \tilde\rho_2'(t') - c_\infty +\la| \leq \frac 1{100},  \nonu \\
& & \qquad \qquad   | \tilde\rho_2 (t_1) - X_0 |\leq \frac 1{100},   \hbox{ and } \; \| u(t')-Q_{c_\infty}(\cdot -\tilde\rho_2(t'))\|_{H^1(\R)}\le D_0 \ve^{1/2} \Big\}.
\eea
Observe that $T^*>t_1$ is well-defined since $D_0>2K$, (\ref{48bon1}) and the continuity of $t\mapsto u(t)$ in ${H^1(\R)}$. The objective is to prove $T^*= +\infty$, and thus (\ref{S}). Therefore, for the sake of contradiction, in what follows {\bf we shall suppose} $T^* <+\infty$.

The first step to reach a contradiction is to decompose the solution $u(t)$ in two parts: soliton plus an error term, on the interval $[t_1,T^*]$, using standard modulation theory around the soliton. In particular, we will find a special $\rho_2(t)$ satisfying the hypotheses in (\ref{Tprime}), but with  
\be\label{onehalf}
\sup_{t\in [t_1, T^*]}\|u(t)- Q_{c_\infty}(\cdot -\rho_2(t))\|_{H^1(\R)}\le \frac 12 D_0 \ve^{1/2},
\ee
a contradiction with the definition of $T^*$.

\begin{lem}[Modulated decomposition]\label{3Dr}~

For $\ve>0$ small enough, independent of $T^*$, there exist  $C^1$ functions $\rho_2, c_2$, defined on $[t_1,T^*]$, with $c_2(t)>0$ and  such that the function $z(t)$ given by
\be\label{eta1a}
z(t,x):=u(t,x)-R (t,x),
\ee
where $R(t,x):= Q_{c_2(t) }(x-\rho_2(t))$, satisfies for all $t\in [t_1,T^*],$
\bea 
&&\int_\R  R(t,x) z(t,x)dx =  \int_\R (x-\rho_2(t))R (t,x)z(t,x)dx =0, \label{10a}\\ 
&& \|z(t)\|_{H^1(\R)}+ |c_2(t) - c_\infty |  
  \leq  K D_0\ve^{1/2},\; \hbox{ and�}\label{11a}\\
&& \|z(t_1)\|_{H^1(\R)}+ |\rho_2(t_1)- X_0|+   |c_2(t_1) -c_\infty |   \leq  K \ve^{1/2} \label{12a},
\eea
where $K$  is not depending on $D_0$.  In addition, $z (t)$ now satisfies the following modified gKdV equation
\bea\label{13a}
& &   z_t + \big\{ z_{xx} -\la z+ a(\ve x) [( R + z)^m -  R^m ] + (a(\ve x)-1) Q_{c_2}^m \big\}_x\qquad\qquad\nonumber \\  
& & \qquad\qquad\qquad\qquad + \; c_2'(t)\Lambda Q_{c_2}  + (c_2-\la-\rho_2')(t)Q'_{c_2}   =0.
\eea
Furthermore, for some constant $\ga>0$ independent of $\ve$, we have the improved estimates:
\bea\label{rho2c2}
|\rho_2'(t)+\la -c_2(t)| & \leq & K(m-3)\Big[ \int_\R e^{-\ga |x-\rho_2(t)|}z^2(t,x) dx\Big]^{\frac 12}\nonumber \\
&& \qquad + K\int_\R e^{-\ga |x-\rho_2(t)|}z^2(t,x) dx + K e^{-\ga\ve t};
\eea
and
\be\label{c2rho2}
\frac{|c_2'(t)|}{c_2(t)} \leq K \int_\R e^{-\ga |x-\rho_2(t)|}z^2(t,x) dx  + K e^{-\ga\ve t}\|z(t)\|_{H^1(\R)} + K\ve e^{-\ve\ga t}.
\ee
\end{lem}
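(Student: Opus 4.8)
The plan is to run the standard modulation argument around the reflected soliton $Q_{c_\infty}$, adapted to the fact that $c_\infty<\la$, so that the soliton escapes to the left.

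\textbf{Modulation.} First I would, for each $t$ near $t_1$, solve the two orthogonality conditions in $(\ref{10a})$ for the parameters by the implicit function theorem: the map $(c,\rho)\mapsto(\int_\R Q_c(\cdot-\rho)(u(t)-Q_c(\cdot-\rho)),\ \int_\R(\cdot-\rho)Q_c(\cdot-\rho)(u(t)-Q_c(\cdot-\rho)))$ vanishes at $(c_\infty,X_0)$ up to $O(\ve^{1/2})$ by $(\ref{48bon1})$, and its Jacobian in $(c,\rho)$ is diagonal to leading order, the entries being $\frac{d}{dc}\frac12\int_\R Q_c^2\ne0$ and $-\frac12\int_\R Q_{c_\infty}^2\ne0$, with the off-diagonal terms killed by the parity of $Q_c$ and $\Lambda Q_c$. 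This produces the $C^1$ functions $c_2(t),\rho_2(t)$ on $[t_1,T^*]$; the bound $(\ref{12a})$ at $t_1$ is read off from $(\ref{48bon1})$, and $(\ref{11a})$, together with $c_2(t)>0$, follows on all of $[t_1,T^*]$ by comparing with the a priori decomposition allowed by the definition $(\ref{Tprime})$ of $T^*$.

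\textbf{Equation for $z$.} Next I would derive $(\ref{13a})$ by substituting $u=R+z$, with $R(t,x)=Q_{c_2(t)}(x-\rho_2(t))$, into $(\ref{aKdV})$: since $Q_c$ solves $Q_c''-cQ_c+Q_c^m=0$ one has $R_{xx}-\la R+R^m=(c_2-\la)R$, while $R_t=c_2'\Lambda Q_{c_2}-\rho_2'Q_{c_2}'$; writing $a(\ve x)(R+z)^m=a(\ve x)[(R+z)^m-R^m]+R^m+(a(\ve x)-1)Q_{c_2}^m$ then collects everything into $(\ref{13a})$, the term $(a(\ve x)-1)Q_{c_2}^m$ being the only discrepancy coming from comparing with the $a\equiv1$ profile; it will be exponentially small because $\rho_2(t)\to-\infty$ and $a-1$ decays at $-\infty$ by $(\ref{ahyp})$.

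\textbf{Improved modulation estimates.} Then I would differentiate the two identities $(\ref{10a})$ in time, substitute $z_t$ from $(\ref{13a})$, and integrate by parts, obtaining a $2\times2$ linear system for $(c_2'(t),\ c_2(t)-\la-\rho_2'(t))$ that is diagonal to leading order by Lemma \ref{surL} and parity, with nonzero diagonal entries proportional to $\int_\R Q_{c_2}^2$. Its right-hand side I would bound term by term: the genuinely nonlinear remainder $a(\ve x)[(R+z)^m-R^m-mR^{m-1}z]$, tested against $Q_{c_2}$, $(\cdot-\rho_2)Q_{c_2}$ and their $x$-derivatives, is $O(\int_\R e^{-\ga|x-\rho_2|}z^2)$ by the exponential localization of $Q_{c_2}$ and $(\ref{11a})$; the contribution linear in $z$ is absorbed using $(\ref{10a})$ together with the identity $\mathcal L\{(yQ_c)_y\}=-(m-3)Q_c^m-2cQ_c$, which is exactly why the $m=3$ linear term disappears and the factor $(m-3)$ appears in $(\ref{rho2c2})$; and the inhomogeneous term contributes at most $K(a(\ve\rho_2(t))-1)(1+\|z(t)\|_{H^1(\R)})$ after estimating $\int_\R|a(\ve x)-1|Q_{c_2}^m(|Q_{c_2}'|+|z|)$ by localization. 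Finally I would upgrade $a(\ve\rho_2(t))-1$ to $e^{-\ga\ve t}$: on $[t_1,T^*]$ the a priori bound $|\rho_2'+\la-c_2|\le\frac1{100}$ gives $\rho_2(t)\le -X_0+(c_\infty-\la+\frac1{100})(t-t_1)$, and since $X_0\ge t_1$ and $c_\infty-\la<0$ this is $\le-\delta t$ for some $\delta=\delta(\la,c_\infty)>0$, whence $a(\ve\rho_2(t))-1\le Ke^{\ga\ve\rho_2(t)}\le Ke^{-\ga\delta\ve t}$ by $(\ref{ahyp})$; relabeling $\ga$ yields $(\ref{rho2c2})$ and $(\ref{c2rho2})$.

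\textbf{Main obstacle.} The delicate part is the circularity in the last step: the linear escape $\rho_2(t)\le-\delta t$, needed so that $a(\ve\rho_2(t))-1$ is exponentially small, relies on $\rho_2'\approx c_\infty-\la<0$, which is part of what the estimate $(\ref{rho2c2})$ is meant to control. I expect this to be handled precisely by staying inside the a priori region $[t_1,T^*]$, where $|\rho_2'+\la-c_2|\le\frac1{100}$ and $|c_2-c_\infty|$ small are available for free, so the escape, and hence the $e^{-\ga\ve t}$ gain, holds unconditionally there. This is also the one place where the hypothesis $c_\infty<\la$ is genuinely used; it is what later forces the use of the increasing modified mass $\mathcal M[u]$ from $(\ref{Mback})$ in place of $\hat M[u]$ in the Weinstein functional.
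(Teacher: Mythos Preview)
Your proposal is correct and follows exactly the standard modulation argument that the paper defers to \cite{Mu2}. The only refinement worth noting is that when you test the inhomogeneous term $[(a(\ve x)-1)Q_{c_2}^m]_x$ against $Q_{c_2}$, the leading contribution $\int_\R Q_{c_2}'(a(\ve\rho_2)-1)Q_{c_2}^m$ vanishes by parity, so the surviving term is $\ve a'(\ve\rho_2)\int_\R yQ_{c_2}'Q_{c_2}^m$; this is what produces the sharper $K\ve e^{-\ve\ga t}$ in (\ref{c2rho2}) rather than the $Ke^{-\ga\ve t}$ your bookkeeping would give.
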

 
\begin{rem}
Note that from (\ref{11a}) and taking $\ve$ small enough we have an improved the bound on $\rho_2(t)$. Indeed, for all $t\in [t_1, T^*]$,
$$
|\rho_2' (t) - c_\infty +\la | +  |\rho_2 (t_1) - X_0 |  \leq 2D_0\ve^{1/2}.
$$
Thus, in order to reach a contradiction, we only need to show (\ref{onehalf}).
Note that for any $t\geq t_1$, 
\be\label{boundapriori}
\rho_2(t) \leq \frac 1{10} (c_\infty(\la) -\la) t_1. 
\ee
This inequality implies that the soliton position is far away from the potential interaction region. 
\end{rem}

\begin{proof}[Proof of Lemma \ref{3Dr}]
See \cite{Mu2}.  
\end{proof}

\medskip

\noindent
{\bf Almost conserved quantities and monotonicity}

By using the decomposition proved in Lemma \ref{3Dr}, we have the following mass and energy monotonicity.

\begin{lem}[Monotonicity of mass backwards in time, see Lemma 7.1 in \cite{Mu2}]\label{MMr}~

Suppose $0<\la<1$. Consider the mass $\mathcal M[u](t)$ introduced in (\ref{Mback}).
Then there exists $\ve_0>0$ such that for all $0<\ve <\ve_0$ one has,
\be\label{decr}
\mathcal M[u](t')-\mathcal M[u](t)  \geq -K e^{-\ve \ga t},
\ee
that for all $t, t' \geq t_1$, with $t'\geq t$.
\end{lem}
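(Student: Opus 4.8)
The plan is to prove the monotonicity of $\mathcal{M}[u](t)=\int_\R u^2(t,x)/(2a(\ve x))\,dx$ \emph{backwards} in time, following the general strategy of Lemma 7.1 in \cite{Mu2} but adapting it to the weight $1/a$ instead of $a^{1/m}$. The key observation is that, unlike in the refraction case, here the soliton is reflected and hence moves to the \emph{left}; to obtain an almost-conserved quantity that decays (rather than grows) one must use a weight that is \emph{decreasing} where the density of $u^2$ is concentrated, which is exactly what $1/a$ does since $a'>0$. First I would compute the time derivative of $\mathcal{M}[u](t)$ using equation (\ref{aKdV}): integrating by parts and using $u_t = -(u_{xx}-\la u + a(\ve x)u^m)_x$, one finds
\be\label{dMback}
\partial_t \mathcal{M}[u](t) = \frac{\ve}{2}\int_\R \Big(\frac{a'}{a^2}\Big)(\ve x)\,\big(3u_x^2 + \la u^2\big)\,dx - \frac{\ve}{2}\int_\R \Big(\frac{a'}{a^2}\Big)''(\ve x)\,\ve^2 u^2\,dx - \frac{m-1}{m+1}\,\ve\int_\R \Big(\frac{a'}{a}\Big)(\ve x)\,a(\ve x)\,\frac{u^{m+1}}{?}\cdots
\ee
— rather than grind through the exact constants, the point is that the leading term is $\frac{\ve}{2}\int (a'/a^2)(\ve x)(3u_x^2+\la u^2)$, which is \emph{nonnegative} because $a'>0$, while all remaining terms are lower order: the term with $\ve^2(a'/a^2)''$ is controlled using hypothesis (\ref{3d1d}) (which bounds third derivatives of $a^{1/m}$, and an analogous bound for $(a'/a^2)''$ in terms of $a'$ follows from (\ref{ahyp})), and the nonlinear term is absorbed once one knows $u$ is close to a soliton whose position $\rho_2(t)$ lies far to the left, i.e. $\ve|\rho_2(t)|$ large. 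This shows $\partial_t \mathcal{M}[u](t) \geq -K\ve \cdot(\text{small})$, and the challenge is to make ``small'' exponentially small in $\ve t$.

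The second step is therefore to localize. Using the modulated decomposition of Lemma \ref{3Dr}, write $u(t) = Q_{c_2(t)}(\cdot-\rho_2(t)) + z(t)$ with $\|z(t)\|_{H^1}\le KD_0\ve^{1/2}$ and, crucially, $\rho_2(t)\le \frac1{10}(c_\infty(\la)-\la)t_1 < 0$ for all $t\ge t_1$ with $|\rho_2(t)|$ growing (since the reflected soliton keeps moving left, $\rho_2'(t)\sim c_\infty-\la<0$). On the region $x \gtrsim \rho_2(t)/2$ (say) the soliton part is exponentially small, $Q_{c_2}(x-\rho_2(t))\le Ke^{-\ga|\rho_2(t)|}$, so the contribution of $u$ there to the error terms in (\ref{dMback}) is $O(\ve e^{-\ve\ga|\rho_2(t)|})+O(\ve\|z\|^2)$; on the region $x\lesssim \rho_2(t)/2$, i.e. deep in the left where $a(\ve x)-1\le Ke^{\ga\ve x}\le Ke^{-\ga\ve|\rho_2(t)|/2}$ by (\ref{ahyp}), the weight $1/a$ is essentially constant, so $a'(\ve x)$ is exponentially small and again all error terms are $O(\ve e^{-\ve\ga|\rho_2(t)|})$. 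Combining with $|\rho_2(t)|\ge \ga t$ (which follows from $\rho_2'(t)\le -\ga$ for $t\ge t_1$ and $t_1\ge \tilde T_\ve\gg\ve^{-1}$) gives $\partial_t\mathcal{M}[u](t)\ge -K\ve e^{-\ve\ga t}$. Integrating this inequality over $[t,t']$ with $t'\ge t\ge t_1$ yields
\be
\mathcal{M}[u](t') - \mathcal{M}[u](t) \ge -K\int_t^{t'}\ve e^{-\ve\ga s}\,ds \ge -\frac{K}{\ga}e^{-\ve\ga t} \ge -Ke^{-\ve\ga t},
\ee
which is exactly (\ref{decr}).

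The main obstacle I anticipate is controlling the \emph{nonlinear} contribution to $\partial_t\mathcal{M}[u](t)$ uniformly on $[t_1,T^*]$ without circular dependence on the very bound being proved: the term involves $\int (a'/a)(\ve x)\,u^{m+1}$, and near the soliton $u^{m+1}\sim Q_{c_2}^{m+1}(x-\rho_2(t))$, which is \emph{not} small, but there $a'(\ve x)$ is evaluated at $\ve x \approx \ve\rho_2(t)$ with $\ve|\rho_2(t)|$ large, so $a'(\ve\rho_2(t))\le Ke^{-\ve\ga|\rho_2(t)|}$ — the decay of $a'$ at $\pm\infty$ from (\ref{ahyp}) is what saves us, and one must split the integral carefully into soliton-core and tail regions as above and use the a priori bound $\|z\|_{H^1}\le KD_0\ve^{1/2}$ only for the cross terms. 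A secondary point requiring care is justifying the integration by parts at the level of $H^1$ solutions; this is handled exactly as in \cite{Mu2} by the global well-posedness and decay estimates of Proposition \ref{Cauchy} together with an approximation argument, and since the statement of the present Lemma explicitly refers to Lemma 7.1 in \cite{Mu2}, I would simply cite that argument verbatim for the functional-analytic part and concentrate the new work on verifying that the weight $1/a$ produces the correct sign. I expect no genuinely new difficulty beyond bookkeeping, which is why the proof in the excerpt is deferred to ``See \cite{Mu2}.''
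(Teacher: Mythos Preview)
Your approach is essentially the paper's: compute $\partial_t\mathcal M[u]$ via the identity in the Remark, use $a'>0$ and $\la>0$ to make the $u_x^2$ and $u^2$ terms nonnegative, control the $\ve^2(\frac{a'}{a^2})''$ term by a hypothesis analogous to (\ref{3d1d}), and show the nonlinear term $-2\ve\int\frac{a'}{a}u^{m+1}$ is harmless. The structure is right.

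There is, however, one step in your sketch that does not close as written. On the region $x\gtrsim\rho_2(t)/2$ you record the error as $O(\ve e^{-\ve\ga|\rho_2(t)|})+O(\ve\|z\|^2)$, and then pass directly to $\partial_t\mathcal M[u](t)\ge -K\ve e^{-\ve\ga t}$. But the $O(\ve\|z\|^2)=O(D_0^2\ve^2)$ term is \emph{not} exponentially decaying in $t$; integrating it over $[t,t']$ gives $O(D_0^2\ve^2(t'-t))$, which is unbounded as $t'\to\infty$ and would destroy (\ref{decr}). The fix is not to bound this contribution but to \emph{absorb} it into the good term. Writing $u=R+z$, the tail of the nonlinear term is
\[
\Big|\ve\int\frac{a'}{a}\,|z|^{m+1}\Big|\;\le\;K\ve\,\|z\|_{L^\infty}^{m-1}\int\frac{a'}{a^2}\,z^2\;\le\;K\ve\,(D_0\ve^{1/2})^{m-1}\Big(\int\frac{a'}{a^2}u^2+\int\frac{a'}{a^2}R^2\Big),
\]
and since $(D_0\ve^{1/2})^{m-1}\ll\la$ for $\ve$ small, the first piece is swallowed by $\frac{\la\ve}{2}\int\frac{a'}{a^2}u^2$ while the second is $O(\ve e^{-\ve\ga|\rho_2(t)|})$. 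With this absorption the pointwise bound $\partial_t\mathcal M[u](t)\ge -K\ve e^{-\ve\ga t}$ is genuine and your integration step goes through. The cross terms $R^kz^{m+1-k}$ are handled the same way.
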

\begin{rem}
Note that the above identity is valid only in the case $\la>0$, and it is a consequence of (\ref{3d1d}) and the following identity
\bee
\partial_t \int_\R \frac{u^2}{ a(\ve x) } &  = & 2\ve \int_\R  \frac{ a'}{a^2}(\ve x) u_x^2  + \ve  \int_\R u^2 \big[  \la \frac{ a'}{a^2}(\ve x) -\ve^2 (\frac{a'}{a^2})''(\ve x) \big]   -  2\ve  \int_\R \frac{a' }{a}(\ve x) u^{m+1}.
\eee
\end{rem}

\medskip

\begin{lem}[Almost conservation of modified mass and energy]\label{C2}~

Consider $\mathcal M= \mathcal M[R]$ and  $E_a=E_a[R ]$ the modified mass and energy of the soliton $R$ (cf. (\ref{eta1a})). Then for all $t\in [t_1, T^*]$ we have
\bea\label{dE0}
& &  \mathcal M[R](t)  =  \frac{1}{2}c_2^{2\theta}(t) \int_\R Q^2 + O(e^{-\ve\ga t}); \\
& & E_a[R](t)  =  \frac{1}{2} c_2^{2\theta}(t)(\la- \la_0 c_2(t)) \int_\R Q^2 +O(e^{-\ve \ga t}). \label{dE01}
\eea
Furthermore, we have the bound
\bea\label{dE02}
& & \abs{E_a[R](t_1) -E_a[R](t)  +  (c_2(t_1)-\la) (\mathcal M[R](t_1) - \mathcal M[R](t)) }   \qquad  \nonumber \\
& & \qquad \qquad \qquad  \leq K \abs{ \Big[\frac{c_2(t)}{c_2(t_1)}\Big]^{2\theta}-1}^2 +K e^{-\ve\ga t_1}. 
\eea
\end{lem}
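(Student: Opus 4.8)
The plan is to establish Lemma \ref{C2} by direct computation using the modulated decomposition from Lemma \ref{3Dr}, together with the scaling identities for the soliton collected in Appendix \ref{IdQ} and the monotonicity formula for the modified mass $\mathcal M$ from Lemma \ref{MMr}. The key observation is that $R(t,x) = Q_{c_2(t)}(x - \rho_2(t))$ is an \emph{exact} rescaled soliton (not an approximate solution), so that $\mathcal M[R](t)$ and $E_a[R](t)$ can be computed explicitly in terms of $c_2(t)$ and $\rho_2(t)$, up to errors coming from the fact that $a(\ve x) \neq 1$ at the soliton location.

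\medskip

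\noindent\textbf{Step 1: Explicit evaluation of $\mathcal M[R]$ and $E_a[R]$.} First I would compute, using the change of variables $y = x - \rho_2(t)$ and the scaling $Q_{c_2}(y) = c_2^{1/(m-1)} Q(\sqrt{c_2}\, y)$,
\be
\mathcal M[R](t) = \frac 12 \int_\R \frac{Q_{c_2}^2(x - \rho_2(t))}{a(\ve x)} dx = \frac 12 c_2^{2\theta}(t) \int_\R Q^2 + \frac 12 \int_\R Q_{c_2}^2(y)\Big(\frac{1}{a(\ve(y + \rho_2))} - 1\Big) dy,
\ee
where $2\theta = \frac{2}{m-1} - \frac 12$. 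The error term is controlled via (\ref{boundapriori}): since $\rho_2(t) \leq \frac{1}{10}(c_\infty(\la) - \la) t_1 < 0$ for $\tilde\la < \la < 1$ (as $c_\infty < \la$), the soliton sits in the region $x \to -\infty$ where by (\ref{ahyp}) we have $|a(r) - 1| \leq K e^{\ga r}$; combining with the exponential decay of $Q_{c_2}$ and the lower bound $\rho_2(t) \leq \rho_2(t_1) + \frac{1}{10}(c_\infty - \la)(t - t_1)$ with $c_\infty - \la < 0$ yields the $O(e^{-\ve\ga t})$ bound. This gives (\ref{dE0}). For (\ref{dE01}) the same computation applies to each of the three terms in $E_a[R]$, and the identity $E_1[Q_c] = c^{2\theta}(\la - \la_0 c) M[Q]$ from Appendix \ref{IdQ} (already used in Section \ref{sec:3}) produces the stated closed form.

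\medskip

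\noindent\textbf{Step 2: The balance estimate (\ref{dE02}).} Here I would set $\delta := [c_2(t)/c_2(t_1)]^{2\theta} - 1$ and use Steps 1 to write both $E_a[R](t_1) - E_a[R](t)$ and $\mathcal M[R](t_1) - \mathcal M[R](t)$ as explicit functions of $c_2(t_1)$ and $c_2(t)$, modulo $O(e^{-\ve\ga t_1})$. The combination
\be
E_a[R](t_1) - E_a[R](t) + (c_2(t_1) - \la)\big(\mathcal M[R](t_1) - \mathcal M[R](t)\big)
\ee
is, up to the exponential error, equal to $\frac 12 M[Q]$ times
\be
c_2^{2\theta}(t_1)(\la - \la_0 c_2(t_1)) - c_2^{2\theta}(t)(\la - \la_0 c_2(t)) + (c_2(t_1) - \la)\big(c_2^{2\theta}(t_1) - c_2^{2\theta}(t)\big).
\ee
The point is that the first-order (linear in $\delta$) part of this expression vanishes: this is exactly the criticality of $c_2(t_1)$ for the functional $c \mapsto \frac 12 c^{2\theta}(\la - \la_0 c)M[Q] + (c_2(t_1) - \la)\cdot \frac 12 c^{2\theta} M[Q] = E_1[Q_c] + (c_2(t_1) - \la) M[Q_c]$, whose derivative at $c = c_2(t_1)$ is $\mathcal L$-orthogonality type cancellation ($\partial_c E_1[Q_c] = -(c-\la)\partial_c M[Q_c]$ by the soliton equation). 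Expanding to second order in $\delta$ therefore leaves only a $O(\delta^2) = O(|[c_2(t)/c_2(t_1)]^{2\theta} - 1|^2)$ remainder, which is the claimed bound.

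\medskip

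\noindent\textbf{Main obstacle.} The delicate point is Step 2: one must verify carefully that the linear-in-$\delta$ term genuinely cancels with the \emph{correct} coefficient $(c_2(t_1) - \la)$ — note it is $c_2(t_1)$, the value at the initial time, not $c_2(t)$ — which is precisely why the Weinstein functional in the stability argument uses $(c_\infty - \la)\mathcal M$ rather than a time-dependent multiplier. The sign issue ($c_2 < \la$ here, so $c_2 - \la < 0$) does not affect the algebra of this cancellation but is what forces the use of $\mathcal M$ (almost increasing backwards, Lemma \ref{MMr}) in place of $\hat M$. I would also need to double-check that the exponential errors from Step 1 at times $t_1$ and $t$ combine to $O(e^{-\ve\ga t_1})$ rather than $O(e^{-\ve\ga t})$, using $t \geq t_1$ and the monotone drift of $\rho_2$; since $\rho_2$ becomes more negative as time increases and $|a(\ve\rho_2) - 1|$ shrinks, the worst error is at $t_1$, so this is fine.
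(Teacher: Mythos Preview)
Your proposal is correct and follows essentially the same approach as the paper. Step~1 matches exactly: the paper also writes $\mathcal M[R]$ and $E_a[R]$ as the pure-soliton values plus an error $\int (\frac{1}{a(\ve x)}-1)R^2$ (resp.\ $\int (1-a(\ve x))R^{m+1}$), controlled via (\ref{boundapriori}) and (\ref{ahyp}) by $O(e^{-\ve\ga t})$, and then invokes the identity $\frac12\int Q'^2 - \frac1{m+1}\int Q^{m+1} = -\frac{\la_0}{2}\int Q^2$ from Appendix~\ref{IdQ}. For Step~2, the paper reaches the same expression you wrote and then performs an explicit second-order Taylor expansion of $g(y)=y^{(2\theta+1)/(2\theta)}=y^{1/\la_0}$ at $y_0=c_2^{2\theta}(t_1)$, noting $g'(y_0)=c_2(t_1)/\la_0$; your ``criticality of $c\mapsto E_1[Q_c]+(c_2(t_1)-\la)M[Q_c]$'' is exactly the same cancellation (it amounts to $(2\theta+1)\la_0=2\theta$), just phrased variationally rather than via $g$. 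One small remark: Lemma~\ref{MMr} is not actually needed here --- it enters only in the next lemma (Lemma~\ref{fin}) to control $|\delta|$ itself.
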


\begin{proof}
We start by showing the first identity, namely (\ref{dE0}). First of all, note that from (\ref{hM}),
$$
\mathcal M[R](t) =   \frac 12 \int_\R \frac{1}{a} R^2 = \frac 12c_2^{2\theta}(t) \int_\R Q^2 + \frac 12\int_\R ( \frac 1{a(\ve x)} -1) R^2.
$$
From (\ref{boundapriori}),
\be\label{dec1}
\abs{ \int_\R (\frac 1{a(\ve x)} -1)R^{2}} \leq Ke^{-\ga \ve t }, 
\ee
for some constants $K,\ga>0$. 
Now we consider (\ref{dE01}). Here we have
\bee
E_a[R](t) & =&\frac 12\int_\R R_x^2  +\frac \la 2 \int_\R R^2 - \frac 1{m+1} \int_\R a(\ve x) R^{m+1}  \\
& =&   c_2^{2\theta}(t)  \Big[ c_2(t) (\frac 12 \int_\R Q'^2 - \frac 1{m+1} \int_\R  Q^{m+1} ) + \frac \la 2 \int_\R Q^2 \Big]    +\frac 1{m+1} \int_\R (1- a(\ve x))R^{m+1}. 
\eee
Similarly to a recent computation, we have
$$
\abs{ \int_\R (1-a(\ve x))R^{m+1}} \leq Ke^{-\ga \ve t }, 
$$
for some constants $K,\ga>0$. On the other hand, from Appendix \ref{AidQ} we have that $\frac 12 \int_\R Q'^2 - \frac 1{m+1} \int_\R  Q^{m+1} = - \frac {\la_0}2 \int_\R Q^2$, $\la_0= \frac{5-m}{m+3}$, and thus
$$
E_a[R](t) =   \frac 12c_2^{2\theta}(t)  ( \la -\la_0 c_2(t) ) \int_\R Q^2  + O( e^{-\ga\ve t}).
$$
Adding both identities we have
$$
E_a[R](t) + (c_2(t_1)-\la) \mathcal M[R](t) =   c_2^{2\theta}(t) ( c_2(t_1) -\la_0 c_2(t) ) M[Q]  + O(e^{-\ve \ga t}).
$$
In particular,
\bee
& & E_a[R](t_1) -E_a[R](t) + (c_2(t_1)-\la) ( \mathcal M[R](t_1) - \mathcal M[R](t) ) = \\
& & \; = \la_0 M[Q] \Big[  c_2^{2\theta +1}(t) -  c_2^{2\theta +1}(t_1)   - \frac{c_2(t_1)}{ \la_0} [ c_2^{2\theta}(t) -c_2^{2\theta}(t_1) ] \Big] 
+ O(e^{-\ve \ga t_1}).
\eee
To obtain the last estimate (\ref{dE02}) we perform a Taylor development up to the second order (around $y=y_0$) of the function $g(y):= y^{\frac {2\theta+1}{2\theta}}$; and where $y:= c_2^{2\theta}(t)$ and $y_0 := c_2^{2\theta}(t_1)$. Note that $\frac{2\theta+1}{2\theta} = \frac{1}{\la_0}$ and $y_0^{1/2\theta} = c_2(t_1)$. The conclusion follows at once.
\end{proof}

Now our objective is to estimate the quadratic term involved in (\ref{dE02}). Following \cite{MMT}, we should use a ``mass conservation'' identity. However, since the mass is not conserved, we need to combine (\ref{hM3})-(\ref{Mback}) in order to obtain the desired estimate.

\begin{lem}[Quadratic control on the variation of $c_2(t)$]\label{fin}
\bea\label{dE02b}
& & \abs{E_a[R](t_1) -E_a[R](t)  +  (c_2(t_1)-\la) (\mathcal M[R](t_1) - \mathcal M[R](t)) }   \qquad  \nonumber \\
& & \qquad \qquad \qquad  \leq K\|z(t)\|_{H^1(\R)}^4 + K\|z(t_1)\|_{H^1(\R)}^4 +K e^{-\ve\ga t_1}. 
\eea
\end{lem}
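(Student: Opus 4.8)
\textbf{Proof plan for Lemma \ref{fin}.} The idea is to upgrade Lemma \ref{C2}, where the right-hand side is controlled by the quantity $\big|[c_2(t)/c_2(t_1)]^{2\theta}-1\big|^2$, by showing that this quantity is itself $O(\|z(t)\|_{H^1}^4+\|z(t_1)\|_{H^1}^4+e^{-\ve\ga t_1})$. Equivalently, since $c_2(t)-c_\infty = O(D_0\ve^{1/2})$ and $\theta>0$ is fixed, it suffices to prove $|c_2(t)-c_2(t_1)| \leq K\|z(t)\|_{H^1}^2 + K\|z(t_1)\|_{H^1}^2 + Ke^{-\ve\ga t_1}$. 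The key tool is the \emph{mass-type} comparison. First I would expand the modified mass $\mathcal M[u](t) = \mathcal M[R + z](t)$ using the orthogonality condition $\int_\R R z = 0$ from (\ref{10a}): one gets
$$
\mathcal M[u](t) = \mathcal M[R](t) + \int_\R \tfrac{1}{2a(\ve x)}z^2(t) + \int_\R\big(\tfrac{1}{a(\ve x)}-1\big)R(t)z(t),
$$
and the cross term is $O(e^{-\ve\ga t}\|z(t)\|_{H^1})$ by (\ref{boundapriori}) (the soliton is far to the left, where $a\equiv 1$ up to exponentially small errors), exactly as in (\ref{dec1}). Hence
$$
\mathcal M[u](t) = \tfrac12 c_2^{2\theta}(t)\int_\R Q^2 + O(\|z(t)\|_{H^1}^2) + O(e^{-\ve\ga t}).
$$

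Now the mechanism: by Lemma \ref{MMr} the modified mass $\mathcal M[u](t)$ is \emph{almost increasing} in time, $\mathcal M[u](t')-\mathcal M[u](t)\geq -Ke^{-\ve\ga t}$ for $t'\geq t\geq t_1$; while from the global Cauchy theory, estimate (\ref{hM3}) gives that the other modified mass $\hat M[u](t) = \tfrac12\int a^{1/m}(\ve x)u^2$ is \emph{almost decreasing}, $\hat M[u](t)\leq \hat M[u](t_1)$. On the interval $[t_1,T^*]$ the soliton $R$ lives in the region $x<0$ where $a(\ve x) = 1 + O(e^{-\ve\ga t})$, so \emph{both} $\mathcal M[u](t)$ and $\hat M[u](t)$ agree with $\tfrac12 c_2^{2\theta}(t)\int Q^2$ up to $O(\|z(t)\|_{H^1}^2)+O(e^{-\ve\ga t_1})$, because the correction factors $\tfrac1{a}-1$ and $a^{1/m}-1$ are both exponentially small on the support of $R$ and, on the support of $z$ away from $R$, contribute harmlessly after using $1\leq a\leq 2$ and $\|z\|_{H^1}\leq KD_0\ve^{1/2}$. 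Combining the two monotonicities sandwiches $c_2^{2\theta}(t)$:
$$
c_2^{2\theta}(t_1) - K\big(\|z(t)\|_{H^1}^2+\|z(t_1)\|_{H^1}^2\big) - Ke^{-\ve\ga t_1} \leq c_2^{2\theta}(t) \leq c_2^{2\theta}(t_1) + K\big(\|z(t)\|_{H^1}^2+\|z(t_1)\|_{H^1}^2\big) + Ke^{-\ve\ga t_1},
$$
which is precisely a linear (hence, after taking square roots and using $c_2\sim c_\infty>0$, quadratic-order) control on $|c_2^{2\theta}(t)-c_2^{2\theta}(t_1)|$.

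Feeding this bound into the right-hand side of (\ref{dE02}) yields $\big|[c_2(t)/c_2(t_1)]^{2\theta}-1\big|^2 = O(\|z(t)\|_{H^1}^4 + \|z(t_1)\|_{H^1}^4 + e^{-2\ve\ga t_1})$ — note the squaring converts the quadratic estimate into a quartic one, and also absorbs the exponential error into $e^{-\ve\ga t_1}$ — which gives (\ref{dE02b}). I expect the main obstacle to be the bookkeeping of the two different weighted masses $\mathcal M$ and $\hat M$ on the region where $z(t)$ is supported but $R(t)$ is not: there one cannot use smallness of $a-1$, only $\|z\|_{H^1}^2\leq K D_0^2\ve$, and one must check that the difference $\mathcal M[u](t)-\hat M[u](t)$ restricted to $\{x<0\}$ is genuinely $O(e^{-\ve\ga t_1}) + O(\|z(t)\|_{H^1}^2)$ and not merely $O(\ve)$; this relies on the sign structure and the fact that on $x<0$ we have $a(\ve x) - 1 = O(e^{\ve\ga x})$ from (\ref{ahyp}), combined with the a priori location (\ref{boundapriori}) of $\rho_2(t)$ and the exponential localization of $R$ and of the $z$-weight, so that all error contributions collapse to exponentially small terms plus the benign quadratic-in-$z$ term. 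Once that is in place, the rest is the Taylor expansion already performed in Lemma \ref{C2}.
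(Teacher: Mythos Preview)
Your proposal is correct and follows essentially the same route as the paper: expand $\hat M[u]$ and $\mathcal M[u]$ around $R+z$ using the orthogonality $\int Rz=0$, use the almost-decreasing property of $\hat M$ from (\ref{hM3}) for the upper bound on $c_2^{2\theta}(t)-c_2^{2\theta}(t_1)$ and the almost-increasing property of $\mathcal M$ from Lemma \ref{MMr} for the lower bound, then square and insert into (\ref{dE02}). One remark: the ``main obstacle'' you flag is not actually an obstacle, since you never need to compare $\mathcal M[u]$ and $\hat M[u]$ to each other; each monotonicity is applied separately, and the $z$-contributions $\hat M[z]$, $\mathcal M[z]$ are trivially $O(\|z\|_{L^2}^2)$ just because $1\leq a\leq 2$, with no need to exploit smallness of $a-1$ on the support of $z$.
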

\begin{proof}
From (\ref{hM3}) and (\ref{10a}) we have for all $t\in [t_1, T^*]$,
$$
\hat M [R](t) - \hat M[R](t_1)  \leq    \hat M[z](t_1) - \hat M [z](t) + K e^{-\ve \ga t_1}( \|z(t_1)\|_{L^2(\R)} + \|z(t)\|_{L^2(\R)}),
$$
namely
$$
c_2^{2\theta}(t)- c_2^{2\theta}(t_1)  \leq   K\|z(t)\|_{L^2}^2  +K\|z(t_1)\|_{L^2}^2 + K (1+D_0 \ve^{1/2}) e^{-\ve \ga t_1}.
$$
On the other hand, from (\ref{decr}) one has
$$
c_2^{2\theta}(t)- c_2^{2\theta}(t_1)  \geq  - K e^{-\ve \ga t_1} (1+ D_0 \ve^{1/2})- K\|z(t)\|_{L^2}^2  - K\|z(t_1)\|_{L^2}^2.
$$
Combining both inequalities, we obtain
$$
\abs{\Big[\frac{c_2(t)}{c_2(t_1)} \Big]^{2\theta} -1} \leq  K  \|z(t)\|_{L^2(\R)}^2  +K \|z(t_1)\|_{L^2(\R)}^2  +  K (1+D_0\ve^{1/2}) e^{-\ga\ve t_1}.
$$
Plugin this estimate in (\ref{dE02}) and taking $\ve$ even smaller, we get the conclusion.
\end{proof}

\subsubsection{Energy estimates}

Let us now introduce the second order functional 
\bee
\mathcal F_2(t) & := & \frac 12\int_\R \big\{ z_x^2 + [\la+ (c_2(t_1)-\la ) \frac{1}{a(\ve x)}] z^2 \big\} \\
& & \quad  -\frac 1{m+1} \int_\R a(\ve x) [ (R+z)^{m+1}-R^{m+1}-(m+1)R^m z ].
\eee
This functional, related to the Weinstein functional, have the following properties.

\begin{lem}[Energy expansion]\label{EE3}~

Consider $E_a[u]$ and $\mathcal M[u]$ the energy and mass defined in (\ref{Ea})-(\ref{Mback}). Then we have for all $t\in [t_1,T^*]$,
$$
E_a[u](t) +  (c_2(t_1) -\la)\mathcal M[u](t)  =  E_a[R] + ( c_2(t_1)-\la) \mathcal M[R]  + \mathcal F_2(t)   + O(e^{-\ga \ve  t}\|z(t)\|_{H^1(\R)}).
$$
\end{lem}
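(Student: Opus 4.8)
The plan is to expand the energy $E_a[u](t)$ and the modified mass $\mathcal M[u](t)$ directly, using the decomposition $u(t,x) = R(t,x) + z(t,x)$ from Lemma \ref{3Dr}, and then to collect the terms of orders $0$, $1$ and $2$ in $z$. First I would write
\bee
E_a[u](t) & = & E_a[R](t) - \int_\R z\big( R_{xx} -\la R + a(\ve x) R^m\big)\, dx \\
& & \qquad - \frac{1}{m+1}\int_\R a(\ve x)\big[ (R+z)^{m+1} - R^{m+1} - (m+1) R^m z\big]\, dx,
\eee
and similarly
\bee
\mathcal M[u](t) & = & \mathcal M[R](t) + \int_\R \frac{R z}{a(\ve x)}\, dx + \frac 12 \int_\R \frac{z^2}{a(\ve x)}\, dx.
\eee
Thus the quantity $E_a[u](t) + (c_2(t_1)-\la)\mathcal M[u](t)$ equals $E_a[R] + (c_2(t_1)-\la)\mathcal M[R]$, plus a linear-in-$z$ term, plus exactly the quadratic functional $\mathcal F_2(t)$ (the quadratic part of the energy gives $\frac 12\int (z_x^2 + \la z^2)$ minus the quadratic Taylor remainder, and the quadratic part of $(c_2(t_1)-\la)\mathcal M[u]$ gives $\frac{c_2(t_1)-\la}{2}\int z^2/a(\ve x)$, which is precisely the $z^2$ coefficient appearing in $\mathcal F_2$).

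The core of the proof is then to show that the linear-in-$z$ term is $O(e^{-\ga\ve t}\|z(t)\|_{H^1(\R)})$. The linear term is
\be
\mathcal I(t) := -\int_\R z\big( R_{xx} -\la R + a(\ve x) R^m\big) + (c_2(t_1)-\la)\int_\R \frac{R z}{a(\ve x)}.
\ee
Here I would use that $R = Q_{c_2(t)}(\cdot -\rho_2(t))$ solves the constant-coefficient soliton equation $R_{xx} - c_2 R + R^m = 0$, so
\be
R_{xx} -\la R + a(\ve x) R^m = (c_2 - \la) R + (a(\ve x)-1)R^m.
\ee
Substituting and grouping, $\mathcal I(t)$ becomes $(c_2(t_1) - c_2(t))\int R z$ up to corrections, plus terms of the form $\int (a(\ve x)-1) R^{m} z$ and $(c_2(t_1)-\la)\int (1/a(\ve x) - 1) R z$. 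The first group vanishes to leading order by the orthogonality condition $\int R z = 0$ from \eqref{10a}, with the residual controlled by $|c_2(t) - c_2(t_1)| \leq K D_0\ve^{1/2}$ together with an integration of \eqref{c2rho2} — this is the place where one must be slightly careful, and where I expect a mild subtlety. The second group: since $\rho_2(t)$ satisfies the a priori bound \eqref{boundapriori}, i.e. $\rho_2(t) \leq \frac{1}{10}(c_\infty(\la)-\la) t_1 < 0$ (recall $c_\infty < \la$), the soliton $R$ is supported far to the left, in the region where $a(\ve x)$ is exponentially close to $1$; hence $|a(\ve x)-1|$ and $|1/a(\ve x)-1|$ are $O(e^{-\ga\ve|\rho_2(t)|}) = O(e^{-\ga\ve t})$ on the effective support of $R$, exactly as in the estimate \eqref{dec1} used in Lemma \ref{C2}. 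By Cauchy–Schwarz these contributions are $O(e^{-\ga\ve t}\|z(t)\|_{L^2(\R)})$.

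The main obstacle — more bookkeeping than conceptual — is handling the term coming from the mismatch between the $z^2$ coefficient $c_2(t_1)-\la$ frozen at time $t_1$ in $\mathcal F_2$ and the quantities at time $t$, and correctly matching the linear term's leading part against the orthogonality relation; one has to be careful that $\int R z = 0$ is at the \emph{current} time $t$ (which it is, by \eqref{10a}), so the only surviving piece is genuinely of size $|c_2(t_1)-c_2(t)|\cdot\|z(t)\|$, and this must be folded into the stated $O(e^{-\ga\ve t}\|z(t)\|_{H^1(\R)})$ bound using the monotonicity of $c_2$ and the exponential smallness coming from the soliton being outside the interaction zone. Once these terms are identified and bounded, the claimed identity follows by simply reading off that everything not accounted for is precisely $\mathcal F_2(t)$. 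I would remark that this is the reflection-case analogue of the corresponding energy expansion in \cite{Mu2}, the only genuinely new input being the use of $\mathcal M[u]$ in place of $\hat M[u]$ and the sign $c_\infty < \la$, which is exactly why the a priori bound \eqref{boundapriori} places the soliton on the far left where $a \equiv 1$ up to exponential corrections.
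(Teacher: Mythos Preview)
Your approach is essentially the same as the paper's: expand $E_a[u]$ and $\mathcal M[u]$ in $z$, use the orthogonality $\int_\R R z = 0$ from \eqref{10a} to kill the linear-in-$z$ leading pieces, and bound the remaining linear terms $\int_\R (a(\ve x)-1)R^m z$ and $\int_\R (1/a(\ve x)-1)Rz$ by $O(e^{-\ga\ve t}\|z\|_{H^1})$ via the exponential decay \eqref{dec1}. The only point to clean up is that your ``residual'' worry is unfounded: since $\int_\R R(t)z(t) = 0$ holds \emph{exactly} at each time $t$, the term $(c_2(t_1)-c_2(t))\int_\R Rz$ is identically zero --- no estimate on $|c_2(t)-c_2(t_1)|$ or integration of \eqref{c2rho2} is needed here, and the paper accordingly dispatches the linear term in one line.
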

\begin{proof}
Using the orthogonality condition (\ref{10a}), we have
\bee
E_a[u](t) &= & E_a[R] - \int_\R z 
  (a(\ve x)-1)  R^m + \frac 12 \int_\R z_x^2  + \frac \la 2 \int_\R z^2 \\
  & & \quad  -\frac 1{m+1} \int_\R a(\ve x) [ (R+z)^{m+1}-R^{m+1}-(m+1)R^m z ].
\eee
Moreover, following (\ref{dec1}), we easily get
$$
\abs{\int_\R z (a(\ve x)-1)  R^m } \leq K e^{-\ga \ve  t}\|z(t)\|_{H^1(\R)}.
$$
Similarly,
\bee
 \mathcal M[u](t)  & =  & \mathcal M[R] +  \mathcal M[z] + \int_\R ( \frac 1{a(\ve x)} -1) R z \\
 & = & \mathcal M[R] +  \mathcal M[z] +O(e^{-\ve\ga t} \|z(t)\|_{H^1(\R)}).
\eee 
Collecting the above estimates, we have
\bee
& & E_a[u](t) +  (c_2(t_1) -\la)\mathcal M[u](t) = \\
& & \quad  E_a[R] + ( c_2(t_1)-\la) \mathcal M[R]  + \frac 1{2}\int_\R \Big\{ z_x^2 + [ \frac {(c_2(t_1)-\la)}{a(\ve x)} + \la ]z^2\Big\} \\
& & \quad -  \int_\R \frac{a(\ve x)}{m+1} [ (R+z)^{m+1}-R^{m+1}-(m+1)R^m z ]+ O(e^{-\ga \ve  t}\|z(t)\|_{H^1(\R)}). 
\eee
This concludes the proof.
\end{proof}

\medskip

\begin{lem}[Modified coercivity for $\mathcal F_2$]\label{Coer3}~

There exists $\ve_0>0$ such that for all $0<\ve<\ve_0$ the following hold. There exist $K,\nu_0>0$, independent of $K^*$ such that for every $t\in [t_1, T^*]$
\be\label{Co3}
\mathcal F_2(t) \geq \nu_0 \|z (t)\|_{H^1(\R)}^2 
  - K \ve e^{-\ga\ve t} \|z (t)\|_{L^2(\R)}^2 +O( \|z(t)\|_{L^2(\R)}^3). 
\ee
\end{lem}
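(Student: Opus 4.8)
The plan is to derive the coercivity of $\mathcal{F}_2$ from the coercivity of the standard gKdV quadratic form, already available from Lemma \ref{surL}, together with the orthogonality conditions (\ref{10a}) and the fact that $|c_2(t_1)-c_\infty| \leq K D_0 \ve^{1/2}$ is small. First I would isolate the quadratic part of $\mathcal{F}_2(t)$ by Taylor-expanding the nonlinear term: writing $a(\ve x)[(R+z)^{m+1}-R^{m+1}-(m+1)R^m z] = \frac{m(m+1)}{2} a(\ve x) R^{m-1} z^2 + O(a(\ve x)|z|^3)$ pointwise, so that
\be\label{eq:F2quad}
\mathcal{F}_2(t) = \frac 12\int_\R \Big\{ z_x^2 + \big[\la+ (c_2(t_1)-\la)\tfrac{1}{a(\ve x)}\big] z^2 \Big\} - \frac m2 \int_\R a(\ve x) R^{m-1} z^2 + O\big(\|z(t)\|_{L^2(\R)}^3\big),
\ee
where the cubic remainder is controlled using $\|z(t)\|_{H^1}\leq KD_0\ve^{1/2}$ and Sobolev embedding (so $\int a|z|^3 \leq K\|z\|_{H^1}\|z\|_{L^2}^2 \leq KD_0\ve^{1/2}\|z\|_{L^2}^2$, which can be absorbed into the $O(\|z\|_{L^2}^3)$ term or into the coercive part).

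Next I would replace, inside the quadratic form, the slowly-varying coefficients by their values at the soliton center $\rho_2(t)$. Since $R(t,\cdot)=Q_{c_2(t)}(\cdot-\rho_2(t))$ is exponentially localized near $x=\rho_2(t)$ and, by (\ref{boundapriori}), $\rho_2(t) \leq \frac{1}{10}(c_\infty(\la)-\la)t_1 < 0$ lies deep in the region where $a(\ve x) = 1 + O(e^{-\ga\ve|\rho_2(t)|})$ (using the decay hypothesis (\ref{ahyp}) and $\rho_2(t)<0$), one has $a(\ve x) R^{m-1} = R^{m-1} + O(e^{-\ga\ve t}R^{m-1})$ and $\frac{1}{a(\ve x)} = 1 + O(e^{-\ga\ve t})$ uniformly where $R$ and $z$ are not yet too large; the errors produce terms $O(\ve e^{-\ga\ve t}\|z\|_{L^2}^2)$ as in the statement. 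After this replacement the quadratic form becomes, up to these errors,
$$
\frac 12\int_\R z_x^2 + \frac{c_2(t_1)}{2}\int_\R z^2 - \frac m2\int_\R Q_{c_2(t)}^{m-1} z^2 = \frac 12\,\mathcal{B}[z,z] + \frac{c_2(t_1)-c_2(t)}{2}\int_\R z^2,
$$
with $\mathcal{B}$ the form of Lemma \ref{surL}(5) associated to the scaling $c=c_2(t)$; the discrepancy $|c_2(t_1)-c_2(t)|$ is $O(D_0\ve^{1/2})$ by (\ref{11a}), hence harmless for $\ve$ small since it is multiplied by $\|z\|_{L^2}^2$ and can be absorbed.

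Then I would invoke the coercivity statement Lemma \ref{surL}(5)(a): $\mathcal{B}[z,z] \geq \sigma_{c}\int_\R z^2 - K|\int_\R z Q_c|^2 - K|\int_\R z Q_c'|^2$, and control the two scalar products using the orthogonality conditions (\ref{10a}). The subtlety is that (\ref{10a}) gives $\int_\R R z = \int_\R (x-\rho_2)R z = 0$ with $R = Q_{c_2(t)}(\cdot-\rho_2(t))$, so $\int_\R z Q_c = 0$ exactly when $c=c_2(t)$, and $\int_\R z Q_c'$ is not directly zero but $\int_\R (x-\rho_2)Q_c z = 0$; one then uses the version Lemma \ref{surL}(5)(b), which is stated precisely for the pair $\int w Q_c = \int w y Q_c = 0$, to get $\mathcal{B}[z,z]\geq \sigma_c \int z^2$. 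Combining with the gradient term (writing $\int z_x^2 + c\int z^2 = \mathcal{B}[z,z] + m\int Q_c^{m-1}z^2 \geq \mathcal{B}[z,z]$, or better, interpolating to recover the full $H^1$ norm: $\|z\|_{H^1}^2 \leq K(\int z_x^2 + \mathcal{B}[z,z])$), one obtains $\mathcal{F}_2(t) \geq \nu_0\|z(t)\|_{H^1}^2 - K\ve e^{-\ga\ve t}\|z(t)\|_{L^2}^2 + O(\|z(t)\|_{L^2}^3)$, which is (\ref{Co3}). The main obstacle is bookkeeping the several small error sources — the potential being close to $1$ near the (far-left) soliton position, the scaling $c_2(t)$ being close to $c_\infty$ and to $c_2(t_1)$, and the cubic-in-$z$ remainder — and checking that each is genuinely of one of the two advertised types ($\ve e^{-\ga\ve t}\|z\|_{L^2}^2$ or $\|z\|_{L^2}^3$) rather than something that would spoil the positive-definiteness; but all of these are routine given the a priori bounds of Lemma \ref{3Dr} and the localization estimate (\ref{boundapriori}), so no essentially new idea beyond the classical Weinstein argument is needed.
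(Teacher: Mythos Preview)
Your overall strategy---Taylor-expand, reduce to the linearized quadratic form, apply Lemma~\ref{surL}(5)(b) under the orthogonality~(\ref{10a})---matches the paper's. But there is a genuine gap in how you handle the non-localized coefficient $\frac{1}{a(\ve x)}$.

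You write that $\frac{1}{a(\ve x)} = 1 + O(e^{-\ga\ve t})$ and that ``the errors produce terms $O(\ve e^{-\ga\ve t}\|z\|_{L^2}^2)$''. This is correct for the term $a(\ve x)R^{m-1}z^2$, because $R^{m-1}$ is exponentially localized near $\rho_2(t)$ and, by~(\ref{boundapriori}) and~(\ref{ahyp}), $a(\ve x)-1=O(e^{-\ga\ve t})$ on the support of $R^{m-1}$. It is \emph{not} correct for the coefficient in front of the bare $z^2$: the function $z$ is merely $H^1$, with no localization, and $\frac{1}{a(\ve x)}-1$ is of order one for $x$ large positive (it tends to $-\tfrac12$). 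So the replacement $\frac{1}{a(\ve x)}\to 1$ produces an error $(c_2(t_1)-\la)\bigl(\frac{1}{a(\ve x)}-1\bigr)z^2$ whose integral is $O(\|z\|_{L^2}^2)$, not $O(e^{-\ga\ve t}\|z\|_{L^2}^2)$, and that would destroy coercivity.

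The paper's fix is a one-line sign observation you are missing: in the reflection regime one has $c_2(t_1)-\la<0$ (since $c_\infty(\la)<\la$ and $|c_2(t_1)-c_\infty|\le K\ve^{1/2}$), and $a(\ve x)\ge 1$, so
\[
\la + \frac{c_2(t_1)-\la}{a(\ve x)} \;\ge\; \la + (c_2(t_1)-\la) \;=\; c_2(t_1)
\quad\text{for all }x\in\R.
\]
Hence the quadratic part of $\mathcal F_2$ dominates $\tfrac12\int(z_x^2 + c_2(t_1)z^2 - mQ_{c_2}^{m-1}z^2)$ pointwise, with no error term at all from this step. After that, your use of Lemma~\ref{surL}(5)(b) and the small scaling discrepancy $|c_2(t_1)-c_2(t)|\le KD_0\ve^{1/2}$ is exactly what the paper does. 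So your argument is salvaged by replacing the incorrect smallness claim for $\frac{1}{a(\ve x)}-1$ with this pointwise inequality.
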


\begin{proof}
First of all, note that 
$$
\mathcal F_2(t)  =  \frac 1{2}\int_\R \Big\{ z_x^2 + [\frac {(c_2(t_1)-\la)}{a(\ve x)} + \la ]z^2 - m Q_{c_2}^{m-1}z^2 \Big\}    + O(\|z(t)\|_{H^1(\R)}^3) + O( e^{-\ga\ve t}\|z(t)\|_{H^1(\R)}^2 ).
$$
Since $\frac {(c_2(t_1)-\la)}{ a(\ve x)} + \la \geq c_2(t_1)$ for all $x\in \R$, we have
$$
\mathcal F_2(t)  =  \frac 1{2}\int_\R (z_x^2 + c_2(t_1) z^2 - m Q_{c_2}^{m-1}z^2) + O(\|z(t)\|_{H^1(\R)}^3) + O( e^{-\ga\ve t}\|z(t)\|_{H^1(\R)}^2 ).
$$
From Lemma \ref{surL} and (\ref{10a})-(\ref{11a}) we finally obtain (\ref{Co3}).
 
\end{proof}

\subsubsection{Conclusion of the proof} Now we prove that our assumption $T^*<+\infty$ leads inevitably to a contradiction. Indeed, from Lemmas \ref{EE3} and \ref{Coer3}, we have for all $t\in [t_1, T^*]$ and for some constant $K>0,$
\bee
\frac 1K \|z(t)\|_{H^1(\R)}^2 & \leq &    E_a[u](t)-E_a[u](t_1) +  (c_2(t_1) -\la)[ \mathcal M[u](t)-\mathcal M[u](t_1)] \\
& &  +  E_a[R](t_1) -E_a[R](t) + ( c_2(t_1)-\la) [ \mathcal M[R](t_1)- \mathcal M[R](t)]\\
& &    + K \mathcal F_2(t_1)  +  K \ve \sup_{t\in [t_1, T^*]} e^{-\ga\ve t} \|z (t)\|_{L^2(\R)} + K\sup_{t\in [t_1, T^*]} \|z(t)\|_{L^2(\R)}^3. 
\eee
From Lemmas \ref{3Dr} and \ref{C2}, Corollary \ref{fin} and the energy conservation we have
\bee
\|z(t)\|_{H^1(\R)}^2 & \leq & K \ve  +  (c_2(t_1) -\la)[ \mathcal M[u](t)- \mathcal M[u](t_1)] \\
& &  \qquad +  K \sup_{t\in [t_1, T^*]} \|z(t)\|_{H^1(\R)}^4 + K e^{-\ve\ga t_1}(1+D_0\ve^{1/2})  + K D_0^3\ve^{3/2}. 
\eee
Finally, from (\ref{hM}) we have $ \mathcal M[u](t)- \mathcal M[u](t_1)\geq -Ke^{-\ga \ve t_1}$. Collecting the preceding estimates we have for $\ve>0$ small and $D_0=D_0(K)$ large enough
$$
\|z(t)\|_{H^1(\R)}^2 \leq \frac 14D_0^2 \ve,
$$
which contradicts the definition of $T^*$. The conclusion is that 
 $$
 \sup_{t\geq t_1} \| u(t) - Q_{c_2(t)}(\cdot -\rho_2(t)) \|_{H^1(\R)} \leq K \ve^{1/2}.
 $$
Using (\ref{11a}), we finally get (\ref{S}). This finishes the proof.

\bigskip

\medskip

\noindent
{\bf Proof of the asymptotic stability result.} In this paragraph we sketch the proof of asymptotic stability property in the case $c_\infty(\la) <\la$, namely $\tilde \la<\la<1$, which is the case of the reflected solitary wave. A detailed proof for the case $0<\la<\la_0$ can be found in \cite{Mu2}, which adapts without modifications to the case $\la_0<\la<\tilde \la$. 

\medskip

Let us consider the remaining case, $\tilde \la<\la<1$. We continue with the notation introduced in the proof of the stability property (\ref{Sb}). From the above mentioned stability result, it is easy to check that the decomposition (\ref{eta1a}) showed in Lemma \ref{3Dr} and all its conclusions hold \textbf{for all time $t\geq t_1$}. 

\medskip

Consider $-\la<\beta <\frac{11}{10}(c_\infty(\la)-\la)$, and let us follow the proof described in \cite{Mu2}. First of all, the Virial estimate (cf. Lemma 6.4 in \cite{Mu2}) holds with no important modifications.

\smallskip
Second, Lemma 6.8, about monotonicity for mass and energy, needs some modifications. Indeed, for $x_0 >0$ we consider, for $t,t_0\geq t_1$, and $\tilde y(x_0):= x- (\rho_2(t_0) + \sigma (t-t_0) + x_0 )$, the modified quantities
\be\label{I0}
I_{x_0,t_0}(t) := \int_\R a^{1/m} (\ve x) u^2(t,x) \phi ( \tilde y(x_0)) dx, \quad \tilde I_{x_0,t_0}(t) :=  \int_\R a^{1/m} (\ve x) u^2(t,x) \phi (\tilde y(-x_0)) dx,
\ee
and
$$
J_{x_0,t_0}(t) :=  \int_\R[ u_x^2  +  a^{1/m}(\ve x)u^2 -\frac {2a(\ve x)}{m+1} u^{m+1}](t,x) \phi (\tilde y(x_0)) dx,
$$
with $\phi(x) := \frac 2\pi \arctan (e^{x/K}).$ Here $\sigma \in (-\la,\frac{11}{10}(c_\infty(\la)-\la)) $ is a fixed quantity, to be chosen later.

\smallskip

First of all, note that  the equivalent of estimate (6.32) is a consequence of the following inequality, valid for $K_0>0$ large and $\ve$ small enough:
\bea\label{dT}
\frac 12\partial_t \int_\R a^{1/m}(\ve x) \phi(\tilde y(x_0)) u^2 & =&  -\frac 32 \int_\R a^{1/m}\phi'  u_x^2   + \frac{m}{m+1} \int_\R a^{1/m +1}(\ve x)\phi' u^{m+1} \nonu\\
& &  +\frac 12 \int_\R u^2 a^{1/m}(\ve x) \big[ -(\sigma + \la)\phi '   + \phi^{(3)} \big] \nonu \\
& &  -\frac 32\ve \int_\R (a^{1/m})' (\ve x) \phi  u_x^2 - \frac \ve 2 \int_\R u^2 [ \la (a^{1/m})'   - \ve^2 (a^{1/m})^{(3)} ](\ve x) \phi \nonu \\
& & + \frac 32\ve \int_\R u^2 \big[ \ve(a^{1/m})^{(2)}(\ve x) \phi'  +  (a^{1/m})' (\ve x)\phi''   \big].
\eea
In this last computation we have six terms. Let us see each one in detail. In what follows we use the decomposition (\ref{eta1a}).
First of all, one has
$$
 \int_\R \phi'  a^{1/m}u_x^2 =  \int_\R \phi'  a^{1/m}(R_x^2 + 2R_x z_x + z_x^2). 
$$
Recall that $R(t)$ is exponentially decreasing in $x-\rho(t)$. On the other hand, $\phi'(\tilde y)$ is exponentially decreasing away from zero. Therefore, one has, for  $K$ large,
$$
\int_\R  a^{1/m}\phi'  u_x^2 =  \int_\R a^{1/m}\phi'   z_x^2 + O (e^{-x_0/K} e^{- (t_0 - t)/K}).
$$
Similarly,
$$
\abs{ \int_\R a^{1/m+1} \phi' u^{m+1}}   \leq K e^{-(t_0-t)/K} e^{-x_0/K} + K \ve^{(m-1)/2} \int_\R a^{1/m} \phi'  z^2.
$$
On the other hand, since $\sigma+ \la>0,$
$$
 \int_\R a^{1/m} u^2 \big[ -(\sigma + \la)\phi'   +\phi^{(3)} \big]  = - \frac 12(\sigma + \la)\int_\R a^{1/m} \phi' z^2 + O(e^{-x_0/K} e^{- (t_0-t)/K}),
$$
and
$$
  -\frac 32\ve \int_\R (a^{1/m})' (\ve x) \phi  u_x^2 - \frac \ve 2 \int_\R u^2 [ \la (a^{1/m})'   - \ve^2 (a^{1/m})^{(3)} ](\ve x) \phi \leq 0,
$$
provided $\ve$ is small.
Finally,
$$
\abs{\frac 32\ve \int_\R u^2 \big[ \ve(a^{1/m})^{(2)}(\ve x) \phi'  +  (a^{1/m})' (\ve x)\phi''   \big]} \leq K \ve e^{-(t_0-t)/K} e^{-x_0/K}  + K\ve \int_\R a^{1/m} z^2 \phi' . 
$$
After these estimates, it is easy to see that
$$
\frac 12\partial_t \int_\R a^{1/m}(\ve x)\phi(y)u^2 \leq K  e^{-(t_0-t)/K} e^{-x_0/K}.
$$
The conclusion follows after integration in time: one has, for all $0<\ve<\ve_0$ and for all $t,t_0\geq t_1$ with $t_0\geq t$,
\be\label{I}
I_{x_0,t_0}(t_0) -I_{x_0,t_0}(t) \leq K e^{-x_0/K} .
\ee
This estimate is an improved version of (6.32) in \cite{Mu2}. On the other hand, to obtain (6.33), we perform a similar computation. Therefore, if $t\geq t_0$, one has
\be\label{tI}
\tilde I_{x_0,t_0}(t) -\tilde I_{x_0,t_0}(t_0) \leq K e^{-x_0/K}.
\ee
Finally if $t_0\geq t$, after a similar computation as performed in \cite{Mu2},
\be\label{J}
J_{x_0,t_0}(t_0) -J_{x_0,t_0}(t) \leq Ke^{-x_0/K}.
\ee
From these estimates, the Virial identity and the decomposition above mentioned, one has (6.35)-(6.39). The rest of the proof is direct, and no deep modifications are needed. The proof is complete.

\bigskip

\section{Proof of Proposition \ref{prop:decomp}}\label{A}

\medskip

This section is an improvement of the Appendix A in \cite{Mu2}. Now we suppose that the parameters $(c(t),\rho(t))$ are not fixed, but satisfy (\ref{r1}).

\medskip

\noindent
{\bf Step 0. Preliminaries.} 

From  (\ref{2.2bis}), we easily have that 
\begin{equation}\label{eq:sion}
S[\tilde u]= \bf I +II +III,
\end{equation}
where (we omit the dependence on $t,x$) 
\be\label{eq:sion1}
{\bf I} := S[R], \quad {\bf II} = {\bf II}(w) := w_t  +  (w_{xx} -\la w + m\ a(\ve x) R^{m-1} w)_x , 
\ee
and for $m=2,3$ or $4$,
\be\label{eq:sion2}
{\bf III} : = \left\{ a(\ve x) [(R + w)^m - R^m - mR^{m-1}w ]\right\}_x.
\ee
Recall that $w$ is given by (\ref{defW}). Since $w$ varies, depending on $m=3$ or $m\neq 3$, we have to consider two different cases in our computations.

In the next results, we expand the terms in \eqref{eq:sion}. Note that $\tilde a = a^{\frac 1{m-1}}$, and
$$
R(t,x) = \frac{Q_{c(t)} (y)}{\tilde a(\ve \rho(t))}, \quad y= x-\rho(t). 
$$

\smallskip

\noindent
{\bf Step 1. Computation of ${\bf I}$.}

\begin{lem}\label{lem:SQ}~

\begin{enumerate}
\item Suppose $m=2$ or $4$. One has
\be\label{eq:SQ}
{\bf I}= F_0^{\bf I}(t,y) + \ve  F_1^{\bf I}(t, y)  + \ve^2  F_c^{\bf I}(t,y),
\ee
where 
\be\label{F0I}
F_0^{\bf I}(t,y) := ( c'(t) -\ve f_1(t) ) \partial_c R(t) + (\rho'(t) - c(t)+ \la -\ve f_2(t))  \partial_\rho R(t),
\ee
$f_1(t)$ and $f_2(t)$ are given by (\ref{f1})-(\ref{f2}), and
\bea\label{F1Q}
F_1^{\bf I}(t; y) & := & f_1(t) \frac{\Lambda Q_c(y)}{ \tilde a(\ve \rho(t))}  -   \frac {\tilde a'}{\tilde a^2}(\ve \rho(t)) (c(t)-\la) Q_c(y) \nonu \\ 
& & \qquad +  \frac{ a' }{\tilde a^m} (\ve \rho(t))(yQ_c^m(y))_y - f_2(t) \frac{Q_c'(y)}{\tilde a(\ve \rho(t))} .
\eea
Finally, for all $t\in [-T_\ve, \tilde T_\ve] $, one has  $\|F_c^{\bf I}(t,\cdot)\|_{H^1(\R)} \leq K(e^{-\ve\ga|\rho(t)|} +\ve) $.

\medskip

\item Suppose now $m=3$. Then one has
\be\label{eq:SQ3}
{\bf I}= F_0^{\bf I}(t,y) + \ve  F_1^{\bf I}(t, y)  + \ve^2  F_2^{\bf I}(t,y) + \ve^3 F_c^{\bf I}(t,y),
\ee
with $F_0^{\bf I}$ given by
\be\label{F0I3}
F_0^{\bf I}(t,y) := ( c'(t) -\ve f_1(t) -\ve^2 f_3(t) ) \partial_c R(t) + (\rho'(t) - c(t)+ \la -\ve^2 f_4(t))  \partial_\rho R(t),
\ee
and $f_1(t), f_3(t)$ and $f_4(t)$ given by (\ref{f1}), (\ref{f3}) and (\ref{f4}) respectively. In addition, $F_1^{\bf I}$ is given by (\ref{F1Q}) (with $f_2\equiv 0$), and
\be\label{F2Q}
F_2^{\bf I}(t,y) := \frac{f_3(t)}{a^{1/2}(\ve \rho(t))} \Lambda Q_c(y)  - \frac{f_4(t)}{a^{1/2}(\ve \rho(t))} Q_c'(y) + \frac{ a'' }{2 a^{3/2}}(\ve \rho(t))(y^2Q_c^3(y))_y .
\ee
Finally,  for all $t\in [-T_\ve, \tilde T_\ve] $, one has $\|F_c^{\bf I}(t,\cdot)\|_{H^1(\R)} \leq K (e^{-\ve\ga|\rho(t)|} +\ve)$.
\end{enumerate}
\end{lem}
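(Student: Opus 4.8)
The plan is to substitute $R(t,x)=Q_{c(t)}(y)/\tilde a(\ve\rho(t))$ with $y=x-\rho(t)$ directly into $S[R]=R_t+(R_{xx}-\la R+a(\ve x)R^m)_x$ and expand everything in powers of $\ve$, collecting terms according to their order. First I would compute $R_t$: differentiating in $t$ produces $c'(t)\partial_c R$, $\rho'(t)\partial_\rho R = -\rho'(t) R_x$, plus a term $-\ve\rho'(t)\tilde a'(\ve\rho)/\tilde a(\ve\rho)\cdot R$ coming from the $t$-dependence of the prefactor; since $\rho'(t)=c(t)-\la+O(\ve^{1/100})$ by \eqref{r1}, this last term is $O(\ve)$ and feeds into $F_1^{\bf I}$. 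Next I would handle the spatial derivative: $(R_{xx}-\la R + a(\ve x)R^m)_x$. Here the key algebraic input is the soliton equation $Q_c''-cQ_c+Q_c^m=0$, which kills the leading-order balance; what survives at order $\ve^0$ is $(c-\la)R_x = -(c-\la)\partial_\rho R$. The nonlinear term $a(\ve x)R^m$ must be Taylor-expanded around $x=\rho(t)$: writing $a(\ve x)=a(\ve\rho)+\ve a'(\ve\rho)(x-\rho)+\tfrac12\ve^2 a''(\ve\rho)(x-\rho)^2+\ldots$ gives $a(\ve\rho)R^m = Q_c^m/\tilde a^m\cdot a(\ve\rho)$; since $\tilde a^{m}=a^{m/(m-1)}=a\cdot a^{1/(m-1)}=a\tilde a$ — wait, more carefully $\tilde a^{m-1}=a$ so $\tilde a^m=a\tilde a$, hence $a(\ve\rho)R^m = Q_c^m/\tilde a$ — which combines with $R_{xx}-cR$ via the soliton equation. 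The first correction $\ve a'(\ve\rho)(x-\rho)R^m = \ve\, a'(\ve\rho)\tilde a^{-m}(\ve\rho)\,yQ_c^m(y)$ produces, after the outer $x$-derivative, the term $\ve\, (a'/\tilde a^m)(\ve\rho)(yQ_c^m)_y$ appearing in \eqref{F1Q}, and for $m=3$ the second correction $\tfrac12\ve^2 a''(\ve\rho) y^2 Q_c^3$ produces the last term of \eqref{F2Q}.

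The organizing idea is then to \emph{define} $f_1$ and $f_2$ (and $f_3,f_4$ for $m=3$) precisely so that the order-$\ve$ (resp.\ order-$\ve^2$) pieces that are \emph{not} of the form ``a known function times $\Lambda Q_c$ or $Q_c'$'' get cancelled by the modulation adjustments hidden in $F_0^{\bf I}$. Concretely, $F_0^{\bf I}$ is written in \eqref{F0I} as $(c'-\ve f_1)\partial_c R + (\rho'-c+\la-\ve f_2)\partial_\rho R$; expanding $\partial_c R = (\Lambda Q_c)/\tilde a - (\tilde a'/\tilde a^2)(\ve\rho)\cdot(\ve \rho_c?)$... actually $\partial_c R=\Lambda Q_c/\tilde a(\ve\rho)$ since $\tilde a$ does not depend on $c$, and $\partial_\rho R = -Q_c'/\tilde a(\ve\rho) - \ve(\tilde a'/\tilde a^2)(\ve\rho)Q_c$. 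So the $\ve f_1\partial_c R$ piece contributes $-\ve f_1\Lambda Q_c/\tilde a$, matching the first term of \eqref{F1Q} with the opposite sign, i.e.\ $f_1$ is \emph{moved out} of $S[R]$ into the dynamical-system part $F_0^{\bf I}$; similarly for $f_2$ and the $Q_c'$ term. The residual genuinely-of-order-$\ve$ terms are exactly $F_1^{\bf I}$ as displayed: the $\Lambda Q_c$ term with coefficient $f_1$, the $Q_c$ term with coefficient $-(\tilde a'/\tilde a^2)(c-\la)$, the $(yQ_c^m)_y$ term, and the $Q_c'$ term with coefficient $-f_2$. I would verify by direct computation that the specific formulas \eqref{f1}, \eqref{f2}, \eqref{f3}, \eqref{f4} are the ones that make later orthogonality relations (needed in Proposition \ref{prop:decomp}(2)) hold — in particular $f_1$ is chosen so that $\int_\R Q_c F_1^{\bf I}$-type quantities vanish, which forces $f_1 = p\, C(C-\la/\la_0)\,a'/a$ with $p=4/(m+3)$ via the integral identities in Appendix \ref{IdQ} (the relations $\int Q^{m+1} = \tfrac{m+1}{m+3}\cdot\ldots$ etc.).

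Finally, for the remainder estimate $\|F_c^{\bf I}(t,\cdot)\|_{H^1}\le K(e^{-\ve\ga|\rho(t)|}+\ve)$, I would collect all terms of order $\ve^2$ (or $\ve^3$ for $m=3$) and higher. These come from: (i) the $\ve^2$ (and beyond) Taylor coefficients of $a(\ve x)$ against $R^m$, which carry factors like $\ve^2 a''(\ve\rho)$ and polynomial weights $y^2$ times $Q_c^m(y)$ — integrable and exponentially localized, giving $O(\ve^2)$ in $H^1$, except that for $m=3$ the $\ve^2$ piece is promoted into $F_2^{\bf I}$ and only $\ve^3$ and higher land in $F_c^{\bf I}$; (ii) cross terms from $\partial_\rho R$ having the extra $\ve(\tilde a'/\tilde a^2)Q_c$ correction, which when multiplied against the $O(\ve)$ coefficient $f_1$ or $f_2$ yields $O(\ve^2)$; (iii) the $\tilde a'/\tilde a$ factors themselves, which by \eqref{ahyp} are bounded by $Ke^{-\ve\ga|\rho(t)|}$ and contribute the $e^{-\ve\ga|\rho(t)|}$ part of the bound; here I use crucially that $a'(\ve\rho) = O(e^{-\ve\ga|\rho|})$ so any term carrying a bare $a'/a$ with no compensating smallness is still exponentially small. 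The bound $\le K(e^{-\ve\ga|\rho(t)|}+\ve)$ is somewhat crude — it just says ``bounded, and small when $|\rho|$ is large'' — and is all that's needed. The main obstacle is purely bookkeeping: keeping track of which $\ve$-powers come from differentiating $a(\ve\cdot)$ versus from the modulation ansatz \eqref{r1}, and making sure the definitions of $f_1,\dots,f_4$ in \eqref{f1}--\eqref{f4} are exactly the ones making $F_1^{\bf I}$ (and $F_2^{\bf I}$) take the stated closed forms; this requires careful use of $\Lambda Q_c = \tfrac1c[\tfrac1{m-1}Q_c+\tfrac12 xQ_c']$ from \eqref{LaQc} and the integral identities of Appendix \ref{IdQ}. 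I expect no conceptual difficulty beyond that, since this is an improvement of the analogous computation already carried out in \cite{Mu2}.
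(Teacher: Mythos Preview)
Your proposal is correct and follows essentially the same route as the paper: direct substitution of $R=Q_c/\tilde a(\ve\rho)$ into $S[R]$, Taylor expansion of $a(\ve x)$ about $\ve\rho$, cancellation of the leading order via $Q_c''-cQ_c+Q_c^m=0$, and the add-and-subtract trick to separate the $(c'-\ve f_1-\cdots)\partial_c R$ and $(\rho'-c+\la-\cdots)\partial_\rho R$ pieces from the explicit $F_1^{\bf I}$, $F_2^{\bf I}$ terms. One organizational point: in this lemma the functions $f_1,\dots,f_4$ are taken as \emph{given} by \eqref{f1}--\eqref{f4} and the decomposition is purely algebraic; the fact that these particular choices yield the orthogonality relations $\int Q_c F_1=0$, $\int Q_c\tilde F_2=0$ is established afterward (Steps 4--6 of Appendix~\ref{A}), not as part of the present computation.
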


\medskip

\begin{proof}[Proof of Lemma \ref{lem:SQ}.]~

We  compute (from now on, and for the sake of simplicity, we avoid the explicit dependence in time $t$ and space $y$ in the computations):
\bee
{\bf I} & = & R_t + (R_{xx}  -\la R+ a(\ve x) R^m )_x\\
& =&  \frac{ c'}{\tilde a}\Lambda Q_c - \frac{\rho'}{\tilde a}  Q_c' - \ve \frac {\tilde a' \rho'}{\tilde a^2} Q_c + \frac 1{\tilde a} Q_c^{(3)} -\frac{\la}{\tilde a}  Q_c' +  \frac{1}{\tilde a^m}( a(\ve x)Q_c^m)_x.
\eee
Note that via a Taylor expansion,
\bee
( a(\ve x)Q_c^m)_x  =  a(\ve \rho) (Q_c^m)_y + \ve a'(\ve \rho) (yQ_c^m)_y + \frac 12 \ve^2 a''(\ve \rho) (y^2 Q_c^m)_y   + O_{H^2(\R)}(\ve^3).
\eee
Therefore, using the equation satisfied by $Q_c$, namely, $Q_c'' -cQ_c +Q_c^m=0$, one has
\bee
{\bf I} &= & \frac{c'}{\tilde a}\Lambda Q_c - \frac{\rho'}{\tilde a}  Q_c' - \frac{\ve}{m-1} \frac {a' \rho'}{\tilde a^m} Q_c + \frac 1{\tilde a} Q_c^{(3)} -\frac{\la}{\tilde a}  Q_c'  \\
& & \qquad + \frac 1{\tilde a} (Q_c^m)' + \frac{\ve a' }{\tilde a^m}(yQ_c^m)_y   + \frac{\ve^2 a'' }{2\tilde a^m}(y^2Q_c^m)_x + O_{H^1(\R)}(\ve^3) \\
& = & \frac 1{\tilde a} (Q_c''- cQ_c + Q_c^m)'  +  \frac{c'}{\tilde a}\Lambda Q_c - (\rho' -c+\la) \frac{Q_c'}{\tilde a}
- \ve \frac {\tilde a'}{\tilde a^2} (\rho' -c+\la)Q_c  \\
& & \qquad - \ve \frac {\tilde a'}{\tilde a^2} ( c-\la)Q_c +  \frac{\ve a' }{\tilde a^m}(yQ_c^m)_y +  \frac{\ve^2 a'' }{2\tilde a^m}(y^2Q_c^m)_y + O_{H^1(\R)}(\ve^3) \\
&  = & (c' -\ve f_1- \delta_{m,3}\ve^2 f_3) \frac{\Lambda Q_c}{\tilde a} - (\rho' -c+\la -\ve f_2 - \delta_{m,3}\ve^2 f_4) ( \frac{Q_c'}{\tilde a}  +  \ve \frac {\tilde a'}{\tilde a^2} Q_c)  \\
& & \qquad +  \ve \big[ \frac{f_1}{\tilde a}\Lambda Q_c - \frac {\tilde a'}{\tilde a^2} (c-\la)Q_c +\frac{ a' }{\tilde a^m}(yQ_c^m)_y  -  \frac{f_2}{\tilde a} Q_c' \big] \\
& & \qquad + \ve^2 F_2^{\bf I}(t,y) + O_{H^1}(\ve^3 e^{-\ve\ga|\rho(t)|} + \ve^4),
\eee
with $F_2^{\bf I}$ given by (\ref{F2Q}), and $\delta_{m,3}$ the Kronecker delta symbol. Moreover  $F_2^{\bf I}(t,y)\in \mathcal Y$ for all $t\in [-T_\ve, \tilde T_\ve]$ and
$$
\| F_2^{\bf I}(t,\cdot) \|_{H^1(\R)} \leq K e^{-\ve\ga|\rho(t)|}.
$$
From the last identity above, we define $F_0^{\bf I}$ and $F_1^{\bf I}$ as above mentioned (cf. (\ref{F0I})-(\ref{F0I3})-(\ref{F1Q}).) Moreover,  depending on the value of $m$, we define $F_c^{\bf I}$ as the rest term of quadratic or cubic order  in $\ve$. Indeed, for $m=3$ we have $F_c^{\bf I}(t,\cdot) = O_{H^1}(\ve^3 e^{-\ve\ga|\rho(t)|} + \ve^4)$, and for $m=2$ or $4$, we have $F_c^{\bf I} =\ve^2 F_2^{\bf I}(t,y) + O_{H^1}(\ve^3 e^{-\ve\ga|\rho(t)|} + \ve^4)$. In both cases, the corresponding estimates, and the decompositions (\ref{eq:SQ})-(\ref{eq:SQ3}) are straightforward. The proof is complete.
\end{proof}

\noindent
{\bf Step 2. Computation of ${\bf II}$.}

\begin{lem}[Decomposition of {\bf II}]\label{lem:dSKdVw}~

Suppose that $(A_c, B_c)$ satisfy (\ref{Ac})-(\ref{Bc}).\footnote{We assume these properties in order to simplify the computations. Later, we will prove that this is indeed the case.} Let $w$ given by (\ref{defW}). The following expansions hold:
\begin{enumerate}
\item \emph{Case $m=2,4$}. We have
\bee
{\bf II} & = &  (c' -\ve f_1)\partial_c w - (\rho' -c +\la-\ve f_2) w_y  - (\mathcal{L} w)_y \\
& &  + \ \ve^2 \big[ \frac 1\ve d' A_{c} +  f_1 d \ \partial_c A_{c}  \big] + \ve^2  F_c^{\bf II}(t; y),
\eee
with $F_c^{\bf II}(t; \cdot) \in \mathcal Y$, uniformly in time. In addition, 
$$
\| F_c^{\bf II}(t; y)\|_{H^1(\R)}\leq K  e^{-\ga\ve|\rho(t)|}.
$$
\item \emph{Case $m=3$.}  Here one has
\bee
{\bf II} & = &  (c' -\ve f_1 -\ve^2 f_3)\partial_c w - (\rho' -c +\la -\ve^2 f_4)  w_y   - (\mathcal{L} w)_y \\
& &  +\ \ve^2 \big[ \frac 1\ve d' A_{c} + f_1 d \  \partial_c A_{c}  + 3 \, d \frac{a'(\ve \rho)}{a(\ve \rho)} ( yQ_c^{2}A_{c})_y \big]  \\
& & + \  \ve^3 \big[ d \ \! f_3  \partial_c A_c +f_1 \partial_c B_c -f_4 (B_c)_y \big]  + \ve^4 f_4 \partial_c B_c + \ve^3  F_c^{\bf II}(t; y),
\eee
with $F_c^{\bf II}(t; \cdot) \in \mathcal Y$, uniformly in time. In addition, 
$$
\| F_c^{\bf II}(t; y)\|_{H^1(\R)}\leq K  e^{-\ga\ve|\rho(t)|}.
$$
\end{enumerate}
\end{lem}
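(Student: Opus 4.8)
\noindent
\textbf{Plan of proof of Lemma \ref{lem:dSKdVw}.} The plan, following the structure of Appendix A in \cite{Mu2}, is to insert $w$ from \eqref{defW} into the definition \eqref{eq:sion1} of $\mathbf{II}$ and expand term by term, reading off the powers of $\ve$. First I compute $w_t$. Writing $w=\ve d(t)A_{c(t)}(y)$ (plus $\ve^2 B_c(t,y)$ if $m=3$) and using the chain rule with $y=x-\rho(t)$, one gets
\[
w_t = c'(t)\,\partial_c w - \rho'(t)\, w_y + \ve\, d'(t)\, A_c \ \big(+\ \ve^2(\partial_t B_c)_{\mathrm{expl}}\ \hbox{if }m=3\big),
\]
since $d=\tfrac{a'}{\tilde a^m}(\ve\rho)$ does not depend on $c$. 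The key point is that $d'(t)=\ve\,\rho'(t)\,\big(\tfrac{a'}{\tilde a^m}\big)'(\ve\rho(t))$ carries an extra power of $\ve$, so $\ve d'A_c=\ve^2\big[\tfrac1\ve d'\big]A_c$ with $\tfrac1\ve d'$ bounded; this is exactly the explicit $\ve^2$--term of the statement. It is kept explicit rather than absorbed into $F_c^{\mathbf{II}}$ precisely because $A_c\notin L^2$: only localized remainders may belong to $\mathcal{Y}$.

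\noindent
Next I treat the spatial part $(w_{xx}-\la w+m\,a(\ve x)R^{m-1}w)_x$. Since $\tilde a=a^{1/(m-1)}$ one has $R^{m-1}=Q_c^{m-1}(y)/a(\ve\rho)$, hence $m\,a(\ve x)R^{m-1}w=m\tfrac{a(\ve x)}{a(\ve\rho)}Q_c^{m-1}w$, and I Taylor-expand $a$ about $\ve\rho$,
\[
\frac{a(\ve x)}{a(\ve\rho)}=1+\ve\frac{a'}{a}(\ve\rho)\,y+\tfrac12\ve^2\frac{a''}{a}(\ve\rho)\,y^2+\cdots .
\]
The zeroth-order piece combines with $w_{xx}-\la w=w_{yy}-\la w$ and the identity $\mathcal{L}w=-w_{yy}+cw-mQ_c^{m-1}w$ to give, after $\partial_x=\partial_y$, the terms $-(\mathcal{L}w)_y+(c-\la)w_y$. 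The first-order piece produces $\ve\tfrac{a'}{a}(\ve\rho)\,m\,(yQ_c^{m-1}w)_y$, which for $m=3$, using the leading part $\ve dA_c$ of $w$, is the explicit $\ve^2$--term $3d\tfrac{a'}{a}(\ve\rho)(yQ_c^2A_c)_y$ that $\ve^2B_c$ will later be chosen to cancel, while for $m=2,4$ it is localized of size $O(\ve^2 e^{-\ve\ga|\rho(t)|})$ and goes into $F_c^{\mathbf{II}}$. All remaining Taylor terms (involving $y^kQ_c^{m-1}w$, $k\ge2$, and the integral remainder) are likewise localized and small, since each carries a factor $d=O(e^{-\ve\ga|\rho(t)|})$ and $Q_c$ is Schwartz.

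\noindent
Finally I collect by order in $\ve$. Writing $c'\partial_c w=(c'-\ve f_1-\ve^2\delta_{m,3}f_3)\partial_c w+(\ve f_1+\ve^2\delta_{m,3}f_3)\partial_c w$ and $-(\rho'-c+\la)w_y=-(\rho'-c+\la-\ve f_2-\ve^2\delta_{m,3}f_4)w_y-(\ve f_2+\ve^2\delta_{m,3}f_4)w_y$, and using $\partial_c w,\,w_y=\ve d\,\partial_c A_c,\ \ve d\,A_c'$ (plus $\ve^2\partial_c B_c$, $\ve^2(B_c)_y$ for $m=3$), the correction terms reproduce $f_1 d\,\partial_c A_c$ at order $\ve^2$ and, in the cubic case, the full $\ve^3$-- and $\ve^4$--brackets; the contribution $-\ve f_2 w_y=-\ve^2 f_2 d A_c'$ is localized ($A_c'\in\mathcal{Y}$) and absorbed into $F_c^{\mathbf{II}}$ when $m=2,4$. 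The bound $\|F_c^{\mathbf{II}}(t)\|_{H^1(\R)}\le Ke^{-\ga\ve|\rho(t)|}$ then follows from \eqref{ahyp} (all retained remainders carry $a'(\ve\rho)$, hence $e^{-\ga\ve|\rho|}$), together with $A_c,\partial_c A_c\in L^\infty$, $A_c'\in\mathcal{Y}$, the estimates \eqref{Bc} on $B_c$, and the Schwartz decay of $Q_c$; that $F_c^{\mathbf{II}}\in\mathcal{Y}$ is automatic since every non-explicit term contains a $Q_c$ factor.

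\noindent
I expect the main obstacle to be the bookkeeping in the cubic case: one must verify that the $B_c$--dependent pieces arising from $c'\partial_c w$, from $\rho'w_y$ and from $(\partial_t B_c)_{\mathrm{expl}}$ assemble exactly into the stated $\ve^3[\,d f_3\partial_c A_c+f_1\partial_c B_c-f_4(B_c)_y\,]+\ve^4 f_4\partial_c B_c$ (recalling $f_2\equiv0$ there), and, throughout, that no non-localized ($A_c$--type) term is inadvertently swept into $F_c^{\mathbf{II}}$ — this last constraint is what dictates which corrections stay explicit and forces the precise form of the statement.
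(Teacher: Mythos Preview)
Your proposal is correct and follows essentially the same approach as the paper's own proof. The paper first derives a general formula for ${\bf II}(D)$ with $D=D_c(t,y)$ arbitrary, then specializes to $D=w$, whereas you work directly with $w$; but the underlying computation---chain rule for $w_t$, Taylor expansion of $a(\ve x)/a(\ve\rho)$, the add/subtract trick for $c'$ and $\rho'-c+\la$, and the localized/non-localized dichotomy governing what may be absorbed into $F_c^{\bf II}$---is identical in both.
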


\begin{proof}
Let $D:=D_c(t,y)$, $y= x-\rho(t)$, be a general, smooth function. We compute
$$
{\bf II}(D) := D_t + (D_{xx} -\la D +m \ a(\ve x)  R^{m-1} D)_x.
$$
We have
\bee
{\bf II}(D)& =& c'(t) \partial_c D + D_t  - \rho'(t)D_y  +\big[ D_{yy} - \la D+  \frac{a(\ve x)}{a(\ve \rho)} mQ_c^{m-1} D \big]_x \\
& = &   D_t  -(\mathcal L D)_y    + (c'(t) - \ve  f_1(t) -\ve^2 \delta_{m,3} f_3(t)) \partial_c D   \\
& &  - (\rho'(t) -c(t)+\la -\ve f_2(t) -\ve^2 \delta_{m,3} f_3(t)) D_y \\
& & + m \ve \frac{a'(\ve \rho)}{a(\ve \rho)} ( yQ_c^{m-1} D )_y  +  O((\ve^2 y^2Q_c^{m-2}D )_y) \\
& & + \ve ( f_1(t) + \ve \delta_{m,3}f_3(t) ) \partial_c D - \ve ( f_2(t)  + \ve \delta_{m,3}f_4(t)) D_y.
\eee

\medskip

We apply this identity to the functions $w=\ve d(t)A_c(y)$ (case $m=2,4$) and $w=\ve d(t)A_c(y)+ \ve^2 B_c(t,y)$ (case $m=3$).
We first deal with the cases $m=2$ or $4$. We have
\bee
{\bf II}(w) & = &  \ve d'(t) A_c  - \ve d(t)(\mathcal L A_c)'    + (c'(t) - \ve  f_1(t)) \ve d(t)\partial_c A_c   \\
& &  - (\rho'(t) -c(t)+\la -\ve f_2(t) ) \ve d(t) A_c' \\
& & + \ve^2 d (t) f_1(t) \partial_c A_c  + O_{H^1(\R)} (\ve^2 e^{-\ve \ga |\rho(t)|} ).
\eee
(Recall that $A_c'\in \mathcal Y$.) This proves the first part of Lemma \ref{lem:dSKdVw}.

\medskip

We treat now the cubic case, $m=3$. Here we have $f_2(t) \equiv 0$, $A_c'\in \mathcal Y$ and
\bee
{\bf II}(w)& = &  \ve d'(t) A_c +  \ve^2 (B_c)_t  -(\mathcal L w)_y    + (c'(t) - \ve  f_1(t) -\ve^2 f_3(t)) \partial_c w   \\
& &  - (\rho'(t) -c(t)+\la  -\ve^2 f_4(t)) w_y  + 3 \ve \frac{a'(\ve \rho)}{a(\ve \rho)} ( yQ_c^{2} w )_y \\
& &  +  O((\ve^2 y^2 Q_c w )_y)  + \ve ( f_1(t)  + \ve f_3(t)) \partial_c w - \ve^2 f_4(t) w_y\\
& =& -(\mathcal L w)_y    - (\rho'(t) -c(t)+\la  -\ve^2 f_4(t)) w_y\\
& &  + \ (c'(t) - \ve  f_1(t) -\ve^2 f_3(t)) \partial_c w   +   \ve d'(t) A_c +  \ve^2 (B_c)_t \\
& & +\ \ve^2 ( f_1(t)  + \ve f_3(t)) \partial_c (d(t)A_c + \ve B_c)   - \ve^3 f_4(t) (B_c)_y  \\
& & + \  3 \ve^2 d(t)\frac{a'(\ve \rho)}{a(\ve \rho)} ( yQ_c^{2} A_c )_y + O_{H^1(\R)} (\ve^3 e^{-\ve\ga|\rho(t)|}).
\eee 
Therefore, we have
\bee
{\bf II}(w)& = & -(\mathcal L w)_y    + (c'(t) - \ve  f_1(t) -\ve^2 f_3(t)) \partial_c w  - (\rho'(t) -c(t)+\la  -\ve^2 f_3(t)) w_y\\
& & +  \ \ve^2 \big[ \frac 1\ve d'(t) A_c +   (B_c)_t  + f_1(t)  d(t) \partial_c A_c +  3 d(t)\frac{a'(\ve \rho)}{a(\ve \rho)} ( yQ_c^{2} A_c )_y  \big]\\
& & + \  \ve^3\big[ d(t)f_3(t)  \partial_c A_c +f_1(t) \partial_c B_c -f_4(t) (B_c)_y \big]  \\
&& +\ \ve^4 f_4(t) \partial_c B_c + O_{H^1(\R)} (\ve^3 e^{-\ve\ga|\rho(t)|}).
\eee
This concludes the proof.
\end{proof}

\medskip

\noindent
{\bf Step 3. Nonlinear term.} 

\begin{lem}[Decomposition of {\bf III}]\label{lem:SintIII}~

Suppose that $(A_c, B_c)$ satisfy (\ref{Ac})-(\ref{Bc}). Then we have
\be\label{33}
{\bf III} = \begin{cases} O_{H^1(\R)} ( \ve^2 e^{-\ve\ga|\rho(t)|}), \qquad m=2,4;\\
3\ve^2 a(\ve \rho)d^2(t) (Q_c A_c^2)'  + 3 \ve^4 a(\ve x)(d(t) A_c +\ve B_c )^2  B_c'  +  \ O_{H^1(\R)} (\ve^3 e^{-\ve\ga |\rho(t)|}), \quad m=3.
\end{cases}
\ee
\end{lem}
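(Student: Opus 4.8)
The plan is to establish Lemma \ref{lem:SintIII} by a direct Taylor expansion of the nonlinear remainder $\mathbf{III}$ in powers of $w$, carefully tracking the order in $\varepsilon$ of each term using the structural bounds on $A_c$ and $B_c$ from \eqref{Ac}--\eqref{Bc}. Recall that
$$
\mathbf{III} = \big\{ a(\ve x)\big[(R+w)^m - R^m - mR^{m-1}w\big]\big\}_x,
$$
and that $R = Q_c(y)/\tilde a(\ve\rho)$ with $y = x-\rho(t)$, while $w = \ve d(t)A_c(y)$ for $m=2,4$ and $w = \ve d(t)A_c(y) + \ve^2 B_c(t,y)$ for $m=3$. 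The first step is to expand the bracket: for any integer $m\geq 2$,
$$
(R+w)^m - R^m - mR^{m-1}w = \sum_{k=2}^m \binom{m}{k} R^{m-k} w^k.
$$
Thus $\mathbf{III} = \sum_{k=2}^m \binom{m}{k}\big(a(\ve x) R^{m-k} w^k\big)_x$, a \emph{finite} sum since $m\leq 4$, which is the key simplification: there are no infinite-series convergence issues.

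Next I would treat the cases $m=2$ and $m=4$ together. Here $w = \ve d(t)A_c(y)$, so the leading term $\binom{m}{2}R^{m-2}w^2$ is already $O(\ve^2)$. The crucial observation is the support/decay structure from \eqref{Ac}: $A_c$ is bounded with $A_c' \in \mathcal{Y}$, and $R^{m-2}$ (for $m=2$ this is just the constant $1$, for $m=4$ it is $Q_c^2/\tilde a^2$) multiplied against the derivative of an $\mathcal{Y}$-type function. One must be slightly careful for $m=2$: there $R^{m-2}=1$ and $w^2 = \ve^2 d^2 A_c^2$, so $(a(\ve x) w^2)_x = \ve^2 d^2 (a(\ve x)A_c^2)_x = \ve^2 d^2 (a(\ve x)(A_c^2)_y + \ve a'(\ve x)A_c^2)$; since $A_c^2 \notin L^2$ but $(A_c^2)_y = 2A_cA_c' \in \mathcal Y$ (using $A_c \in L^\infty$ and $A_c' \in \mathcal Y$), and $d(t) = (a'/\tilde a^m)(\ve\rho)$ carries a factor $a'(\ve\rho) = O(e^{-\ve\ga|\rho(t)|})$ by \eqref{ahyp}, we get $\mathbf{III} = O_{H^1(\R)}(\ve^2 e^{-\ve\ga|\rho(t)|})$. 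For $m=4$ the same argument applies with the extra $Q_c$ factors only improving localization, and the cubic and quartic terms $w^3, w^4$ are even higher order. This gives the first line of \eqref{33}.

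The main work, and the main obstacle, is the cubic case $m=3$. Here $m=3$ so $\mathbf{III} = \big(a(\ve x)[3R w^2 + w^3]\big)_x$, and $w = \ve d A_c + \ve^2 B_c$ has \emph{two} pieces. I would expand $w^2 = \ve^2 d^2 A_c^2 + 2\ve^3 d A_c B_c + \ve^4 B_c^2$ and $w^3 = \ve^3 d^3 A_c^3 + \cdots$. The genuinely dangerous terms are those of order $\ve^3$: namely $3R\cdot 2\ve^3 d A_c B_c$ and $\ve^3 d^3 A_c^3$, because $A_c$ is \emph{not} localized ($\lim_{-\infty}A_c \neq 0$) while $B_c$ grows only linearly as $y\to-\infty$ (from \eqref{Bc}), so a product like $A_c B_c$ need not be in $L^2$ and its $x$-derivative must be handled via the $\mathcal Y$-membership of $A_c'$ and $B_c'\in L^\infty$ together with the exponential smallness $e^{-\ve\ga|\rho(t)|}$ that $B_c$ carries as $y\to+\infty$ and the factor $d(t)$; one must also use that on the relevant region $y\geq -3/\ve$ (after the cutoff of Section \ref{3}) all these are controlled. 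After isolating the genuine $\ve^2$ contribution $3R w^2|_{\text{leading}} = 3\ve^2 d^2 (Q_c/\tilde a) A_c^2 = 3\ve^2 d^2 a(\ve\rho) Q_c A_c^2$ (using $R = Q_c/\tilde a$ and $\tilde a = a^{1/2}$ so $1/\tilde a = a^{1/2}/a$, hence $3R d^2 = 3 a(\ve\rho) d^2 Q_c/a(\ve\rho)\cdot$\dots — I would double-check this bookkeeping so the coefficient matches $3\ve^2 a(\ve\rho)d^2(Q_c A_c^2)'$ exactly), and the genuine $\ve^4$ contribution $3\ve^4 a(\ve x)(dA_c+\ve B_c)^2 B_c'$ coming from the $w^3$ term and cross terms involving $B_c'$, everything else must be shown to be $O_{H^1(\R)}(\ve^3 e^{-\ve\ga|\rho(t)|})$. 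The delicate point throughout is that every ``error'' term must pick up \emph{either} an extra power of $\ve$ beyond $\ve^2$ \emph{or} a factor $d(t) = O(e^{-\ve\ga|\rho(t)|})$ (or both), and that products of non-decaying $A_c$ with polynomially-growing $B_c$ survive in $H^1$ only because of the cutoff $\eta_\ve$ restricting to $y\geq -3/\ve$ and the exponential gain from $a'$. I would organize this as a term-by-term table, invoking \eqref{Est1} for the $a'(\ve x)f(y)$-type factors and the explicit decay rates in \eqref{Ac}--\eqref{Bc}, and conclude \eqref{33}.
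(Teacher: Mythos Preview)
Your approach is essentially the same as the paper's: a direct binomial expansion of $(R+w)^m-R^m-mR^{m-1}w$, followed by term-by-term $H^1$ estimates using $A_c'\in\mathcal Y$, $A_c\in L^\infty$, the linear growth of $B_c$, and the exponential smallness of $d(t)$ coming from the factor $a'(\ve\rho)$. Two small corrections to your sketch: (i) the ``dangerous'' cross term $3R\cdot 2\ve^3 d A_c B_c$ is in fact harmless without any cutoff, since $R=Q_c/\tilde a$ already decays exponentially in $y$ and kills the linear growth of $B_c$; the localization in this lemma comes from either $Q_c$, $A_c'$, or the factor $a'(\ve x)$, not from $\eta_\ve$ (which is only applied afterwards in Proposition~\ref{CV}); (ii) your coefficient bookkeeping for the leading $\ve^2$ term should give $3\ve^2\,\tilde a(\ve\rho)\,d^2(Q_cA_c^2)' = 3\ve^2\,(a')^2 a^{-5/2}(Q_cA_c^2)'$, consistent with how the term reappears in \eqref{F2}; the statement of the lemma carries a minor misprint in that factor.
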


\begin{proof}
First of all, define ${\bf \tilde{III} } := a(\ve x)[ (R+ w)^m - R^m - m R^{m-1} w]$. We consider separate cases.

Let us suppose $m=2$ or $4$. In these cases, we have $w(t)=d(t) A_c(y)$. Therefore, 
$$
{\bf \tilde{III} }  = 
\begin{cases}
 \ve^2  d^2(t)a(\ve x) A_c^2 \quad \hbox{ if } m=2;\\
\ve^2 a(\ve x)d^2(t) A_c^2[6   Q_c^2 + 4\ve  d(t) Q_c A_c + \ve^2 d^2(t) A_c^2], \quad \hbox{ in the case }  m=4.
\end{cases} 
$$ 
Thus taking space derivative we obtain
\bee
{\bf {III} } &  = &  \ve^{m+1} a'(\ve x)  d^m(t) A_c^m  + O_{H^1(\R)}(\ve^2 e^{-\ve\ga|\rho(t)|}) \\
& =&  O_{H^1(\R)}(\ve^{m+\frac 12} e^{-\ve\ga|\rho(t)|}  + \ve^2 e^{-\ve\ga|\rho(t)|} ) = O_{H^1(\R)}(\ve^2 e^{-\ve\ga|\rho(t)|}).
\eee
Note that $ (A_c^m)'\in \mathcal Y$ because $A_c$ satisfies (\ref{Ac}).

\smallskip

Suppose now $m=3$. We have $w(t,x)= \ve d(t) A_c(y) + \ve^2 B_c(t,y)$, and
${\bf \tilde{III} }  =    a(\ve x) [ 3 Q_c  w^2  + w^3].$
From this identity we get
\bee
{\bf III }  & = & 3\ve^2 a(\ve \rho) d^2(t)(Q_c A_c^2)' +   \ve a'(\ve x) w^3 + 3 a(\ve x)w^2 w_x  + O_{H^1(\R)} (\ve^3 e^{-\ve\ga |\rho(t)|})\\
& =&  3\ve^2 a(\ve \rho)d^2(t) (Q_c A_c^2)' +  \ve^4 a'(\ve x) ( d(t) A_c +\ve B_c)^3  \\
& & \quad +\ 3 \ve^3 a(\ve x)(d(t) A_c +\ve B_c )^2 (d(t) A_c' + \ve B_c') + O_{H^1(\R)} (\ve^3 e^{-\ve\ga |\rho(t)|}).
\eee
The first term above is of second order, so we keep it. The second term in the last identity is in $H^1(\R)$ and it can be estimated as follows:
\bee
& & \ve^4 a'(\ve x) ( d(t) A_c +\ve B_c)^3= \\
& & \qquad =\ve^4 a'(\ve x)(d^3A_c^3 + 3\ve d^2 A_c^2 B_c + 3\ve^2 dA_c B_c^2 + \ve^3 B_c^3) \\
& &  \qquad =  O_{H^1(\R)} (\ve^{7/2} e^{-\ve\ga |\rho(t)|}) + O_{L^\infty(\R)} (\ve^5 a'(\ve x)(|y| + \ve y^2 + \ve^2|y|^3)e^{-\ve\ga |\rho(t)|}) \\
 & & \qquad = O_{H^1(\R)} (\ve^{7/2} e^{-\ve\ga |\rho(t)|})  + O_{H^1(\R)} (\ve^{9/2} (|\rho(t)| + \ve |\rho(t)|^2 + \ve^2|\rho(t)|^3)e^{-\ve\ga |\rho(t)|}).
\eee
Since we assume (\ref{r1}), we have $|\rho(t)| \leq KT_\ve$ inside the interval $[-T_\ve, \tilde T_\ve]$, which gives
$$
 \ve^4 a'(\ve x) ( d(t) A_c +\ve B_c)^3  = O_{H^1(\R)} (\ve^{7/2-3/100}e^{-\ve\ga |\rho(t)|}).
$$
Finally,
\bee
3 \ve^3 a(\ve x)(d(t) A_c +\ve B_c )^2 (d(t) A_c' + \ve B_c')   =   3 \ve^4 a(\ve x)(d(t) A_c +\ve B_c )^2  B_c'  +  O_{H^1(\R)} (\ve^3 e^{-\ve\ga |\rho(t)|}).
\eee
Collecting all these estimates, we finally obtain (\ref{33}).
\end{proof}

\medskip

\noindent
{\bf Step 4. First conclusion.}
Now we collect the estimates from Lemmas \ref{lem:SQ}, \ref{lem:dSKdVw} and \ref{lem:SintIII}. We obtain that, for all $t\in [-T_\ve, \tilde T_\ve]$,
\bea\label{Stt}
S[\tilde u] & = &   (c'(t) - \ve f_1(t) -\ve^2 \delta_{m,3} f_3(t))\partial_c\tilde u  \nonu\\
& & +\ (\rho'(t) -c(t)+ \la -  \ve f_2(t) -\ve^2 \delta_{m,3}f_4(t)) \partial_\rho \tilde u + \tilde S[\tilde u], 
\eea
with $\partial_\rho \tilde u := \partial_\rho R -w_y $,
\bea\label{tStu}
& & \tilde S[\tilde u]   =    \ve [F_1(t,y) -d(t)(\mathcal L A_{c})_y]   +  O(\ve^2 |\rho'(t) -c(t)+ \la -  \ve f_2(t) |e^{-\ve \ga |\rho(t)|} |A_c| )\nonu \\
& & \quad + \ \ve^2 \big[ (\frac {a'}{\tilde a^m})' (\ve \rho)(c-\la) A_{c} +  f_1 d \ \partial_c A_{c}  \big] +   \ve^2O_{H^1(\R)}(e^{-\ve \ga |\rho(t)|} +\ve),  \label{a10}
\eea
for the cases $m=2$ and $4$; and for the cubic case,
\bea\label{tStu3}
 \tilde S[\tilde u] & = &    \ve [F_1(t,y) -d(t)(\mathcal L A_{c})_y]  + \ve^2 [ F_2(t,y) - (\mathcal L B_{c})_y]\nonu\\
& & \quad + \ \ve^3 \big[ d(t)f_3(t)  \partial_c A_c +f_1(t) \partial_c B_c -f_4(t) (B_c)_y \big] \label{a11} \\
& & \quad +\  \ve^4 \big[ f_4(t) \partial_c B_c +  3  a(\ve x)(d(t) A_c +\ve B_c )^2  B_c' \big] +  \ve^3 O_{H^1(\R)} (e^{-\ve\ga |\rho(t)|} +\ve).  \label{a12}
\eea
In addition, $f_1 (t), f_2(t), f_3(t)$ and $f_4(t)$ are given by (\ref{f1}), (\ref{f2}), (\ref{f3}) and (\ref{f4}) respectively,
\be\label{F1}
F_1  := F_1^{\bf I} =  \frac{f_1(t)}{\tilde a(\ve \rho)} \Lambda Q_c +\frac{a'}{\tilde a^m} \big[ (yQ_c^m)_y- \frac 1{m-1} (c-\la)Q_c  \big] -  \frac{f_2(t)}{\tilde a(\ve \rho)} Q_c' ,
\ee
(cf. (\ref{F1Q}).) Moreover, for any $t\in [-T_\ve, \tilde T_\ve]$ one has
\be\label{Or}
\int_\R F_1(t, y) Q_c(y) dy= 0.
\ee
(See \cite{Mu2} for a proof of this identity.) On the other hand, 
$F_2$ is given by  
$$
F_2 := \tilde F_2 + O(|\rho'-c +\la -\ve^3 f_4 | (\frac{a'}{a^{3/2}})'(\ve \rho) A_c),
$$
with
\bea\label{F2}
\tilde F_2 & := & (\frac{a'}{a^{3/2}})'(\ve \rho) (c-\la) A_c + f_1(t) \frac{a'}{a^{3/2}}\partial_c A_c + 3 \frac{a'^2}{a^{5/2}} (yQ_c^2A_c)_y + \frac{f_3(t)}{a^{1/2}} \Lambda Q_c \nonu \\
& & \qquad   +\frac{a''}{2a^{3/2}} (y^2 Q_c^3)_y-\frac{f_4(t)}{a^{1/2}}Q_c' + 3\frac{a'^2}{a^{5/2}} (Q_cA_c^2)_y,
\eea
and $|\tilde F_2(t,y)| \leq K e^{-\ve\ga |\rho(t)|}$.
Finally, one has, with the choice of $f_3(t)$ in (\ref{f3}),
\be\label{Or2}
\int_\R \tilde F_2(t, y) Q_c(y) dy= 0,
\ee
for all time $t\in [-T_\ve, \tilde T_\ve]$. (cf. (\ref{OR}) below for the proof.)

\bigskip

\noindent
{\bf Step 5. Resolution of the first linear problem.}

The next step is the resolution of the linear differential equation involving the first order terms in $\ve$. Indeed, from (\ref{tStu})-(\ref{tStu3}), we want to solve
\be\label{Omegaa}
 d(t)(\mathcal L A_{c})_y (y) = F_1(t, y),  \quad \hbox{ for all  } y\in \R, \; \hbox{ and }  \; t\in [-T_\ve, \tilde T_\ve] \hbox{ fixed;}
\ee
with $d(t)$ given by (\ref{dd}). We start with an important remark.

\begin{rem}[Simplified expression for $F_1$]\label{F1simpli}
Note that from (\ref{f1}), (\ref{f2}) and (\ref{F1}) one has
\bea\label{F1new}
F_1(t; y) &  := & \frac{a'}{\tilde a^m}  \Big[ p  c (c -  \frac \la{\la_0 } ) \Lambda Q_c   - \frac 1{m-1} (c-\la)Q_c +(yQ_c^m)' \Big] - \frac{f_2(t)}{\tilde a } Q_c', \nonumber \\
& = & \frac{a'}{\tilde a^m}  \Big[ p  c^2 \Lambda Q_c   - \frac c{m-1} Q_c +(yQ_c^m)'  - 3\la_0 \xi_m \sqrt{c} Q_c' \Big]  \nonumber \\
& &\qquad + \ \la \frac{a'}{\tilde a^m}  \Big[ - \frac {4c}{5-m} \Lambda Q_c   + \frac 1{m-1} Q_c  + \frac{\xi_m}{\sqrt{c}} Q_c' \Big]  \nonu \\
& = :& d(t)( \tilde F_1(t,y) + \la \hat F_1(t,y)).
\eea
\medskip

Compared with the former term $F_1$ described in \cite{Mu2}, now $F_1$ possesses an additional, odd component given by $-\frac{f_2(t)}{\tilde a} Q_c'$, which is orthogonal to $Q_c$ in $L^2(\R)$. The purpose of this term is to obtain a unique solution $A_c$ satisfying the {\bf additional orthogonality condition} $\int_\R A_c Q_c =0.$ Moreover, since $f_2 \equiv 0$ for the cubic case, it will imply that our solution $A_c$ satisfies in this case, this condition for free. 
\end{rem}
From the above remark, we are reduced to solve the following simple problem,
$$
(\mathcal L A_{c})_y (y) =  \tilde F_1(t,y) + \la \hat F_1(t,y),
$$
with $\tilde F_1$ and $\hat F_1$ defined in (\ref{F1new}), and from (\ref{Or}),
$$
\int_\R (\tilde F_1(t,y) + \la \hat F_1(t,y))Q_c(y) =0.
$$

\medskip

Now we introduce the following function, with the purpose of describing the effect of \emph{potential} on the solution. Let $c>0$ and
\be\label{varfi}
\varphi(x):=-\frac {Q'(x)}{Q(x)}, \qquad  \varphi_c (x) := -\frac{Q_c'}{Q_c} = \sqrt{c} \varphi(\sqrt{c} x).
\ee
Note that $\varphi$ is an odd function, and satisfies (see \cite{MMcol2} for more details)
\be\label{surphi}
\lim_{x\to \pm \infty} \varphi(x)=\pm 1; \quad \varphi^{(k)} \in \mathcal{Y}, \; k\geq 1.
\ee

\medskip

We recall the form of the solution $A_c$ that we are looking for. In addition to the simple structure required in \cite{Mu2}, we seek for a bounded solution satisfying
\be\label{eq:st}
A_{c(t)}(y) = \beta_c(t) (\varphi_c(y) - \sqrt{c(t)}) + \hat A_c(y) + \mu_c(t) Q_c'(y) + \delta_c(t) \Lambda Q_c(y),
\ee
for some $\beta_c(t), \mu_c(t),  \delta_c(t) \in \R$, $\varphi$ defined in (\ref{varfi}), and $ \hat A_c\in \mathcal Y$. The parameters $\mu_c$, $\delta_c$ will be chosen in order to find the \emph{unique} solution $A_c$ satisfying some orthogonality conditions. This last fact is one of the key new ingredients for the proof of our main result.

\medskip

\begin{lem}[Solvability of system (\ref{Omegaa}), improved version]\label{lem:omega}~

Suppose $0\leq \la <1$, $\la\neq \tilde \la$, $(c, \rho)$ given by (\ref{c}), and $f_1(t)$, $f_2(t)$ given by (\ref{f1}) and (\ref{f2}) respectively. There exists a solution $A_{c}= A_{c}(y)$ of
\be\label{A10}
(\mathcal{L}A_{c(t)} )_y(y) = \tilde F_1(t,y) + \la \hat F_1(t,y), 
\ee
satisfying, for every $t\in [-T_\ve, \tilde T_\ve]$,  
\bea\label{Acy}
& & A_{c} (y) :=  \beta_c (\varphi_c(y) -\sqrt{c}) + \hat A_{c}(y) + \mu_c Q_c' +  \delta_c \Lambda Q_c(y), \\
& & \lim_{-\infty} A_c = -2\sqrt{c} \beta_c; \quad  |A_{c}(y)|\leq K e^{-\ga y}, \; \hbox{ as } y\to +\infty,   \label{LI}
\eea
with $\hat A_{c} \in \mathcal Y$.  In addition, we have
\be\label{constants}
\beta_c(t) := \frac 1{2c^{3/2}} \int_\R ( \tilde F_1 +  \la \hat F_1)(t) \neq 0.
\ee
Moreover, $A_{c}$ satisfies
\be\label{Or3}
\int_\R A_{c} Q_c =\int_\R A_{c} y Q_c =0.
\ee
\end{lem}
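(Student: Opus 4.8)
The strategy is to solve the scalar linear equation $(\mathcal L A_c)_y = \tilde F_1 + \la \hat F_1$ by first integrating once in $y$ and then inverting $\mathcal L$, using the spectral properties of $\mathcal L$ recorded in Lemma \ref{surL}, and finally adjusting the two free parameters $\mu_c$ and $\delta_c$ in the ansatz \eqref{eq:st} to enforce the two orthogonality conditions $\int_\R A_c Q_c = \int_\R A_c y Q_c = 0$. Concretely, I would proceed as follows. First, set $G(t,y) := \int_{-\infty}^y (\tilde F_1 + \la \hat F_1)(t,s)\,ds$. Since each building block of $\tilde F_1, \hat F_1$ is a derivative of a Schwartz function plus a multiple of $\Lambda Q_c$ (which decays polynomially times exponentially) and, crucially, of $Q_c$ or $Q_c'$, and since by \eqref{Or} the total $Q_c$-mass of $F_1$ vanishes, the primitive $G$ will have a finite limit $G(t,-\infty)=0$ and a finite limit as $y\to+\infty$; in fact $\tilde F_1 + \la \hat F_1$ is a sum of an exact derivative of a $\mathcal Y$-function and a term proportional to $Q_c$, whose primitive does \emph{not} decay. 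This is exactly the mechanism producing the non-decaying tail $-\frac{Q_c'}{Q_c}$-component (cf. \eqref{varfi}): recall $(\varphi_c)_y = \varphi_c^2 - c + Q_c^{m-1}$ and $(\mathcal L(\varphi_c - \sqrt c))_y$ absorbs precisely the $Q_c$-proportional piece of the right-hand side, leaving a remainder that lies in $\mathcal Y$ (or at least is integrable with exponential decay as $y\to+\infty$). One then computes the constant $\beta_c$ explicitly by matching the coefficient of $Q_c$ on both sides; a short computation with the explicit expressions \eqref{F1new} for $\tilde F_1, \hat F_1$ gives $\beta_c(t) = \frac1{2c^{3/2}}\int_\R(\tilde F_1 + \la\hat F_1)(t)$, and one must check $\beta_c \neq 0$, which follows from the explicit value of this integral (it is a nonzero multiple of $M[Q]$ times a function of $c,\la$ that does not vanish for $\la_0 < \la < 1$ in the relevant range, hence $\la \neq \tilde\la$).

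Next, having peeled off the $\beta_c(\varphi_c - \sqrt c)$ part, I would write the remaining equation as $(\mathcal L(\hat A_c + \mu_c Q_c' + \delta_c \Lambda Q_c))_y = \tilde F_1 + \la\hat F_1 - \beta_c (\mathcal L(\varphi_c - \sqrt c))_y =: H(t,y)$, where now $H \in \mathcal Y$ by construction, and $\int_\R H\,Q_c = 0$ remains true (since we only subtracted terms with the right $Q_c$-balance, using $\int_\R Q_c'\,Q_c = 0$ and $\mathcal L Q_c' = 0$). Integrate once: $\mathcal L(\hat A_c + \mu_c Q_c' + \delta_c \Lambda Q_c) = \int_{-\infty}^y H(t,s)\,ds + \text{const}$. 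Because $\mathcal L Q_c' = 0$ and $\mathcal L \Lambda Q_c = -Q_c$ (Lemma \ref{surL}(2)), the $\mu_c$ term drops out of $\mathcal L$ and the $\delta_c$ term contributes $-\delta_c Q_c$; choosing the integration constant and $\delta_c$ so that the right-hand side is orthogonal to $Q_c'$ (the kernel of $\mathcal L$), Lemma \ref{surL}(3) produces a unique polynomially-growing — in fact, by regularity item (4), Schwartz once we know the source is Schwartz — preimage $\hat A_c$ with $\int_\R \hat A_c Q_c' = 0$. The parameters $\mu_c, \delta_c$ are then the \emph{two} remaining degrees of freedom, and I would fix them by solving the $2\times 2$ linear system expressing $\int_\R A_c Q_c = 0$ and $\int_\R A_c y Q_c = 0$; the relevant $2\times2$ matrix has entries $\int Q_c^2$, $\int y Q_c Q_c'$, $\int y Q_c Q_c$, $\int y Q_c \Lambda Q_c$, etc., and is invertible by parity (the off-diagonal blocks vanish or the Gram-type determinant is nonzero by the non-degeneracy of $Q_c, yQ_c$), so the system has a unique solution. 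Finally, the asymptotics \eqref{LI} follow: as $y\to +\infty$ everything in $\hat A_c, Q_c', \Lambda Q_c$ decays exponentially and $\varphi_c(y) - \sqrt c \to 0$ exponentially (by \eqref{surphi}), so $A_c$ decays like $e^{-\ga y}$; as $y \to -\infty$, $\varphi_c(y) \to -\sqrt c$ so $A_c(y) \to -2\sqrt c\,\beta_c$, while all other terms decay or grow only polynomially-times-exponentially but are dominated, giving $\lim_{-\infty} A_c = -2\sqrt c\,\beta_c \neq 0$.

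The main obstacle I anticipate is \textbf{not} the inversion of $\mathcal L$ (that is routine given Lemma \ref{surL}) but rather the bookkeeping needed to verify two things simultaneously: (i) that after subtracting the $\beta_c(\varphi_c - \sqrt c)$ term the residual source $H$ is genuinely in $\mathcal Y$ with the correct exponential decay \emph{and} still satisfies $\int_\R H Q_c = 0$, which requires carefully tracking the $Q_c$- and $Q_c'$-proportional pieces of $\tilde F_1 + \la \hat F_1$ through the identity for $(\mathcal L(\varphi_c-\sqrt c))_y$; and (ii) that the $2\times 2$ system for $(\mu_c, \delta_c)$ is solvable, i.e. that the specific choice of the odd term $-\frac{f_2(t)}{\tilde a}Q_c'$ in $F_1$ (Remark \ref{F1simpli}) is exactly what makes the extra orthogonality condition $\int_\R A_c Q_c = 0$ compatible — in the cubic case $f_2 \equiv 0$ and one must check the condition holds automatically by parity, whereas for $m=2,4$ the nonzero $f_2$ is precisely calibrated so the system remains consistent. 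The value of $\xi_m$ in \eqref{f2} should drop out of this consistency computation, which is the sanity check that the whole construction is coherent; I would verify this last point by an explicit computation of $\int_\R A_c Q_c$ in terms of the already-fixed $\beta_c, \delta_c$ and the known moments of $Q_c$, and read off $\mu_c$. Details of these moment computations I would relegate to a short sub-step, citing the analogous computation in \cite{Mu2} for the pieces that are unchanged.
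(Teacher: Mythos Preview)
Your overall strategy --- peel off the non-localized tail $\beta_c(\varphi_c-\sqrt c)$, integrate once, invert $\mathcal L$ on the localized remainder via Lemma~\ref{surL}(3), then adjust free parameters to enforce the two orthogonality relations --- is the right one, and is essentially what the paper does. But there is a genuine conceptual slip in your bookkeeping of the degrees of freedom, and the paper's proof uses a trick that circumvents precisely the computation you flag as the main obstacle.

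\medskip

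\textbf{The degree-of-freedom issue.} Once the source $\tilde F_1+\la\hat F_1$ (hence $f_2$) is fixed, the general bounded solution of $(\mathcal L A_c)_y=\tilde F_1+\la\hat F_1$ with $\lim_{+\infty}A_c=0$ is determined up to a multiple of $Q_c'$ only; $\delta_c$ is \emph{not} an independent free parameter. Indeed, if you change $\delta_c$ by $\epsilon$ in the ansatz \eqref{Acy}, then to keep the equation satisfied $\hat A_c$ must change by $-\epsilon\Lambda Q_c$ (plus kernel), so $A_c$ itself is unchanged. Consequently your ``$2\times2$ system for $(\mu_c,\delta_c)$'' is ill-posed as written: the quantity $\int_\R A_cQ_c$ is independent of both $\mu_c$ (since $\int Q_c'Q_c=0$) and of the fictitious $\delta_c$, and is a fixed number depending only on the source. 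The correct statement is that $\int_\R A_cQ_c=0$ is a \emph{compatibility condition on $f_2$}, satisfied precisely when $f_2$ is given by \eqref{f2}; then $\mu_c$ alone is fixed by $\int_\R A_c\,yQ_c=0$. (A smaller point: the orthogonality to $Q_c'$ needed to invert $\mathcal L$ is automatic from \eqref{Or} by one integration by parts, not something achieved by choosing the constant or $\delta_c$.)

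\medskip

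\textbf{The device you are missing.} The paper never computes $\int_\R\hat A_c Q_c$ directly. Instead it tests \eqref{A10} against the antiderivative $\int_{-\infty}^y\Lambda Q_c$ and integrates by parts, using $\mathcal L\Lambda Q_c=-Q_c$ and the vanishing of the boundary terms, to obtain
\[
\int_\R A_cQ_c \;=\; -\int_\R(\mathcal L A_c)\Lambda Q_c \;=\; \int_\R(\tilde F_1+\la\hat F_1)\int_{-\infty}^y\Lambda Q_c.
\]
The right-hand side is now an explicit integral of the source against a known weight; setting it to zero yields the value \eqref{f2} of $f_2$ by a direct computation with the identities of Appendix~\ref{AidQ}. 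This is the efficient route that avoids any knowledge of the localized inverse $\hat A_c$. Likewise, the formula \eqref{constants} for $\beta_c$ comes simply from integrating \eqref{A10} over $\R$ and using $\mathcal L A_c(\pm\infty)=c\,A_c(\pm\infty)$, rather than by ``matching the coefficient of $Q_c$''.
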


\begin{proof}
First of all, note that from Remark \ref{F1simpli}, we have used the explicit value of $f_1(t)$ and $f_2(t)$ to obtain the simplified linear problem (\ref{A10}). Next, the existence of a solution $A_c \in L^\infty(\R)$ of the form (\ref{Acy}) for this equation was established in \cite{Mu2}, provided 
$$
\int_\R (\tilde F_1(t , y) +\la F_2(t,y))Q_c =0,
$$
which is indeed the case (cf. (\ref{Or}) and Lemma \ref{surL}). The novelty now is the inclusion of the term proportional to $f_2(t)Q_c'$ in (\ref{F1}), which induces the new term $\delta_c \Lambda Q_c$ in (\ref{Acy}) (Note that from Lemma \ref{surL}  $(\mathcal L \Lambda Q_c)' =-Q_c' $.) Furthermore, the limits in (\ref{LI}) are straightforward from (\ref{surphi}).

\medskip

On the other hand, we choose the terms $\mu_c$ and $\delta_c$ in order to satisfy (\ref{Or3}). Since we do not know explicitly $A_c$, we need another method to compute explicitly $f_2(t)$ (and therefore, $\delta_c(t)$.)  Indeed, multiplying (\ref{A10}) by $\int_{-\infty}^y \Lambda Q_c$ and integrating, one has
\be\label{Beg}
\int_\R (\mathcal L A_c)_y \int_{-\infty}^{y} \Lambda Q_c = \int_\R (\tilde F_1 +\la F_2) \int_{-\infty}^{y} \Lambda Q_c .
\ee
Integrating by parts, we get
$$
(\mathcal L A_c) \int_{-\infty}^{y} \Lambda Q_c \Big|_{-\infty}^{+\infty} +  \int_\R (\mathcal L A_c)_y \int_{-\infty}^{y} \Lambda Q_c = -\int_\R \Lambda Q_c \mathcal L A_c  =\int_\R Q_c A_c =0.
$$
Using (\ref{Ac}), we have $(\mathcal L A_c) \int_{-\infty}^{y} \Lambda Q_c \Big|_{-\infty}^{+\infty} =0$. Therefore, from (\ref{F1new}), 
$$
-f_2 \int_\R Q_c \Lambda Q_c = \frac{a'}{a}\int_\R \Big[ p  c (c -  \frac \la{\la_0 } ) \Lambda Q_c   - \frac 1{m-1} (c-\la)Q_c +(yQ_c^m)' \Big]\int_{-\infty}^y \Lambda Q_c.
$$
A simple computation, using Lemma \ref{IdQ}, gives us
\bee
- \theta  f_2 c^{2\theta -1} \int_\R Q^2  & = & \frac{a'}{2a} \Big[  p  c (c -  \frac \la{\la_0 } ) \int_\R \Lambda Q_c   - \frac 1{m-1} (c-\la) \int_\R  Q_c \Big]\int_\R \Lambda Q_c \\
&  =&  \frac{a'}{2a c} \Big[  p   (c -  \frac \la{\la_0 } ) (\theta -\frac 14)    - \frac 1{m-1} (c-\la) \Big](\theta -\frac 14) c^{2\theta -\frac 12} (\int_\R  Q)^2.
\eee
Using that $p=\frac 4{m+3}$, $\la_0 = \frac{5-m}{m+3}$ and $\theta = \frac1{m-1}-\frac 14$, we finally obtain
\bee
f_2(t) & =& \frac{3-m}{5-m}(\frac{3c}{m+3} -\frac{\la}{5-m})\frac{a'(\ve \rho)}{\sqrt{c}a(\ve \rho)} \frac{(\int_\R Q)^2}{\int_\R Q^2} \\
& = & \frac{3-m}{(5-m)^2}(3\la_0 c -\la)\frac{a'(\ve \rho)}{\sqrt{c}a(\ve \rho)} \frac{(\int_\R Q)^2}{\int_\R Q^2},
\eee
as desired (cf. (\ref{f2}).)
\medskip

Now, let us prove (\ref{constants}). Indeed, from (\ref{A10}), integrating over $\R$ and using (\ref{LI}), we get
\be\label{Acminf}
2\beta_c c \sqrt{c}   = c A_c(-\infty)= \mathcal L A_c(+\infty) -\mathcal L A_c(-\infty)   = \int_\R(\tilde F_1 +\la F_2),
\ee
which gives the value of $\beta_c$.

\medskip

Let us now describe the dependence in $c$ of the solution $A_c$. From (\ref{F1new}) (see also Lemma 4.5 in \cite{Mu2}), one has
$$
\tilde F_1 (t,y) + \la \hat F_1(t,y) = c^{1/(m-1)+1}  \tilde F_1^0(\sqrt{c}y) +\la c^{1/(m-1)} \hat F^0_1(\sqrt{c}y), 
$$
where
\bee
\tilde F^0_1(x) & := & p  \Lambda Q   - \frac 1{m-1} Q +(yQ^m)'  - 3\la_0 \xi_m  Q', \\
\hat F^0_1(x)&  := &  - \frac {4}{5-m} \Lambda Q   + \frac 1{m-1} Q  + \xi_m Q'.
\eee
Moreover, Claim 3 in \cite{Mu2} allows to conclude that $A_c$ satisfies the following decomposition:
\be\label{ScaA}
A_c(y) = c^{1/(m-1) -3/2} [ c \tilde A^0(\sqrt{c}y) +  \la  \hat A^0(\sqrt{c}y)], 
\ee
with $ \tilde A^0,  \hat A^0$ bounded solutions of $(\mathcal L \tilde A^0)' =  \tilde F_1^0$ and $(\mathcal L \hat A^0)' =  \hat F_1^0$, respectively. Moreover, one has $ (\tilde A^0)',  (\hat A^0)' \in \mathcal Y$. Using this decomposition we have
$$
\partial_c A_c  = (\frac 1{m-1} -\frac 32) \frac 1c A_c  + \frac 1{2c} yA_c' + c^{1/(m-1) -3/2}  \tilde A^0(\sqrt{c}y).
$$
From this identity we see that $\partial_c A_c$ has the same behavior as $A_c$: it is bounded, it is not $L^2$-integrable, and satisfies $\lim_{+\infty} \partial_c A_c =0$, $\lim_{-\infty} \partial_c A_c \neq0$. The same result holds for $\partial_c^2 A_c$. 
\end{proof}

\medskip

\begin{rem}[Cubic case]
In the special case $m=3$, the algebra of  functions involved in the linear problem (\ref{A10}) is well understood, and it can be computed explicitly. Indeed, from (\ref{ScaA}) one has 
$$
A_c(y) = \tilde A^0(\sqrt{c} y) + \frac{c}{\la} \hat A^0(\sqrt{c} y),
$$
with
$$
\tilde A^0(s) := \frac 12 (1-Q^2)\int_s^{+\infty} \!\!Q -\frac 1{12}y^2Q' -\frac 23 yQ +Q'\ln Q + \tilde \mu_0 Q',  
$$
and
$$
\hat A^0(s) := -\frac 12 (1-Q^2) \int_s^{+\infty} \!\! Q  + \frac 1{4}y^2Q' + \frac 12 yQ +Q'\ln Q + \hat \mu_0 Q'.  
$$
See Appendix \ref{AidQ} for the main ingredients of the proof of this result.  In particular, we have $\lim_{-\infty}A_c = \frac 12  (1-\frac \la c) \int_\R Q$, which is different from zero provided $c(t) \neq \la$. Finally, the constants $\tilde \mu_0$ and $\hat \mu_0$ are chosen such that 
$$
\int_\R yQ_c A_c(y) =0.
$$
\end{rem}

\medskip

\noindent
{\bf Step 6. Cubic case. resolution of a second linear system.} Since $f_2(t)\equiv 0$ in the case $m=3$ (cf. (\ref{f2})), we need to go beyond in our computations and solve a new linear system, in order to find a formal defect in the solution. From (\ref{tStu}), one has to consider a linear problem  for the unknown function $B_c(t,\cdot)$, with fixed time $t$, and with source term \emph{\bf non localized}. The next result gives the existence of such a second order correction term.

\medskip

\begin{lem}[Existence of a second order correction term]\label{2sys}~

Let $f_3(t), f_4(t)$ be given by (\ref{f3})-(\ref{f4}), and consider $\tilde F_2$ as in (\ref{F2}). For each fixed time $t\in [-T_\ve, \tilde T_\ve]$, there exists a unique solution $B_c(t,\cdot)$ of
\be\label{O2}
(\mathcal L B_c)_y = \tilde F_2(t,y),
\ee
satisfying, for all $t\in [-T_\ve,\tilde  T_\ve]$,
\be\label{O33}
\int_\R Q_c B_c = \int_\R yQ_c B_c=0.
\ee
In addition, one has, for some $\ga>0$ independent of $\ve$,
\be\label{limB}
\begin{cases}
 |B_c(t,y)|+|\partial_cB_c(t,y)| \leq K e^{-\ga y} e^{-\ve\ga |\rho(t)|}, \quad \hbox{ as } \; y\to +\infty, \\
 |B_c(t,y)|+|\partial_cB_c(t,y)| \leq K|y| e^{-\ve\ga |\rho(t)|}, \quad \hbox{ as } \; y\to -\infty.
\end{cases}
\ee
\end{lem}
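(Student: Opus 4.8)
The plan is to solve the inhomogeneous linear ODE $(\mathcal L B_c)_y = \tilde F_2(t,y)$ for fixed $t$ by reducing it to $\mathcal L B_c = \partial_x^{-1}\tilde F_2 + \text{const}$, exactly as was done in Step 5 for $A_c$, and then fixing the constants of integration and the kernel components to enforce the two orthogonality conditions in \eqref{O33}. First I would observe that from \eqref{Or2} one has $\int_\R \tilde F_2(t,y)Q_c(y)\,dy = 0$, which is precisely the solvability condition needed to invert $\mathcal L$ on the orthogonal complement of $Q_c'$ via part (3) of Lemma \ref{surL}. However, $\tilde F_2$ itself is not localized — inspecting \eqref{F2} shows it contains the term $(\frac{a'}{a^{3/2}})'(\ve\rho)(c-\la)A_c$, and $A_c$ tends to a nonzero constant $-2\sqrt c\,\beta_c$ as $y\to -\infty$ (cf. \eqref{LI}) — so before inverting $\mathcal L$ we must split off the non-decaying part. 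I would write $\tilde F_2 = \tilde F_2^{\mathrm{loc}} + \kappa_c(t)\bigl(\text{non-localized piece}\bigr)$ where the non-localized piece is handled by a $\varphi_c$-type ansatz just as in \eqref{eq:st}, and where $\kappa_c(t) = O(e^{-\ve\ga|\rho(t)|})$ because $(\frac{a'}{a^{3/2}})'(\ve\rho)$ inherits the exponential decay of $a'$ through \eqref{ahyp} and \eqref{Est1}.

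The key steps, in order: (i) decompose $\tilde F_2$ into an $\mathcal Y$-part plus a bounded non-decaying part, tracking the $e^{-\ve\ga|\rho(t)|}$ prefactor on every term (this prefactor is uniform because every term in \eqref{F2} carries at least one factor $a'$, $a''$, or $(\frac{a'}{a^{3/2}})'$ evaluated at $\ve\rho(t)$); (ii) make the ansatz $B_c(t,y) = b_c(t)(\varphi_c(y)-\sqrt c) + \hat B_c(t,y) + \mu_c^B(t)Q_c'(y) + \delta_c^B(t)\Lambda Q_c(y)$ paralleling \eqref{eq:st}, with $\hat B_c(t,\cdot)\in\mathcal Y$, and determine $b_c(t)$ by integrating \eqref{O2} over $\R$ and using the limits of $\varphi_c$, i.e. $2 b_c c\sqrt c = \int_\R \tilde F_2(t,y)\,dy$, which gives $|b_c(t)|\le K e^{-\ve\ga|\rho(t)|}$; (iii) apply Lemma \ref{surL}(3) to solve $\mathcal L \hat B_c = (\text{localized source})$ with $\int_\R \hat B_c Q_c' = 0$, noting the source is polynomially-bounded-times-exponential so $\hat B_c\in\mathcal Y$ by Lemma \ref{surL}(4); (iv) choose $\mu_c^B(t)$ and $\delta_c^B(t)$ to enforce $\int_\R Q_c B_c = \int_\R yQ_c B_c = 0$ — this is a $2\times2$ linear system whose matrix is, up to the scaling of $\Lambda Q_c$ and $Q_c'$ against $Q_c$ and $yQ_c$, nondegenerate by the standard nondegeneracy of $\mathcal L$ (the same computation used for $A_c$ in \eqref{Or3}); (v) read off \eqref{limB} from the decay of $\varphi_c - \sqrt c$ (which decays like $e^{-\ga y}$ as $y\to+\infty$ and is linear as $y\to -\infty$ after subtracting the constant, contributing the $|y|$ growth), from $\hat B_c\in\mathcal Y$, and from the uniform $e^{-\ve\ga|\rho(t)|}$ factor; the estimate on $\partial_c B_c$ follows by differentiating the scaling identity $B_c(y) = c^{\alpha}[\ldots(\sqrt c y)]$ in $c$, exactly as for $\partial_c A_c$ at the end of the proof of Lemma \ref{lem:omega}.

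For uniqueness: any two solutions of \eqref{O2}--\eqref{O33} differ by an element of $\ker(\partial_y\circ\mathcal L)$ that is orthogonal to both $Q_c$ and $yQ_c$ and has the prescribed decay; since $\partial_y w = 0$ forces $w$ constant and $\mathcal L(\text{const})$ is not generally zero, while the relevant solution space of $\mathcal L v = \text{const}$ with the growth constraints is spanned by $Q_c'$, $\Lambda Q_c$, and $\varphi_c - \sqrt c$, the two orthogonality conditions plus the decay normalization pin down the coefficients — so the difference vanishes. I expect the main obstacle to be step (iv) together with the bookkeeping in step (i): one must verify that the same $2\times 2$ Gram-type matrix that was (implicitly) invertible for $A_c$ is still invertible here — which it is, since it only involves $\mathcal L$, $Q_c$, $yQ_c$, $Q_c'$, $\Lambda Q_c$ and not the source — and, more delicately, that the nonlocalized component $b_c(t)(\varphi_c-\sqrt c)$ produced by the $A_c$-term inside $\tilde F_2$ does not reintroduce a worse-than-$e^{-\ve\ga|\rho(t)|}$ contribution; this is where the explicit decay hypotheses \eqref{ahyp} on $a'$, $a''$ are essential. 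The cubic-specific explicit formulas from the Remark after Lemma \ref{lem:omega} can be used to double-check the computation but are not needed for the existence argument.
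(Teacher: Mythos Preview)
Your overall strategy is right in spirit, but the ansatz in step~(ii) has a structural gap. Look again at $\tilde F_2$ in \eqref{F2}: it contains the terms $(\tfrac{a'}{a^{3/2}})'(\ve\rho)(c-\la)A_c$ and $f_1\tfrac{a'}{a^{3/2}}\partial_c A_c$, and both $A_c$ and $\partial_c A_c$ tend to nonzero constants as $y\to -\infty$ (cf.\ \eqref{LI} and the discussion after \eqref{ScaA}). Hence $\tilde F_2$ is not integrable on $\R$---your formula $2b_c c\sqrt c=\int_\R\tilde F_2$ is divergent---and $\mathcal L B_c=\int_y^{+\infty}\tilde F_2$ must grow \emph{linearly} as $y\to-\infty$. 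But $\varphi_c-\sqrt c$ is bounded (it tends to $-2\sqrt c$, not to $-\infty$), so with your ansatz $B_c$ stays bounded and $\mathcal L B_c$ stays bounded; the equation $(\mathcal L B_c)_y=\tilde F_2$ then cannot hold. Your parenthetical claim that $\varphi_c-\sqrt c$ ``is linear as $y\to-\infty$'' is simply false.

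The paper fixes this by using an ansatz one order less localized than for $A_c$: it writes
\[
\tilde B_c(t,y)=\alpha_1(t)\!\int_y^{+\infty}\!\!A_c+\alpha_2(t)\!\int_y^{+\infty}\!\!\partial_c A_c+\alpha_3(t)+\hat B_c(t,y),\qquad \hat B_c\in\mathcal Y,
\]
so that the antiderivatives $\int_y^{+\infty}A_c$ and $\int_y^{+\infty}\partial_c A_c$ absorb the non-decaying pieces of $\tilde F_2$ directly (via $[\mathcal L(\int_y^{+\infty}A_c)]_y=-cA_c+\ldots$) and supply the $|y|$ growth in \eqref{limB}. The constant $\alpha_3$ is then killed using the explicit kernel element $c-Q_c^2$ of $\partial_y\circ\mathcal L$ in the cubic case. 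Once you replace your $\varphi_c$-ansatz by these antiderivative terms, the rest of your outline (solving for $\hat B_c$ via Lemma~\ref{surL}(3), adjusting by $Q_c'$ and $\Lambda Q_c$ for \eqref{O33}, uniqueness from the kernel description) goes through essentially as you wrote.
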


\begin{proof}
The proof is divided in several steps.

\medskip

\noindent
1. Note that  since $A_c\in L^\infty(\R)$, one has from (\ref{F2}) that  $\tilde F_2(t,\cdot) \in L^\infty (\R)$. From Lemma \ref{surL} (see also \cite{Mu2}), we get solvability in $S'(\R)$ for (\ref{O2}) provided $\tilde F_2$ satisfies the orthogonality condition
\be\label{OR}
\int_\R \tilde F_2 Q_c =0.
\ee
Let us prove this last identity. Indeed, we have\footnote{For the sake of simplicity, we avoid the explicit dependence on time in this computation.}
\bee
\int_\R Q_c \tilde F_2 & = & \int_\R Q_c( f_1 d\partial_c A_c + 3d \frac{a'}{a} (yQ_c^2A_c)_y + \frac{f_3}{\tilde a} \Lambda Q_c  + 3d^2 a (Q_cA_c^2)_y) \\
& =& -f_1 d \int_\R \Lambda Q_c A_c - 3d \frac{a'}{a} \int_\R yQ_c^2Q_c' A_c + \frac{f_3}{\tilde a} \int_\R Q_c \Lambda Q_c   - 3d^2 a \int_\R Q_c Q_c' A_c^2 \\
& =& -\frac{a'^2}{a^{5/2}} \big[ \frac 13 (c-3\la)  \int_\R  yQ_c' A_c +  3 \int_\R yQ_c^2Q_c' A_c +3  \int_\R Q_c Q_c' A_c^2 \big]  + \frac{f_3}{a^{1/2}} \int_\R Q_c \Lambda Q_c.  
\eee
Let us define
$$
\mu_c := \frac 13 (c-3\la)  \int_\R  yQ_c' A_c +  3 \int_\R yQ_c^2Q_c' A_c +3  \int_\R Q_c Q_c' A_c^2.
$$
Our objective is to give a simple expression of this quantity. Indeed, first note that $A_c'\in \mathcal Y$. From the equation $(\mathcal L A_c)' =\tilde F_1 + \la \hat F_1$, one has $ \mathcal L A_c' = \tilde F_1 + \la \hat F_1 + 6Q_cQ_c' A_c$.\footnote{Let us recall that $m=3$.} We multiply this identity by $A_c$ and integrate over $\R$. We get
\be\label{IntALA}
\int_\R A_c \mathcal L A_c'  = \int_\R A_c(  \tilde F_1 + \la \hat F_1) + 6 \int_\R Q_cQ_c' A_c^2.
\ee
On the other hand, after integration by parts, one has
\bee
\int_\R A_c \mathcal L A_c'  & = &  \int_\R A_c' \mathcal L A_c = A_c \mathcal L A_c |_{-\infty}^{+\infty} -\int_\R A_c (  \tilde F_1 + \la \hat F_1) \\
& = & -cA_c^2(-\infty)- \int_\R A_c (  \tilde F_1 + \la \hat F_1).
\eee
From these two identities, we get
$$
3 \int_\R Q_cQ_c' A_c^2 = -\frac 12 cA_{c}^2(-\infty) -\int_\R  A_c (  \tilde F_1 + \la \hat F_1).
$$
We replace this identity above, in the definition of $\mu_c$, to obtain (recall that $A_c$ is orthogonal to $Q_c$ and $Q_c'$)
\bee
\mu_c & =& -\frac 12 cA_{c}^2(-\infty) +  \frac 13 (c-3\la)  \int_\R  yQ_c' A_c +  3 \int_\R yQ_c^2Q_c' A_c    -\int_\R  A_c ( \frac 13  cy Q_c'   +  (yQ_c^3)'   - \la   yQ_c'  )\\
& =& -\frac 12 cA_{c}^2(-\infty)  -\int_\R A_c Q_c^3.
\eee
On the other hand, note that $\mathcal L(-\frac 12 Q_c) = Q_c^3$. We have then
\bee
\mu_c &= & -\frac 12 cA_{c}^2(-\infty)  +\frac 12 \int_\R \mathcal L A_c Q_c = -\frac 12 cA_{c}^2(-\infty)  - \frac 12 \int_\R Q_c\int_y^{+\infty} (\tilde F_1 + \la \hat F_1) \\
& =& -\frac 12 cA_{c}^2(-\infty) - \frac 14 \int_\R Q_c \int_\R (\tilde F_1 + \la \hat F_1),
\eee
(recall that $f_2\equiv 0$.) A simple computation gives 
$$
 \int_\R (\tilde F_1 + \la \hat F_1) = -\frac 12(c-\la)\int_\R Q_c,
$$
since from (\ref{Acminf}) and (\ref{F1new}) one has
$$
A_c(-\infty) = -\frac 1c\int_\R (\tilde F_1 + \la \hat F_1)   = \frac 1{2c}(c-\la)\int_\R Q_c,
$$
we finally get
$$
\mu_c = \frac\la{8c} (c-\la)(\int_\R Q)^2\neq 0.
$$
From the definition of $f_3(t)$ in (\ref{f3}), we get finally (\ref{OR}). In consequence, there exists at {\bf least one solution} $B_c\in S'$ satisfying (\ref{O2}). 

\medskip

\noindent
2. Let us look for a solution $B_c$ with a special behavior. In fact, we will search for a solution with the following structure:
$$
B_c(t,y) = \tilde B_c(t,y) + f_5(t) Q_c'(y) + \frac{f_4}{a^{1/2}} (t)\Lambda Q_c(y),
$$
where $\tilde B_c$ has the following decomposition
\be\label{tBct}
\tilde B_c(t,y) = \al_1(t) \int_y^{+\infty} \!\! A_c +\al_2(t) \int_y^{+\infty}\!\! \partial_c A_c + \al_3(t) +\hat B_c(t,y),  \qquad \hat B_c(t,\cdot )\in \mathcal Y.
\ee
Here $\al_1(t), \al_2(t), \al_3(t)$ are real valued, exponentially decreasing, time-dependent functions, to be found. Note that this function satisfies (\ref{limB}), provided $\al_3(t)\equiv 0$, since $A_c(y), \partial_c A_c(y), \partial_c^2 A_c(y) \to 0$ as $y\to +\infty$, at exponential rate. Moreover, we can choose unique $f_4(t),f_5(t)\in \R$ such that (\ref{O33}) holds, respectively, for all time $t\in [-T_\ve, \tilde T_\ve]$.
 
\smallskip

Let us prove the existence of $\hat B_c$, with the desired properties. By replacing the form (\ref{tBct}) in (\ref{O2}), we get
\bee
 (\mathcal L \hat B_c)_y  & = & -\al_1 \big[ \mathcal L (\int_y^{+\infty} \!\! A_c)\big]_y - \al_2 \big[\mathcal L (\int_y^{+\infty}\!\! \partial_c A_c)\big]_y + 3\al_3 (Q_c^2)'  \\
& & + \ (\frac{a'}{a^{3/2}})' (c-\la) A_c + f_1 \frac{a'}{a^{3/2}}\partial_c A_c \\
& & + \ 3 \frac{a'^2}{a^{5/2}} (yQ_c^2A_c)_y + \frac{f_3}{ a^{1/2}} \Lambda Q_c  +\frac{a''}{2a^{3/2}} (y^2 Q_c^3)_y +  3\frac{a'^2}{a^{5/2}} (Q_cA_c^2)_y.
\eee
On the other hand, one has
$$
\big[\mathcal L ( \int_y^{+\infty} \!\! A_c) \big]_y = -c A_c + A_c''  -3\big[Q_c^2 \int_y^{+\infty} \!\! A_c \big]_y, 
$$
and
$$
\big[\mathcal L ( \int_y^{+\infty} \!\! \partial_c A_c)\big]_y =-c \partial_cA_c + (\partial_c A_c)''  -3\big[Q_c^2 \int_y^{+\infty} \!\! \partial A_c \big]_y.
$$
Therefore, by defining 
$$
\al_1(t) := -(\frac{a'}{a^{3/2}})'(\ve \rho(t)) (1  - \frac{\la}{c(t)}), \quad \hbox{ andÊ} \quad  \al_2(t) := -\frac 1{c(t)} f_1(t) \frac{a'}{a^{3/2}}(\ve \rho(t));
$$
(note that both functions are exponentially decreasing in $|\rho(t)|$), one has that $\hat B_c(t,y)$ must be a solution of
\bee
\mathcal L \hat B_c  & =  &  3 \frac{a'^2}{a^{5/2}} yQ_c^2A_c + \frac{f_3}{2c a^{1/2}} y Q_c  +\frac{a''}{2a^{3/2}} y^2 Q_c^3 +  3\frac{a'^2}{a^{5/2}} Q_cA_c^2 \\
& & +\ \al_1 \big[ A_c' -3Q_c^2\int_y^{+\infty} \!\! A_c   \big] + \al_2 \big[ (\partial_c A_c)'-3Q_c^2 \int_y^{+\infty} \!\! \partial_c A_c  \big] + 3\al_3 Q_c^2. 
\eee
Note that the right hand side above is in $\mathcal Y$ and it is orthogonal to $Q_c'$, since there exists a solution $B_c$ of (\ref{O2}). Therefore, from Lemma \ref{surL}, we have $\hat B_c(t,\cdot )\in \mathcal Y$, with $\|\hat B_c(t,\cdot)\|_{L^\infty(\R)}\leq Ke^{-\ve \ga |\rho(t)|} + K|\al_3(t)|$. Let us adjust the value of $\al_3(t)$. Indeed, first note that 
$$
(\mathcal L(c-Q_c^2))_y =0.   \qquad \hbox{(cf. Lemma \ref{IdQ2} in Appendix \ref{AidQ} below.)}
$$
Therefore, by substracting a suitable ponderation of the term $c- Q_c^2$ in the form of $\tilde B_c$ above (see (\ref{tBct})), we {\bf may suppose} $\al_3(t)\equiv 0$, still having $\hat B_c \in \mathcal Y$. This proves the existence of $B_c$ with the required behavior.

\medskip

\noindent
3. Finally, let us prove that $f_4(t)$ has the form (\ref{f4}). Indeed, note that
\be\label{nulo}
\int_\R B_c Q_c = -\int_\R B_c \mathcal L \Lambda Q_c = -\int_\R \Lambda Q_c \mathcal L B_c ;
\ee
From one has for $y<r$,  $ \mathcal L B_c (y)=\mathcal LB_c(r) - \int_y^{r} \tilde F_2$
therefore
$$
(\ref{nulo}) = -\int_\R \Lambda Q_c (\mathcal LB_c(r) - \int_y^{r} \tilde F_2) = \int_\R  \Lambda Q_c \int_y^{r} \tilde F_2.
$$
From the definition of $\Lambda Q_c$, we get
$$
(\ref{nulo}) = \frac{1}{2c}\int_\R yQ_c \tilde F_2.
$$
Now we use the definition of $F_2$ and the orthogonality conditions on $A_c$ to get
\bee
2c (\ref{nulo}) & = &   
- \frac 13 (c-3\la)\frac{a'^2}{a^{5/2}}  \int_\R y^2 Q_c'  A_c  - 3 \frac{a'^2}{a^{5/2}}  \int_\R (yQ_c)' yQ_c^2A_c    \\
& &  - \frac{a''}{8a^{3/2}} \int_\R y^2 Q_c^4 + \frac{f_4(t)}{2a^{1/2}} \int_\R Q_c^2 -3\frac{a'^2}{a^{5/2}} \int_\R (yQ_c)' Q_cA_c^2 . 
\eee
Now we use the scaling property (\ref{ScaA}) of the function $A_c$, with $m=3$, to obtain a better description of $f_4(t)$: we have
\bee
 2c (\ref{nulo}) &  = &  - \frac 13 (c-3\la)\frac{a'^2}{\sqrt{c}a^{5/2}}  \int_\R y^2 Q'   [  \tilde A^0 +  \frac \la c  \hat A^0]   - 3 \frac{\sqrt{c} a'^2}{a^{5/2}}  \int_\R (yQ)' yQ^2 [  \tilde A^0(y) +  \frac \la c  \hat A^0(y)] \\
& & \quad  - \frac{a'' \sqrt{c}}{8a^{3/2}} \int_\R y^2 Q^4 + \frac{f_4(t)}{2a^{1/2}} \sqrt{c} \int_\R Q^2 -3\frac{\sqrt{c}a'^2}{a^{5/2}} \int_\R (yQ)' Q [  \tilde A^0(y) +  \frac \la c  \hat A^0(y)]^2 . 
\eee
Therefore, one has $f_4(t)$ as in (\ref{f4}), with
$$
f_4^2(t) := \frac 1{8M[Q]} \int_\R y^2 Q^4>0, 
$$
and
\bee
 f_4^1 (t)&  := & \frac 1{3M[Q]} (1-3\frac \la c)\int_\R y^2 Q'   [  \tilde A^0 +  \frac \la c  \hat A^0] +\frac 3{M[Q]}   \int_\R (yQ)' yQ^2 [  \tilde A^0 +  \frac \la c  \hat A^0]    \\
& & \qquad + \frac{3}{M[Q]} \int_\R (yQ)' Q [  \tilde A^0 +  \frac \la c  \hat A^0]^2.
\eee

\medskip

In conclusion, we have the existence of a unique $B_c(t,y)$ satisfying (\ref{O2})-(\ref{O33}). Estimates (\ref{limB}) are direct from (\ref{tBct}).
\end{proof}

\noindent
{\bf Step 7. Final conclusion.} Having solved one linear problem in the cases $m=2$ and $4$, and two linear equations 
in the case $m=3$, from (\ref{Stt}) and (\ref{tStu}) we have
\bee
S[\tilde u](t,x) & = &   (c'(t) - \ve f_1(t) -\ve^2 \delta_{m,3} f_3(t))\partial_c\tilde u  \nonu\\
& & + (\rho'(t) -c(t)+ \la -  \ve f_2(t) -\ve^2 \delta_{m,3}f_4(t)) \partial_\rho \tilde u + \tilde S[\tilde u](t,x), 
\eee
with 
$$
\partial_\rho \tilde u := \partial_\rho R -w_y + O(\ve^2 e^{-\ve \ga |\rho(t)|}|A_c|),
$$
and
$$
\tilde S[\tilde u]  = 
\begin{cases}
(\ref{a10}), \quad \hbox{ for  the cases } m=2,4; \\
(\ref{a11}) + (\ref{a12}), \quad \hbox{ in the cubic case.}
\end{cases}
$$
This proves the first part of Proposition \ref{prop:decomp}.

\medskip

In addition, from Lemmas \ref{lem:omega} and \ref{2sys} we have (\ref{Ac}) and (\ref{Bc}), respectively. This proves the second part of Proposition \ref{prop:decomp}. In addition, from these lemmas,  $f_1(t)$, $f_2(t)$, $f_3(t)$ and $f_4(t)$ are well determined. This proves the third part of Proposition \ref{prop:decomp}.

\medskip

Finally, we prove the last part of Proposition \ref{prop:decomp}. Let us recall that (\ref{a10}) is a bounded, non localized term, and (\ref{a11}) $+$ (\ref{a12}) is a polynomially growing  term. Indeed, from (\ref{dd}), (\ref{f1}), (\ref{f3}), (\ref{f4}), (\ref{Acy}) and (\ref{limB}) we have
$$
|(\ref{a11})| \leq K \ve^3 |y| e^{-\ve\ga |\rho(t)|}, \quad \hbox{ as } y\to -\infty , \qquad |(\ref{a11})|\to 0 \quad \hbox{ as } y\to +\infty,
$$
and
$$
\begin{cases}
|(\ref{a12})| \leq K \ve^4(1+ |y| + \ve^2|y|^2) e^{-\ve\ga |\rho(t)|}, & \hbox{ as } y\to -\infty ,\\
|(\ref{a12})|\to 0, & \hbox{ as } y\to +\infty.
\end{cases}
$$
Moreover, note that
$$
\|(\ref{a10})\|_{H^1(y\geq -\frac 3\ve)} \leq K \ve^{3/2} e^{-\ve\ga |\rho(t)|};
$$
 (cf. \cite{Mu2} for this bound) and
$$
\|(\ref{a11})\|_{H^1(y\geq - \frac 3\ve)} \leq K \ve^{3/2} e^{-\ve\ga |\rho(t)|}, \qquad \|(\ref{a12})\|_{H^1(y\geq - \frac 3\ve)} \leq K \ve^{5/2} e^{-\ve\ga |\rho(t)|}.
$$

\medskip

Finally, (\ref{Stilde}) is direct from this last estimate. On the other hand, from (\ref{a10}) one has (\ref{SIn}), and from (\ref{a11})-(\ref{a12}) we finally obtain (\ref{SIn3}).

The proof of Proposition \ref{prop:decomp} is now complete.

\bigskip

\section{Some identities related to the soliton $Q$}\label{AidQ}

This section has been taken from Appendix C in \cite{MMcol1}.

\begin{lem}[Identities for the soliton $Q$]\label{IdQ}~

Suppose $m>1$ and denote by $Q_c := c^{\frac 1{m-1}} Q(\sqrt{c} x)$ the scaled soliton. Then
\begin{enumerate}
\item \emph{Energy}.
$$
E_1[Q]= \frac 12 (\la - \la_0)\int_\R Q^2 =(\la-\la_0)M[Q],  \quad \hbox{with }\; \la_0 = \frac{5-m}{m+3}.
$$
\item \emph{Integrals}. Recall $\theta = \frac 1{m-1} -\frac 14$. Then
$$
\int_\R Q_c = c^{\theta-\frac 14} \int_\R Q, \quad \int_\R Q_c^{2} = c^{2\theta} \int_\R Q^2, \quad E_1[Q_c] = c^{2\theta}(\la -\la_0 c)M[Q],
$$
and finally
$$
\int_\R Q_c^{m+1} = \frac{2(m+1)c^{2\theta +1}}{m+3} \int_\R Q^2, \quad \int_\R \Lambda Q_c = (\theta -\frac 14) c^{\theta -\frac 14 } \int_\R Q, 
$$
$$
 \int_\R \Lambda Q_c Q_c =\theta c^{2\theta -1} \int_\R Q^2.
$$
\end{enumerate}
\end{lem}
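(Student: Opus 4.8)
The final statement to prove is Lemma \ref{IdQ}, the collection of identities for the soliton $Q$ and its scalings $Q_c$. Here is how I would organize a proof.

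\textbf{Plan of proof.} The strategy is to extract everything from two sources: (i) the defining ODE $Q'' - Q + Q^m = 0$, which after multiplication by $Q$ or $Q'$ and integration yields the Pohozaev-type relations linking $\int Q^2$, $\int Q'^2$, and $\int Q^{m+1}$; and (ii) the elementary scaling relation $Q_c(x) = c^{1/(m-1)}Q(\sqrt c\,x)$, which reduces every integral of $Q_c$ to a power of $c$ times the corresponding integral of $Q$ by the change of variables $s = \sqrt c\,x$. I would do the scaling bookkeeping first (it is purely mechanical) and then the two genuine computations: the Pohozaev identities and the energy identity.

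\textbf{Step 1: scaling identities.} For a homogeneous monomial, $\int_\R Q_c^k\,dx = c^{k/(m-1)} \int_\R Q(\sqrt c\,x)^k\,dx = c^{k/(m-1) - 1/2}\int_\R Q^k\,ds$. Taking $k=1$ gives $\int Q_c = c^{1/(m-1)-1/2}\int Q = c^{\theta - 1/4}\int Q$ since $\theta = \frac{1}{m-1}-\frac14$ means $\frac{1}{m-1}-\frac12 = \theta - \frac14$. Taking $k=2$ gives $\int Q_c^2 = c^{2/(m-1)-1/2}\int Q^2 = c^{2\theta}\int Q^2$ since $\frac{2}{m-1}-\frac12 = 2\theta$. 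Taking $k = m+1$ gives $\int Q_c^{m+1} = c^{(m+1)/(m-1) - 1/2}\int Q^{m+1} = c^{2\theta + 1}\int Q^{m+1}$, using $\frac{m+1}{m-1} - \frac12 = 1 + \frac{2}{m-1} - \frac12 = 1 + 2\theta$. For $\Lambda Q_c$, I would use the explicit formula \eqref{LaQc}, $\Lambda Q_c = \frac1c\big[\frac{1}{m-1}Q_c + \frac12 xQ_c'\big]$; integrating the term $xQ_c'$ by parts gives $\int x Q_c' = -\int Q_c$, so $\int \Lambda Q_c = \frac1c(\frac{1}{m-1} - \frac12)\int Q_c = \frac1c(\theta+\frac14)\cdot c^{\theta-1/4}\int Q$; and since $\frac{1}{m-1}-\frac12 = \theta - \frac14$ one rewrites the prefactor as $(\theta - \frac14)c^{\theta - 5/4}\int Q = (\theta-\frac14)c^{\theta-1/4-1}\int Q$, matching the claimed $(\theta-\frac14)c^{\theta-1/4}\int Q$ up to the normalization $\Lambda$ carries (I would double-check the exponent against \eqref{LaQc} and absorb the factor $1/c$ consistently). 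Similarly $\int \Lambda Q_c\,Q_c = \frac1c\int(\frac1{m-1}Q_c^2 + \frac12 xQ_c Q_c') = \frac1c(\frac1{m-1} - \frac14)\int Q_c^2 = \frac1c\theta\, c^{2\theta}\int Q^2 = \theta\, c^{2\theta-1}\int Q^2$, again using $xQ_cQ_c' = \frac12(xQ_c^2)'$ so $\int xQ_cQ_c' = -\frac12\int Q_c^2$.

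\textbf{Step 2: the two true identities and the main obstacle.} For $\int_\R Q_c^{m+1}$ in terms of $\int Q^2$ I need the Pohozaev relation for $Q$: multiply $Q''-Q+Q^m=0$ by $Q$ and integrate to get $-\int Q'^2 - \int Q^2 + \int Q^{m+1} = 0$; multiply by $xQ'$ and integrate (or equivalently use scaling of the action) to get a second relation; solving the linear system yields $\int Q^{m+1} = \frac{2(m+1)}{m+3}\int Q^2$ and $\int Q'^2 = \frac{5-m}{m+3}\int Q^2 = \la_0 \int Q^2$. Combined with Step 1 this gives $\int Q_c^{m+1} = c^{2\theta+1}\cdot\frac{2(m+1)}{m+3}\int Q^2$, the stated formula. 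The energy identity then follows: $E_1[Q] = \frac12\int Q'^2 + \frac\la2\int Q^2 - \frac1{m+1}\int Q^{m+1} = \frac12\la_0\int Q^2 + \frac\la2\int Q^2 - \frac1{m+1}\cdot\frac{2(m+1)}{m+3}\int Q^2 = \frac12(\la_0 + \la - \frac{4}{m+3})\int Q^2 = \frac12(\la - \la_0)\int Q^2$, using $\frac{4}{m+3} = 1 + \la_0$ hence $\la_0 - \frac{4}{m+3} = -1$... wait, $\la_0 + \la - \frac4{m+3}$: since $\la_0 = \frac{5-m}{m+3}$ and $\frac{4}{m+3}$, we get $\la_0 - \frac{4}{m+3} = \frac{1-m}{m+3}$, not quite $-\la_0$; so I would instead note $\frac{2}{m+1}\int Q^{m+1} = \frac{4}{m+3}\int Q^2$ and $\frac12\int Q'^2 = \frac{\la_0}2\int Q^2$, giving $E_1[Q] = \frac12(\la_0 + \la)\int Q^2 - \frac{2}{m+3}\int Q^2 = \frac12(\la - \la_0)\int Q^2$ precisely when $\la_0 - \frac{4}{m+3} = -\la_0$, i.e. $2\la_0 = \frac4{m+3}$, i.e. $\la_0 = \frac2{m+3}$ — which contradicts $\la_0 = \frac{5-m}{m+3}$ unless $m=3$. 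This arithmetic tension is exactly the step I expect to be the main obstacle: I would need to recompute the Pohozaev constants carefully, because the correct value $\int Q'^2 = \frac{5-m}{m+3}\int Q^2$ together with $\frac12\int Q'^2 - \frac1{m+1}\int Q^{m+1} = -\frac{\la_0}2\int Q^2$ (which is the identity actually invoked as \eqref{l0}-related in Step 2 of Appendix \ref{A}) is what makes $E_1[Q] = \frac12(\la-\la_0)\int Q^2$ work out. Finally $E_1[Q_c] = \frac12 c^{2\theta}(c\int Q'^2 + \la\int Q^2) - \frac1{m+1}c^{2\theta+1}\int Q^{m+1} = c^{2\theta}\big[\frac c2\int Q'^2 - \frac c{m+1}\int Q^{m+1} + \frac\la2\int Q^2\big] = c^{2\theta}\big[c(-\frac{\la_0}2\int Q^2) + \frac\la2\int Q^2\big] = c^{2\theta}(\la - \la_0 c)M[Q]$, as claimed. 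Since this lemma is quoted verbatim from Appendix C of \cite{MMcol1}, I would in practice simply cite that reference and present the above as a sketch, flagging that the only nontrivial inputs are the two Pohozaev identities for $Q$.
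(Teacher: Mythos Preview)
Your approach is exactly right and matches the paper's: the paper offers no argument beyond ``a lengthy but direct computation'' and a citation to \cite{MMcol1}, and the direct computation is precisely the scaling bookkeeping plus the two Pohozaev relations you outline.

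There is, however, one concrete arithmetic slip that is the source of your confusion in Step~2. Solving the two Pohozaev relations (multiply $Q''-Q+Q^m=0$ by $Q$, then by $xQ'$, and integrate) gives
\[
\int_\R Q'^2 \;=\; \frac{m-1}{m+3}\int_\R Q^2,
\qquad
\int_\R Q^{m+1} \;=\; \frac{2(m+1)}{m+3}\int_\R Q^2,
\]
so $\int Q'^2$ equals $\frac{m-1}{m+3}\int Q^2$, \emph{not} $\la_0\int Q^2=\frac{5-m}{m+3}\int Q^2$ as you wrote. With the correct value the energy computation is clean:
\[
E_1[Q]=\Big[\tfrac12\cdot\tfrac{m-1}{m+3}-\tfrac{2}{m+3}+\tfrac{\la}{2}\Big]\int_\R Q^2
=\tfrac12\Big(\la-\tfrac{5-m}{m+3}\Big)\int_\R Q^2
=(\la-\la_0)M[Q],
\]
with no contradiction and nothing special about $m=3$. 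The combination you eventually land on, $\tfrac12\int Q'^2-\tfrac1{m+1}\int Q^{m+1}=-\tfrac{\la_0}{2}\int Q^2$, is correct and is exactly what makes the $E_1[Q_c]$ computation go through; you were simply misattributing which individual Pohozaev constant equals $\la_0$.

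On the exponent of $c$ in $\int_\R\Lambda Q_c$: your computation $\int\Lambda Q_c=\partial_c\int Q_c=(\theta-\tfrac14)c^{\theta-5/4}\int Q$ is correct, and the stated exponent $c^{\theta-1/4}$ in the lemma appears to be a typo in the paper (note that the companion identity $\int\Lambda Q_c\,Q_c=\theta c^{2\theta-1}\int Q^2$ does carry the expected extra factor of $c^{-1}$). This does not affect anything downstream.
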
 

\medskip

\begin{lem}[Inverse functions, case $m=3$]\label{IdQ2}~

Let $\mathcal L_0$ be the fixed, linearized operator defined in (\ref{defLy}) for $m=3$. Then one has
$$
\mathcal L_0(Q')=0, \quad \mathcal L_0(yQ) = -2yQ^3-2Q', \quad \mathcal L_0(y^2 Q') = -4yQ+4yQ^3 -2Q';
$$
$$
\mathcal L_0(\int_y^{+\infty} Q )=(1-3Q^2)\int_y^{+\infty} Q +Q' , 
$$
$$
 \mathcal L_0(Q^2\int_y^{+\infty} Q) = -3Q^2\int_y^{+\infty} Q + 5Q^2Q',
$$
and
$$
\mathcal L_0(Q^2) =-3Q^2, \quad \mathcal L_0(Q'\ln Q) = -2Q'+\frac 52 Q^2Q'.
$$
\end{lem}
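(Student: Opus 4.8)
The plan is to reduce every identity to the single defining ODE of the $m=3$ soliton, $Q''=Q-Q^3$ (cf. (\ref{soliton})), recalling that for $m=3$ and $c=1$ the operator in (\ref{defLy}) reads $\mathcal L_0 w=-w''+w-3Q^2w$. First I would record the three elementary facts used repeatedly: $(i)$ $Q''=Q-Q^3$; $(ii)$ differentiating $(i)$ once, $Q'''=Q'-3Q^2Q'$, which is exactly the statement $\mathcal L_0 Q'=0$; and $(iii)$ multiplying $(i)$ by $Q'$ and integrating on $[y,+\infty)$ with $Q,Q'\to 0$ at $+\infty$, the first integral $(Q')^2=Q^2-\tfrac12 Q^4$. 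With these in hand, each remaining identity becomes a bookkeeping computation.

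For the polynomially weighted entries $\mathcal L_0(yQ)$ and $\mathcal L_0(y^2Q')$ I would expand the second derivatives by Leibniz: in $\mathcal L_0(y^2Q')$ all terms carrying a factor $y^2$ combine into $-y^2(Q'''-Q'+3Q^2Q')=0$ by $(ii)$, while the surviving terms $-2Q'-4yQ''$ are rewritten via $(i)$ as $-4yQ+4yQ^3-2Q'$; the computation for $yQ$ is the same, shorter. For the entries built from the primitive $P(y):=\int_y^{+\infty}Q$, note $P'=-Q$, $P''=-Q'$, so $\mathcal L_0 P=Q'+(1-3Q^2)P$ immediately; for $\mathcal L_0(Q^2P)$ the Leibniz expansion leaves a coefficient $-2(Q')^2-2QQ''+Q^2-3Q^4$ in front of $P$, and I would check that this coefficient equals $-3Q^2$ using $(i)$ for $QQ''$ and then $(iii)$ for $(Q')^2$; the same identity $-2(Q')^2-2QQ''+Q^2-3Q^4=-3Q^2$ yields $\mathcal L_0(Q^2)=-3Q^2$ at no extra cost.

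The last entry $\mathcal L_0(Q'\ln Q)$ is the only one needing a touch of care. Writing $h:=Q'\ln Q$, one gets $h'=Q''\ln Q+(Q')^2/Q$ and $h''=Q'''\ln Q+3Q'Q''/Q-(Q')^3/Q^2$, so the coefficient of $\ln Q$ in $\mathcal L_0 h$ is $-(Q'''-Q'+3Q^2Q')=0$ by $(ii)$, i.e. the non-polynomial term drops out entirely; the remaining rational part $-3Q'Q''/Q+(Q')^3/Q^2$ is simplified using $(i)$ to replace $Q''$ and $(iii)$ to replace $(Q')^2$, giving $-2Q'+\tfrac52 Q^2Q'$. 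I expect no genuine obstacle: the only non-automatic input is the first integral $(iii)$, and the output is precisely the list of auxiliary inverse formulas invoked to build the correction terms $A_c$ and $B_c$ in Appendix \ref{A}; the computations are the standard ones reproduced from \cite{MMcol1}.
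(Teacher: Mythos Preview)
Your proposal is correct and is precisely the ``lengthy but direct computation'' to which the paper alludes without giving details; each identity indeed reduces to the soliton ODE $Q''=Q-Q^3$, its differentiated form $Q'''=Q'-3Q^2Q'$, and the first integral $(Q')^2=Q^2-\tfrac12 Q^4$. There is nothing to add or correct.
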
 
The proof of these result is a lengthy but direct computation.  

\bigskip

\end{document}

\endinput